\newcommand{\ga}{\alpha}\newcommand{\gb}{\beta}
\newcommand{\gga}{\gamma}\newcommand{\gG}{\Gamma}
\newcommand{\gd}{\delta}\newcommand{\gD}{\Delta}
\newcommand{\gep}{\varepsilon}\newcommand{\gz}{\zeta}
\newcommand{\gl}{\lambda}\newcommand{\gL}{\Lambda}
\newcommand{\gr}{\rho}
\newcommand{\gs}{\sigma}
\newcommand{\gp}{\varphi}\newcommand{\gP}{\Phi}
\newcommand{\gO}{\Omega}
\newcommand{\gh}{\eta}
\newcommand{\cB}{\mathcal{B}}\newcommand{\cF}{\mathcal{F}}
\newcommand{\cP}{\mathcal{P}}\newcommand{\cQ}{\mathcal{Q}}
\newcommand{\cR}{\mathcal{R}}\newcommand{\cE}{\mathcal{E}}
\newcommand{\cI}{\mathcal{I}}
\newcommand{\cK}{\mathcal{K}}\newcommand{\cC}{\mathcal{C}}
\newcommand{\cM}{\mathcal{M}}\newcommand{\cN}{\mathcal{N}}
\newcommand{\cJ}{\mathcal{J}}\newcommand{\cL}{\mathcal{L}}
\newcommand{\cS}{\mathcal{S}}\newcommand{\cD}{\mathcal{D}}
\newcommand{\cU}{\mathcal{U}}\newcommand{\cA}{\mathcal{A}}
\newcommand{\cV}{\mathcal{V}}
\newcommand{\cT}{\mathcal{T}}\newcommand{\cG}{\mathcal{G}}
\newcommand{\bN}{\mathbb{N}}
\newcommand{\bR}{\mathbb{R}}
\newcommand{\bT}{\mathbb{T}}
\newcommand{\bZ}{\mathbb{Z}}
\newcommand{\bE}{\mathbb{E}}
\newcommand{\bP}{\mathbb{P}}
\theoremstyle{definition}
\newtheorem{definition}{Definition}[section]
\theoremstyle{plain}
\newtheorem{theorem}[definition]{Theorem}
\newtheorem{proposition}[definition]{Proposition}
\newtheorem{lemma}[definition]{Lemma}
\newtheorem{corollary}[definition]{Corollary}
\theoremstyle{remark}
\newtheorem{remark}[definition]{Remark}
\numberwithin{equation}{section}
\tikzset{every node/.style={circle, draw=black, fill, inner sep=0.1ex, grow=up}, level/.style={level distance=1.2ex, sibling distance=0.7em)}, grow'=up, noise/.style={edge from parent/.style={draw, densely dotted}}, derivative/.style={edge from parent/.style={draw,double}}, neutral/.style={edge from parent/.style={draw}}}
\newcommand{\abs}[1]{\vert #1 \vert}
\newcommand{\norm}[1]{\Vert #1 \Vert}
\newcommand{\f}[1]{\mathfrak{#1}}
\newcommand{\1}{\mathbf{1}}
\newcommand{\X}{\mathbf{X}}
\newcommand{\z}{\bar{z}}
\newcommand{\PI}{\mathbf{\Pi}}
\newcommand{\hPI}{\hat{\mathbf{\Pi}}}
\newcommand{\hPi}{\hat{\Pi}}
\newcommand{\hgG}{\hat{\Gamma}}
\title{An It\^o type formula for \\ the additive stochastic heat equation}\author{Carlo Bellingeri \thanks{Laboratoire de Probabilités Statistique et Modélisation}}
\date{}
\begin{document}
\maketitle
\begin{abstract}
We use the theory of regularity structures to develop an It\^o formula for $u$, the solution of the one-dimensional stochastic heat equation driven by space-time white noise with periodic boundary conditions. In particular, for any smooth enough function $\gp$ we can express the random distribution $(\partial_t-\partial_{xx})\gp(u)$ and the random field $\gp(u)$ in terms of the reconstruction of some modelled distributions. The resulting objects are then identified with some classical constructions of stochastic calculus.
\end{abstract}
\tableofcontents
\section{Introduction}
We consider $\{u(t,x)\colon t\in [0,T],\; x\in \bT=\bR/\bZ\}$ the solution of the additive stochastic heat equation with periodic boundary conditions and zero initial value:
\begin{equation}\label{eqSHE}
\begin{cases}
\partial_t u= \partial_{xx} u +\xi \,,\\
u(0,x)=0  \\
u(t,0)=u(t,1)\\
\partial_xu(t,0)=\partial_x u(t,1)
\end{cases}
\end{equation}
where $\xi$ is a space-time white noise over $\bR\times \bT$. This equation was originally formulated to model a one-dimensional string exposed to a stochastic force (see \cite{funaki1983}). From a theoretical point of view, the equation \eqref{eqSHE} represents one of the simplest examples of a stochastic PDE whose solution can be written explicitly, the so-called \emph{stochastic convolution} (see e.g. \cite{walsh1984,daprato2014}). Writing $\xi= \partial W/\partial t\partial x$, where $W$ is the Brownian sheet associated to $\xi$, one has
\begin{equation}\label{sto_conv}
u(t,x)= \int_0^t\int_{\bT}P_{t-s}(x-y)dW_{s,y}\,,
\end{equation} 
where the integral $dW_{s,y}$ is a Walsh integral taken with respect to the martingale measure associated to $W$ and $P\colon (0,+\infty)\times \bT\to \bR$ is the fundamental solution of the heat equation with periodic boundary conditions:
\[P_t(x)= \sum_{m\in \bZ}G_t(x+m)\,,\quad G_t(x)= \frac{1}{\sqrt{4\pi t}}\exp\left(-\frac{x^2}{4t}\right)\,.\]

It is well known in the literature (see e.g. \cite{daprato2014}) that $u$ admits a continuous modification in both variables $t$ and $x$ and it satisfies the equation \eqref{eqSHE} in a weak sense, that is for any smooth function $l\colon \bT\to \bR$ one has
\begin{equation}\label{weak_form}
\int_{\bT} u(t,y) l(y) dy= \int_0^t\int_{\bT} u(s,y) l''(y)dy\, ds+ \int_0^t\int_{\bT} l(y)dW_{s,y}\,.
\end{equation}
Looking at $u$ as a process with values in an infinite-dimensional space, the process $u_t=u(t,\cdot) $ is also a Feller Diffusion taking values in $C(\bT)$, the space of periodic continuous functions, and $L^2(\bT)$. Its hitting properties were intensively studied in \cite{Mueller2002} using the potential theory for Markov processes and some general features of Gaussian random fields. Nevertheless, the general extension of the It\^o formula in the infinite-dimensional stochastic calculus (see e.g. \cite{daprato2014}) cannot be applied directly to $u_t$, because the quadratic variation of this process in this setting must coincide with the trace of the identity operator. Moreover, it has been shown in \cite{swanson2007} that the process $t\to u(t,x)$ for any fixed $x\in \bT$ has an a.s. infinite quadratic variation as a real-valued process. Therefore any attempt to apply classically the powerful theory of It\^o calculus seems pointless.

Introduced in 2014 and explained through the famous ``quartet" of articles \cite{Hairer2014,Bruned2019, chandra_analytic16,Bruned_final17}, the theory of regularity structures has provided a very general framework to prove local pathwise existence and uniqueness of a wide family of stochastic PDEs driven by space-time white noise. In this article, we will show how these new techniques allow formulating an It\^o formula for $u$. The formula itself will be expressed under a new form, reflecting the new perspective under which the stochastic PDEs are analysed. Indeed, for any fixed smooth function $\gp\colon \bR\to \bR$, we will study the quantity $(\partial_t - \partial_{xx})\gp(u)$, interpreted as a space-time random distribution. This choice is heuristically motivated by the parabolic form of the equation \eqref{eqSHE} defining $u$ and it is manageable by the regularity structures, where it is possible to manipulate random distributions. Thus we are searching for a random distribution $g_{\gp}$, depending on higher derivatives of $\gp$, such that, denoting by $\langle\cdot,\cdot \rangle$ the duality bracket, one has a.s. the identity
\begin{equation}\label{example_differential}
\langle(\partial_t - \partial_{xx})\gp(u),\psi \rangle =\langle g_{\gp}, \psi\rangle\,,
\end{equation}
for any test function $\psi$.  We will refer to this formula as a \emph{differential It\^o formula}, because of the presence of a differential operator on the left-hand side of \eqref{example_differential}. By uniqueness of the heat equation with a distributional source $g_{\gp}$ (see section \ref{elements}), for every $(t,x)\in [0,T]\times \bT\to \bR$, we can write formally
\begin{equation}\label{example_integral}
\gp(u(t,x))= \gp(0)+ \int_0^t \int_{\bT} P_{t-s}(x-y)g_{\gp}(s,y)\,ds \,dy
\end{equation}
where for any fixed $ (t,x)$ the equality \eqref{example_integral} hold  a.s. We call a similar identity an \emph{integral It\^o formula} because of the double integral on the right-hand side of \eqref{example_integral}. This resulting formula may be one possible tool to improve our comprehension of the trajectories of $u$, even if it is still not clear whether it will be as effective as it is for finite-dimensional diffusions (see e.g. \cite{revuz2004continuous}).

To obtain these identities, we will follow the general philosophy of the regularity structure theory. Instead of working directly with the process $u$, we will consider $\{u_{\gep}\}_{\gep>0}$ an approximating sequence of $u$, solving a so-called ``Wong-Zakai" formulation of \eqref{eqSHE}
\begin{equation}\label{eq3}
\begin{cases}
\partial_t u_{\gep}= \partial_{xx} u_{\gep} +\xi_{\gep}\,,\\
u_{\gep}(0,x)=0  \\
u_{\gep}(t,0)=u_{\gep}(t,1)\\
\partial_xu_{\gep}(t,0)=\partial_x u_{\gep}(t,1)
\end{cases}
\end{equation}
where the random fields $\{\xi_{\gep}\}_{\gep>0}$ are defined by extending $\xi$ periodically on $\bR^2$ and convolving it with a  fixed smooth, compactly supported function $\gr\colon \bR^2\to \bR$ such that $\int_{\bR^2} \gr=1$ and $ \gr(t,x)=\gr(-t,-x)$. That is  denoting by $*$ the convolution on $\bR^2$ for any $\gep>0$ we set 
\begin{equation}\label{rho_eps}
\gr_{\gep}= \gep^{-3}\gr(\gep^{-2}t, \gep^{-1}x)\,, \quad \xi_{\gep}(t,x)= (\gr_{\gep}*\xi)(t,x)\,. 
\end{equation}
The inhomogeneous scaling in the mollification procedure is chosen accordingly to the parabolic nature of the equation \eqref{eqSHE}. This regularisation makes $\xi_{\gep}$ an a.s. smooth function $\xi_{\gep} \colon [0,T]\times \bT\to \bR$ and the equation \eqref{eq3} admits for any $\gep>0$ an a.s. periodic strong solution (in the analytical sense) $u_{\gep}\colon [0,T]\times \bT\to \bR$ which is smooth in space and time. Therefore in this case $(\partial_t - \partial_{xx})\gp(u_{\gep})$ is calculated by applying the classical chain rule between $u_{\gep}$ and $\gp$, obtaining
\begin{equation}\label{chain1}
\partial_t(\gp(u_{\gep}))=\gp'(u_{\gep})\partial_t u_{\gep}\,,\quad \partial_x (\gp(u_{\gep}))=\gp'(u_{\gep})\partial_x u_{\gep}\,,
\end{equation}
\begin{equation}\label{chain2}
\partial_{xx} (\gp(u_{\gep}))=\gp''(u_{\gep})(\partial_x u_{\gep})^2+\gp'(u_{\gep})\partial_{xx}u_{\gep} \,.
\end{equation}
Thereby yielding
\begin{equation}\label{parabolic_phi}
(\partial_t - \partial_{xx})\gp(u_{\gep})=\gp'(u_{\gep})\xi_{\gep}-\gp''(u_{\gep})(\partial_x u_{\gep})^2\,.
\end{equation}
Let us understand heuristically what happens when $\gep\to 0^+$. Since $\gr_{\gep}$ is an approximation of the delta function, $u$ is a.s. continuous and the derivative is a continuous operation between distributions, we can reasonably infer that the left-hand side of \eqref{parabolic_phi} converges in some sense to $ (\partial_t- \partial_{xx} )\gp(u)$. Thus the right-hand side of \eqref{parabolic_phi} should converge too to some limit distribution. However, written under this form, it is very hard to study this right-hand side because it is possible to show 
\[
\norm{\gp'(u_{\gep})\xi_{\gep}}\overset{\bP}{\to}  +\infty\,,\quad\norm{\gp''(u_{\gep})(\partial_x u_{\gep})^2}\overset{\bP}{\to}  +\infty\]
with respect to some norm in an infinite-dimensional space (see the remark \ref{div_proba}). These two results suggest a cancellation phenomenon between two objects whose divergences compensate each other.
This simple cancellation phenomenon between two diverging random quantities, which lies at the heart of the recent study of singular SPDEs, has already been noticed in the pioneering article \cite{Zambotti2006} (see also \cite{gradinaru05,Lanconelli2007}) and  now we are able to reinterpret that result in the general context of the renormalization theory, as explained in the theory of regularity structures. Using the notion of modelled distribution and the reconstruction theorem, we can also explain the limit as the difference of two explicit random distributions. However, these limits are only characterised by some analytical properties which cannot allow to understand immediately their probabilistic representation. Therefore the convergence is also linked with some specific \emph{identification theorems} which describe their law. Summing up both these results we can state the main theorem of the article:
\begin{theorem}[Integral and differential It\^o formula]\label{Integral_Ito}
Let $\gp$ be a function of class $C^{4}_b(\bR)$, the space of $C^4$ functions with all its derivatives bounded. Then for any test function $\psi\colon \bR\times \bT \to \bR$ with supp $(\psi) \subset (0,T) \times \bT$, one has the a.s. equality
\[\begin{split}
&\langle(\partial_t - \partial_{xx})\gp(u),\psi \rangle =\\&\int_{0}^{T}\int_{\bT} \gp'(u_s(y))\psi(s,y)dW_{s,y}+ \frac{1}{2}\int_{0}^{T}\int_{\bT}\psi(s,y)\gp''(u_s(y)) C(s)dy ds\\&-\int_{[0,T]^2\times \bT^2}\left[\int_{\mathbf{s}_2\vee \mathbf{s}_1}^T \int_{\bT}\psi(s,y)\gp''(u_s(y))P'_{s-\mathbf{s}_1}( y- \mathbf{y}_1)P'_{s-\mathbf{s}_2}( y- \mathbf{y}_2) dyds \right]dW^{2}_{\mathbf{s},\mathbf{y}}\,.
\end{split}\]
Moreover for any $(t,x)\in [0,T]\times \bT$ we have a.s.
\[
\begin{split}
&\gp(u_t(x))=\\&\gp(0)+\int_{0}^{t}\int_{\bT}  P_{t-s}(x-y)\gp'(u_s(y)) dW_{s,y} +\frac{1}{2}\int_{0}^{t}\int_{\bT}P_{t-s}( x- y)\gp''(u_s(y)) C(s) dy ds\\&-\int_{[0,t]^2\times \bT^2}\left[\int_{\mathbf{s}_2\vee \mathbf{s}_1}^t \int_{\bT}P_{t-s}(x-y)\gp''(u_s(y))P'_{s-\mathbf{s}_1}( y- \mathbf{y}_1)P'_{s-\mathbf{s}_2}( y- \mathbf{y}_2) dyds \right]dW^{2}_{\mathbf{s},\mathbf{y}},
\end{split}
\]
where in both formulae $P'_s(y)= \partial_x P_s(x)$, the integral $dW^{2}_{\mathbf{s},\mathbf{y}} $ is the multiple Skorohod integral of order two integrating the variables $\mathbf{s}=(\mathbf{s}_1,\mathbf{s}_2)$ and $\mathbf{y}=(\mathbf{y}_1\,,\mathbf{y}_2)$,  $u_s(y)=u(s,y)$ and $C\colon(0,T)\to \bR$ is the integrable function $C(s):=\norm{ P_s( \cdot)}^2_{L^2(\bT)}$.
\end{theorem}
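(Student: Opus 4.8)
The plan is to go back to the smooth approximations $u_{\gep}$, for which the chain rule \eqref{parabolic_phi} holds classically, and to re-read both sides of that identity through the regularity--structures machinery of the previous sections. Concretely, I work in the regularity structure $\cT$ generated by the heat operator $\cL=\partial_t-\partial_{xx}$ and the noise symbol $\Xi$, enlarged so as to contain the symbols $\cI(\Xi)^{k}$ for $0\le k\le 4$ together with the descendants that $\cL$ produces abstractly, namely $\cI(\Xi)^{k}\Xi$ and $\cI(\Xi)^{j}\cI'(\Xi)^{2}$ --- this being nothing but \eqref{chain1}--\eqref{parabolic_phi} read at the level of trees --- all of homogeneity strictly above $-2$, so that the reconstruction theorem applies. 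On $\cT$ one has the canonical model $Z_{\gep}$ built from $\xi_{\gep}$ and its BPHZ renormalisation $\hat Z_{\gep}=(\hPI_{\gep},\hgG_{\gep})$, with $\hat Z_{\gep}\to\hat Z$ in probability in the space of admissible models (the input recalled in section~\ref{elements}).

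Since \eqref{eq3} is linear, the abstract solution is explicit: $\U_{\gep}=\cI(\Xi)+(\text{polynomial remainder reconstructing to }u_{\gep})$, so that $u_{\gep}=\cR^{\hat Z_{\gep}}\U_{\gep}$ and $\U_{\gep}\to\U$ with $u=\cR^{\hat Z}\U$. Composing with $\gp\in C^{4}_{b}$ (composition of a modelled distribution of positive regularity with a smooth function) gives $\gp(\U_{\gep})$, in which only positive--homogeneity symbols occur, so the renormalisation does not act on it and $\cR^{\hat Z_{\gep}}\gp(\U_{\gep})=\gp(u_{\gep})$, hence $\cR^{\hat Z}\gp(\U)=\gp(u)$. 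Using $\cL\cI(\tau)=\tau$ together with $\cL(\cI(\Xi)^{2})=2\cI(\Xi)\Xi-2\cI'(\Xi)^{2}$ and the analogous identities for higher powers, one writes $\gp(\U_{\gep})=\gp(u_{\gep})\1+\cI(\mathbf F_{\gep})+(\text{purely polynomial})$ for an explicit $\mathbf F_{\gep}=\gp'(\U_{\gep})\Xi-\gp''(\U_{\gep})(\cI'(\Xi)+\cdots)^{2}$ with $\cR^{Z_{\gep}}\mathbf F_{\gep}=\gp'(u_{\gep})\xi_{\gep}-\gp''(u_{\gep})(\partial_x u_{\gep})^{2}$. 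By the Schauder/consistency statement for abstract integration this yields $(\partial_t-\partial_{xx})\gp(u_{\gep})=\cR^{Z_{\gep}}\mathbf F_{\gep}=\cR^{\hat Z_{\gep}}\mathbf F_{\gep}-\gd_{\gep}$, where $\gd_{\gep}:=\cR^{\hat Z_{\gep}}\mathbf F_{\gep}-\cR^{Z_{\gep}}\mathbf F_{\gep}$ is the explicit sum of renormalisation counterterms, each a multiple of $\gp''(u_{\gep})$, $\gp'''(u_{\gep})u_{\gep},\dots$ by one of the divergent contraction constants $\bE[u_{\gep}\xi_{\gep}]$, $\bE[(\partial_x u_{\gep})^{2}]$.

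Now let $\gep\to 0^{+}$. The left--hand side converges to $(\partial_t-\partial_{xx})\gp(u)$ since $u_{\gep}\to u$ uniformly a.s.\ and $\cL$ is continuous on distributions; the term $\cR^{\hat Z_{\gep}}\mathbf F_{\gep}$ converges to $\cR^{\hat Z}\mathbf F$ by continuity of reconstruction in the model and of composition with $C^{4}_{b}$ functions; hence $\gd_{\gep}$ converges --- the divergent constants cancel one another, which is precisely the compensation phenomenon announced in the introduction --- and $\langle(\partial_t-\partial_{xx})\gp(u),\psi\rangle=\langle\cR^{\hat Z}\mathbf F-\gd,\psi\rangle$. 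It remains to identify the two limiting random distributions probabilistically, using the identification theorems of section~\ref{elements}: $\langle\cR^{\hat Z}(\gp'(\U)\Xi),\psi\rangle=\int_{0}^{T}\int_{\bT}\gp'(u_s(y))\psi(s,y)\,dW_{s,y}$, a Walsh/Skorohod integral; the renormalised square $\cI'(\Xi)^{2}$ reconstructs to the Wick--renormalised square of $\partial_x u$, whose Wiener chaos decomposition has a second--chaos part carrying the kernel $P'_{s-\mathbf s_1}(y-\mathbf y_1)P'_{s-\mathbf s_2}(y-\mathbf y_2)$ and a zeroth--chaos part which, after its divergence is cancelled against the BPHZ constant, equals $-\tfrac12 C(s)$, by virtue of $\norm{P'_{a}}_{L^{2}(\bT)}^{2}=-\tfrac12\tfrac{d}{da}\norm{P_{a}}_{L^{2}(\bT)}^{2}$; multiplying by the modelled distribution $\gp''(\U)$ and testing against $\psi$ converts these, respectively, into the double Skorohod integral and into $\tfrac12\int_{0}^{T}\int_{\bT}\psi(s,y)\gp''(u_s(y))C(s)\,dy\,ds$, while the higher--homogeneity symbols of $\mathbf F$ reconstruct to continuous functions whose contributions are absorbed by the remaining part of $\gd$. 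Assembling the pieces gives the first displayed identity; the integral formula then follows by convolving it with $P$ --- applying it to $(s,y)\mapsto P_{t-s}(x-y)$ after a routine cut--off and mollification, since this kernel is not itself admissible --- then using uniqueness for $\cL v=g_{\gp}$ with zero initial datum (section~\ref{elements}) to obtain \eqref{example_integral}, and a stochastic Fubini theorem for the multiple Skorohod integrals.

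The main obstacle is the identification step. The reconstruction theorem characterises the limiting distributions only through their local analytic behaviour, and one must match these against the Wiener--chaos representation of iterated Walsh/Skorohod integrals; the delicate case is $\cR^{\hat Z}(\gp''(\U)\cI'(\Xi)^{2})$, for which one must control the full chaos expansion of $\gp''(u_{\gep})(\partial_x u_{\gep})^{2}$, determine which contractions survive in the limit, check that the odd chaoses and the contributions of the higher symbols vanish once tested against a smooth $\psi$, and --- crucially --- observe that BPHZ subtracts the \emph{constant} built from the truncated translation--invariant kernel whereas $u$ carries the initial condition $u(0,\cdot)=0$, the difference being precisely the finite $s$--dependent quantity $-\tfrac12 C(s)$ that produces the drift term. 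A secondary technical point is the bookkeeping ensuring $\gd_{\gep}$ converges: all the divergent constants are multiples of $\bE[u_{\gep}\xi_{\gep}]$ and $\bE[(\partial_x u_{\gep})^{2}]$, and one must identify their finite relative limits and verify that, together with the finite corrections relating Wick powers of $u$ to ordinary powers, they recombine so that only $\gp'(u_s(y))$ and $\gp''(u_s(y))$ --- and no higher derivatives of $\gp$ --- appear in the end; this, together with the justification of the convolution--with--$P$ step, is where most of the work lies.
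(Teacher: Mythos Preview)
Your overall strategy---establish a pathwise identity via regularity structures and then identify the two reconstructed distributions probabilistically---is exactly the paper's architecture, and your handling of the first half (lifting $u_\varepsilon$, applying the chain rule abstractly, showing $\hat{\cR}_\varepsilon(\Phi'(U_\varepsilon)\Xi)-\hat{\cR}_\varepsilon(\Phi''(U_\varepsilon)(D_xU_\varepsilon)^2)$ converges because $C^1_\varepsilon-C^2_\varepsilon\to 0$) is essentially right. Two points, however, need correction.

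\textbf{The identification of $\hat\cR(\Phi''(U)(D_xU)^2)$ does not go the way you suggest.} You propose to ``control the full chaos expansion of $\varphi''(u_\varepsilon)(\partial_x u_\varepsilon)^2$'' and read off the limit. The paper explicitly avoids this: the Wong--Zakai approximation interacts badly with the $K+R$ splitting of the heat kernel that underlies the model, and a direct probabilistic reading of the BPHZ reconstruction is, in the paper's words, ``still missing''. Instead, the paper bootstraps: once the pathwise differential formula is known, one rewrites $\hat\cR(\Phi''(U)(D_xU)^2)(\psi)$ as $-\langle(\partial_t-\partial_{xx})\varphi(u),\psi\rangle+\hat\cR(\Phi'(U)\Xi)(\psi)$, and then identifies the right-hand side via a \emph{different} approximation, namely the heat-semigroup smoothing $u^\varepsilon_t(x)=\int_{\mathbb T}P_\varepsilon(x-y)u(t,y)\,dy$. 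For fixed $x$ this is a genuine semimartingale in $t$, so classical It\^o applies; the Wiener-chaos/product formula for $(\partial_x u^\varepsilon)^2$ is then straightforward because $\partial_x u^\varepsilon$ is an honest Wiener integral with smooth kernel, and the $-\tfrac12 C(s)$ term drops out of an exact integration $\int_0^s\|\partial_x P_{s-r+\varepsilon}\|_{L^2}^2\,dr=\tfrac12(\|P_\varepsilon\|^2-\|P_{s+\varepsilon}\|^2)$ rather than from comparing BPHZ to the initial condition. Your route would require handling the infinite chaos expansion of $\varphi''(u_\varepsilon)$ multiplied by a divergent second-chaos object, and showing that higher-derivative contributions vanish in the limit; the paper sidesteps this entirely.

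\textbf{The regularity $C^4_b$ is not enough to run the regularity-structures argument.} Composition with $\varphi',\varphi''$ at the level of modelled distributions in $\cD^{3/2+2\kappa,1/2-\kappa}$ requires roughly $\gamma/\beta+1\approx 4$ bounded derivatives of $\varphi',\varphi''$, so $\varphi\in C^7_b$. The paper proves everything first under this stronger assumption and only at the very end relaxes to $C^4_b$ by approximating $\varphi$ itself, using that the \emph{probabilistic} objects (Walsh and Skorohod integrals) in the final formula make sense and are $L^2$-continuous under $C^4_b$. Your sketch silently assumes $C^4_b$ suffices throughout.
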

\begin{remark}
It is natural to ask whether the same techniques could be applied to a generic convex function $\gp$, to establish a Tanaka formula for $u$. In case $\gp$ is not a regular function, the formalism of regularity structures does not work anymore (see the section \ref{calculus_reg_struct}). However even if we try to generalise the Theorem \ref{Integral_Ito} using only the Malliavin calculus, the presence of a multiple Skorohod integral of order $2$ in both formulae would require apriori that the random variable $\gp''(u(s,y))$ ought to be twice differentiable (in the Malliavin sense). Hence the condition $\gp\in C^4_b(\bR)$ in the statement appears to be optimal. Finally, any Tanaka formula would require a robust theory of local times associated to $u$, yet this notion is very ambiguous in the literature. On the one hand, using some general results on Gaussian variables (such as \cite{geman1980})  for any $x\in \bT$  we can prove the existence of a local time with respect to the occupation measure of the process $t\to u(t,x)$. On the other hand, an alternative notion of a local time for $u$ has been developed employing distributions on the Wiener space in \cite{gradinaru05}.
\end{remark}
We discuss the organization of the paper: in Section \ref{abstract_reg_struct} and \ref{calculus_reg_struct} we will apply the general theorems of the regularity structures theory to build the analytical and algebraic tools to study the problem: all the constructions are mostly self-contained. In some cases we will also recall some previous results obtained in \cite{MartinHairer2015, Bruned2019,chandra_analytic16}. Then in the section \ref{ito_form} we will combine all these tools to obtain firstly two formulae involving only objects built in the previous sections (we will refer to them as \emph{pathwise It\^o formulae}) and later the identifications theorems.

We finally remark that some of the techniques presented here could potentially be used to establish an It\^o formula for a non-linear perturbation of \eqref{eqSHE}, the so-called generalised KPZ equation:
\begin{equation}\label{gKPZeq}
\begin{cases}
\partial_t u= \partial_{xx} u + g(u)(\partial_x u)^2 +  h(u)(\partial_x u)+k(u) +f(u)\xi\,,\\
u(0,x)=u_0(x)\\
u(t,0)=u(t,1)\\
 \partial_xu(t,0)=\partial_x u(t,1)\,
\end{cases}
\end{equation}
where $g$, $f$, $h$, $k$ are smooth functions and $u_0\in C(\bT)$ is a generic initial condition. (We refer the reader to \cite{Bruned2015,hairer_string16,brunedSHE2019}). Establishing such a formula in this generalized setting shall be subject to further investigations. Other possible directions of research may also take into account the Itô formula for the solutions of other stochastic PDEs with Dirichlet boundary conditions (see \cite{Gerencser2019}) and, using the reformulation in the regularity structures context of differential equations driven by a fractional Brownian motion (see \cite{fritz_bayer17}), we could recover some classical results for fractional processes (see e.g. \cite{flandoli2002,Russo1993}).
\subsection*{Acknowledgements}
The author is especially grateful to Lorenzo Zambotti, Cyril Labbé, Nikolas Tapia, Henri Elad Altman and Yvain Bruned for many enlightening discussions, comments and suggestions concerning the content and the organization of the present article. A special thank goes also to the organisers of the Second Haifa Probability School and the weekly seminar in SPDEs and rough path based in the greater Berlin area, where a preliminary version of these results was presented. Finally, the author is also very thankful for the very careful referees' reports, which led to many improvements and corrections during the revision of the article.

\section{H\"older spaces and Malliavin calculus}\label{elements}
We recall here some preliminary notions and notations we will use throughout the chapter. For any space-time variable $z=(t,x)\in \bR^2$, in order to preserve the different role of time and space in the parabolic equation \eqref{eqSHE} we define, with an abused notation, its parabolic norm as
\[\norm{z}:= \sqrt{\abs{t}}+ \abs{x}\,.\]
Moreover for any multi-index $k=(k_1,k_2)$  the parabolic degree of $k$ is given by $\abs{k}:=2k_1+k_2$ and we adopt the multinomial notation for monomials $z^k= t^{k_1}x^{k_2}$ and derivatives $\partial^k= \partial_t^{k_1}\partial_{x}^{k_2}$\footnote{the derivative $\partial_x^2 f$ will be denoted in some cases  by  $ \partial_{xx} f$ to shorten the notation}.  According to the definition of $\gr_{\gep}$ in \eqref{rho_eps}, the parabolic rescaling of any function $\gh\colon \bR^2 \to \bR$ of parameter $\gl>0$ and centred at $z=(t,x)$ is given by
\[\gh_z^{\gl}(\z):= \gl^{-3}\gh(\frac{\bar{t}-t}{\gl^2},\frac{\bar{x}-x}{\gl})\,,\quad \z=(\bar{t},\bar{x})\,.\]
For any non integer $\ga\in \bR$, a function $f\colon \bR^2 \to \bR $ belongs to the $\ga$ H\"older space $\cC^{\ga}$ when one of these conditions is verified:
\begin{itemize}
\item If $0<\ga<1$, $f$ is continuous and for any compact set $\cK\subset \bR^2$ 
\[ \norm{f}_{\cC^{\ga}(\cK)}:= \sup_{z\in \cK} \abs{f(z)}+ \sup_{\substack{z,w\in \cK\\z\neq w}}\frac{\vert f(z)-f(w)\vert}{\Vert z-w\Vert^{\ga} }<\infty\,.\]
\item If $\ga>1$, $f$ has $ \lfloor \ga\rfloor$ continuous derivative in space and $\lfloor \ga/2\rfloor $ continuous derivative in time, where $ \lfloor \cdot\rfloor$ is the integer part of a real number. Moreover for any compact set $\cK\subset \bR^2$
\[ \norm{f}_{\cC^{\ga}(\cK)}:= \sup_{z\in \cK} \sup_{\abs{k}\leq \lfloor \ga\rfloor}\abs{\partial^kf(z)}+ \sup_{\substack{z,w\in \cK\\z\neq w}}\sup_{\abs{k}= \lfloor \ga\rfloor}\frac{\vert \partial^k f(z)-\partial^k f(w)\vert}{\Vert z-w\Vert^{\ga-\lfloor \ga\rfloor} }<\infty\,.\]
\item If $\ga<0$, $f$ is an element of $\cS'(\bR^2)$, the set of tempered distributions on $\bR^2$ and at the same time $f$ belongs to the dual of $\cC^{r}_0(\bR^2)$, the set of compactly supported functions belonging to $\cC^{r}(\bR^2) $, where $r= - \lfloor \ga\rfloor+1$. Moreover for every compact set $\cK\subset\bR^2$
\[ \norm{f}_{\cC^{\ga}(\cK)}:=\sup_{z\in \cK}\sup_{\gh\in \cB_r}\sup_{\gl\in (0,1]}\frac{\vert\langle f,\eta^{\gl}_z\rangle\vert}{ \gl^{\ga}}<\infty \,,\]
where $\cB_r$ is the set of all test functions $\gh$ supported on $\{z\in \bR^{2}\colon \norm{z}\leq 1\}$, such that all the directional derivatives up to order $r$ are bounded in the sup norm.
\end{itemize}
The spaces $\cC^{\ga}$ and the respective localised version $\cC^{\ga}(D)$, defined on every open set $D\subset\bR^2$ are naturally a family of Fréchet spaces. Moreover for any $\ga>0$ and any compact set $\cK\subset \bR^2$, defining $\cC^{\ga}(\cK)$ by restriction of $f$ on $\cK$, we obtain a Banach space using the quantity $\norm{f}_{\cC^{\ga}(\cK)}$. The elements $f\in \cC^{\ga}(\bR\times \bT)$ are interpreted as elements of $\cC^{\ga}$ whose space variable lives in $\bT$. Most of the classical analytical operations apply to the $\cC^{\ga}$ spaces as follows:
\begin{itemize}
\item \emph{Differentiation} if $f\in \cC^{\ga}$ and $k$ is a multi-index then the map $f\to \partial^kf$ is a continuous map from $\cC^{\ga}$ to $ \cC^{\gb}$ where $\gb= \ga-\abs{k}$.
\item \emph{Schauder estimates} (see \cite{Simon1997}) if $P$ is the Heat kernel on some domain, then the space-time convolution with $P$, $f\to P*f$ is a well defined map for every f supported on positive times and it sends continuously $\cC^{\ga}$ in $\cC^{\ga+2}$ for every real non integer $\ga$.
\item \emph{Product} (see \cite[Prop. 4.14]{Hairer2014}) for any real non integer $\gb$ the map $(f,g)\to f\cdot g$ defined over smooth functions extends \emph{continuously} to a bilinear map $B\colon \cC^{\ga} \times \cC^{\gb}\to  \cC^{\ga \wedge \gb}$ if and only if $\ga+\gb>0$.
\end{itemize}
The H\"older spaces and the operations defined on them provide a natural setting to formulate the deterministic PDE
\begin{equation}\label{deterministic_equation}
\begin{cases}
\partial_t v- \partial_{xx}v = g\,,\\
v(0, x)= v_0(x)\\
v(t,0)=v(t,1)\\
\partial_xv(t,0)=\partial_x v(t,1)\,,
\end{cases}
\end{equation}
where $g\in \cC^{\gb}(\bR\times \bT)$ and $v_0\in  \cC(\bT)$. For any $\gb>0$, classical results on PDE theory (see e.g. \cite{krylov1996lectures}) imply that there exists a unique strong solution $v\in\cC^{\gb+2}([0,T]\times \bT)$ of \eqref{deterministic_equation} which is given explicitly by the so-called \emph{variation of the constant formula}
\begin{equation}\label{explicit_heat_equation}
v(t,x)= \int_{\bT}P_t(x-y)v_0(y) dy+ (P*\1_{[0,t]}\,g) (t,x)\,,
\end{equation}
where for any $t>0$, $\1_{[0,t]}$ is the indicator function of the interval $[0,t]\times \bT$. Furthermore if we consider $\gb\in (-2,0)$ non-integer, the equation \eqref{deterministic_equation} admits again a unique solution $v\in\cC^{\gb+2}([0,T]\times \bT)$ satisfying \eqref{deterministic_equation} but only in a distributional sense. This solution can be expressed again by the formula \eqref{explicit_heat_equation} by interpreting $\1_{[0,t]}$ as a continuous linear map $\1_{[0,t]}\colon  \cC^{\gb}(\bR\times \bT)\to \cC^{\gb}(\bR\times \bT)$ such that $(\1_{[0,t]}g)(\psi)= g(\psi)$ for any smooth test function $\psi$ satisfying supp$(\psi)\subset [0,t]\times \bT$ and $(\1_{[0,t]}g)(\psi)=0$ if supp$(\psi)\cap [0,t]\times \bT= \emptyset$ (see \cite[Lem. 6.1]{MartinHairer2015}). In particular, it is possible to show (see  \cite[Prop. 6.9]{Hairer2014} and \cite[Prop. 2.15]{Gerencser2019}) that for any test function $\psi$  
\begin{equation}\label{indicator}
(\1_{[0,t]}g)(\psi) =\lim_{N\to +\infty} g(\gp_N \psi)\,, 
\end{equation}
where $\gp_N $ is a fixed sequence of smooth functions converging a.e. to $\1_{(0,t)\times \bT}$. Thus the solution of the equation \eqref{deterministic_equation} is given by the same formula \eqref{explicit_heat_equation} if $g\in \cC^{\gb}(\bR\times \bT)$. The following procedure can be adapted straightforwardly to define a linear map $\1_{[s,t]}\colon  \cC^{\gb}(\bR\times \bT)\to \cC^{\gb}(\bR\times \bT)$ for any interval $[s,t]\subset \bR$ eventually unbounded.
 
The equation \eqref{eqSHE} can be expressed in the context of the spaces $\cC^{\ga}$. Indeed for every $\kappa>0$ it is possible to show that there exists a modification of $\xi$ belonging to $\cC^{-3/2 - \kappa}(\bR\times \bT)$ such that the sequence $\xi_{\gep}$ defined in \eqref{rho_eps} converges in probability to $\xi$ with respect to the topology of $ \cC^{-3/2 - \kappa}(\bR\times \bT)$ (see \cite[Lem. 10.2]{Hairer2014}). Choosing $\kappa<1/2$ and $v_0=0$, we can apply then the deterministic results of \eqref{deterministic_equation} with every a.s. realisation of $\xi$ and by uniqueness of the solution \eqref{eqSHE} we obtain the pathwise representation
\begin{equation}\label{analytic_representation}
u(t,x)= (P*\1_{[0,t]}\xi)(t,x)\,.
\end{equation}
Using this identity we deduce immediately from the Schauder estimates that every a.s. realisation of $u$ must belong to $ \cC^{1/2-\kappa}([0,T]\times \bT )$. This property excludes immediately the possibility to define an object like ``$u\xi$" because the sum of the H\"older regularity of each factor will be $-1-2\kappa$ and we cannot apply the property of the product stated before. The same reasoning applies also for the formal object ``$(\partial_x u)^2$" and it tells us there is no classical theory to understand these products.

Since we will need to compare distributions  defined on $\bR\times\bT$ with distribution on $\bR^2$, for every function $v\in \cS'(\bR\times \bT)$ we denote by $\widetilde{v}\in  \cS'(\bR^2)$ the  \emph{periodic extension} of $v$, defined for every test function $\psi\colon \bR^2\to \bR$ as
\begin{equation}\label{equation_periodic}
\widetilde{u}(\psi)=u(\sum_{m\in \bZ}\psi(\cdot, \cdot+m))\,.
\end{equation}
This operation extends to the level of distribution  the usual periodic extension of functions defined on $\bR\times\bT$ to $\mathbb{R}^2$ (which we denote by the same notation). Thanks to this definition we have the identities 
\begin{equation}\label{analytic_representation2}
\widetilde{u}(t,x)= (G*\widetilde{\1_{[0,t]}\xi})(t,x)\,, \quad \widetilde{u_{\gep}}(t,x)= (G*\widetilde{\1_{[0,t]}\xi_{\gep}})(t,x)\,.
\end{equation}
From a probabilistic point of view, $\xi$ is an isonormal Gaussian process on $H=L^2(\bR\times \bT)$, defined on a complete probability space $(\Omega, \cF, \bP)$. That is we can associate to any $f\in H$ a real Gaussian random variable $\xi(f)$ such that for any couple $f,g\in H$ one has
\[\bE[\xi(f)\xi(g)]= \int_{\bR}\int_{\bT}f(s,x)g(s,x)ds\,dx\,.\]
We denote by $I_n\colon H^{\otimes n}\to L^{2}(\gO)$, $n\geq 1$ the multiple stochastic Wiener integral with respect to $\xi$ (see \cite{nualart1995malliavin}). $I_n$ is an isometry between the symmetric elements of $H^{\otimes n}$ equipped with the norm $\sqrt{n!}\norm{\cdot}_{H^{\otimes n}}$ and the Wiener chaos of order $n$, the closed linear subspace of $L^2(\gO)$ generated by $\{H_n(\xi(h))\colon \norm{h}_H=1\}$ where $H_n$ is the $n$-th Hermite polynomial. Thus we have the natural identifications $\xi(f)= I_1(f)=\int_{\bR}\int_{\bT}f(s,y)dW_{s,y}$.
Let us introduce some elements of Malliavin calculus for $\xi$ (see \cite{nualart1995malliavin} for a thorough introduction on this subject). We consider $\cS\subset L^2(\gO)$ the set of random variables $F$ of the form 
\[F= h(\xi(f_1),\cdots, \xi(f_n))\,,\]
where $h\colon \bR^n\to \bR$ is a Schwartz test function and $f_1\,,\cdots\,, f_n\in H$. The Malliavin derivative with respect to $\xi$ (see \cite[Def. 1.2.1]{nualart1995malliavin}) is the family of $H$-valued random variables $\nabla F= \{\nabla_z F\colon z\in \bR\times  \bT\}$  defined by
\[\nabla_z F= \sum_{i=1}^n \frac{\partial h}{\partial x_i}(\xi(f_1),\cdots, \xi(f_n))f_i(z)\,.\]
Iterating the procedure and adopting the usual convention $\nabla^0=\text{id}$, for any $k\geq 0$ one can define the $k$-th Malliavin derivative $\nabla^k F= \{\nabla^k_{z_1\cdots z_k} F\colon z_1\,,\cdots \,, z_k \in \bR\times  \bT\} $, which is a family of $H^{\otimes k}$-valued random variables. Moreover starting from a  separable Hilbert space $V$ and considering the random variables $G\in \cS_V$ of the form
\[G= \sum_{i=1}^n F_i v_i\quad F_i\in \cS\,, \quad v_i\in V\,,\]
we can also define the $H^{\otimes k}\otimes V$-random variable $\nabla^k G$. In all of these cases the operator $\nabla^k$ is closable and its domain contains the space $\mathbb{D}^{k,p}(V)$, the closure of $ \cS_V$ with respect to the norm $\norm{\cdot}_{k,p,V}$ defined by 
\begin{equation}\label{malliavin_norm}
\norm{F}_{k,p,V}^p:= \bE[\norm{F}_V^p]+ \sum_{l=1}^k \bE[ \norm{\nabla^l F}_{H^{\otimes l}\otimes V}^p]\,.
\end{equation}
The space $\mathbb{D}^{k,p}(\bR)$ is also denoted by $\mathbb{D}^{k,p} $. Trivially all variables belonging to some finite Wiener chaos are infinitely Malliavin differentiable. We denote by $\gd\colon \text{Dom}(\gd)\subset L^2(\gO; H)\to L^{2}(\gO)$ the adjoint operator of $\nabla$ defined by duality as
\[\bE[\gd (u)F]= \bE[\langle u, \nabla F\rangle_{H}]\] 
for any $u\in \text{Dom}(\gd) $, $ F\in \mathbb{D}^{1,2}$. The operator $\gd$ is known in the literature as the Skorohod integral and for any $u\in \text{Dom}(\gd) $ we will write again $\gd (u)$ with the symbol $\int_{\bR}\int_{\bT}u(s,y)dW_{s,y}$ because $\gd$ is a proper extension of the stochastic integration over a class of non adapted integrands. Using the same procedure we define $\gd^k\colon \text{Dom}(\gd^k)\subset L^2(\gO; H^{\otimes k})\to L^{2}(\gO)$ as the adjoint of $\nabla^k $. Similarly to before we call the operator $\gd^k$ the multiple Skorohod integral of order $k$ and we denote it by  
\[\int_{(\bR\times\bT)^k}u((t_1,x_1)\,,\cdots\,,(t_k,x_k) )dW^k_{t,x}\,.\]
Let us recall the main properties of $\gd^k$:
\begin{itemize}
\item \emph{Extension of the Wiener integral} For any $h\in H^{\otimes k}$, we have  $\gd^k(h)=I_k(h)$.
\item \emph{Product Formula} (see \cite[Lem. 2.1]{Nourdin2010}) Let  $ u\in \text{Dom}(\gd^k)$ be a symmetric function in the variables  $t_1\,,\cdots \,,t_k$ and $F\in \mathbb{D}^{k,2}$. If for any couple of positive integers $j\,,r$ such that $0\leq j+r\leq k$ one has $\langle\nabla^r F, \gd^{j}u\rangle_{H^{\otimes r}}\in L^{2}(\gO; H^{\otimes(k-r- j)})$ then $\langle\nabla^r F, u\rangle_{H^{\otimes r}}\in \text{Dom}(\gd^{q-r})$ and we have
\begin{equation}\label{product_formula}
F\gd^k(u)= \sum_{r=0}^k \binom{k}{r}\gd^{k-r}(\langle\nabla^r F, u\rangle_{H^{\otimes r}})\,.
\end{equation}
\item \emph{Continuity property} (see \cite[Pag. 8]{Nourdin2010}) We have the inclusion $\mathbb{D}^{k,2}(H^{\otimes k})\subset \text{Dom}(\gd^k)$ and the map $\gd^2\colon \mathbb{D}^{k,2}(H^{\otimes k})\to L^2(\gO)$ is continuous. In other words, there exists a constant $C>0$ such that for any $u\in  \mathbb{D}^{k,2}(H^{\otimes k})$ one has
\begin{equation}\label{continuity_property}
\norm{\gd^k(u)}_{L^2(\gO)}\leq C \norm{u}_{\mathbb{D}^{k,2}(H^{\otimes k})}\,.
\end{equation}
\end{itemize}
Extending periodically $\xi$ and the Brownian sheet $W$ to $\bR^2$, we can transfer the Walsh integral and the Skorohod integral to stochastic processes $H\colon \gO\times \bR^2\to \bR$ through the definition:
\begin{equation}\label{stochastic_periodic}
\int_{\bR^2}H(s,y)d\widetilde{W}_{s,y}:= \int_{\bR\times \bT}\sum_{m\in \bZ}H(s,y+m)dW_{s,y}\,,
\end{equation}
as long as the right-hand side above is well defined. Similar definitions hold for the multiple Skorohod integral of order $k$, \emph{mutatis mutandis}.
Using this notation one has
\[\xi_{\gep}(t,x)= \int_{\bR^2}\gr_{\gep}(t-s, x-y)d\widetilde{W}_{s,y}\,,\quad u(t,x)=\int_{0}^t\int_{\bR}G(t-s, x-y)d\widetilde{W}_{s,y}\,. \]


%

\section{Regularity structures}\label{abstract_reg_struct}
In this part we will recall some general concepts of the theory of regularity structures to show the existence of an explicit \emph{regularity structure} and a \emph{model}. These objects will permit to define some analytical operations on $u$. For a quick introduction to the whole theory, we refer the reader to  \cite{Hairer2015}.
\subsection{Algebraic construction}
The starting point of the theory is the notion  of a \emph{regularity structure} $(A,T,G)$, a triple of the following elements:
\begin{itemize}
\item A discrete lower bounded subset $A$ of $\bR$ containing $0$.
\item A graded vector space $T= \bigoplus_{\ga\in A} T_{\ga}$ such that each space $T_{\ga}$ is a Banach space with norm $\norm{\cdot}_{\ga}$ and $T_0$ is generated by a single element $\1$.
\item A group $G$ of linear operators on $T$ such that for each $\ga \in A$, $a$ in $T_{\ga}$ and $\gG$ in $G$, one has $\gG\1=\1$ and 
\begin{equation}\label{reg_tri}
\gG a- a \in \bigoplus_{\gb<\ga}T_{\gb}\,.
\end{equation}
\end{itemize}
Recalling the equations \eqref{parabolic_phi} and \eqref{eq3}, our aim is then to build a regularity structure $\cT$ whose elements are capable to describe for any $\gep>0$ the systems of equations
\begin{equation}\label{referring_equation}
\begin{cases}
\partial_t u_{\gep}= \partial_{xx} u_{\gep} +\xi_{\gep} \\ \partial_t v_{\gep}= \partial_{xx}v_{\gep}+ \gp'(u_{\gep})\xi_{\gep}-\gp''(u_{\gep})(\partial_x u_{\gep})^2 \,.
\end{cases}
\end{equation}
Let us give a first description of $\cT$ in terms of abstract symbols. We start by considering the real polynomials on two indeterminates. For any multi-index $k\in \bN^2$, $k=(k_1,k_2)$ we will write $\X^k$ as a shorthand for the monomial $X_1^{k_1}X_2^{k_2}$ while the unit will be denoted by $\1$. In this way, we will be able to describe smooth functions. At the same time, we introduce an additional abstract symbol $\Xi$ to represent the space-time white noise $\xi$ and for any symbol $\gs$ and $k\in \bN^2$ we introduce a family of symbols $\cI_k(\gs)$ ($\cI_{(0,0)}(\gs)$ is denoted by $\cI(\gs)$) to represent formally the convolution of the $k$-th derivative of the heat kernel with the function associated to the symbol $\gs$. Since $\cI_k(\X^m)$ should be identified with a smooth function, we simply put it to $0$ to avoid repetitions. Finally for any two symbols $\tau_1$ and $\tau_2$ we consider also the juxtaposition of symbols $\tau_1\tau_2$ as a new symbol. To include the product between polynomials as a juxtaposition of symbols we impose that the juxtaposition with $\1$ does not change the symbol and for every multi-index $l,m$ $\X^l\X^m= \X^{l+m}$. We denote also the iterated juxtaposition of the same symbol by an integer power. Adding all these formal rules, we denote by $F$ the set of all possible formal expressions satisfying
\begin{itemize}
\item $ \{\X^k\}_{k\in \bN^2} \cup \{\Xi\}\subset F$.
\item For any $\tau_1,\tau_2\in F$, $\tau_1\tau_2\in F$.
\item For any $\gs\in F$ and $m\in \bN^2$, $\cI_m(\gs)\in F$.
\end{itemize}
We write $\cF$ for the free vector space generated by $F$. Similarly to polynomials, we define a \emph{homogeneity} map $\vert \cdot\vert \colon F \to \bR$ which has approximately the same properties of the degree of polynomials but in the context of the H\"older spaces, as described in Section \ref{elements}. In particular we set recursively:
\begin{itemize}
\item $\abs{\X^k}:=2k_1+ k_2$ the parabolic degree\footnote{By identifying $X_1=t$  and $X_2=x$, $ \abs{\X^k}$ coincides with the parabolic degree $\abs{k}$ defined in Section \ref{elements}.};
\item $\abs{\Xi}:= - 3/2- \kappa\,$ for some fixed parameter $\kappa>0$ ;
\item $\abs{ \cI_k(\tau)}:= \abs{\tau} +2-2k_1 -k_2\;,\;\; \vert \tau\tau'\vert:= \abs{\tau}+\abs{\tau'} $ for any $\tau,\tau'\in F$.
\end{itemize}
Starting from the linear space $\cF$, we introduce a subset of $F$ where we choose all reasonable products that we will need in our calculations. We write $\cI_1(\Xi) $ as shorthand of $\cI_{(0,1)}(\Xi)$.
\begin{definition} \label{defn_reg_symb}
We define the sets of symbols $T, U,  U'\subset F$ as the smallest triple of sets satisfying:
\begin{itemize}
\item $\{\Xi\}\subset T\,,$  $ \{\cI(\Xi)\}\cup \{\X^k\}_{k\in \bN^2}\subset U\,,  \{\cI_1(\Xi)\}\cup \{\X^k\}_{k\in \bN^2}\subset U'$;
\item for every $k\geq 0$ and any finite family of elements $ \tau_1,\ldots, \tau_k\in U$ and any couple of elements $\gs_1,\gs_2\in U'$, then $\{\tau, \tau \Xi, \tau\gs_1,\tau \gs_1 \gs_2\} \subset T $ and $ \tau_1 \ldots \tau_k \in U$.
\end{itemize}
We denote also by $\cT$ and $\cU$ respectively the free vector spaces upon $T$ and $U$.
\end{definition}
The definition of $T$ has an equivalent description in terms of symbols. Defining $V= \{\cI(\Xi)^m\X^l\colon m\in \bN\,,\; l \in \bN^{2} \}$ and for any $\gs\in \{\Xi, \cI_1(\Xi), \cI_1(\Xi)^2 \}$ $V_{\gs}:= \gs V $ the set of all symbols of the form $\gs$ times an element of $V$, it is straightforward to show the identities
\begin{equation}\label{decomposition_T}
U=V\,,\quad T=V_{\Xi}\sqcup V_{\cI_1(\Xi)^2}\sqcup V_{\cI_1(\Xi)}\sqcup V\,.
\end{equation}
Therefore, denoting by $\cV_{\gs}$ the free vector space generated upon $V_{\gs}$, we have the decomposition $
\cT=\cV_{\Xi}  \oplus \cV_{\cI_1(\Xi)^2} \oplus  \cV_{\cI_1(\Xi)}\oplus\cU$. Let us give the construction of the structure group associated to $\cT$. For any $h\in \bR^{3}$, $h=(h_1,h_2,h_3)$ we define the function $\gG_h\colon \cT\to\cT$ as the unique linear map such that
\begin{equation}\label{explicit_gamma}
\gG_h(\gs \cI(\Xi)^m\X^l):= \gs [(X_1+ h_1\1)^{l_1}(X_2+ h_2\1)^{l_2}(\cI(\Xi)+ h_3\1)^m ]\,,
\end{equation}
for any $\gs\in \{\Xi, \cI_1(\Xi), \cI_1(\Xi)^2 , \1\}$, $m\in \bN,\; l\in \bN^2$. Using this explicit definition it is straightforward to show
\begin{equation}\label{algebraic_Gamma}
\gG_h\gG_k= \gG_{h+k}
\end{equation}
for any $h,k\in \bR^3$ and the map $h\to \gG_h$ is injective. We will denote by  $\cG$ the group $\{\gG_h\colon h\in \bR^{3}\}$.
\begin{proposition}
For any $\kappa<1/2$, the triple $(\cA,\cT, \cG)$ where $\cA=\{\abs{\tau}\colon \tau\in T\}$ is a regularity structure.
\end{proposition}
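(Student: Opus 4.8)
The plan is to verify, one at a time, the three defining clauses of a regularity structure for the triple $(\cA,\cT,\cG)$, using throughout the explicit description \eqref{decomposition_T}: every basis symbol $\tau\in T$ has the form $\tau=\gs\,\cI(\Xi)^m\X^l$ with $\gs\in\{\1,\Xi,\cI_1(\Xi),\cI_1(\Xi)^2\}$, $m\in\bN$ and $l\in\bN^2$, so that
\[
\abs{\tau}=\abs{\gs}+m\bigl(\tfrac12-\kappa\bigr)+2l_1+l_2,\qquad
\abs{\gs}\in\Bigl\{0,\,-\tfrac32-\kappa,\,-\tfrac12-\kappa,\,-1-2\kappa\Bigr\}.
\]
First I would check that $\cA=\{\abs{\tau}:\tau\in T\}$ is discrete, bounded below and contains $0$. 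It contains $0$ because $\1=\X^{(0,0)}\in T$ has $\abs{\1}=0$. Since $\kappa<\tfrac12$ the coefficient $\tfrac12-\kappa$ of $m$ is strictly positive and $2l_1+l_2\ge0$, so $\abs{\tau}\ge-1-2\kappa>-2$, which gives the lower bound; moreover, for any $C$ only finitely many triples $(\gs,m,l)$ satisfy $\abs{\tau}\le C$, so $\cA$ is locally finite, hence discrete.

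Next I would equip $\cT$ with the grading $\cT_\ga:=\operatorname{span}\{\tau\in T:\abs{\tau}=\ga\}$. The same counting shows each $\cT_\ga$ is finite-dimensional, hence a Banach space (take the norm declaring the symbols of homogeneity $\ga$ an orthonormal basis), and $\cT=\bigoplus_{\ga\in\cA}\cT_\ga$ because $\cT$ is the free vector space on $T$. To see $\cT_0=\langle\1\rangle$, observe that when $\gs\neq\1$ the $\kappa$-component of $\abs{\tau}$ equals $-(m+1)$ or $-(m+2)$, never $0$; taking $\kappa$ irrational — which is harmless, as $\kappa$ is only constrained to lie in $(0,\tfrac12)$ — makes every such homogeneity irrational, in particular nonzero, while among the monomials $\X^l$ only $\1$ has degree $0$.

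For the group clause I would first note that each $\gG_h$ is a well-defined linear endomorphism of $\cT$: expanding the binomials on the right-hand side of \eqref{explicit_gamma} yields a finite linear combination of symbols of $T$. By \eqref{algebraic_Gamma} the family $\cG=\{\gG_h:h\in\bR^3\}$ is closed under composition, with unit $\gG_0=\id$ and $\gG_h^{-1}=\gG_{-h}$, so it is a group; the identity $\gG_h\1=\1$ is the case $\gs=\1$, $m=0$, $l=0$ of \eqref{explicit_gamma}. Finally, for \eqref{reg_tri} it suffices to test on a basis element $a=\gs\,\cI(\Xi)^m\X^l$ of homogeneity $\ga$: the binomial expansion of \eqref{explicit_gamma} reads $\gG_h a=a+\sum c\,\gs\,\cI(\Xi)^p\X^{(j_1,j_2)}$, summed over $(p,j_1,j_2)\le(m,l_1,l_2)$ componentwise with $(p,j_1,j_2)\neq(m,l_1,l_2)$; each such term has homogeneity $\abs{\gs}+p(\tfrac12-\kappa)+2j_1+j_2<\ga$, the strict inequality using exactly $\tfrac12-\kappa>0$. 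Hence $\gG_h a-a\in\bigoplus_{\gb<\ga}\cT_\gb$.

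The whole verification is essentially bookkeeping, and the only point that is not automatic is the role of the hypothesis $\kappa<\tfrac12$: it is precisely what makes $\abs{\cI(\Xi)}=\tfrac12-\kappa$ positive, and this single positivity is simultaneously responsible for the lower bound on $\cA$ and for the lower-triangular structure of $\gG_h$ in \eqref{reg_tri}; were $\kappa\ge\tfrac12$, applying $\gG_h$ would produce components of strictly higher homogeneity. The only other mild subtlety is the one-dimensionality of $\cT_0$, which I secured above by restricting to $\kappa\notin\mathbb{Q}$.
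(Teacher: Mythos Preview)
Your argument follows the same route as the paper's: both use the explicit description \eqref{decomposition_T} to show that $\{\tau\in T:\abs{\tau}\le\gb\}$ is finite (hence $\cA$ is discrete and bounded below), and both derive \eqref{reg_tri} from the binomial expansion of \eqref{explicit_gamma} together with $\abs{\cI(\Xi)}=\tfrac12-\kappa>0$.

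You are in fact more scrupulous than the paper on one point: the paper's proof never checks that $\cT_0$ is one-dimensional, whereas you notice that this can genuinely fail for certain rational $\kappa$ (for instance $\kappa=\tfrac14$ makes $\abs{\Xi\,\cI(\Xi)^3 X_2}=1-4\kappa=0$). Your fix of taking $\kappa$ irrational is valid and settles the issue, but it does mean you are proving a slightly different statement than the one written (``for any $\kappa<1/2$''). This is harmless in the context of the paper, which later always takes $\kappa$ sufficiently small anyway; still, you should flag explicitly that you are strengthening the hypothesis rather than presenting the restriction as a free choice.
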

\begin{proof}
To prove that $\cA$ is a discrete lower bounded  set, we show that for any  $\gb\in \bR $ the set $I:=\{\tau\in T\colon \vert \tau\vert \leq \gb\}$ is finite. For any $\tau\in I$ by means of the identity \eqref{decomposition_T} there exist two indices $m\in \bN $, $n\in \bN^2$ and $\gs\in \{\Xi, \cI_1(\Xi), \cI_1(\Xi)^2 \}$ such that $\tau= \gs \cI(\Xi)^m\X^n$. From $\vert \tau\vert \leq \gb$ we deduce
\begin{equation}\label{ineq_T}
n_1+ 2n_2 + (1/2 -\kappa)m\leq \gb - \abs{\gs}\,.
\end{equation}
Imposing $\kappa<1/2$, the left-hand side of the inequality \eqref{ineq_T} is bigger or equal than $0$ and the set $I$ is bounded. This finiteness result implies also the identity $\cT= \bigoplus_{\gga\in A}\cT_{\gga}$ where $\cT_{\gga}=\langle\tau\in T\colon \vert \tau\vert = \gga\rangle$. Moreover there is no need to specify a norm on $\cT_{\gga}$, since it is finite-dimensional. Finally the property \eqref{reg_tri} comes directly from Newton's binomial formula and the positive homogeneity of the symbol $\cI(\Xi)$. 
\end{proof}
\begin{remark}\label{positive_renom}
The Definition \ref{defn_reg_symb} is a simplification of the vector space introduced in \cite[Pag. 7]{hairer_string16} with fewer symbols. The triple $(\cA,\cT, \cG)$ is also intimately linked with $(\cA^{HP},\cT^{HP}, \cG^{HP})$, the regularity structure defined in \cite[Pag. 13-14]{MartinHairer2015}. More precisely we consider $U^{HP}$ the smallest set of symbols of $F$ such that $ \{\X^k\}_{k\in \bN^2}\subset U$ and  satisfying the properties
\[\tau\in U^{HP}\Rightarrow \cI(\tau)\,, \cI(\Xi\tau) \in U^{HP}\,;\quad \tau\,, \tau'\in U^{HP}\Rightarrow \tau\tau'\in U^{HP}\,.\]
Introducing the set $T_{\Xi}^{HP}=\{\Xi v\in F\colon v\in U^{HP}\} $ and $\cT^{HP}_{\Xi}$, $\cU^{HP} $ the corresponding free vector spaces defined on these sets, the space  $\cT^{HP}$ is defined by $\cT^{HP}= \cT^{HP}_{\Xi}\oplus \cU^{HP}$. Looking  at $ \cT \cap \cT^{HP}=\cV_{\Xi}\oplus\cU $, it is also possible to show that the action of the group $\cG^{HP}$ coincides with that of  $\cG$ on these subspaces and from the explicit definition of $\gG$ one has $\gG(\cV_{\Xi})\subset \cV_{\Xi} $ and $\gG(\cU)\subset \cU $ for any $\gG\in \cG$. Hence the subspaces $\cV_{\Xi}$ and $\cU$ are respectively a sector of regularity $-3/2-\kappa $ and a function-like sector of both $\cT^{HP}$ and $\cT$ (see  \cite[Def. 2.5]{Hairer2014}). Due to these identifications, we can transfer some results of \cite{MartinHairer2015} to our context.
\end{remark}
\begin{remark}\label{finite_dim}
As a matter of fact, we can restrict our considerations  once and for all to a subspace of $\cT$ generated by all symbols with homogeneity less than some parameter $\gz>0$. By convention we denote by $\abs{\cdot}_{\gb}$ the euclidean norm on $\cT_{\gb}$ (the euclidean norm is coherent with \cite{MartinHairer2015} but there is no ``canonical" choice because $\cT_{\gb}$ is finite-dimensional). For any $\gb\in A$, we will denote by $\cQ_{\gb}$ and $\cQ_{<\gb}$ the projection operator respectively on $\cT_{\gb}$ and $ \bigotimes_{\ga<\gb}\cT_{\ga}$.
\end{remark}
\begin{remark}\label{multiplicative_gamma}
Following the definition \eqref{explicit_gamma}, we can also easily prove that  $\gG_h \tau\tau'= \gG_h\tau\gG_h\tau'$ for every symbol $\tau,\tau'\in T$ such that also their product $\tau\tau'$ belongs to $T$ and $h\in \bR^3$. We remark that the explicit expression of $\gG_h$ in \eqref{explicit_gamma} can be easily rewritten as
\begin{equation}\label{delta_gamma}
\gG_h= ( id\otimes h' )\gD\,,
\end{equation}
where $h'\colon \cU\to \bR$ is the unique real character over $\cU$ such that
\[h'(X_1)=h_1\,, \quad h'(X_2)=h_2\,, \quad h'(\cI(\Xi))=h_3\,\]
and $\gD\colon \cT\to \cT\otimes\cU$ is the unique linear map such that
\begin{equation}\label{delta_gamma_defn}
\begin{gathered}
\gD X_i = X_i \otimes \1 + \1 \otimes X_i \;,\quad 
\gD \1  = \1 \otimes \1\;, \quad \gD \gs   = \gs\otimes\1\\
\gD \cI(\Xi) = \cI(\Xi) \otimes \1 + \1 \otimes \cI(\Xi)\;, \quad \gD\tau\tau'= \gD\tau\gD\tau'\,.
\end{gathered}
\end{equation}
for every $i=1,2$, $\gs\in \{\Xi, \cI_1(\Xi), \cI_1(\Xi)^2 \}$ and all $\tau,\tau'\in T$ such that $\tau\tau'\in T$. Comparing the relations \eqref{delta_gamma_defn} with the explicit definitions given in \cite[Sec. 8.1]{Hairer2014} and  \cite[Pag. 14]{MartinHairer2015}, the group $\cG$ can be obtained also with the general construction presented in \cite{Hairer2014}.
\end{remark}
To apply some general results obtained in \cite{Bruned2019} and \cite{chandra_analytic16}, we show how to express the regularity structure $\cT$ using the formalism of \emph{trees}. Let us recall some basic notations. We start by considering labelled, rooted trees $\tau$. That is $\tau$ is a combinatorial tree (a finite connected simple graph) with a non-empty set of nodes $N_{\tau}$ and a set of edges $E_{\tau}$ without cycles, where we fixed a specific node $\gr_{\tau}\in N_{\tau}$ called the root of $\tau$. The trees we consider are also \emph{labelled} i.e. there exists a finite set of labels $\cL$ and a function $\mathfrak{t}\colon E_{\tau}\to \cL$. These trees are the building blocks of a more general family of trees. We define a \emph{decorated tree} as a triple $\tau^{\f{n}}_{\f{e}}= (\tau,\f{n},\f{e})$, where $\tau $ is a LR rooted tree and $\f{n}\colon N_{\tau}\to\bN^2$, $\f{e}\colon E_{\tau}\to \bN^{2}$ are two fixed functions. The set of decorated tree is denoted by $\mathfrak{T}$.

Let us define two main operations on $\mathfrak{T}$. For any two elements two elements $\tau^{\f{n}}_{\f{e}}$, $\gs^{\f{n}'}_{\f{e}'}\in \mathfrak{T}$ we introduce the \emph{product tree} $\tau^{\f{n}}_{\f{e}}\gs^{\f{n}'}_{\f{e}'}$ by simple considering $\tau\gs$, the tree obtained by joining the roots of $\tau^{\f{n}}_{\f{e}}$ and $\gs^{\f{n}'}_{\f{e}'}$. Moreover we  impose $\mathfrak{n}(\gr_{\tau\gs})= \mathfrak{n}(\gr_{\tau})+\mathfrak{n'}(\gr_{\gs})$ and we keep $\mathfrak{e}$ unaltered. Then for any $m\in \bN^2$, $\mathfrak{l}\in \cL$ we define the \emph{grafting application} $\cE^{\mathfrak{l}}_m\colon \mathfrak{T}\to \mathfrak{T}$ as follows: for any $\gs^{\f{n}}_{\f{e}}\in \mathfrak{T}$, $\cE^{\mathfrak{l}}_m(\gs^{\f{n}}_{\f{e}})\in \mathfrak{T}$ is the tree with zero decoration on the root obtained by adding one more edge decorated by $(\mathfrak{l},m)$ to the root of $\gs$. The set $\mathfrak{T}$ can be constructed recursively starting from the root trees $\{\bullet_{k}\}_{k\in \bN^2}$ and applying iteratively the grafting operations and the multiplication. Similarly to what we did for the set of symbols $F$, for any fixed $\mathfrak{s}\colon \cL\,\sqcup\, \bN^2\to \bR$  we define  a \emph{homogeneity map} $\vert \cdot\vert_{\mathfrak{s}} \colon \mathfrak{T}\to \bR $ as follows
\begin{equation}
\abs{\tau^{\f{n}}_{\f{e}}}_{\mathfrak{s}}:= \sum_{e\in E_{\tau}}\mathfrak{s}(\mathfrak{t}(e))-\mathfrak{s}(\f{e}(e))  + \sum_{x\in N_{\tau}} \mathfrak{s}(\f{n}(x))\,,
\end{equation}
for any $\tau^{\f{n}}_{\f{e}}\in \mathfrak{T}$. The name homogeneity is used because the function $\vert \cdot\vert_{\mathfrak{s}} $ has the following properties 
\[\abs{\tau^{\f{n}}_{\f{e}}\gs^{\f{n}'}_{\f{e}'}}_{\mathfrak{s}}= \abs{\tau^{\f{n}}_{\f{e}}}_{\mathfrak{s}}+\abs{\gs^{\f{n}'}_{\f{e}'}}_{\mathfrak{s}}\,,\quad \abs{\cE^{\mathfrak{l}}_m(\gs^{\f{n}}_{\f{e}})}_{\mathfrak{s}}=\abs{\gs^{\f{n}}_{\f{e}}}_{\mathfrak{s}}+ \mathfrak{s}(\mathfrak{l})- \mathfrak{s}(m)\,, \] 
which are similar to the properties of the homogeneity on symbols $\vert \cdot\vert$ introduced above.

To describe the symbols defining $\cT$ using trees, we choose in this case a set of labels with three elements $\cL=\{\Xi, I, J\}$, associated to the symbol $\Xi$ and $\cI$. The presence of two different labels $I, J$ to denote $\cI$ is done to isolate all the trees we need for our calculations. Once we introduce the labels, the function $\mathfrak{s}$ is defined by
\begin{equation}\label{scaling_s}
\mathfrak{s}(k_1,k_2)=2k_1+ k_2\;,\quad \mathfrak{s}(\Xi)= - \frac{3}{2}- \kappa\;,\quad \mathfrak{s}(I)=\mathfrak{s}(J)=2\,,
\end{equation}
where $\kappa>0$ is a fixed parameter. This choice of $\mathfrak{s} $ is done to be coherent with the homogeneity  $\vert \cdot\vert$, as explained in the Remark \ref{iota_map}. We can easily draw a labelled decorated tree $\tau^{\f{n}}_{\f{e}}\in \mathfrak{T}$ by simply putting its root at the bottom and decorating the nodes and the edges with the non zero values of $\mathfrak{n} $, $\mathfrak{e}$ and the labels of $\cL$. For example, when we write the tree
\[
\tikz[scale=3]{\node[label={[label distance=0.65cm]50:$\scriptstyle (1,2)$}]{} child{ node[label={[label distance=0.02cm]225:$\scriptstyle{((1,2), I)}$}]{} child{node[label={[label distance=1cm]-40:$\scriptstyle((2,3), J)$}]{} }}child{node[label={[label distance=1cm]168:$\scriptstyle \Xi$}]{}}}
\]

\vspace*{-0.4cm}

\noindent we suggest that $\mathfrak{n}$ is zero over three nodes and $\mathfrak{e}$ is zero on the edge labelled by $\Xi$.

To conclude the construction of a regularity structure, we need to choose a suitable subset of trees contained in $\mathfrak{T}$. This operation is formalised in the context of decorated trees by the notion of a ``rule" (see \cite[Def. 5.7]{Bruned2019}). This object takes in account the branching behaviour of the trees and consequently what type of edges are allowed next to every label. More precisely, denoting by $\mathfrak{E}$ the set of all finite multi-sets of $ \cL \times \bN^2$, a rule is a function $R\colon \cL \to P(\mathfrak{E})\setminus \{\emptyset\}$ where $P(\mathfrak{E})$ is the power set of $\mathfrak{E}$. Let us explain what rule we choose in this context. 
\begin{definition}\label{defn_rule}
Writing $\Xi$, $I$, $I_1$ and $[I]_k$ as a shorthand for $((0,0),\Xi) $, $( (0,0), I)$ ,  $((0,1),I)$ and the multi-set $(I,\dots, I)$ repeated $k$ times, we define
\begin{gather} 
R(\Xi)=\{()\}\,,\quad R(I)= \{()\,, \Xi\}\,, \nonumber\\
 R(J) = \lbrace ()\,,[I]_k, \;   ([I]_k,I_1), \;    ([I]_k,I_1,I_1), \;  ([I]_k,\Xi), \; k \in \bN \rbrace\,,
\end{gather}
where the brackets $ \{\}$ describe a subset of $\mathfrak{E}$ and the brackets $()$ describe a multi-set of $\cL \times \bN^2$ (the symbol $()$ denotes the empty multi-set).
\end{definition}
Once we established a rule, we can consider the set of all decorated trees which \textit{strongly conforms to the rule $R$} (see \cite[Def. 5.8]{Bruned2019}), denoted by $\mathfrak{T}(R)$, that is $\tau^{\f{n}}_{\f{e}}\in \mathfrak{T}(R)$ if the following properties are satisfied
\begin{itemize}
\item  Looking at the edges attached at the root $ \gr_{\tau}$, they can be expressed as $R(\mathfrak{l})$ for some $\mathfrak{l}\in \cL$; 
\item for any node $x\in N_{\tau}\setminus\{ \gr_{\tau}\}$, all the edges attached at $ x$ can be written as $R(\mathfrak{t}(e))$, where $e$ is the unique edge linking $x$ to its parent.
\end{itemize}
For example let us consider the two trees
\[
\tikz[scale=3]{\node{} child{ node[label={[label distance=0.7cm]-20:$\scriptstyle I$}]{} child{node[label={[label distance=0.8cm]270:$\scriptstyle I$}]{} }}child{node[label={[label distance=1cm]168:$\scriptstyle \Xi$}]{} child{node[label={[label distance=0.25cm]-45:$\scriptstyle \Xi$}]{}}}}\;\;,\tikz[scale=3]{\node{} child{ node[label={[label distance=0.7cm]-20:$\scriptstyle J$}]{} child{node[label={[label distance=0.8cm]270:$\scriptstyle I$}]{} }}child{node[label={[label distance=1cm]168:$\scriptstyle \Xi$}]{}}}\quad \,,\]
where we used the shorthand notation $J= ((0,0), J)$. The tree on the left-hand side strongly conforms to the rule $R$, but the tree on the right one does not because the multi-set $\{I, J\}$ is not in the image of $R$. From the Definition \ref{defn_rule} it is straightforward to see that all possible decorations of the trees in $\mathfrak{T}(R) $ are of three types. We will abbreviate them with the shorthand notations\footnote{to improve the readability the same trees will be henceforth drawn on a smaller}
\[
\tikz[scale=3]{\node{} child{ node[label={[label distance=0.3cm]225:$\scriptstyle \Xi$}]{} }}=\tikz[scale=3]{\node{} child[noise]{ node{} }}\;\;,\qquad \tikz[scale=3]{\node{} child{ node[label={[label distance=0.3cm]225: $\scriptstyle I$}]{} }}= \tikz[scale=3]{\node{} child{ node{} }}\;\;,\qquad \tikz[scale=3]{\node{} child{ node[label={[label distance=0.3cm]225:$\scriptstyle I_1$}]{} }}=\tikz[scale=3]{\node{} child[derivative]{ node{} }}\,\,.
\]

Thanks to the definition of $\mathfrak{T}(R)$, we can extract a specific subset of trees from $\mathfrak{T}$. To conclude the existence of a regularity structure from $\mathfrak{T}(R)$, it is necessary to check  two last fundamental properties on $R$:
\begin{itemize}
\item $R$ is \emph{subcritical} (see \cite[Def. 5.14]{Bruned2019}), that is there  exists a function $\text{reg}\colon \cL\to \bR$ such that when we extend it to $\mathfrak{E}$ for any $(\mathfrak{l},k)\in \cL\times \bN^2 $ and $ M\in \mathfrak{E}$ 
\[
\text{reg}(l,k):= \text{reg}(l)-\mathfrak{s}(k)\,, \qquad
\text{reg}(M) := \sum_{(l,k) \in M} \text{reg}(l,k)\,,
\]
then we have for any $\mathfrak{l}\in \cL$
\[\text{reg}(l)< \mathfrak{s}(l)+ \inf_{M\in R(l)}\text{reg}(M)\,.\]
\item $R$ is \emph{normal} (see \cite[Def. 5.8]{Bruned2019}), that is $ R(\mathfrak{l})= \{()\}$ for every $ \mathfrak{l}\in \cL$ such that $ \mathfrak{s}(\mathfrak{l})<0$ and for any couple of  multi-sets $M, N\in \mathfrak{E}$ such that $N\in R(\mathfrak{l})$ for some $\mathfrak{l}\in \cL$ and $M\subset N$, then  $M \in R(\mathfrak{l}) $.
\end{itemize}
Both properties are relatively easy to check in this specific case. Indeed the rule $R$ is normal by construction and we can verify the subcriticality hypothesis using the function $\text{reg}\colon \cL \to \bR$ defined by
\[\text{reg}(\Xi)= -\frac{3}{2}-2\kappa\,,\quad \text{reg}(I)=\text{reg}(J)= \frac{1}{2}-3\kappa\,,\]
as long as $\kappa$ is sufficiently small. These two properties allow us to apply the results \cite[Prop. 5.21]{Bruned2019},  \cite[Prop. 5.39]{Bruned2019} and  the definition \cite[Def. 6.22]{Bruned2019} to prove the following result
\begin{proposition}\label{reg_struct_tree}
There exists a rule $R'$ such that $ R(\mathfrak{l})\subset R'(\mathfrak{l})$ for every $\mathfrak{l}\in \cL$ and a group $\cG'$ such that the triple $(\cA',\cT', \cG')$ is a regularity structure, where $\cT'$ is the free vector space generated from $ \mathfrak{T}(R') $ and $\cA=\{\abs{\tau}_{\mathfrak{s}}\colon \tau\in \mathfrak{T}(R')\}$. 
\end{proposition}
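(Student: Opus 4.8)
The plan is to obtain $(\cA',\cT',\cG')$ as an instance of the general machinery of \cite{Bruned2019}: having shown that the rule $R$ of Definition~\ref{defn_rule} is \emph{normal} and \emph{subcritical}, one invokes \cite[Prop.~5.21]{Bruned2019} to pass to a \emph{complete} rule $R'$ with $R(\mathfrak{l})\subset R'(\mathfrak{l})$ for all $\mathfrak{l}\in\cL$ which is again normal and subcritical, then \cite[Prop.~5.39]{Bruned2019} builds the structure group $\cG'$ acting on the free vector space $\cT'$ generated by $\mathfrak{T}(R')$, and \cite[Def.~6.22]{Bruned2019} packages these data into a regularity structure. Normality of $R$ has already been observed to hold by construction: $R(\Xi)=\{()\}$ and $\mathfrak{s}(\Xi)<0$, and each of $R(\Xi)$, $R(I)$, $R(J)$ is closed under passage to sub-multi-sets. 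So the only genuine input is subcriticality.

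To check subcriticality I would use the candidate $\text{reg}\colon\cL\to\bR$ given above, $\text{reg}(\Xi)=-\frac{3}{2}-2\kappa$ and $\text{reg}(I)=\text{reg}(J)=\frac{1}{2}-3\kappa$, extend it by $\text{reg}(\mathfrak{l},k)=\text{reg}(\mathfrak{l})-\mathfrak{s}(k)$ and additively on multi-sets, and then compute $\inf_{M\in R(\mathfrak{l})}\text{reg}(M)$ for $\mathfrak{l}\in\{\Xi,I,J\}$. The decisive remark is that $\text{reg}(I)=\frac{1}{2}-3\kappa>0$ for $\kappa$ small, so enlarging the block $[I]_k$ only increases $\text{reg}$; hence in $R(J)$ the infimum over the infinite families $[I]_k$, $([I]_k,I_1)$, $([I]_k,I_1,I_1)$, $([I]_k,\Xi)$ is attained at $k=0$, and since $I_1=((0,1),I)$ carries $\text{reg}(I_1)=\frac{1}{2}-3\kappa-1=-\frac{1}{2}-3\kappa$, the relevant values are $0$, $-\frac{1}{2}-3\kappa$, $-1-6\kappa$ and $-\frac{3}{2}-2\kappa$, whose minimum equals $-\frac{3}{2}-2\kappa$ as soon as $\kappa<\frac{1}{8}$; the same value $-\frac{3}{2}-2\kappa$ is the infimum for $R(I)=\{(),\Xi\}$, and for $R(\Xi)=\{()\}$ the infimum is $0$. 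Substituting into the subcriticality inequality $\text{reg}(\mathfrak{l})<\mathfrak{s}(\mathfrak{l})+\inf_{M\in R(\mathfrak{l})}\text{reg}(M)$ reduces each of the three cases to $\kappa>0$: for $\mathfrak{l}=J$ and $\mathfrak{l}=I$ it reads $\frac{1}{2}-3\kappa<2+(-\frac{3}{2}-2\kappa)=\frac{1}{2}-2\kappa$, and for $\mathfrak{l}=\Xi$ it reads $-\frac{3}{2}-2\kappa<-\frac{3}{2}-\kappa$. Hence $R$ is subcritical for every $\kappa\in(0,\frac{1}{8})$, which is compatible with the standing assumption $\kappa<\frac{1}{2}$.

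With the completed rule $R'$ at hand, the remaining axioms are immediate from the cited results. Subcriticality of $R'$ forces $\{\tau\in\mathfrak{T}(R')\colon\abs{\tau}_{\mathfrak{s}}\le\gamma\}$ to be finite for every $\gamma\in\bR$, so $\cA'=\{\abs{\tau}_{\mathfrak{s}}\colon\tau\in\mathfrak{T}(R')\}$ is discrete, bounded below, and contains $0=\abs{\bullet_0}_{\mathfrak{s}}$; each $\cT'_\gamma$ is finite-dimensional, hence a Banach space for any chosen norm, and $\cT'_0$ is spanned by the trivial tree $\bullet_0$ in the role of $\1$; finally the group $\cG'$ produced by \cite[Prop.~5.39]{Bruned2019} satisfies $\gG\1=\1$ and $\gG\tau-\tau\in\bigoplus_{\beta<\abs{\tau}_{\mathfrak{s}}}\cT'_\beta$. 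I expect the subcriticality computation of the second paragraph to be the only real obstacle: the rest is a direct appeal to the general theory of \cite{Bruned2019}, and the one point requiring care is the exact enumeration of the multi-sets of $\cL\times\bN^2$ occurring in $R(J)$, since it is this that makes the infimum — and hence the final inequalities — explicit.
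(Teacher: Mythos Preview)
Your proposal is correct and follows essentially the same route as the paper: verify normality and subcriticality of $R$, then invoke \cite[Prop.~5.21, Prop.~5.39, Def.~6.22]{Bruned2019}. The paper merely asserts that the subcriticality inequality holds for the given $\text{reg}$ ``as long as $\kappa$ is sufficiently small'' without writing out the case analysis, whereas you carry out the explicit computation of $\inf_{M\in R(\mathfrak{l})}\text{reg}(M)$ for each label and obtain the concrete threshold $\kappa<\tfrac{1}{8}$; this is a genuine addition of detail but not a different argument.
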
 
Even if we could give an explicit description of the group $\cG'$ and $\cT'$ given in the Proposition \ref{reg_struct_tree}, for our purposes it is sufficient to establish a relation between the regularity structure $\cT$ and $\cT'$. From the explicit definition of $F$ and $\mathfrak{T}$, it is possible to define recursively an injective map $\iota\colon F \to \mathfrak{T}$ as follows:
\begin{itemize}
\item for any $ m\in \bN^2$ we set
\[ \iota(\Xi):= \tikz[scale=1.3]{\node{} child[noise]{node{}}}\;,\quad \quad \iota(\X^m):=\bullet_m \]
\item For any symbol $\gs$ such that $\iota(\gs)$ is defined, then $\iota(\cI_k(\gs)):= \cE^I_k(\gs)$ .
\item For any couple of symbols $\gs,\gs'$ such that $\iota(\gs)$ and $\iota(\gs')$ are well defined we set $\iota(\gs\gs')=\iota(\gs)\iota(\gs') $.
\end{itemize}
\noindent We present two examples of the action of $\iota$:
\[\iota(\Xi^2\mathcal{I}(\Xi))= \tikz[scale=1.3]{\node{} node{} child[noise]{node{} } child[noise]{node{} } child{node{} edge from parent[noise] child{node{}}}}\,,\quad \iota(\cI_1(\Xi)^2\cI(\Xi\X^{(3,4)}))=\tikz[scale=1.3]{\node{} child[derivative]{node[label={[label distance=0.6cm]0: \tiny (3,4)}]{} edge from parent[noise] child {node{}}} child[derivative]{node{} edge from parent[noise] child {node{}}} child{node{} child[noise]{node{}}} }\,.\]
Restricting the map $\iota$ on $T$ and extending it by linearity we have the following inclusion
\begin{proposition}\label{inclusion}
The regularity structure $(\cA, \cT, \cG)$ is contained in $(\cA',\cT', \cG')$ in the sense of the inclusion of regularity structure explained in \cite[Sec. 2.1]{Hairer2014}.
\end{proposition}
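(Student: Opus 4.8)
The plan is to establish the three ingredients that the notion of inclusion of regularity structures in \cite[Sec.~2.1]{Hairer2014} requires: (a) that the restriction of $\iota$ to $T$ extends to an injective linear map $\cT\hookrightarrow\cT'$ with $\iota(\cT_{\gb})\subset\cT'_{\gb}$ for every homogeneity $\gb$; (b) that $\cA\subset\cA'$; and (c) that there is an injective group morphism $\cG\hookrightarrow\cG'$ which intertwines the two actions through $\iota$.

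First I would treat (a) and (b). Injectivity of $\iota$ on $\cT$ is inherited from the injectivity of $\iota\colon F\to\mathfrak T$ recalled before the statement. To see that $\iota(T)\subset\mathfrak T(R')$ I would argue directly from the explicit list of symbols: by the decomposition \eqref{decomposition_T} every $\tau\in T$ equals $\gs\,\cI(\Xi)^m\X^l$ with $\gs\in\{\1,\Xi,\cI_1(\Xi),\cI_1(\Xi)^2\}$, and reading off $\iota(\tau)$ one sees that its root carries the multi-set $[I]_m$, $([I]_m,\Xi)$, $([I]_m,I_1)$ or $([I]_m,I_1,I_1)$ according to $\gs$ — each of which lies in $R(J)$ by Definition \ref{defn_rule} — while every inner node of $\iota(\tau)$ carries either a single $\Xi$-edge (an element of $R(I)$) or no edge at all (an element of $R(\Xi)$). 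Hence $\iota(\tau)$ strongly conforms to $R$, and since $R(\mathfrak l)\subset R'(\mathfrak l)$ for every $\mathfrak l\in\cL$ it strongly conforms to $R'$ as well, so $\iota(T)\subset\mathfrak T(R')$. The identity $\vert\tau\vert=\vert\iota(\tau)\vert_{\mathfrak s}$ follows by the same induction on the construction of $\tau$, matching the recursive rules for $\vert\cdot\vert$ on $F$ with the values of $\mathfrak s$ fixed in \eqref{scaling_s}; this is exactly the coherence recorded in Remark \ref{iota_map}. Together these give $\cA\subset\cA'$ and $\iota(\cT_{\gb})\subset\cT'_{\gb}$.

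The substantial point is (c), the compatibility of the structure groups. Here I would start from the explicit presentation $\gG_h=(\id\otimes h')\gD$ of \eqref{delta_gamma}, with $\gD$ as in \eqref{delta_gamma_defn}, and from the construction of $\cG'$ in \cite{Bruned2019} as the group of (suitably admissible) characters of the Hopf algebra of positive-homogeneity trees associated with $\cT'$, acting on $\cT'$ through the BHZ coaction $\gD'$. The key step is to show that $\iota$ intertwines $\gD$ with $\gD'$, i.e.\ that $\gD'\,\iota=(\iota\otimes\iota^+)\,\gD$ on $\cT$, where $\iota^+$ denotes the induced inclusion of $\cU$ — identified with $\{\cI(\Xi)^m\X^l\}$ — into the relevant commutative sub-Hopf-algebra on the positive side. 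This is verified on the generators $X_1,X_2,\cI(\Xi)$ and on $\Xi,\cI_1(\Xi),\cI_1(\Xi)^2$ appearing in \eqref{delta_gamma_defn}, and then propagated by multiplicativity of both coproducts (Remark \ref{multiplicative_gamma}), using that on this very restricted class of trees the BHZ coproduct collapses to precisely the elementary ``Taylor-type'' expansions of \eqref{delta_gamma_defn}. Once the intertwining holds, every real character $h'$ of $\cU$ is the pullback along $\iota^+$ of an admissible character on the positive Hopf algebra, so $\gG_h$ corresponds to a unique $\gG'_h\in\cG'$ with $\iota\,\gG_h=\gG'_h\,\iota$; the composition law \eqref{algebraic_Gamma} together with the injectivity of $h\mapsto\gG_h$ then shows that $\gG_h\mapsto\gG'_h$ is an injective group morphism, which finishes the inclusion.

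I expect step (c) to be the main obstacle, because $\cG'$ is defined only implicitly through the Hopf-algebraic machinery of \cite{Bruned2019}, so the real work is the careful bookkeeping that unwinds the BHZ coproduct on the trees in $\iota(\cT)$ and checks it reduces to \eqref{delta_gamma_defn}. This is considerably lightened by Remark \ref{positive_renom}: the subspaces $\cV_{\Xi}$ and $\cU$ are already identified there with sectors of the regularity structure $\cT^{HP}$ of \cite{MartinHairer2015}, on which the comparison of the structure group with the general construction is available, so only the new sectors $\cV_{\cI_1(\Xi)}$ and $\cV_{\cI_1(\Xi)^2}$ — carrying trees with one or two derivative edges $I_1$ grafted at the root — demand a genuinely new but entirely parallel check.
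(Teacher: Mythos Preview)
Your proposal is correct and follows essentially the same route as the paper: conformity to the rule $R$ (hence $R'$), matching of homogeneities via the choice of $\mathfrak{s}$, and compatibility of the structure groups through the coproduct $\gD$. The only difference is one of presentation: where you outline an explicit verification of the intertwining $\gD'\iota=(\iota\otimes\iota^{+})\gD$ on generators and by multiplicativity, the paper simply invokes \cite[Equation (6.32)]{Bruned2019} to assert that $\cG'$ acts on $\iota(\cT)$ exactly as $\gD$ does in Remark~\ref{multiplicative_gamma}, so the bookkeeping you anticipate as the ``main obstacle'' is absorbed into that citation rather than carried out in the text.
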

\begin{proof}
The theorem is a strict consequence of the choices done to define $\cT'$. Firstly by definition of $R$, every decorated tree $\iota (\tau)$ for some $\tau\in T$ strongly conforms to the rule $R$, therefore it will strongly conform to the rule $R'$. Moreover by construction of $\mathfrak{s}$ in \eqref{scaling_s} we have $\abs{\iota(\gs)}_{\mathfrak{s}}= \abs{\gs}$ for any $\gs\in F$. Thus $\cA\subset \cA'$. Finally, when we consider the groups $\cG$, $\cG'$, it has been showed in \cite[Equation (6.32)]{Bruned2019} that the group $\cG'$ acts on $\iota(\cT)$ in the same way as the operator $\gD$ explained in the Remark \ref{multiplicative_gamma}. Therefore we obtain the inclusion of the regularity structures.
\end{proof}
\begin{remark}\label{iota_map}
The function $\iota$ is an injective map function from $F$ to $\mathfrak{T}$ but there are many trees of $\mathfrak{T}$ which do not belong to $\iota(F)$. In particular, If a tree contains only the label $I$ and no $\Xi$, then it is not contained, because we identified all the symbols $\cI_k(\X^m)$ to zero. Moreover, none of the trees labelled with $J$ belong to $\iota(F)$. In what follows, we will identify both symbols and decorated trees, without writing explicitly the map $\iota$.
\end{remark}

\subsection{Models on a regularity structure}
The algebraic structure comes also with a \emph{model} associated to it. In order to recall this notion and to simplify the whole exposition, we fix a parameter $\gz\geq 2$ and with an abuse of notation we will identify all along the chapter $\cT$ (respectively its canonical basis $T$) with the finite-dimensional vector space $\cQ_{<\gz}\cT$ (resp. the finite set $\{\tau\in T\colon \abs{\tau}<\zeta\}$). The same applies also for the sets $V_{\Xi}\,,V_{\cI_1(\Xi)^2}\,, V_{\cI_1(\Xi)}$.
\begin{definition}\label{defn_model}
A model on $(\cA,\cT, \cG)$ consists of a pair $(\Pi,\gG)$ given by:
\begin{itemize}
\item A map $\gG\colon \bR^2\times \bR^2\to \cG$ such that $\gG_{zz}=id$ and $\gG_{zv}\gG_{vw}=\gG_{zw}$ for any $z,v,w\in \bR^2$.
\item A collection $\Pi=\{\Pi_z \}_{z\in \bR^2}$ of linear maps $\Pi_z\colon \cT \mapsto \cS'(\bR^2)$ such that $\Pi_z=\Pi_v\gG_{vz}$ for any $z,v\in \bR^2$.
\end{itemize}
Furthermore, for every compact set $\cK\subset \bR^{2}$, one has
\begin{equation}\label{model1}
\norm{\Pi}_{\cK}:= \sup \left\{ \frac{\abs{(\Pi_z \tau) (\gh_z^\lambda)}}{\lambda^{\abs{\tau}}}\colon z \in \cK\,,\lambda \in (0,1]\,,\tau\in T ,\gh \in \cB_2\right\}<\infty\,,
\end{equation}
\begin{equation}\label{model2}
\norm{\gG }_{\cK}:= \sup \left\{ \frac{\abs{\gG_{zw}(\tau)}_{\gb}}{\norm{z-w}^{\abs{\tau}-\gb}}\colon z\neq w \in \cK\,,\norm{z-w}\leq 1\,,\tau\in T, \gb <\abs{\tau} \right\}<\infty\,,
\end{equation}
where the set of test functions $\cB_2 $ was already introduced in the Section \ref{elements}.
\end{definition}
This notion plays a fundamental role in the whole theory, because it associates to any $\tau\in T$ an explicit distribution $\Pi_z\tau$ belonging in some way to $\cC^{\abs{\tau}}$. To compare two different models defined on the same structure, we endow $\cM$, the set of all models on $(\cA,\cT,\cG)$, with the topology associated to the corresponding system of semi-distances induced by the conditions \eqref{model1} and \eqref{model2}:
\begin{equation}\label{DistModel}
\norm{(\Pi,\Gamma),(\bar\Pi,\bar\Gamma)}_{\cM(\cK)}:=\norm{\Pi - \bar \Pi}_{\cK} + \norm{\Gamma - \bar \Gamma}_{\cK}\;.
\end{equation}
Since we want to study the processes on a finite time horizon, it is sufficient to verify the conditions \eqref{model1} \eqref{model2} on a fixed compact set $\cK$ containing $[0, T]\times [0,1]$ and we will avoid any reference to it in the notation. In this way $(\cM,\norm{\cdot}_{\cM})$ becomes a complete metric space ($\cM$ is not a Banach space because the sum of models is not necessarily a model!). In particular if a sequence $(\Pi^n,\gG^n)$ converge to $(\Pi, \gG)$, then $\Pi^n_z\tau$ converges to $\Pi_z\tau$ in the sense of tempered distributions for any $z$, $\tau$. To define correctly a model over a symbol of the form $\cI(\gs)$, we need a technical lemma related to a suitable decompositions of $G$, the heat kernel on $\bR$, interpreted as a function $G\colon \bR^2\setminus \{0\}\to \bR$.
\begin{lemma}[First decomposition]\label{KandR1}
\emph{(see \cite[Lemma 5.5]{Hairer2014})} There exists a couple of functions $ K\colon\bR^2\setminus \{0\}\to \bR$, $ R\colon\bR^2\to \bR$ such that $G(z)= K(z)+ R(z)$ in such a way that $R$ is $C^{\infty}(\bR^{2})$ and $K$ satisfies:
\begin{itemize}
\item $K$ is a smooth function on $\bR^2\setminus \{0\}$, supported on the set $\{(t,x)\in \bR^2\colon x^2 + |t| \leq 1\}$ and equal to $G$ on $\{(t,x)\in \bR_+\times \bR\colon x^2 + t <1/2, \, t>0\}$ .
\item $K(t,x) = 0$ for $t \leq 0, x\neq0$ and $K(t,-x) = K(t,x)$.
\item For every polynomial $Q \colon \bR^2 \mapsto \bR$ of parabolic degree less than $\zeta$, one has
\begin{equation}\label{pol_0}
\int_{\bR^2} K(t,x) Q(t,x)\,dx\,dt = 0\;.
\end{equation}
\end{itemize}
\end{lemma}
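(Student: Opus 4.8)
The plan is to follow the classical construction of \cite[Lemma 5.5]{Hairer2014}. First I would localise $G$ near the origin: fix a smooth cutoff $\chi\colon\bR^2\to[0,1]$, even in the space variable, with $\chi\equiv 1$ on $\{x^2+|t|\le 1/2\}$ and $\operatorname{supp}\chi\subset\{x^2+|t|\le 1\}$. Such a $\chi$ exists by the smooth Urysohn lemma on $\bR^2$ applied to the compact set $\{x^2+|t|\le 1/2\}$ inside the open set $\{x^2+|t|<1\}$, followed by the symmetrisation $\chi(t,x)\mapsto\tfrac12(\chi(t,x)+\chi(t,-x))$. Since $G(t,x)=0$ for $t\le 0$ and, for each fixed $x\neq 0$, $G(\cdot,x)$ together with all its derivatives decays faster than any power of $t$ as $t\downarrow 0$ (the heat kernel is smooth away from the origin), the product $\chi G$ defines a function that is smooth on $\bR^2\setminus\{0\}$, supported in $\{x^2+|t|\le 1\}$, even in $x$, identically zero for $t\le 0$, and equal to $G$ on $\{t>0,\ x^2+t<1/2\}$. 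Thus $\chi G$ already has every property required of $K$ except possibly the vanishing-moment condition \eqref{pol_0}.

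It remains to correct $\chi G$ by a smooth function supported away from the origin, so as not to disturb any of the properties above. Let $\cP\subset\bR[t,x]$ be the finite-dimensional space of polynomials of parabolic degree $<\zeta$, and define the linear functional $\ell$ on $\cP$ by $\ell(Q):=\int_{\bR^2}\chi(z)G(z)Q(z)\,dz$. This is well defined because $\int_{\bR}G(t,x)|x|^{n}\,dx=C_n\,t^{n/2}$, so that $\int|\chi G\,z^k|\le C\int_0^1 t^{k_1+k_2/2}\,dt<\infty$ for every $k\in\bN^2$; moreover $\ell$ vanishes on polynomials odd in $x$, by the symmetry of $\chi G$. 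Now fix an $x$-symmetric bounded open set $A\subset\{t>0,\ 3/4<x^2+t<7/8\}$, which is contained in $\{x^2+|t|\le 1\}$, disjoint from $\{x^2+t\le 1/2\}$, and bounded away from $0$; and fix $\varphi\in C_c^\infty(A)$ with $\varphi\ge 0$, $\varphi\not\equiv 0$, even in $x$. On $\cP$ the bilinear form $\langle P,Q\rangle_\varphi:=\int_A\varphi PQ$ is an inner product (a polynomial vanishing $\varphi$-a.e. on the open set $A$ vanishes identically), so by Riesz representation there is a unique $v\in\cP$ with $\langle v,Q\rangle_\varphi=\ell(Q)$ for all $Q\in\cP$. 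Because $\ell$ and $\langle\cdot,\cdot\rangle_\varphi$ both respect the decomposition of $\cP$ into its $x$-even and $x$-odd parts, $v$ is even in $x$; hence $g:=\varphi v$ is a smooth, $x$-even function supported in $A$ with $\int_{\bR^2}g\,Q=\ell(Q)$ for every $Q\in\cP$.

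Finally I would set $K:=\chi G-g$ and $R:=G-K=(1-\chi)G+g$. The function $R$ is smooth on all of $\bR^2$: $1-\chi$ vanishes on a neighbourhood of the origin, so $(1-\chi)G$ has no singularity and is smooth, while $g$ is smooth. The function $K$ keeps from $\chi G$ the stated support, the $x$-symmetry, the vanishing for $t\le 0$, and the smoothness on $\bR^2\setminus\{0\}$, since $g$ is smooth and supported inside $\{x^2+|t|\le 1\}$; it still equals $G$ on $\{t>0,\ x^2+t<1/2\}$ because $g$ vanishes there ($A$ being disjoint from that region); and for every $Q\in\cP$ one has $\int_{\bR^2}KQ=\ell(Q)-\ell(Q)=0$, which is exactly \eqref{pol_0}.

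I do not expect a genuine obstacle: the argument is elementary. The two points needing a little care are the smoothness of $\chi G$ across $\{t=0,\ x\neq 0\}$, which relies on the super-exponential decay of the Gaussian in that regime, and the solvability of the moment problem in the second step, which amounts to the invertibility of a Gram matrix on the finite-dimensional space $\cP$.
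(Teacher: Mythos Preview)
Your argument is correct and is precisely the standard construction behind \cite[Lemma 5.5]{Hairer2014}, which the paper invokes without reproducing a proof: localise $G$ by a symmetric cutoff, then subtract a smooth corrector supported in an annulus to enforce the finitely many moment conditions via a Gram-matrix argument on the polynomial space $\cP$. The two points you flag as needing care (smoothness of $\chi G$ across $\{t=0,\,x\neq 0\}$ from the super-exponential Gaussian decay, and invertibility of the moment problem) are exactly the right ones, and you handle both.
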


\begin{remark}\label{convolution_K_reg}
Thanks to these lemmas, it is possible to \emph{localise} on a compact support the regularising action of the heat kernel. Indeed it is also possible to show (see \cite[Lem 5.19]{Hairer2014}) that the map $v\to K*v$ sends continuously $\cC^{\ga}$ in $\cC^{\ga+2}$ for any non integer $\ga\in \bR$ and any distribution $v$ not necessarily compactly supported.
\end{remark}
In what follows, for any given realisation of $\widetilde{\xi_{\gep}}$, the periodic extension of $\xi_{\gep}$, we will provide the construction of $(\hPi^{\gep},\hgG^{\gep}) $ a sequence of models associated to it and  converging to a model $(\hPi,\hgG) $ related to $\xi$. As a further simplification, we parametrise all possible models $(\Pi,\gG)$ on $(\cA, \cT, \gG) $ with a couple $(\PI,f)$ where $\PI\colon \cT\to \cS'(\bR^2)$ and $f\colon \bR^2 \rightarrow  \bR^{3}$. Indeed it is straightforward to check that for any given couple $(\PI, f)$ the operators
\begin{equation}\label{PI_and_Pi}
\Pi_z := \PI \gG_{ f(z)}\,, \quad \gG_{zz'}:= \gG_{f(z')- f(z)}.
\end{equation}
satisfy trivially the algebraic relationships in the Definition \ref{defn_model}, because of the identity \eqref{algebraic_Gamma}. Since any realisation of $\xi_{\gep}$ is smooth, we firstly introduce a model upon any deterministic smooth function $\zeta\colon \bR^2\to \bR$. 
\begin{proposition}\label{canonical}
Let $\zeta\colon \bR^2\to \bR$ be a smooth periodic function and we suppose that the map $\PI$ satisfies the conditions
\begin{align}\label{canonical1}
\mathbf{\Pi} \1(z) = 1& \,,\quad \mathbf{\Pi} \X^k\tau (z)= z^k \mathbf{\Pi}\tau(z)\, ,\\ \label{canonical2}\PI\cI_{k}(\gs)(z)=& \partial^k(K*\PI(\gs))(z)\,,\quad \mathbf{\Pi} \Xi(z)= \zeta(z)\,, \\ \label{canonical3} &\mathbf{\Pi} \bar{\tau}\tau (z)= \mathbf{\Pi}\bar{\tau}(z) \mathbf{\Pi}\tau(z)\,;
\end{align}
defined for any $k\in\bN^d$, $\tau, \bar{\tau}\in T$ such that $\tau\X^k\in T $, $\cI_k(\tau)\in T$ and $\tau\bar{\tau}\in T$. Then, there exists a unique couple $(\PI, f)$ such that, using the identifications \eqref{PI_and_Pi}, the associated operators $(\Pi,\gG)$ is a model. We call it the \emph{canonical model} of $\zeta$.
\end{proposition}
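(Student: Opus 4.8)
The plan is to use the parametrisation \eqref{PI_and_Pi}, which makes the \emph{algebraic} part of the model automatic: for any pair $(\PI,f)$ the operators $\Pi_z:=\PI\gG_{f(z)}$ and $\gG_{zz'}:=\gG_{f(z')-f(z)}$ satisfy $\gG_{zz}=\gG_0=\id$ (immediate from \eqref{explicit_gamma}) and $\gG_{zv}\gG_{vw}=\gG_{zw}$, $\Pi_z=\Pi_v\gG_{vz}$ (from the group law \eqref{algebraic_Gamma}). So it remains to construct $(\PI,f)$ and to check the two analytic bounds \eqref{model1}--\eqref{model2}. First I would define $\PI$ by recursion on the tree structure of the symbols: set $\PI\1=1$, $\PI\Xi=\gz$, and apply \eqref{canonical1}, \eqref{canonical2}, \eqref{canonical3} to peel off, in turn, polynomial factors, juxtapositions, and the operators $\cI_k$. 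Since the factorisation of a symbol into irreducible pieces is canonical this is unambiguous; a structural induction shows the resulting $\PI$ satisfies \eqref{canonical1}--\eqref{canonical3}, and conversely any $\PI$ satisfying them coincides with it. Because $\gz$ is smooth and both $\partial^k(K*\,\cdot\,)$ and pointwise multiplication preserve smoothness, every $\PI\tau$ is a smooth, space-periodic function. I would then set $f(z):=\bigl(-t,-x,-(K*\gz)(z)\bigr)=\bigl(-t,-x,-(\PI\cI(\Xi))(z)\bigr)$ (which is smooth) and let $(\Pi,\gG)$ be given by \eqref{PI_and_Pi}.

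The heart of the matter is \eqref{model1}, and this is the step I expect to be the main obstacle. Using \eqref{decomposition_T}, write a generic $\tau\in T$ as $\tau=\gs\,\cI(\Xi)^m\X^l$ with $\gs\in\{\Xi,\cI_1(\Xi),\cI_1(\Xi)^2,\1\}$. Then \eqref{explicit_gamma}, the multiplicativity \eqref{canonical3} of $\PI$ and the choice of $f$ yield the identity
\[
(\Pi_z\tau)(y)=(\PI\gs)(y)\,(y-z)^l\,\bigl((K*\gz)(y)-(K*\gz)(z)\bigr)^m .
\]
On any compact set the four functions $\PI\gs$ are bounded; the monomial $(y-z)^l$ vanishes to parabolic order $2l_1+l_2$ at $z$; and, $K*\gz$ being $C^1$, each factor $(K*\gz)(y)-(K*\gz)(z)$ is $O(\norm{y-z})$ in the parabolic metric. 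Hence $\abs{(\Pi_z\tau)(y)}\lesssim\norm{y-z}^{2l_1+l_2+m}$ locally uniformly in $z$, so that a parabolic change of variables gives $\abs{(\Pi_z\tau)(\gh_z^{\lambda})}\lesssim\lambda^{2l_1+l_2+m}$ uniformly over $\gh\in\cB_2$ and $\lambda\in(0,1]$. Since $\abs{\gs}\le0$ for each of the four admissible $\gs$, one has $\abs{\tau}=\abs{\gs}+m(1/2-\kappa)+2l_1+l_2\le m+2l_1+l_2$, hence $\lambda^{2l_1+l_2+m}\le\lambda^{\abs{\tau}}$ for $\lambda\le1$: this is \eqref{model1}. (When $\abs{\tau}\le0$ the crude estimate $\abs{(\Pi_z\tau)(\gh_z^\lambda)}\le\norm{\PI\tau}_{L^\infty}\norm{\gh_z^\lambda}_{L^1}=O(1)$ already suffices, so the vanishing is only needed for positive-homogeneity symbols.) Thus the ``cancellation'' forcing positive-homogeneity symbols to decay at the correct rate here reduces to the $C^1$-regularity of $K*\gz$ together with the boundedness of the prefactors $\PI\gs$; alternatively one can invoke the general admissibility results of \cite[Sec.~8]{Hairer2014}.

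For \eqref{model2} I would expand $\gG_{zw}\tau-\tau=\gG_h\tau-\tau$, $h=f(w)-f(z)$, by Newton's binomial in \eqref{explicit_gamma}. The coordinates of $h$ obey $\abs{h_1}\le\norm{z-w}^2$, $\abs{h_2}\le\norm{z-w}$, and $\abs{h_3}\le C\norm{z-w}\le C\norm{z-w}^{1/2-\kappa}$ for $\norm{z-w}\le1$ (again by the $C^1$ bound on $K*\gz$); since replacing a factor $X_1$, $X_2$ or $\cI(\Xi)$ by $h_1\1$, $h_2\1$ or $h_3\1$ lowers homogeneity by exactly $2$, $1$ or $1/2-\kappa$, matching exponents gives $\abs{\gG_{zw}\tau}_{\gb}\lesssim\norm{z-w}^{\abs{\tau}-\gb}$, so $(\Pi,\gG)\in\cM$. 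Finally, uniqueness: $\PI$ is the unique solution of \eqref{canonical1}--\eqref{canonical3}, and with $\PI$ fixed, \eqref{model1} applied to the positive-homogeneity symbols $\X^{(1,0)},\X^{(0,1)},\cI(\Xi)$ --- for which $(\Pi_z\X^{(1,0)})(y)=y_1+f_1(z)$, $(\Pi_z\X^{(0,1)})(y)=y_2+f_2(z)$, $(\Pi_z\cI(\Xi))(y)=(K*\gz)(y)+f_3(z)$ --- forces these quantities to vanish at $y=z$; testing against a $\gh\in\cB_2$ with $\int\gh\neq0$ and letting $\lambda\to0$ then pins down $f(z)=\bigl(-t,-x,-(K*\gz)(z)\bigr)$, so $(\PI,f)$ is unique.
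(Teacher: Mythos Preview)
Your proof is correct and follows essentially the same route as the paper: define $\PI$ recursively from \eqref{canonical1}--\eqref{canonical3}, choose $f(z)=(-t,-x,-(K*\gz)(z))$, exploit the factorisation $\tau=\gs\,\cI(\Xi)^m\X^l$ from \eqref{decomposition_T} to obtain the explicit formula $(\Pi_z\tau)(y)=(\PI\gs)(y)\,(y-z)^l\,\bigl((K*\gz)(y)-(K*\gz)(z)\bigr)^m$, and read off the pointwise decay that implies \eqref{model1}; then verify \eqref{model2} via the multiplicative structure of $\gG_h$. Your write-up is more detailed than the paper's --- in particular you supply the uniqueness argument for $f$ (testing \eqref{model1} against $\X^{(1,0)},\X^{(0,1)},\cI(\Xi)$ and letting $\lambda\to0$), which the paper leaves implicit --- but the underlying argument is the same.
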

\begin{proof}
The hypotheses on $\zeta$ and the conditions \eqref{canonical1} \eqref{canonical2} implies straightforwardly that $\PI\tau$ is a smooth function for any $\tau\in T$ which is not a product of symbols. Therefore, the point-wise product on the right-hand side of the equation  \eqref{canonical3} is well defined and by linearity, the operator $\PI$ exists and it is unique. To conclude the proof it is sufficient to choose $f$ such that $(\Pi,\gG)$ satisfy the right analytic properties. We use \eqref{algebraic_Gamma} to compute explicitly
\begin{equation}\label{explicit_Pi_z}
\begin{split}
&\Pi_z(\gs\cI(\Xi)^m\X^k)(\bar{z})=\PI\gG_{f(z)}(\gs\cI(\Xi)^m\X^k)\\& =\PI(\gs)(\bar{z}) (\bar{z}+ (f(z))_{1,2})^{k}[(K*\xi)(\bar{z})+ (f(z))_3]^m \,.
\end{split}
\end{equation}
for any $z,\, \bar{z}\in\bR^2$, $\gs\in\{\cI_1(\Xi),\cI_1(\Xi)^2, \Xi, \1\}$ and $k$, $m$ as before. Imposing the condition
\begin{equation}\label{character_f_z}
f(z)_i= - z_i\,,\; i=1,2\qquad f(z)_3= -(K* \PI \Xi )(z)\,.
\end{equation}
we obtain immediately the bound $\Pi_z\tau (\bar{z})\leq C \norm{\bar{z}-z}^{\abs{\tau}}$ for some constant $C>0$ depending on $\xi$ and uniformly on $\tau \in T$. Thus the condition \eqref{model1} is satisfied. On the other hand, in order to check the second property \eqref{model2}, we can easily verify it using  when $\tau\in\{\cI(\Xi),X_1,X_2\} $ and  Applying the multiplicative property of $\gG$ (see the Remark \ref{multiplicative_gamma}) we conclude.
\end{proof} 
\begin{remark}\label{operator L}
The existence of a canonical model is a general result already proved in \cite[Prop 8.27]{Hairer2014} but we repeat a simplified version of that proof to take in account the slightly different notation of this article. Looking at the definition of $f$ in \eqref{character_f_z}, we remark that $f$ depends on $\PI$ but to define it we do not need the multiplicative property of $\PI$ \eqref{canonical3}, nor the smoothness of $\xi$. Therefore for any map $\PI\colon \cT\to \cS'(\bR^2)$, the conditions \eqref{character_f_z} and  \eqref{PI_and_Pi} identify uniquely a couple $\cL(\PI):=(\Pi, \gG)$. $\cL(\PI)$ is not necessarily a model but if $\PI\Xi=\xi\in \cC^{-3/2-\kappa}$ for some $0<\kappa<1/2$ and $\PI$ satisfies the properties \eqref{canonical1}, \eqref{canonical2}, then the proof of the Proposition \ref{canonical} implies also that the operators $\gG_{zz'}$ given by $\cL(\PI)$ will always satisfy the property \eqref{model2}. The choice of a kernel $K$ satisfying \eqref{pol_0} is due in order to be compatible with the  assumption that the symbols $\cI_k(\X^m)$ are identified with $0$.
\end{remark}

\begin{remark}
If $\xi$ is also periodic in the space variable it is straightforward to prove that the canonical model $(\Pi,\gG)$ associated to $\xi$ satisfies also
\begin{equation}\label{periodic}
\Pi_{(t,x+m)}\tau(t',x'+m)=\Pi_{(t,x)}\tau (t',x')\,,\quad \gG_{(t,x+m)(t',x'+m)}\tau= \gG_{(t,x)(t',x')}\tau 
\end{equation}
for any couple of space-time points $z=(t,x)$, $z'=(t',x')$, $m\in \bZ$, $\tau\in T$. Thus the canonical model is also \emph{adapted} to the action of translation on $\bR$ (for this definition see \cite[Definition 3.33]{Hairer2014}). Roughly speaking this property allows to apply the notion of models also for distributions periodic in space.
\end{remark}
\begin{remark}\label{tilde_model}
Recalling the inclusion of the regularity structure $\cT$ in $\cT'$ as explained in the Proposition \ref{inclusion}, we can immediately extend the Definition \ref{defn_model} to define a model $(\Pi^{'},\gG^{'})$ over the regularity structure $(\cA',\cT',\cG')$. This extension will be useful to define the so-called BPHZ renormalisation and the BPHZ model as In particular for any smooth function $\zeta\colon \bR^2\to \bR$  we can define again a canonical model in this context starting from an explicit function $\PI'\colon \mathfrak{T}(R')\to\cC^{\infty}$. Using the grafting operation the application $\PI'$ is defined recursively for any $k,m\in \bN^2$, $\tau, \tau'\in \mathfrak{T}$ 
\[\PI'(\bullet_{k})(z):= z^k\,,\quad \PI'(\cE^{J}_m(\tau))(z)= \PI'(\cE^{I}_m(\tau))(z)=\partial_k^m(K* \PI'(\tau))(z)\,,\]
\[ \PI'(\cE^{\Xi}_{m}(\bullet_{k}))(z):= \partial^m(\zeta(z)z^k)\,,\quad  \PI'(\tau \tau')(z):= \PI'(\tau)(z)\PI'( \tau')(z)\,.\]
These conditions allow  to define $\PI'$ without knowing in detail $R'$ and the existence of a model is provided by \cite[Prop.  6.12]{Bruned2019}. By construction when we restrict $\PI'$ on $T$ we obtain the properties \eqref{canonical1} \eqref{canonical2} \eqref{canonical3}. The map $\PI'$ will be important when we want to define the renormalisation of a model (see Theorem  \ref{BPHZ_explic}).
\end{remark}
\subsection{The BPHZ renormalisation and the BPHZ model}
For any $\gep>0$ we denote by $\PI^{\gep}$ and $\cL(\PI^{\gep}):=(\Pi^{\gep}, \gG^{\gep})$ the canonical model obtained by applying the Proposition \ref{canonical} where $\zeta$ is a fixed a.s. realisation of $\widetilde{\xi_{\gep}}$. Since $\xi_{\gep}$ converges to $\xi$ a.s. in the sense of distributions, we would like to define a model by studying the convergence of the sequence $(\Pi^{\gep},\gG^{\gep})$ as $\gep\to 0$. Unfortunately, it is well known from \cite{MartinHairer2015} that the sequence $\PI^{\gep} (\cI(\Xi)\Xi)$ does not converge as a distribution, implying that $\cL(\PI^{\gep})$ does not converge. A natural way to get rid of this ill-posedness and to prove a general convergence result is the main content of  \cite{Bruned2019} and \cite{chandra_analytic16}. The main consequence of these general results will be the existence of an explicit sequence of applications $F_{\gep}\colon \cM\to \cM$ such that the sequence $F_{\gep}(\cL(\PI^{\gep})):= (\hPi^{\gep},\hgG^{\gep})$ converges in probability to some random model. The model $(\hPi^{\gep},\hgG^{\gep})$ and the limiting model are referred in the literature as the \emph{BPHZ renormalisation} and the \emph{BPHZ model}. 

In order to satisfy the bounds \eqref{model2} for $\hgG^{\gep}$ uniformly on $\gep>0$, it is reasonable to write $(\hPi^{\gep},\hgG^{\gep})$ as  $\cL(\hPI^{\gep})$, for some admissible map $\hPI^{\gep}\colon \cT\to \cS'(\bR^2)$ (see the Remark \ref{operator L}).  This property can be obtained by defining a sequence of linear maps $\{A_{\gep}\}_{\gep>0}\colon \cT\to \cT$ satisfying
\begin{equation}\label{admissible2}
\begin{split}
 &A_{\gep} \1 = \1 \,,\quad A_{\gep}\cI_k(\tau) =  \cI_k(A_{\gep}\tau) \,,\\ 
& A_{\gep} \X^k\tau = \X^k A_{\gep}\tau\,,\quad A_{\gep} \Xi= \Xi\,,
\end{split}
\end{equation}
for any $k\in\bN^d$ and $\tau\in T$ such that $\tau\X^k\in T $, $\cI_k(\tau)\in T$. Indeed combining the properties of \eqref{admissible2} with the explicit definition of $\PI^{\gep}$ in the proposition \ref{canonical}, the application $\PI^{\gep}A_{\gep}$ is automatically admissible (by analogy we call $\{A_{\gep}\}$ an \emph{admissible renormalisation scheme}) and we can define the couple $\cL(\PI^{\gep}A_{\gep})$. However the conditions \eqref{admissible2} are not sufficient to prove that $\cL(\PI^{\gep}A_{\gep})$ is again a model. As a matter of fact, writing the elements of $\cT$ as trees and embedding $\cT$ in $\cT'$ (see the Proposition \ref{inclusion}), the BPHZ renormalisation is obtained from an explicit admissible renormalisation scheme $\{\widetilde{M}_{\gep}\}_{\gep>0}\colon \cT\to \cT$ such that imposing  $ \hPI^{\gep}:=\Pi^{\gep} \widetilde{M}_{\gep}$ the couple $\cL(\hPI^{\gep})$ is again a model. By construction $\widetilde{M}_{\gep}$ is a linear map $\widetilde{M}_{\gep}\colon \cT'\to \cT'$ but an important consequence of the Theorem \ref{BPHZ_explic} will imply $\widetilde{M}_{\gep} (\cT)\subset \cT$. Let us recall briefly the definition of $\widetilde{M}_{\gep} $ in terms of decorated trees as explained in \cite[Sec. 6]{Bruned2019} and \cite[Sec.4]{Bruned2018} starting from $\cT'$ and its basis $ \mathfrak{T}(R')$ (see the Proposition \ref{reg_struct_tree}).

Denoting  by $\centerdot$ the combinatorial operation of the disjoint union of graphs and by  $\emptyset$ the empty graph, we consider $\hat{T}_-$, the set of all graphs $\gs$ such that 
\[\gs =\tau_1 \centerdot \cdots \centerdot \tau_n\]
for some $n\geq 1$ and $\{\tau_i\}_{i=1,\cdots, n}\in \mathfrak{T}(R')\cup \{\emptyset\}$. The elements of $\hat{T}_- $ are called \emph{forests} and we denote by $\hat{\cT}_-$ the free vector space generated over $\hat{T}_- $. $(\hat{\cT}_-, \centerdot, \emptyset)$ is trivially a commutative algebra with unity. For any decorated tree $\tau^{\f{n}}_{\f{e}} $ we say that a forest $\gga\in \hat{T}_- $ is a \emph{subforest} of $\tau^{\f{n}}_{\f{e}} $ ($\gga\subset \tau^{\f{n}}_{\f{e}}$) if $\gga$ is an arbitrary subgraph of $\tau^{\f{n}}_{\f{e}} $ with  no isolated vertices. For instance let us consider
\[ \gga_1=\tikz[scale=1.3]{\node{}  child[noise]{node{} } child{node{} edge from parent[noise] child{node{}}}}\,,\qquad \gga_2=\tikz[scale=1.3]{\node{} child{ node{} child[noise]{node{} }}}\quad\tikz[scale=1.3]{\node{}}\,,\qquad \gga_3=\tikz[scale=1.3]{\node{} child[noise]{ node{} }}\quad\tikz[scale=1.3]{\node{} child{ node{} child[noise]{node{} }}}\;.\]
In this case $\gga_2$ and $\gga_3$ are both elements of $\hat{T}_- $  but only $\gga_3\subset \gga_1$ because $\gga_2$ has an isolated vertex. The empty forest $\emptyset$ is always a subforest. A decorated tree $ \tau^{\f{n}}_{\f{e}}$ and a subforest $\gga=\gs_1 \centerdot \cdots \centerdot \gs_n$ such that $\gga\subset \tau^{\f{n}}_{\f{e}}$ are used to define the contraction tree $\cK_{\gga} \tau^{\f{n}}_{\f{e}}=(\cK_{\gga}\tau, \cK_{\gga}\mathfrak{n},  \cK_{\gga}\mathfrak{e})$, where
\begin{itemize}
\item $\cK_{\gga}\tau$ is the tree obtained from $\tau$ replacing each $\gs_i$ with a node.
\item Denoting by $\bullet_1$,$\cdots$, $\bullet_n$ each node associated to the contraction of the tree $\gs_i$, the function $\cK_{\gga}\mathfrak{n}$ is equal to $\f{n}$ on every non contracted node of $\cK_{\gga}\tau$ and for every $i$, $\f{n}(\bullet_i)= \sum_{y\in N_{\gs_i}}\f{n}(y)$.
\item $\cK_{\gga}\mathfrak{e}\colon E_{\cK_{\gga}}\to \bN^2 $ is equal to $\f{e}$ on every non contracted edge of $\cK_{\gga}\tau$.
\end{itemize}
In the previous example we have $\cK_{\gga_{3}}\gga_1= \bullet$.

Once we give $\hat{\cT}_-$, we define $\cT_-:= \hat{\cT}_-/\cJ$ as the quotient algebra of $\hat{\cT}_-$ with respect to $\cJ$, the ideal of $\hat{\cT}_-$ generated by the set 
\[
J:=\{ \tau^{\mathfrak{n}}_{\mathfrak{e}}\in \mathfrak{T}(R')\colon \,\abs{\tau^{\mathfrak{n}}_{\mathfrak{e}}}_{\mathfrak{s}}> 0\;\}\subset \hat{T}_-\;.
\]
The map $\widetilde{M}_{\gep}$ is then defined  for any $\tau^{\f{n}}_{\f{e}}\in \mathfrak{T}(R')$ as
\begin{equation}\label{BPHZ_renorm}
 \widetilde{M}_{\gep}\tau^{\f{n}}_{\f{e}}:= (h_{\gep}\otimes id )\gD_-\tau^{\f{n}}_{\f{e}}\,.
\end{equation}
We will describe the objects $\gD_-$ and $h_{\gep}$ separately. First $\gD_-\colon \cT\to \cT_-\otimes \cT$ is a linear map which is explicitly given for any $\tau^{\f{n}}_{\f{e}}\in \mathfrak{T}(R')$ by the formula
\begin{equation}\label{defn_delta}
\gD_-\tau^{\f{n}}_{\f{e}} : =\sum_{\gga\subset \tau} \sum_{\mathfrak{e}_{\gga}, \mathfrak{n}_{\gga}\leq \f{n} }\frac{1}{\mathfrak{e}_{\gga}!} \binom{\f{n}}{\mathfrak{n}_{\gga}} p(\gga, \mathfrak{n}_{\gga} + \pi\mathfrak{e}_{\gga},\mathfrak{e}\vert_{\gga}) \otimes  (  \cK_{\gga}\tau, \cK_{\gga}(\mathfrak{n}- \mathfrak{n}_{\gga}),  \cK_{\gga}\mathfrak{e}+ \mathfrak{e}_{\gga})\,.
\end{equation}
Let us explain the meaning of the formula \eqref{defn_delta}. The first sum outside is done over all subforests $\gga\subset\tau$ and for any subforest $\gga$, denoting by $N_{\gga}$ and $\partial(\gga, \tau)$ respectively the set of the nodes of $\gga$ and the edges in $E_{\tau}$ that are adjacent to $N_{\gga}$, the second sum is done over all functions $\mathfrak{n}_{\gga}\colon N_{\gga}\to \bN^2 $ and $\mathfrak{e}_{\gga }\colon\partial(\gga, \tau)\to \bN^2$ such that for any $x\in N_{\gga}$ $\mathfrak{n}_{\gga}(x)\leq \mathfrak{n}(x)$, where with an abuse of notation we denote by $\leq $ the lexicographical order between vectors of $\bN^2$. 

A generic subforest $\gga$ is an element of $\hat{T}_-$ so we compose it with the canonical projection homomorphism $p\colon \hat{\cT}_-\to\cT_-$, where with an abuse of notation we identify all forests generating $\cJ$ to zero. Moreover, for any $\mathfrak{e}_{\gga }\colon\partial(\gga, \tau)\to \bN^2 $ the function $\pi \mathfrak{e}_{\gga }\colon N_{\gga}\to \bN^2$ is given by
\[\pi \mathfrak{e}_{\gga }(x):= \sum_{e\in \partial(\gga, \tau)\colon  x\in e}\mathfrak{e}_{\gga }(e)\,.\]
The remaining combinatorial coefficients are finally interpreted in a multinomial sense, that is for any function $l\colon S\to \bN^2$ where $S$ is a finite set we have 
\[l!:= \prod_{y\in S}(l(y))_1!(l(y))_2!\]
and similarly for the binomial coefficients. In principle, the summations over $\mathfrak{n}_{\gga} $ and $\mathfrak{e}_{\gga }$ are done over an infinite set of values but the projection $p$ and the constraints $\f{n}_{\gga}\leq \f{n}$, together with the subcritical hypothesis of the rule $R'$, make the sum finite. 

On the other hand, the map $h_{\gep}$ has the explicit form
\begin{equation}\label{character_BPHZ}
h_{\gep}:= g_{\gep}(\PI)\widehat{A}_-\,.
\end{equation} 
The first object in \eqref{character_BPHZ} is given by a linear map $\widehat{A}_-\colon \cT_-\to \hat{\cT}_{-}$. Its name is \emph{twisted-antipode} and it is characterised as the only homomorphism (so then $\widehat{A}_-(\emptyset)= \emptyset$) such that for any tree $\tau^{\f{n}}_{\f{e}}\neq \emptyset$, denoting by $M^{\centerdot}$ the forest product, one has the identity
\begin{equation}\label{pseudo_antipode}
\widehat{A}_-\tau^{\f{n}}_{\f{e}}= - M^{\centerdot}(\widehat{A}_-\otimes id) (\gD_-\tau^{\f{n}}_{\f{e}} - \tau^{\f{n}}_{\f{e}} \otimes \1 )\,.
\end{equation}
Finally the last object $g_{\gep}(\PI)\colon\hat{\cT}_{-}\to \bR$ is the only real character on the algebra $ \hat{\cT}_{-}$ such that for any tree $\tau^{\f{n}}_{\f{e}} \in \mathfrak{T}(R')$
\begin{equation}\label{stochastic_object}
g_{\gep}(\PI) (\tau^{\f{n}}_{\f{e}}):=  \bE ({\PI}^{\gep})' (\tau^{\f{n}}_{\f{e}})(0)\,,
\end{equation}
where  $(\PI^{\gep})'$ is the extension of $\PI^{\gep}$ over all the decorated trees as explained in the Remark \ref{tilde_model}. We combine all these definitions to obtain the explicit form of the application $\widetilde{M}_{\gep}$.
\begin{theorem}\label{BPHZ_explic}
By fixing $\kappa>0$ sufficiently small and restricting the map $\widetilde{M}_{\gep}$ defined in \eqref{BPHZ_renorm} on $\cT$, we have $\widetilde{M}_{\gep}=M_{\gep}$, where $M_{\gep}\colon \cT\to \cT$ is the unique linear map satisfying $M_{\gep}=\text{id}$ on $\cV_{\cI_1(\Xi)}\oplus\cU$ and for any couple of indexes  $m\geq 0$, $k\in \bN^2$
\begin{equation}\label{defn_M}
\begin{split}
M_{\gep}(\Xi \cI(\Xi)^m\X^k)&=\Xi \cI(\Xi)^m\X^k-(m C_{\gep}^{1}\cI(\Xi)^{m-1}\X^k)\1_{m\geq 1}\;,\\
M_{\gep}(\cI_1(\Xi)^2\cI(\Xi)^m\X^k)&=\cI_1(\Xi)^2\cI(\Xi)^m\X^k- C_{\gep}^{2}\cI(\Xi)^m\X^k\;,
\end{split}
\end{equation}
where the constants $C_{\gep}^{1}$ and $C^{2}_{\gep}$ are given by
\begin{align}\label{C_1}
C_{\gep}^{1}&:=\bE[\PI^{\gep}(\Xi \cI(\Xi))(0)]=  \int_{\bR^2} \gr_{\gep}(z)(K*\gr_{\gep})(z)dz \,,\\ 
\label{C_2}
C^{2}_{\gep}&:=\bE[\PI^{\gep}(\cI_1(\Xi)^2)(0)]= \int_{\bR^2} (K_x*\gr_{\gep})^2(z)dz \,.
\end{align}
\end{theorem}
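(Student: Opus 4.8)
The plan is to evaluate the right-hand side of \eqref{BPHZ_renorm} directly on each basis tree of $\cT$, grouped according to the decomposition $T=V_{\Xi}\sqcup V_{\cI_1(\Xi)^2}\sqcup V_{\cI_1(\Xi)}\sqcup V$ from \eqref{decomposition_T}. Writing $h_{\gep}=g_{\gep}(\PI)\widehat{A}_-$ as in \eqref{character_BPHZ}, the composition of the algebra homomorphism $\widehat{A}_-$ with the character $g_{\gep}(\PI)$ is again multiplicative over the forest product, so it is enough to understand $h_{\gep}$ on connected decorated trees, and in \eqref{defn_delta} only those subforests $\gga$ all of whose connected components have homogeneity $\leq 0$ survive the projection $p$. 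Separating the term $\gga=\emptyset$, which reproduces $\tau$, formula \eqref{BPHZ_renorm} reads $\widetilde{M}_{\gep}\tau=\tau+\sum_{\emptyset\neq\gga\subset\tau}(\cdots)\,h_{\gep}(p(\gga))\,\cK_{\gga}\tau$, while \eqref{pseudo_antipode} gives, for any connected $\sigma$ carrying no non-trivial decoration on its root, the recursion $h_{\gep}(\sigma)=-g_{\gep}(\PI)(\sigma)-\sum_{\emptyset\neq\gga\subsetneq\sigma}(\cdots)\,h_{\gep}(p(\gga))\,g_{\gep}(\PI)(\cK_{\gga}\sigma)$. Here $(\cdots)$ denotes the combinatorial coefficients of \eqref{defn_delta}, equal to $1$ for the trivial redistributions that are the only relevant ones below, and $p(\gga)$ carries the decorations prescribed in \eqref{defn_delta}.

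I first record a few vanishing facts. Contractions never create noise edges, so by \eqref{pseudo_antipode} every term of $\widehat{A}_-\sigma$ has the same number of $\Xi$-edges as $\sigma$; since by \eqref{stochastic_object} and Remark \ref{tilde_model} the quantity $(\PI^{\gep})'(\sigma)(0)$ is a polynomial of degree equal to the number of noise edges of $\sigma$ in a centred Gaussian family, it follows that $h_{\gep}(\sigma)=0$ whenever $\sigma$ has a connected component with an odd number of noise edges, and also whenever a node of $\sigma$ carries a decoration $\X^{k}$ with $k\neq0$ (then $(\PI^{\gep})'(\sigma)(0)$ has a factor $0^{k}$). Next, by Proposition \ref{canonical} and the Wiener isometry, $g_{\gep}(\PI)(\cI(\Xi)\Xi)=\bE[\xi_{\gep}(0)(K*\xi_{\gep})(0)]=\int\gr_{\gep}(z)(K*\gr_{\gep})(z)\,dz=C^{1}_{\gep}$ and $g_{\gep}(\PI)(\cI_1(\Xi)^2)=\bE[((K_x*\xi_{\gep})(0))^2]=\int(K_x*\gr_{\gep})^2(z)\,dz=C^{2}_{\gep}$, which is \eqref{C_1}--\eqref{C_2}; since every proper non-empty subforest of $\cI(\Xi)\Xi$ (respectively of $\cI_1(\Xi)^2$) surviving $p$ has all of its components with exactly one noise edge, the recursion yields $h_{\gep}(\cI(\Xi)\Xi)=-C^{1}_{\gep}$ and $h_{\gep}(\cI_1(\Xi)^2)=-C^{2}_{\gep}$. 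Finally $g_{\gep}(\PI)(\cI_1(\Xi)\cI(\Xi))=\int(K_x*\gr_{\gep})(z)(K*\gr_{\gep})(z)\,dz=\tfrac{1}{2}\int\partial_x\big((K*\gr_{\gep})^2\big)(z)\,dz=0$, because $K*\gr_{\gep}$ is smooth and compactly supported, so $h_{\gep}(\cI_1(\Xi)\cI(\Xi))=0$ by the same argument.

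The core of the proof is to show that $h_{\gep}$ vanishes on every remaining connected subtree of negative homogeneity with an even number of noise edges that occurs inside a basis tree of $\cT$. Enumerating them from the shape of the basis trees, for $\kappa$ small these are, up to decorations (already disposed of) and trees with an odd noise count such as $\cI_1(\Xi)^2\cI(\Xi)$ (for which $h_{\gep}=0$ automatically), exactly $\cI_1(\Xi)\cI(\Xi)$ (done above), $\Xi\cI(\Xi)^3$ and $\cI_1(\Xi)^2\cI(\Xi)^2$. For $\Xi\cI(\Xi)^3$ the only proper subforests surviving $p$ with non-zero $h_{\gep}$ are the three copies of $\cI(\Xi)\Xi$ obtained by pairing the root noise with one of the three branches, so the recursion gives $h_{\gep}(\Xi\cI(\Xi)^3)=-g_{\gep}(\PI)(\Xi\cI(\Xi)^3)-3\,h_{\gep}(\cI(\Xi)\Xi)\,g_{\gep}(\PI)(\cI(\Xi)^2)$, which vanishes because Wick's theorem gives $\bE[\xi_{\gep}(0)(K*\xi_{\gep})^3(0)]=3\,C^{1}_{\gep}\,\bE[(K*\xi_{\gep})^2(0)]$ while $h_{\gep}(\cI(\Xi)\Xi)=-C^{1}_{\gep}$; an identical Zimmermann-type cancellation, with the subdivergence $\cI_1(\Xi)^2$ in place of $\cI(\Xi)\Xi$, disposes of $\cI_1(\Xi)^2\cI(\Xi)^2$. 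I expect this step --- making the bookkeeping of the decoration sums in \eqref{defn_delta} precise and checking that these cancellations are exact --- to be the main obstacle.

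Finally I assemble the identities \eqref{defn_M}. For $\tau\in V_{\cI_1(\Xi)}\cup V$ no non-empty subforest survives with non-zero $h_{\gep}$, since the available components with homogeneity $\leq0$ either have an odd noise count, or are decorated copies of $\cI_1(\Xi)\cI(\Xi)$, on all of which $h_{\gep}=0$; hence $\widetilde{M}_{\gep}\tau=\tau$. For $\tau=\Xi\cI(\Xi)^m\X^k$ the surviving subforests are exactly the $m$ undecorated copies of $\cI(\Xi)\Xi$ obtained by choosing one of the $m$ branches to pair with the root noise; each such contraction leaves $\cI(\Xi)^{m-1}\X^k$ and contributes $h_{\gep}(\cI(\Xi)\Xi)\,\cI(\Xi)^{m-1}\X^k=-C^{1}_{\gep}\,\cI(\Xi)^{m-1}\X^k$, so that $\widetilde{M}_{\gep}\tau=\Xi\cI(\Xi)^m\X^k-mC^{1}_{\gep}\cI(\Xi)^{m-1}\X^k\,\1_{m\geq1}$. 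For $\tau=\cI_1(\Xi)^2\cI(\Xi)^m\X^k$ the unique surviving subforest is the pair of derivative branches, whose contraction leaves $\cI(\Xi)^m\X^k$ and gives $h_{\gep}(\cI_1(\Xi)^2)\,\cI(\Xi)^m\X^k=-C^{2}_{\gep}\,\cI(\Xi)^m\X^k$. Comparing with \eqref{defn_M} this proves the claimed formulae; and since every correction term lies in $\cU\subset\cT$, we also get $\widetilde{M}_{\gep}(\cT)\subseteq\cT$, hence $\widetilde{M}_{\gep}|_{\cT}=M_{\gep}$.
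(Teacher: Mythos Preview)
Your argument is correct and follows essentially the same route as the paper: both reduce to evaluating $h_{\gep}$ on the finitely many connected negative-homogeneity subtrees with even noise count ($\cI(\Xi)\Xi$, $\cI_1(\Xi)^2$, $\cI_1(\Xi)\cI(\Xi)$, $\Xi\cI(\Xi)^3$, $\cI_1(\Xi)^2\cI(\Xi)^2$ and their decorated variants) using parity, the spatial symmetry of $K$, and Wick cancellations, and then assemble $\widetilde M_{\gep}$ tree by tree.

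The one place where you and the paper differ is the handling of polynomial decorations. The paper first invokes \cite[Thm.~6.17]{Bruned2019} to obtain $\widetilde M_{\gep}(\X^k\tau)=\X^k\widetilde M_{\gep}\tau$ and $\widetilde M_{\gep}\cI_k(\tau)=\cI_k(\widetilde M_{\gep}\tau)$, thereby reducing to the undecorated symbols, and then computes $h_{\gep}$ explicitly on each decorated tree (such as $\Xi\X^{(0,1)}$, $\cI(\Xi)\Xi\,\X^{(0,1)}$, $\cI_1(\Xi)^2\X^{(0,1)}$) that arises in the $\f{e}_\gga$-sums of $\gD_-$. You instead absorb all of this into the blanket claim that $h_{\gep}(\sigma)=0$ whenever $\sigma$ carries a nonzero node decoration. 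Your parenthetical justification (``$(\PI^{\gep})'(\sigma)(0)$ has a factor $0^k$'') only gives $g_{\gep}(\PI)(\sigma)=0$, not $h_{\gep}(\sigma)=0$; to upgrade this you need a short induction on the number of edges: in the recursion for $h_{\gep}(\sigma)$, the root decoration of $\sigma$ is split in $\gD_-$ between the subforest and the contraction, so either the contraction carries a nonzero root decoration (hence $g_{\gep}(\PI)$ of it vanishes) or the full decoration sits on a strictly smaller decorated subforest (hence $h_{\gep}$ of it vanishes by induction). In the present setting the decorations always land on the root of each connected component, so this induction goes through cleanly; you should spell it out, since it is exactly the ``bookkeeping of the decoration sums'' that you flagged as the main obstacle.
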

\begin{proof}
Thanks to the result \cite[Theorem 6.17]{Bruned2019}, for any $k\in\bN^d$, $\tau\in T$ such that $\tau\X^k\in T $, $\cI_k(\tau)\in T$ the map $\widetilde{M}_{\gep}$ always satisfies 
\[
\begin{split}
 &\widetilde{M}_{\gep} \1 = \1 \,,\quad \widetilde{M}_{\gep}\cI_k(\tau) =  \cI_k(\widetilde{M}_{\gep}\tau) \,,\quad\widetilde{M}_{\gep} \X^k\tau = \X^k\widetilde{M}_{\gep}\tau\,\,.
\end{split}
\]
Therefore to prove the theorem it is sufficient to show  for any $m$ the identities 
\begin{equation}\label{renom_identities}
\begin{split}
\widetilde{M}_{\gep}(\cI_1(\Xi) \cI(\Xi)^m)&=\cI_1(\Xi)\cI(\Xi)^m \,,\quad \widetilde{M}_{\gep}(\cI(\Xi)^m)=\cI(\Xi)^m \\\widetilde{M}_{\gep}(\Xi \cI(\Xi)^m)&=\Xi \cI(\Xi)^m-(m C_{\gep}^{1}\cI(\Xi)^{m-1})\1_{m\geq 1}\;,\\
\widetilde{M}_{\gep}(\cI_1(\Xi)^2\cI(\Xi)^m)&=\cI_1(\Xi)^2\cI(\Xi)^m- C_{\gep}^{2}\cI(\Xi)^m\,.
\end{split}
 \end{equation}
Denoting by $W$ the set of symbols
\[W:= \{\cI_1(\Xi)^2\cI(\Xi)^m\,,\;\cI_1(\Xi) \cI(\Xi)^m\,,\; \Xi \cI(\Xi)^m\,,\; \cI(\Xi)^m \colon m\in \bN\}\,.\]
we have to calculate the operator $\gD_-$ and $h_{\gep}$ over the elements of $W$. To do that we need to know for any $w\in W$ what are the subforests $\gga\subset w$ and in principle, we should know explicitly the rule $R'$ and the forests which define $\hat{\cT}_-$. However, we remark that every subgraph $\gga$ included in $w$ with no isolated vertices can be expressed as a disjoint union of trees belonging only to $\mathfrak{T}(R)$. Thus the knowledge of $R'$ is unnecessary to calculate  $\gD_-$. Secondly we fix $\kappa>0$ sufficiently small such that the only trees of $\mathfrak{T}(R)$ with strictly negative homogeneity that are included in $W$ are the following  
\[W_-:=\left\{\tikz[scale=1.5]{\node{} child[noise]{node{}}}\,, \tikz[scale=1.2]{\node{} child{node{} edge from parent[noise] child {node{}}} child[noise]{node{} }}\, ,\,\tikz[scale=1.2]{\node{} child{node{} edge from parent[noise] child {node{}}}child{node{} edge from parent[noise] child {node{}}}child[noise]{node{} }} \, ,\, \tikz[scale=1.2]{\node{} child{node{} edge from parent[noise] child {node{}}}child{node{} edge from parent[noise] child {node{}}}child{node{} edge from parent[noise] child {node{}}}child[noise]{node{} }}\,, \, \tikz[scale=1.2]{\node{}  child[derivative]{node{} edge from parent[noise] child {node{}}}}\,,\, \tikz[scale=1.2]{\node{}  child[derivative]{node{} edge from parent[noise] child {node{}}}child[derivative]{node{} edge from parent[noise] child {node{}}}} \,,\tikz[scale=1.2]{\node{}  child[derivative]{node{} edge from parent[noise] child {node{}}}child{node{} edge from parent[noise] child {node{}}}}\,, \tikz[scale=1.2]{\node{}  child[derivative]{node{} edge from parent[noise] child {node{}}} child[derivative]{node{} edge from parent[noise] child {node{}}} child{node{} edge from parent[noise] child {node{}}}}\,,\tikz[scale=1.2]{\node{}  child[derivative]{node{} edge from parent[noise] child {node{}}} child[derivative]{node{} edge from parent[noise] child {node{}}} child{node{} edge from parent[noise] child {node{}}}child{node{} edge from parent[noise] child {node{}}}}\,\,\right\}.\]
Denoting by $\tau_m=\Xi\cI(\Xi)^m $, we calculate the quantity $\widetilde{M}_{\gep}(\tau_m)$ in case $m=0,1$ explaining all the passages. Firstly, we apply $\gD_-$ in  \eqref{defn_delta} and the recursive definition of $\widehat{A}_-$ in \eqref{pseudo_antipode} to obtain immediately
\[\gD_{-} \;\tikz[scale=1.2]{\node{} child[noise]{node{}}}= \emptyset \otimes \tikz[scale=1.2]{\node{} child[noise]{node{}}} + \tikz[scale=1.2]{\node{} child[noise]{node{}}}\otimes \1\,,\quad \widehat{A}_-\;\tikz[scale=1.2]{\node{} child[noise]{node{}}}= - \tikz[scale=1.2]{\node{} child[noise]{node{}}}\;,\]
\[ \gD_{-} \tikz[scale=1.2]{\node{} child{node{} edge from parent[noise] child {node{}}} child[noise]{node{} }}= \emptyset\otimes \tikz[scale=1.2]{\node{} child{node{} edge from parent[noise] child {node{}}} child[noise]{node{} }}+\;\tikz[scale=1.2]{\node{} child[noise]{ node{}} } \otimes \tikz[scale=1.2]{\node{} child{node{} edge from parent[noise] child {node{}}} }+ \tikz[scale=1.2]{\node{} child[noise]{ node{}} }_{\scriptstyle (0,1)} \otimes \,\tikz[scale=1.2]{\node{} child[derivative]{node{} edge from parent[noise] child {node{}}}} + \tikz[scale=1.2]{\node{} child[noise]{ node{}} }\otimes \tikz[scale=1.2]{\node{} child[noise]{ node{}} child{node{}} }+  \tikz[scale=1.2]{\node{} child[noise]{ node{}} }_{\scriptstyle (0,1)}\otimes \tikz[scale=1.2]{\node{} child[noise]{ node{}} child[derivative]{node{}} }+  \tikz[scale=1.2]{\node{} child[noise]{ node{}} }\;\tikz[scale=1.2]{\node{} child[noise]{ node{}} }\otimes \tikz[scale=1.2]{\node{} child{node{}} }+\, \tikz[scale=1.2]{\node{} child[noise]{ node{}} }_{\scriptstyle (0,1)}\,\tikz[scale=1.2]{\node{} child[noise]{ node{}} }_{\scriptstyle (0,1)}\otimes \tikz[scale=1.2]{\node{} child[derivative]{node{}} }+\tikz[scale=1.2]{\node{} child{node{} edge from parent[noise] child {node{}}} child[noise]{node{} }}\otimes \1\; ,\]
\[ \widehat{A}_{-}\tikz[scale=1.2]{\node{} child{node{} edge from parent[noise] child {node{}}} child[noise]{node{} }}=-\tikz[scale=1.2]{\node{} child{node{} edge from parent[noise] child {node{}}} child[noise]{node{} }}+\;\tikz[scale=1.2]{\node{} child[noise]{ node{}} } \;\tikz[scale=1.2]{\node{} child{node{} edge from parent[noise] child {node{}}} }- \widehat{A}_{-}\left(\,\tikz[scale=1.2]{\node{} child[noise]{ node{}} }_{\scriptstyle (0,1)} \right)\;\tikz[scale=1.2]{\node{} child[derivative]{node{} edge from parent[noise] child {node{}}}}+ \tikz[scale=1.2]{\node{} child[noise]{ node{}} }\;\;\tikz[scale=1.2]{\node{} child[noise]{ node{}} child{node{}} }- \widehat{A}_{-}\left(\,\tikz[scale=1.2]{\node{} child[noise]{ node{}} }_{\scriptstyle (0,1)}\right) \tikz[scale=1.2]{\node{} child[noise]{ node{}} child[derivative]{node{}} }- \tikz[scale=1.2]{\node{} child[noise]{ node{}} }\;\;\tikz[scale=1.2]{\node{} child[noise]{ node{}} }\;\; \tikz[scale=1.2]{\node{} child{node{}} }- \widehat{A}_{-}\left(\,\tikz[scale=1.2]{\node{} child[noise]{ node{}} }_{\scriptstyle (0,1)} \right)\widehat{A}_{-}\left(\,\tikz[scale=1.2]{\node{} child[noise]{ node{}} }_{\scriptstyle (0,1)}\right)\; \tikz[scale=1.2]{\node{} child[derivative]{node{}} }\,.\]
The terms with the extra decoration $(0,1)$ come from the sum over $\mathfrak{e}_{\gga}$ in the definition of $\gD_-$, combined with the projection operator $p$ and the map $\pi$ (we recall that all forests generating $\mathcal{J}$ are identified to zero). Using again the general definition of $\gD_-$ in \eqref{defn_delta} and the recursive identity \eqref{pseudo_antipode}, the calculations for the symbol $ \Xi \X^{(0,1)}$ are given by 
\[\gD_{-}\,\tikz[scale=1.2]{\node{} child[noise]{ node{}} }_{\scriptstyle (0,1)}=\emptyset\otimes  \,\tikz[scale=1.2]{\node{} child[noise]{ node{}} }_{\scriptstyle (0,1)}+ \tikz[scale=1.2]{\node{} child[noise]{node{}}}\otimes \bullet_{(0,1)}+ \,\tikz[scale=1.2]{\node{} child[noise]{ node{}} }_{\scriptstyle (0,1)}\otimes \1\;,\quad \widehat{A}_-\left(\,\tikz[scale=1.2]{\node{} child[noise]{ node{}} }_{\scriptstyle (0,1)}\right)= - \, \tikz[scale=1.2]{\node{} child[noise]{ node{}} }_{\scriptstyle (0,1)}+ \tikz[scale=1.2]{\node{} child[noise]{node{}}}\;\;\bullet_{(0,1)}\,,\]
To complete the calculation of $\widetilde{M}_{\gep}$, we need to apply $g_{\gep}(\PI)$ on the images of the pseudo antipode. By definition of $(\PI^{\gep})'$ one has
\begin{equation}\label{stochastic_semplification}
\begin{gathered}
g_{\gep}(\PI) (\,\tikz[scale=1.2]{\node{} child[noise]{node{} }}\,)= \bE\left[\int_{\bR^2}  \gr_{\gep}(-z_1)d\widetilde{W}_{z_1}\right]=0\,,\quad g_{\gep}(\PI) \left(\,\tikz[scale=1.2]{\node{} child[noise]{ node{}} }_{\scriptstyle (0,1)}\;\right)= 0\,,\\ g_{\gep}(\PI)\left(\tikz[scale=1.2]{\node{} child{node{} edge from parent[noise] child {node{}}} child[noise]{node{} }}\right)=\bE\left[\int_{\bR^2}  \gr_{\gep}(-z_1)d\widetilde{W}_{z_1}\int_{\bR^2}  K*\gr_{\gep}(-z_2)d\widetilde{W}_{z_2}\right]= C^1_{\gep}\,.
\end{gathered}
\end{equation}
Hence we conclude firstly
\begin{equation}\label{h_eps}
h_{\gep}(\;\tikz[scale=1.2]{\node{} child[noise]{node{} }}\;)=h_{\gep}\left(\,\tikz[scale=1.2]{\node{} child[noise]{ node{}} }_{\scriptstyle (0,1)}\,\right)=0 \,,\quad h_{\gep}\left(\tikz[scale=1.2]{\node{} child{node{} edge from parent[noise] child {node{}}} child[noise]{node{} }}\right)= - C^1_{\gep}\,.
\end{equation}
Plugging the formulae \eqref{h_eps} in the sums of $\gD_-$ we obtain the right identities of \eqref{renom_identities} for $\widetilde{M}_{\gep}\tau_m$ $m=0,1$. Let us pass to the calculation of $\widetilde{M}_{\gep}\tau_m$ $m=2,3$. Writing  $\gD_- \tau_m$ and $\widehat{A}_-\tau_m $, a deep consequence of \eqref{h_eps} and \eqref{stochastic_semplification} is then all the subforests containing the trees $\Xi$ or  $\Xi\X^{(0,1)}$ between the connected components will become zero after applying $h_{\gep}$ or $g_{\gep}(\PI)$, thereby not giving any contribution for $\widetilde{M}_{\gep}$. Denoting by $(\cdots)$ all these terms we have
\vspace{-0.2cm}

\[\gD_{-}\tikz[scale=1.2]{\node{} child{node{} edge from parent[noise] child {node{}}}child{node{} edge from parent[noise] child {node{}}}child[noise]{node{} }}=\emptyset\otimes\tikz[scale=1.2]{\node{} child{node{} edge from parent[noise] child {node{}}}child{node{} edge from parent[noise] child {node{}}}child[noise]{node{} }} +2\,\tikz[scale=1.2]{\node{} child{node{} edge from parent[noise] child {node{}}} child[noise]{node{} }}\otimes \tikz[scale=1.2]{\node{} child{node{} edge from parent[noise] child {node{}}}}+2\,\tikz[scale=1.2]{\node{} child{node{} edge from parent[noise] child {node{}}} child[noise]{node{} }}_{\!\!\!\scriptstyle (0,1)}\otimes \tikz[scale=1.2]{\node{} child[derivative]{node{} edge from parent[noise] child {node{}}}}+ \tikz[scale=1.2]{\node{} child{node{} edge from parent[noise] child {node{}}}child{node{} edge from parent[noise] child {node{}}}child[noise]{node{} }}\otimes\1 + (\cdots)\,,\]
\[\begin{split}
\gD_{-}\tikz[scale=1.2]{\node{} child{node{} edge from parent[noise] child {node{}}}child{node{} edge from parent[noise] child {node{}}}child{node{} edge from parent[noise] child {node{}}}child[noise]{node{} }}&= \emptyset\otimes \tikz[scale=1.2]{\node{} child{node{} edge from parent[noise] child {node{}}}child{node{} edge from parent[noise] child {node{}}}child{node{} edge from parent[noise] child {node{}}}child[noise]{node{} }}+3\,\tikz[scale=1.2]{\node{} child{node{} edge from parent[noise] child {node{}}} child[noise]{node{} }}\otimes \tikz[scale=1.2]{\node{} child{node{} edge from parent[noise] child {node{}}}child{node{} edge from parent[noise] child {node{}}}}+6\,\tikz[scale=1.2]{\node{} child{node{} edge from parent[noise] child {node{}}} child[noise]{node{} }}_{\!\!\!\scriptstyle (0,1)}\otimes \tikz[scale=1.2]{\node{}child{node{} edge from parent[noise] child {node{}}} child[derivative]{node{} edge from parent[noise] child {node{}}}}+  3\,\tikz[scale=1.2]{\node{} child{node{} edge from parent[noise] child {node{}}}child{node{} edge from parent[noise] child {node{}}}child[noise]{node{} }}\otimes \tikz[scale=1.2]{\node{} child{node{} edge from parent[noise] child {node{}}}}+ \tikz[scale=1.2]{\node{} child{node{} edge from parent[noise] child {node{}}}child{node{} edge from parent[noise] child {node{}}}child{node{} edge from parent[noise] child {node{}}}child[noise]{node{} }} \otimes \1  +(\cdots)\,,
\end{split}\]
\[\widehat{A}_{-}\tikz[scale=1.2]{\node{} child{node{} edge from parent[noise] child {node{}}}child{node{} edge from parent[noise] child {node{}}}child[noise]{node{} }}=-\tikz[scale=1.2]{\node{} child{node{} edge from parent[noise] child {node{}}}child{node{} edge from parent[noise] child {node{}}}child[noise]{node{} }} -2\widehat{A}_{-}\left(\,\tikz[scale=1.2]{\node{} child{node{} edge from parent[noise] child {node{}}} child[noise]{node{} }}\right)\;\,\tikz[scale=1.2]{\node{} child{node{} edge from parent[noise] child {node{}}}}-2\widehat{A}_{-}\left(\tikz[scale=1.2]{\node{} child{node{} edge from parent[noise] child {node{}}} child[noise]{node{} }}_{\!\!\!\scriptstyle (0,1)}\right)\;\,\tikz[scale=1.2]{\node{} child[derivative]{node{} edge from parent[noise] child {node{}}}}\;+ (\cdots)\,,\]
\[\begin{split}
\widehat{A}_{-}\tikz[scale=1.2]{\node{} child{node{} edge from parent[noise] child {node{}}}child{node{} edge from parent[noise] child {node{}}}child{node{} edge from parent[noise] child {node{}}}child[noise]{node{} }}&= -\tikz[scale=1.2]{\node{} child{node{} edge from parent[noise] child {node{}}}child{node{} edge from parent[noise] child {node{}}}child{node{} edge from parent[noise] child {node{}}}child[noise]{node{} }} -3\widehat{A}_{-}\left(\tikz[scale=1.2]{\node{} child{node{} edge from parent[noise] child {node{}}} child[noise]{node{} }}\right)\; \tikz[scale=1.2]{\node{} child{node{} edge from parent[noise] child {node{}}}child{node{} edge from parent[noise] child {node{}}}}+6\widehat{A}_{-}\left(\,\tikz[scale=1.2]{\node{} child{node{} edge from parent[noise] child {node{}}} child[noise]{node{} }}_{\!\!\!\scriptstyle (0,1)}\right)\; \tikz[scale=1.2]{\node{}child{node{} edge from parent[noise] child {node{}}} child[derivative]{node{} edge from parent[noise] child {node{}}}} -2\widehat{A}_{-}\left(\tikz[scale=1.2]{\node{} child{node{} edge from parent[noise] child {node{}}}child{node{} edge from parent[noise] child {node{}}}child[noise]{node{} }}\right)\; \tikz[scale=1.2]{\node{} child{node{} edge from parent[noise] child {node{}}}}+ (\cdots)\;. 
\end{split}\]
Similarly we also have
\[\gD_-\tikz[scale=1.2]{\node{} child{node{} edge from parent[noise] child {node{}}} child[noise]{node{} }}_{\!\!\!\scriptstyle (0,1)}= \emptyset \otimes \tikz[scale=1.2]{\node{} child{node{} edge from parent[noise] child {node{}}} child[noise]{node{} }}_{\!\!\!\scriptstyle (0,1)} + \tikz[scale=1.2]{\node{} child{node{} edge from parent[noise] child {node{}}} child[noise]{node{} }}_{\!\!\!\scriptstyle (0,1)}\otimes \1 + (\cdots)\,,\quad \widehat{A}_{-}\left(\,\tikz[scale=1.2]{\node{} child{node{} edge from parent[noise] child {node{}}} child[noise]{node{} }}_{\!\!\!\scriptstyle (0,1)}\right)= -\tikz[scale=1.2]{\node{} child{node{} edge from parent[noise] child {node{}}} child[noise]{node{} }}_{\!\!\!\scriptstyle (0,1)}+ (\cdots)\,.\]
\vspace{-0.4cm}

\noindent Therefore the calculation of $\widetilde{M}_{\gep}\tau_m$ is obtained once we know the constants
\[g_{\gep}(\PI)\left(\tikz[scale=1.2]{\node{} child{node{} edge from parent[noise] child {node{}}} child[noise]{node{} }}_{\!\!\!\scriptstyle (0,1)}\right),\;g_{\gep}(\PI)\left(\tikz[scale=1.2]{\node{} child{node{} edge from parent[noise] child {node{}}}child{node{} edge from parent[noise] child {node{}}}child[noise]{node{} }}\right),\;    g_{\gep}(\PI)\left(\tikz[scale=1.2]{\node{} child{node{} edge from parent[noise] child {node{}}}child{node{} edge from parent[noise] child {node{}}}}\right),\;g_{\gep}(\PI)\left(\tikz[scale=1.2]{\node{} child{node{} edge from parent[noise] child {node{}}}child{node{} edge from parent[noise] child {node{}}}child{node{} edge from parent[noise] child {node{}}}child[noise]{node{} }}\right).\]
The first two constants from the left are zero because we are taking the expectations over a product of an \emph{odd} number of centred Gaussian variables. On the other hand, using the shorthand notation $K_{\gep}= K*\gr_{\gep}$ we have the identity
\[ g_{\gep}(\PI)\left(\tikz[scale=1.2]{\node{}child{node{} edge from parent[noise] child {node{}}} child{node{} edge from parent[noise] child {node{}}}}\right)= \bE \left[ \int_{\bR^2}  K_{\gep}(-z_1)d\widetilde{W}_{z_1} \right]^2= \int_{\bR^2}  (K_{\gep}(z))^2 dz\,.\]
Applying the Wick's formula for the product of four Gaussian random variables one has
\[\begin{split}
&g_{\gep}(\PI)\left(\tikz[scale=1.2]{\node{} child{node{} edge from parent[noise] child {node{}}}child{node{} edge from parent[noise] child {node{}}}child{node{} edge from parent[noise] child {node{}}}child[noise]{node{} }}\right)=\\&=\bE\left[\int_{\bR^2}  K_{\gep}(-z_1)d\widetilde{W}_{z_1} \int_{\bR^2}  K_{\gep}(-z_2)d\widetilde{W}_{z_2}\int_{\bR^2}  K_{\gep}(-z_3)d\widetilde{W}_{z_3}\int_{\bR^2}  \gr_{\gep}(-z_4)d\widetilde{W}_{z_4}\right]\\&= 3\bE\left[\int_{\bR^2}  K_{\gep}(-z_1)d\widetilde{W}_{z_1} \int_{\bR^2}  K_{\gep}(-z_2)d\widetilde{W}_{z_2}\right]\bE\left[\int_{\bR^2}  K_{\gep}(-z_3)d\widetilde{W}_{z_3}\int_{\bR^2}  \gr_{\gep}(-z_4)d\widetilde{W}_{z_4}\right]\\&=3 \int_{\bR^2}  (K_{\gep}(z))^2 dz\int_{\bR^2}  (K_{\gep}(z))\gr_{\gep}(z) dz=3\, g_{\gep}(\PI)\left(\tikz[scale=1.2]{\node{}child{node{} edge from parent[noise] child {node{}}} child{node{} edge from parent[noise] child {node{}}}}\right) C^{1}_{\gep} \,.
\end{split}\]
By replacing the values of $g_{\gep}(\PI)$ in the above calculations of $\widehat{A}_-\tau_m$ we yield to 
\begin{equation}\label{h_eps2}
h_{\gep}\left(\tikz[scale=1.2]{\node{} child{node{} edge from parent[noise] child {node{}}} child[noise]{node{} }}_{\!\!\!\scriptstyle (0,1)}\right)=h_{\gep}\left(\tikz[scale=1.2]{\node{} child{node{} edge from parent[noise] child {node{}}} child{node{} edge from parent[noise] child {node{}}} child[noise]{node{} }}\right)= h_{\gep}\left(\tikz[scale=1.2]{\node{} child{node{} edge from parent[noise] child {node{}}}child{node{} edge from parent[noise] child {node{}}}child{node{} edge from parent[noise] child {node{}}}child[noise]{node{} }}\right)=0\;.
\end{equation}
Moreover the values of $\widetilde{M}_{\gep}\tau_m$ coincide with \eqref{renom_identities} for $ m\leq 3$. Looking at $\widetilde{M}_{\gep}\tau_m$ if $m> 3$ and $ \widetilde{M}_{\gep}(\cI(\Xi)^m)$, the subforests contained in $\tau_m$ and $\cI(\Xi)^m$ that are not identified to zero in the quotient $\cT_-$ contain at least one of the following trees in $W_-$
\[\left\{\tikz[scale=1.5]{\node{} child[noise]{node{}}}\,, \tikz[scale=1.2]{\node{} child{node{} edge from parent[noise] child {node{}}} child[noise]{node{} }}\, ,\,\tikz[scale=1.2]{\node{} child{node{} edge from parent[noise] child {node{}}}child{node{} edge from parent[noise] child {node{}}}child[noise]{node{} }} \, ,\, \tikz[scale=1.2]{\node{} child{node{} edge from parent[noise] child {node{}}}child{node{} edge from parent[noise] child {node{}}}child{node{} edge from parent[noise] child {node{}}}child[noise]{node{} }}\,, \tikz[scale=1.2]{\node{} child[noise]{ node{}} }_{\scriptstyle (0,1)}\,, \,\tikz[scale=1.5]{\node{} child{node{} edge from parent[noise] child {node{}}} child[noise]{node{} }}_{\!\!\!\!\scriptstyle (0,1)} \right\}\,,\quad \left\{\tikz[scale=1.5]{\node{} child[noise]{node{}}}\,,  \tikz[scale=1.5]{\node{} child[noise]{ node{}} }_{\scriptstyle (0,1)}\right\}\,.\]
Since we know from \eqref{h_eps} \eqref{h_eps2} the values of $h_{\gep}$ on these trees, we obtain the following identity
\[\gD_{-}\tau_{m}= \emptyset\otimes \tau_{m}+ m\;\tikz[scale=1.3]{\node{} child{node{} edge from parent[noise] child {node{}}} child[noise]{node{}}}\otimes \underbrace{\tikz[scale=1.5,level/.style={level distance=1ex, sibling distance=1em)}]{\node{}child{node{} edge from parent[noise] child {node{}}} child{node[label={[label distance=0.1cm]170  : ... }]{} edge from parent[noise] child {node{}}}}}_{\text{m-1 }}+ (\cdots)\,,\quad \gD_{-} \underbrace{\tikz[scale=1.5,level/.style={level distance=1ex, sibling distance=1em)}]{\node{}child{node{} edge from parent[noise] child {node{}}} child{node[label={[label distance=0.1cm]170  : ... }]{} edge from parent[noise] child {node{}}}}}_{\text{m }}= \emptyset\otimes  \underbrace{\tikz[scale=1.5,level/.style={level distance=1ex, sibling distance=1em)}]{\node{}child{node{} edge from parent[noise] child {node{}}} child{node[label={[label distance=0.1cm]170  : ... }]{} edge from parent[noise] child {node{}}}}}_{\text{m }}+ (\cdots) \,,\]
where the term $(\cdots)$ contains some terms that becomes zero after we apply $h_{\gep}$. The combinatorial factor $m$ appears because the tree associated to $\Xi\cI(\Xi)$ appears $m$ times inside $\tau_m$. Therefore we prove the first part of the equations \eqref{renom_identities}. We pass to the terms of the form $\gs_m=\cI_1(\Xi)^2\cI(\Xi)^m$ and $\gh_k= \cI_1(\Xi)\cI(\Xi)^k$ for $m=0$ and $k\leq 1$. Adopting the same notation as before to denote the terms that becomes zero after applying $h_{\gep}$ for $\gD_-$ or $g_{\gep}(\PI)$ for $\widehat{A}_-$, we have 
\[\gD_{-}\tikz[scale=1.2]{\node{}  child[derivative]{node{} edge from parent[noise] child {node{}}}}=\emptyset\otimes \tikz[scale=1.2]{\node{}  child[derivative]{node{} edge from parent[noise] child {node{}}}} +\tikz[scale=1.2]{\node{}  child[derivative]{node{} edge from parent[noise] child {node{}}}}\otimes\1+ (\cdots)\,,\quad \gD_{-}\tikz[scale=1.2]{\node{}  child[derivative]{node{} edge from parent[noise] child {node{}}} child{node{} edge from parent[noise] child {node{}}}}=\emptyset\otimes \tikz[scale=1.2]{\node{}  child{node{} edge from parent[noise] child {node{}}} child[derivative]{node{} edge from parent[noise] child {node{}}}} +\tikz[scale=1.2]{\node{}  child[derivative]{node{} edge from parent[noise] child {node{}}}}\otimes\tikz[scale=1.2]{\node{}  child{node{} edge from parent[noise] child {node{}}}}+\tikz[scale=1.2]{\node{}  child{node{} edge from parent[noise] child {node{}}} child[derivative]{node{} edge from parent[noise] child {node{}}}}\otimes \1 + (\cdots)\,,\]
\[\gD_{-}\tikz[scale=1.2]{\node{} child[derivative]{node{} edge from parent[noise] child {node{}}} child[derivative]{node{} edge from parent[noise] child {node{}}}}=\emptyset\otimes \tikz[scale=1.2]{\node{} child[derivative]{node{} edge from parent[noise] child {node{}}} child[derivative]{node{} edge from parent[noise] child {node{}}}} +2\,\tikz[scale=1.2]{\node{} child[derivative]{node{} edge from parent[noise] child {node{}}} } \otimes \tikz[scale=1.2]{\node{} child[derivative]{node{} edge from parent[noise] child {node{}}} }+\tikz[scale=1.2]{\node{} child[derivative]{node{} edge from parent[noise] child {node{}}} child[derivative]{node{} edge from parent[noise] child {node{}}}}\otimes\1+ (\cdots)\,,\]
\[\widehat{A}_-\tikz[scale=1.2]{\node{}  child[derivative]{node{} edge from parent[noise] child {node{}}}}=-\, \tikz[scale=1.2]{\node{}  child[derivative]{node{} edge from parent[noise] child {node{}}}} +(\cdots)\,,\quad \widehat{A}_-\tikz[scale=1.2]{\node{}  child[derivative]{node{} edge from parent[noise] child {node{}}} child{node{} edge from parent[noise] child {node{}}}}=-\tikz[scale=1.2]{\node{}  child{node{} edge from parent[noise] child {node{}}} child[derivative]{node{} edge from parent[noise] child {node{}}}} +\tikz[scale=1.2]{\node{}  child[derivative]{node{} edge from parent[noise] child {node{}}}}\;\;\tikz[scale=1.2]{\node{}  child{node{} edge from parent[noise] child {node{}}}}+(\cdots)\,,\quad \widehat{A}_-\tikz[scale=1.2]{\node{} child[derivative]{node{} edge from parent[noise] child {node{}}} child[derivative]{node{} edge from parent[noise] child {node{}}}}=-\tikz[scale=1.2]{\node{} child[derivative]{node{} edge from parent[noise] child {node{}}} child[derivative]{node{} edge from parent[noise] child {node{}}}} +2\,\tikz[scale=1.2]{\node{} child[derivative]{node{} edge from parent[noise] child {node{}}} }\;\,\tikz[scale=1.2]{\node{} child[derivative]{node{} edge from parent[noise] child {node{}}} }\,+(\cdots)\,.\]
Applying the map $(\PI^{\gep})'$ we obtain also
\begin{equation}\label{stochastic_semplification2}
\begin{gathered}
g_{\gep}(\PI) \left(\,\tikz{\node{} child[derivative]{node{} child[noise]{node{} }}}\,\right)=0\,,\; g_{\gep}(\PI)\left(\tikz[scale=1.2]{\node{} child[derivative]{node{} edge from parent[noise] child {node{}}} child[derivative]{node{} edge from parent[noise] child {node{}}}}\right)=\bE\left[\int_{\bR^2}   \partial_xK_{\gep}(-z_1)d\widetilde{W}_{z_1}\right]^2= C^2_{\gep}\,,\\ g_{\gep}(\PI)\left(\tikz[scale=1.2]{\node{}child{node{} edge from parent[noise] child {node{}}} child[derivative]{node{} edge from parent[noise] child {node{}}}}\right)= \bE \left[ \int_{\bR^2}  K_{\gep}(-z_1)d\widetilde{W}_{z_1} \int_{\bR^2}  \partial_xK_{\gep}(-z_2)d\widetilde{W}_{z_2}\right]\\= \int_{\bR^2}  K_{\gep}(z)\partial_xK_{\gep}(z) dz=0\,,
\end{gathered}
\end{equation}
where the first and the last identity of \eqref{stochastic_semplification2} are obtained because we take the expectation of a centred Gaussian variable and the function $x\to K_{\gep}(t,x)\partial_xK_{\gep}(t,x)$ is odd in $x$ for any $t>0$. Then we obtain
\begin{equation}\label{h_eps3}
h_{\gep}\left(\;\tikz{\node{} child[derivative]{node{} child[noise]{node{} }}}\;\right)=h_{\gep}\left(\tikz[scale=1.2]{\node{}child{node{} edge from parent[noise] child {node{}}} child[derivative]{node{} edge from parent[noise] child {node{}}}}\right)= 0\,,\quad h_{\gep}\left(\tikz[scale=1.2]{\node{}child[derivative]{node{} edge from parent[noise] child {node{}}} child[derivative]{node{} edge from parent[noise] child {node{}}}}\right)=-C^2_{\gep}\,,
\end{equation}
and consequently the identities \eqref{renom_identities} for $\widetilde{M}_{\gep}\gs_m$ and $\widetilde{M}_{\gep}\gh_k$. Passing to the calculation of $\widetilde{M}_{\gep}\gs_m$ for $m=1,2$ we have
\[\gD_{-}\tikz[scale=1.2]{\node{} child[derivative]{node{} edge from parent[noise] child {node{}}} child[derivative]{node{} edge from parent[noise] child {node{}}}child{node{} edge from parent[noise] child {node{}}}}=\emptyset\otimes \tikz[scale=1.2]{\node{} child[derivative]{node{} edge from parent[noise] child {node{}}} child[derivative]{node{} edge from parent[noise] child {node{}}}child{node{} edge from parent[noise] child {node{}}}} +\tikz[scale=1.2]{\node{} child[derivative]{node{} edge from parent[noise] child {node{}}} child[derivative]{node{} edge from parent[noise] child {node{}}}}\,\otimes \, \tikz[scale=1.2]{\node{} child{node{} edge from parent[noise] child {node{}}}}+ \tikz[scale=1.2]{\node{} child[derivative]{node{} edge from parent[noise] child {node{}}} child[derivative]{node{} edge from parent[noise] child {node{}}}}_{\!\!(0,1)}\otimes \tikz[scale=1.2]{\node{} child[derivative]{node{} edge from parent[noise] child {node{}}}}+\tikz[scale=1.2]{\node{} child[derivative]{node{} edge from parent[noise] child {node{}}} child[derivative]{node{} edge from parent[noise] child {node{}}}child{node{} edge from parent[noise] child {node{}}}}\otimes\1+ (\cdots)\,,\]
\[\begin{split}
\gD_{-}\tikz[scale=1.2]{\node{} child[derivative]{node{} edge from parent[noise] child {node{}}} child[derivative]{node{} edge from parent[noise] child {node{}}} child{node{} edge from parent[noise] child {node{}}}child{node{} edge from parent[noise] child {node{}}}}&=\emptyset\otimes \tikz[scale=1.2]{\node{} child[derivative]{node{} edge from parent[noise] child {node{}}} child[derivative]{node{} edge from parent[noise] child {node{}}}child{node{} edge from parent[noise] child {node{}}} child{node{} edge from parent[noise] child {node{}}} } +\tikz[scale=1.2]{\node{} child[derivative]{node{} edge from parent[noise] child {node{}}} child[derivative]{node{} edge from parent[noise] child {node{}}}}\otimes  \tikz[scale=1.2]{\node{}child{node{} edge from parent[noise] child {node{}}} child{node{} edge from parent[noise] child {node{}}}}+2\;\tikz[scale=1.2]{\node{} child[derivative]{node{} edge from parent[noise] child {node{}}} child[derivative]{node{} edge from parent[noise] child {node{}}}}_{\!\!(0,1)}\otimes  \tikz[scale=1.2]{\node{}child{node{} edge from parent[noise] child {node{}}} child[derivative]{node{} edge from parent[noise] child {node{}}}} +2\,\tikz[scale=1.2]{\node{} child[derivative]{node{} edge from parent[noise] child {node{}}} child[derivative]{node{} edge from parent[noise] child {node{}}}child{node{} edge from parent[noise] child {node{}}}}\otimes \tikz[scale=1.2]{\node{} child{node{} edge from parent[noise] child {node{}}}}+\tikz[scale=1.2]{\node{} child[derivative]{node{} edge from parent[noise] child {node{}}} child[derivative]{node{} edge from parent[noise] child {node{}}}child{node{} edge from parent[noise] child {node{}}}child{node{} edge from parent[noise] child {node{}}}}\otimes\1+ (\cdots)\,,
\end{split}\]
\[\widehat{A}_-\tikz[scale=1.2]{\node{} child[derivative]{node{} edge from parent[noise] child {node{}}} child[derivative]{node{} edge from parent[noise] child {node{}}}child{node{} edge from parent[noise] child {node{}}}}=- \tikz[scale=1.2]{\node{} child[derivative]{node{} edge from parent[noise] child {node{}}} child[derivative]{node{} edge from parent[noise] child {node{}}}child{node{} edge from parent[noise] child {node{}}}} -\widehat{A}_-\left(\tikz{\node{} child[derivative]{node{} edge from parent[noise] child {node{}}} child[derivative]{node{} edge from parent[noise] child {node{}}}}\right)\; \tikz[scale=1.2]{\node{} child{node{} edge from parent[noise] child {node{}}}}+(\cdots)\,,\]
\[\begin{split}
\widehat{A}_-\;\tikz[scale=1.2]{\node{} child[derivative]{node{} edge from parent[noise] child {node{}}} child[derivative]{node{} edge from parent[noise] child {node{}}} child{node{} edge from parent[noise] child {node{}}}child{node{} edge from parent[noise] child {node{}}}}&=-\tikz[scale=1.2]{\node{} child[derivative]{node{} edge from parent[noise] child {node{}}} child[derivative]{node{} edge from parent[noise] child {node{}}}child{node{} edge from parent[noise] child {node{}}} child{node{} edge from parent[noise] child {node{}}} } -\widehat{A}_-\left(\tikz{\node{} child[derivative]{node{} edge from parent[noise] child {node{}}} child[derivative]{node{} edge from parent[noise] child {node{}}}}\right)\; \tikz[scale=1.2]{\node{}child{node{} edge from parent[noise] child {node{}}} child{node{} edge from parent[noise] child {node{}}}}-2\,\widehat{A}_-\left(\tikz[scale=1.2]{\node{} child[derivative]{node{} edge from parent[noise] child {node{}}} child[derivative]{node{} edge from parent[noise] child {node{}}}child{node{} edge from parent[noise] child {node{}}}}\right)\,\tikz[scale=1.2]{\node{}  child{node{} edge from parent[noise] child {node{}}}} +(\cdots)\,.
\end{split}\]
Using again the same notations
\[\gD_-\tikz[scale=1.2]{\node{} child[derivative]{node{} edge from parent[noise] child {node{}}} child[derivative]{node{} edge from parent[noise] child {node{}}}}_{\!\!\!(0,1)} = \emptyset \otimes \tikz[scale=1.2]{\node{} child[derivative]{node{} edge from parent[noise] child {node{}}} child[derivative]{node{} edge from parent[noise] child {node{}}}}_{\!\!\!(0,1)}+ \tikz[scale=1.2]{\node{} child[derivative]{node{} edge from parent[noise] child {node{}}} child[derivative]{node{} edge from parent[noise] child {node{}}}}_{\!\!\!(0,1)}\otimes \1 + (\cdots)\,,\quad \widehat{A}_{-}\left(\,\tikz[scale=1.2]{\node{} child[derivative]{node{} edge from parent[noise] child {node{}}} child[derivative]{node{} edge from parent[noise] child {node{}}}}_{\!\!\!(0,1)}\right)=- \tikz[scale=1.2]{\node{} child[derivative]{node{} edge from parent[noise] child {node{}}} child[derivative]{node{} edge from parent[noise] child {node{}}}}_{\!\!\!(0,1)}+ (\cdots)\,.\]
Similarly as before, we apply Wick's formula and the definition of $(\PI^{\gep})'$ to obtain
\[\begin{split}
g_{\gep}(\PI)\left(\tikz[scale=1.2]{\node{} child[derivative]{node{} edge from parent[noise] child {node{}}} child[derivative]{node{} edge from parent[noise] child {node{}}}}_{\!\!\!(0,1)}\right)&=g_{\gep}(\PI)\left(\tikz[scale=1.2]{\node{} child[derivative]{node{} edge from parent[noise] child {node{}}} child[derivative]{node{} edge from parent[noise] child {node{}}}child{node{} edge from parent[noise] child {node{}}}}\right)=g_{\gep}(\PI)\left(\tikz[scale=1.2]{\node{} child{node{} edge from parent[noise] child {node{}}}}\right)=0\,,\\ g_{\gep}(\PI)\left(\tikz[scale=1.2]{\node{} child{node{} edge from parent[noise] child {node{}}} child{node{} edge from parent[noise] child {node{}}}  child[derivative]{node{} edge from parent[noise] child {node{}}} child[derivative]{node{} edge from parent[noise] child {node{}}}}\right)&= 2\left(g_{\gep}(\PI) \left(\,\tikz{\node{} child[derivative]{node{} child[noise]{node{} }}child{node{} child[noise]{node{} }}}\,\right)\right)^2 +g_{\gep}(\PI) \left(\,\tikz{\node{} child[derivative]{node{} child[noise]{node{} }}child[derivative]{node{} child[noise]{node{} }}}\,\right)g_{\gep}(\PI) \left(\,\tikz{\node{} child{node{} child[noise]{node{} }}child{node{} child[noise]{node{} }}}\,\right)\\&= g_{\gep}(\PI) \left(\,\tikz{\node{} child[derivative]{node{} child[noise]{node{} }}child[derivative]{node{} child[noise]{node{} }}}\,\right)g_{\gep}(\PI) \left(\,\tikz{\node{} child{node{} child[noise]{node{} }}child{node{} child[noise]{node{} }}}\right)\,.
\end{split}\]
Thus yielding finally 
\begin{equation}\label{h_eps4}
h_{\gep}\left(\,\tikz[scale=1.2]{\node{} child[derivative]{node{} edge from parent[noise] child {node{}}} child[derivative]{node{} edge from parent[noise] child {node{}}}}_{\!\!\!(0,1)} \right)= h_{\gep}\left(\tikz[scale=1.2]{\node{} child[derivative]{node{} edge from parent[noise] child {node{}}} child[derivative]{node{} edge from parent[noise] child {node{}}}child{node{} edge from parent[noise] child {node{}}}}\right)=h_{\gep}\left(\tikz[scale=1.2]{\node{} child{node{} edge from parent[noise] child {node{}}} child{node{} edge from parent[noise] child {node{}}}  child[derivative]{node{} edge from parent[noise] child {node{}}} child[derivative]{node{} edge from parent[noise] child {node{}}}}\right)=0\,,
\end{equation}
and the identities \eqref{renom_identities} when $m\leq 2$, $k\leq 1$. In case $m>2$ or $k>1$, the terms in the left factor of $\gD_- \gs_m$ and $\gD_- \gh_k$ are respectively forests composed by the trees
\[\left\{\tikz[scale=1.5]{\node{} child[noise]{node{}}}\,,\tikz[scale=1.5]{\node{} child[noise]{ node{}} }_{(0,1)} \, ,\,\tikz[scale=1.2]{\node{}  child[derivative]{node{} edge from parent[noise] child {node{}}}}\,,\, \tikz[scale=1.2]{\node{}  child[derivative]{node{} edge from parent[noise] child {node{}}}child[derivative]{node{} edge from parent[noise] child {node{}}}} \,,\tikz[scale=1.2]{\node{} child[derivative]{node{} edge from parent[noise] child {node{}}} child[derivative]{node{} edge from parent[noise] child {node{}}}}_{\!\!\!(0,1)}\,,\tikz[scale=1.2]{\node{}  child[derivative]{node{} edge from parent[noise] child {node{}}}child{node{} edge from parent[noise] child {node{}}}}\,, \tikz[scale=1.2]{\node{}  child[derivative]{node{} edge from parent[noise] child {node{}}} child[derivative]{node{} edge from parent[noise] child {node{}}} child{node{} edge from parent[noise] child {node{}}}}\,,\tikz[scale=1.2]{\node{}  child[derivative]{node{} edge from parent[noise] child {node{}}} child[derivative]{node{} edge from parent[noise] child {node{}}} child{node{} edge from parent[noise] child {node{}}}child{node{} edge from parent[noise] child {node{}}}}\,\,\right\}\,\quad \text {or} \quad\left\{\tikz[scale=1.5]{\node{} child[noise]{node{}}}\,,  \tikz[scale=1.5]{\node{} child[noise]{ node{}} }_{(0,1)}\,,\tikz[scale=1.2]{\node{} child[derivative]{child[noise]{node{}}}}\,,\tikz[scale=1.2]{\node{}  child[derivative]{node{} edge from parent[noise] child {node{}}}child{node{} edge from parent[noise] child {node{}}}}\right\}\,.\]
Applying the identities \eqref{h_eps} \eqref{h_eps3} and \eqref{h_eps4}, the only relevant terms in the sums become
\[\gD_- \gs_m= \emptyset \otimes \gs_m+ \tikz[scale=1.2]{\node{} child[derivative]{node{} edge from parent[noise] child {node{}}} child[derivative]{node{} edge from parent[noise] child {node{}}}}\otimes \underbrace{\tikz[scale=1.5,level/.style={level distance=1ex, sibling distance=1em)}]{\node{}child{node{} edge from parent[noise] child {node{}}} child{node[label={[label distance=0.1cm]170  : ... }]{} edge from parent[noise] child {node{}}}}}_{\text{m }}\,+ (\cdots)\,,\quad \gD_- \gh_k=\emptyset\otimes \gh_k+ (\cdots)\,.\]
Thus we obtain the final part of the identities \eqref{renom_identities} and we conclude.
\end{proof}

We will henceforth fix the parameter $\kappa$ in order to keep the Theorem \ref{BPHZ_explic} true. By construction of the BPHZ renormalisation (see \cite[Sec. 6]{Bruned2019}) the application $\widetilde{M}_{\gep}$ is an admissible renormalisation scheme and, denoting by $\hPI^{\gep}=\PI\widetilde{M}_{\gep}$, the couple $\cL(\hPI^{\gep})= (\hPi^{\gep},\hgG^{\gep})$ obtained from the Remark \ref{operator L} is always a model for any $\gep>0$. The explicit form of the map $\widetilde{M}_{\gep}$ obtained in the Theorem \ref{BPHZ_explic} allows us to write explicitly also $(\hPi^{\gep},\hgG^{\gep})$.
\begin{proposition}\label{interaction_model}
For any $z\in \bR^2$ and $z,z'\in\bR^2$ one has
\begin{equation}\label{interaction}
\hPi_z^{\gep}=\Pi^{\gep}_zM_{\gep}\,,\qquad \hgG_{zz'}^{\gep}=\gG_{zz'}^{\gep}\,.
\end{equation}
Furthermore the model $(\hPi^{\gep},\hgG^{\gep})$ is also adapted to the action of translation on $\bR$
\end{proposition}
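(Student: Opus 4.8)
The plan is to unwind the construction $\cL$ from Remark \ref{operator L} applied to the renormalised admissible map $\hPI^{\gep}=\PI^{\gep}\widetilde{M}_{\gep}$, and to feed in the explicit form of $M_{\gep}$ obtained in Theorem \ref{BPHZ_explic}. Note first that the analytic bounds \eqref{model1}--\eqref{model2} for $\cL(\hPI^{\gep})$ are already provided by the general renormalisation theory recalled just above the statement, so the only task is to identify this model. Since Theorem \ref{BPHZ_explic} gives $\widetilde{M}_{\gep}=M_{\gep}$ on $\cT$ with $\widetilde{M}_{\gep}(\cT)\subset\cT$, I would work throughout with $\hPI^{\gep}=\PI^{\gep}M_{\gep}$ viewed as a map $\cT\to\cS'(\bR^2)$.

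First I would compute the character attached to $\hPI^{\gep}$ through \eqref{character_f_z}. Writing $f^{\gep}$ for the character of the canonical model $\PI^{\gep}$ and $\hat f^{\gep}$ for that of $\hPI^{\gep}$, the first two components are both $-z_1,-z_2$ by \eqref{character_f_z}, and the only non-trivial component is $\hat f^{\gep}(z)_3=-(K*\hPI^{\gep}\Xi)(z)$; but $M_{\gep}\Xi=\Xi$ by \eqref{defn_M} in the case $m=k=0$, so $\hPI^{\gep}\Xi=\PI^{\gep}\Xi$ and hence $\hat f^{\gep}=f^{\gep}$. Using $\gG_{zz'}=\gG_{f(z')-f(z)}$ from \eqref{PI_and_Pi}, this immediately gives the second identity in \eqref{interaction}, namely $\hgG^{\gep}_{zz'}=\gG_{f^{\gep}(z')-f^{\gep}(z)}=\gG^{\gep}_{zz'}$.

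For the first identity I would write $\hPi^{\gep}_z=\hPI^{\gep}\gG_{\hat f^{\gep}(z)}=\PI^{\gep}M_{\gep}\gG_{f^{\gep}(z)}$, whereas $\Pi^{\gep}_zM_{\gep}=\PI^{\gep}\gG_{f^{\gep}(z)}M_{\gep}$, so that everything reduces to the commutation relation $M_{\gep}\gG_h=\gG_hM_{\gep}$ for all $h\in\bR^3$. This is the step requiring actual work, and it is where the precise shape of $M_{\gep}$ matters: on $\cV_{\cI_1(\Xi)}\oplus\cU$ it is trivial since $M_{\gep}=\mathrm{id}$ there, while on the sectors $\cV_{\Xi}$ and $\cV_{\cI_1(\Xi)^2}$ one expands $\gG_h$ by \eqref{explicit_gamma} and Newton's binomial formula and compares with \eqref{defn_M}; after the shift $j\mapsto j+1$ in the sum generated by $M_{\gep}$, the equality boils down to the elementary identity $(j+1)\binom{m}{j+1}=m\binom{m-1}{j}$, so the combinatorial prefactor $mC^1_{\gep}$ (and likewise $C^2_{\gep}$) is exactly what makes $M_{\gep}$ intertwine the structure group. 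Equivalently, I could verify the co-interaction identity $\gD M_{\gep}=(M_{\gep}\otimes\mathrm{id})\gD$ directly on the generators using \eqref{delta_gamma}--\eqref{delta_gamma_defn}, which yields the same commutation since $\gG_h=(\mathrm{id}\otimes h')\gD$. Granting this, $\hPi^{\gep}_z=\PI^{\gep}\gG_{f^{\gep}(z)}M_{\gep}=\Pi^{\gep}_zM_{\gep}$, which is the first identity in \eqref{interaction}.

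Lastly, for the adaptedness to the translation action of $\bR$, I would use that the canonical model of the spatially periodic $\xi_{\gep}$ already satisfies \eqref{periodic}, together with the fact that $M_{\gep}\colon\cT\to\cT$ is a fixed, $z$-independent linear operator: applying \eqref{periodic} to the symbol $M_{\gep}\tau$ gives $\hPi^{\gep}_{(t,x+m)}\tau(t',x'+m)=\Pi^{\gep}_{(t,x)}(M_{\gep}\tau)(t',x')=\hPi^{\gep}_{(t,x)}\tau(t',x')$, and the corresponding statement for $\hgG^{\gep}=\gG^{\gep}$ is immediate from \eqref{periodic}. The main obstacle is the commutation $M_{\gep}\gG_h=\gG_hM_{\gep}$ of the preceding step; everything else is bookkeeping around the definition of $\cL$.
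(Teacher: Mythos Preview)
Your proposal is correct and follows essentially the same approach as the paper: identify $\hat f^{\gep}=f^{\gep}$ from $M_{\gep}\Xi=\Xi$ to get $\hgG^{\gep}=\gG^{\gep}$, reduce the first identity to the commutation $M_{\gep}\gG_h=\gG_hM_{\gep}$, verify this trivially on $\cV_{\cI_1(\Xi)}\oplus\cU$ and by an explicit binomial computation on $\cV_{\Xi}$ and $\cV_{\cI_1(\Xi)^2}$, and deduce adaptedness from the canonical case. The paper's argument differs only in cosmetic detail (it spells out the binomial expansions rather than invoking the identity $(j+1)\binom{m}{j+1}=m\binom{m-1}{j}$, and it notes explicitly that $\gG_h$ preserves $\cV_{\cI_1(\Xi)}\oplus\cU$, which you implicitly use).
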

\begin{proof}
By definition of $\cL(\hPI^{\gep})$, the model $(\hPi^{\gep},\hgG^{\gep})$ can be represented as the couple $(\hPI^{\gep}, \hat{f}_{\gep})$, where the function $\hat{f}_{\gep}\colon \bR^2\to \bR^3 $ is defined as
\begin{equation}
\hat{f}_{\gep}(z)_i= - z_i\,,\; i=1,2 \quad \hat{f}_{\gep}(z)_3= -(K* \hPI^{\gep} \Xi )(z)= -(K* \PI^{\gep} \Xi )(z)\,.
\end{equation}
Thus the function $\hat{f}_{\gep}$ coincides with $f_{\gep}$, the same function obtained from the decomposition of the canonical model $(\Pi^{\gep},\gG^{\gep})$ as $(\PI^{\gep}, f_{\gep})$. By definition of $\gG$ we have straightforwardly $\hgG_{zz'}^{\gep}=\gG_{zz'}^{\gep}$. In case of  $\hPi^{\gep}$ we can apply immediately the identity \eqref{PI_and_Pi} with the Theorem \ref{BPHZ_explic}  to obtain
\[ \hPi_z^{\gep}= \hPI^{\gep}\gG_{\hat{f}_{\gep}(z)}= \PI^{\gep}M_{\gep} \gG_{\hat{f}_{\gep}(z)}= \PI^{\gep}M_{\gep} \gG_{f_{\gep}(z)}\,.\]
Then the formula \eqref{interaction} holds as long as for any $h\in \bR^3$ and $\gep>0$ one has
\begin{equation}\label{cointeraction}
M_{\gep} \gG_h=\gG_h M_{ \gep}\;.
\end{equation}
Let us verify the identity \eqref{cointeraction} for any $\tau \in V_{\Xi}\sqcup V_{\cI_1(\Xi)^2}\sqcup V_{\cI_1(\Xi)}\sqcup U$. In case $\tau \in V_{\cI_1(\Xi)}\sqcup U $, this identity holds trivially because $\gG_h$ leaves invariant the subspace $\cV_{\cI_1(\Xi)}\sqcup \cU $ (see equation \eqref{explicit_gamma}) and $M_{\gep}$ is the identity when it is restricted to this subspace. On the other hand if $\tau \in V_{\Xi}\sqcup V_{\cI_1(\Xi)^2}$, the multiplicative property of $\gG_h$ (see the Remark \ref{multiplicative_gamma}) 
and the behaviour of $M_{\gep}$ on the polynomials in \eqref{defn_M} reduces to verify \eqref{cointeraction} over the symbols $\cI_1(\Xi)^2\cI(\Xi)^m$ and $\Xi \cI(\Xi)^m$ for any $m\geq 1$. Writing $h=(h_1,h_2,h_3)$ we obtain 
\[\begin{split}
M_{\gep} \gG_h (\cI_1(\Xi)^2\cI(\Xi)^m)=(\cI_1(\Xi)^2 - C^2_{\gep}) \sum_{n=0}^m\binom{m}{n} \cI(\Xi)^nh_3^{m-n}=  \gG_hM_{ \gep} (\cI_1(\Xi)^2  \cI(\Xi)^m)\,,\end{split}\]
\[\begin{split}M_{ \gep} \gG_h (\Xi\cI(\Xi)^m)&=\sum_{n=0}^m\binom{m}{n}(\Xi\cI(\Xi)^n- nC^1_{\gep}\cI(\Xi)^{n-1} )h_3^{m-n}\\&=\sum_{n=0}^m\binom{m}{n} \Xi\cI(\Xi)^nh_3^{m-n}- m C^1_{\gep}\sum_{n'=0}^{m-1}\binom{m-1}{n'} \cI(\Xi)^{n'} h_3^{m-1-n'}\\& =\Xi(\cI(\Xi)+ h_3 \1)^m - m C^1_{\gep} (\cI(\Xi)+ h_3\1)^{m-1}=  \gG_hM_{ \gep}(\Xi  \cI(\Xi)^m)\,.\end{split}\]
Thus yielding the result. The identity \eqref{interaction} implies immediately the properties in the identity \eqref{periodic}. Therefore $(\hPi^{\gep}, \hgG^{\gep})$ is adapted to the action of translations. 
\end{proof}
We study the convergence of $ \cL(\hPi^{\gep})$ in the space of models. Embedding the regularity structure $\cT$ into $\cT'$ as explained in the Proposition \ref{inclusion}, it is possible to prove the convergence of $ \cL(\hPi^{\gep})$ using the general criterion exposed in \cite[Thm. 2.15]{chandra_analytic16}. We introduce some notation to apply this statement. Representing all the elements  $\tau \in T$ as decorated trees, we denote by $E_{\Xi}(\tau)$ the set of edges labelled by $\Xi$. By construction every element $e\in E_{\Xi}(\tau)$ is written uniquely as $e=\{e_{\Xi}, e^{\Xi}\}$, where $e^{\Xi}$ is one of the leaves of $\tau$.  This decomposition allows to define the sets
\[N_{\Xi}(\tau):=\{e_{\Xi}\colon e\in E_{\Xi}(\tau)\}\,,\quad N^{\Xi}(\tau):=\{e^{\Xi}\colon e\in E_{\Xi}(\tau)\}\,,\quad N(\tau):= N_{\tau}\setminus N^{\Xi}(\tau)\,.\]
Moreover, expressing $\tau$ as $\tau^{\f{n}}_{\f{e}} $  for some decoration $\f{n}$, $\f{e}$ we write $\tau^{0}_{\f{e}}$ to denote the decorated tree whose decoration $\f{n}$ is replaced by zero in every node. Let us express the convergence theorem in this context.
\begin{theorem}\label{convergence}
There exists a random model $(\hat{\Pi},\hat{\gG})$ such that
\begin{equation}
(\hat{\Pi}_{\gep},\hat{\gG}_{\gep})\overset{\bP}{\to}(\hat{\Pi},\hat{\gG})
\end{equation}
with respect to the metric $\norm{\cdot}_{\cM}$. We call $(\hat{\Pi},\hat{\gG})$ the BPHZ model.
\end{theorem}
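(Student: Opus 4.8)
The plan is to obtain the statement as a direct application of the general convergence theorem for BPHZ renormalised models of Gaussian noises, \cite[Thm.~2.15]{chandra_analytic16}, applied on the tree regularity structure $(\cA',\cT',\cG')$ of Proposition~\ref{reg_struct_tree} into which $(\cA,\cT,\cG)$ was embedded in Proposition~\ref{inclusion}. The hypotheses of that theorem are of two kinds. The combinatorial ones --- that the underlying rule is subcritical and normal, and that the renormalisation used is the BPHZ one --- have already been arranged: subcriticality and normality of $R'$ were checked when $\cT'$ was constructed, and $\widetilde M_\gep$ defined in \eqref{BPHZ_renorm} is by construction the BPHZ renormalisation, so that $\cL(\hPI^\gep)=(\hPi^\gep,\hgG^\gep)$ is precisely the BPHZ renormalised canonical model of $\widetilde{\xi_\gep}$.

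The analytic hypotheses concern the driving noise. Here I would first replace $\xi$ by its periodic extension $\widetilde\xi$ on $\bR^2$ introduced in \eqref{equation_periodic}: this is a centred, stationary Gaussian field whose covariance $\sum_{m\in\bZ}\delta(\,\cdot\,-(0,m))$ agrees, in a neighbourhood of the origin, with that of space-time white noise, so it has the parabolic small-scale singularity (homogeneity $-3/2-\kappa$ for every $\kappa>0$) required by \cite{chandra_analytic16}, and $\widetilde\xi\in\cC^{-3/2-\kappa}(\bR^2)$ almost surely. The regularisations $\widetilde{\xi_\gep}=\gr_\gep*\widetilde\xi$ are built from a kernel $\gr$ that is smooth, compactly supported, even, of total mass one and parabolically rescaled --- precisely the admissible class of mollifiers in that framework --- so the assumption that $\widetilde{\xi_\gep}$ approximates $\widetilde\xi$ at the canonical rate holds. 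Since $\xi$ is Gaussian, for fixed $\tau$ and $\gh$ each $\hPi^\gep_z\tau(\gh_z^\lambda)$ belongs to a fixed inhomogeneous Wiener chaos, and the $L^p$ bounds (uniform in $\gep$, together with the corresponding estimates comparing two scales of mollification) demanded by \cite[Thm.~2.15]{chandra_analytic16} follow from their $L^2$ analogues by equivalence of moments on a fixed chaos; those $L^2$ estimates are exactly what the general theorem derives from subcriticality, so no further estimate has to be carried out explicitly here.

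Applying \cite[Thm.~2.15]{chandra_analytic16} then produces a random model on $(\cA',\cT',\cG')$ to which $\cL(\hPI^\gep)$ converges in probability in $(\cM,\norm{\cdot}_{\cM})$. Restricting this model to the sub-structure $(\cA,\cT,\cG)$ yields the limit $(\hat\Pi,\hat\gG)$ of the statement: the restriction is legitimate because Theorem~\ref{BPHZ_explic} shows $\widetilde M_\gep(\cT)\subset\cT$, so the whole sequence stays on $\cT$, and convergence in the semi-distances \eqref{model1}--\eqref{model2} restricted to the basis of $\cT$ is inherited from convergence on the larger structure. Finally, each $(\hPi^\gep,\hgG^\gep)$ is adapted to the action of translations on $\bR$ by Proposition~\ref{interaction_model}, and the periodicity identities \eqref{periodic} pass to the limit because convergence of models entails pointwise-in-$z$ convergence of $\hPi^\gep_z\tau$ as tempered distributions; hence $(\hat\Pi,\hat\gG)$ is again adapted and defines a model for the space-periodic equation \eqref{eqSHE}. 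The genuinely delicate point of the argument is this passage between the periodic problem on $\bR\times\bT$ and the whole-space formulation in which \cite{chandra_analytic16} is phrased; it is handled exactly as the analogous step in \cite{MartinHairer2015} (see Remark~\ref{positive_renom}), where the same heat-kernel regularity structure is analysed under periodic boundary conditions.
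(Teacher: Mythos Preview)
Your overall strategy --- invoke \cite[Thm.~2.15]{chandra_analytic16} on the tree structure $(\cA',\cT',\cG')$ and then restrict --- is exactly what the paper does. But you have misidentified the hypotheses of that theorem, and the gap is precisely where the paper spends all of its effort.

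You write that ``those $L^2$ estimates are exactly what the general theorem derives from subcriticality, so no further estimate has to be carried out explicitly here.'' This is not correct. Subcriticality guarantees that the regularity structure is well defined (finitely many trees below each homogeneity), but \cite[Thm.~2.15]{chandra_analytic16} requires additional \emph{power-counting} hypotheses on the homogeneities of subtrees --- essentially integrability conditions on the Feynman-type integrals arising from the Wick contractions --- and these do \emph{not} follow from subcriticality. Concretely, for every $\tau^{\f{n}}_{\f{e}}\in T$ and every subtree $\gs^{\f{n}}_{\f{e}}\subset\tau^{\f{n}}_{\f{e}}$ with $\sharp\{N(\gs)\}\ge 2$, one must verify inequalities of the type
\[
|\gs^{0}_{\f{e}}|_{\mathfrak{s}}+(3/2-\kappa)\,\sharp\{A\}>0
\quad\text{and}\quad
|\gs^{0}_{\f{e}}|_{\mathfrak{s}}>-3/2
\]
for all admissible subsets $A\subset E_\Xi(\tau)$. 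The paper's proof consists precisely of checking these: it identifies the worst subtrees (namely $\Xi\cI(\Xi)$ and $\cI_1(\Xi)^2$, both of homogeneity $-1-2\kappa$), observes that $-1-2\kappa>-3/2$ for $\kappa$ small, and concludes. Without this check you have not verified the hypotheses of the theorem you are citing.

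Your discussion of the noise regularity, mollifier admissibility, and the periodic-versus-whole-space issue is fine as context, but those points are either standard or deferred to Remark~\ref{Ito_model} in the paper; they are not where the work lies. The missing ingredient is the explicit verification of the subtree homogeneity bounds, which is short but not automatic.
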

\begin{proof}
This theorem is a direct consequence of \cite[Thm. 2.15]{chandra_analytic16}. Expressing the hypotheses of this theorem in our context, we obtain the thesis after checking  for any $\tau^{\f{n}}_{\f{e}} \in T$ and every subtree $\gs^{\f{n}}_{\f{e}}$ included in $\tau^{\f{n}}_{\f{e}} $ satisfying $\sharp\{N(\gs)\}\geq 2$ the following property:
\begin{itemize}
\item[1)] For any non-empty subset $A\subset E_{\Xi}(\tau)$  such that $\sharp\{ A\}+ \sharp \{N_{\Xi}(\gs)\}$ is even, one has 
\begin{equation}\label{check}
|\gs^{0}_{\f{e}}|_{\mathfrak{s}} + \sum_{e\in A}\mathfrak{s} (\mathfrak{t}(e))+3\sharp\{ A\}   > 0\,,
\end{equation}
\item[2)] For any $e\in E_{\Xi}$, $|\gs^{0}_{\f{e}}|_{\mathfrak{s}}-\mathfrak{s} (\mathfrak{t}(e))>0$.
\item[3)]$|\gs^{0}_{\f{e}}|_{\mathfrak{s}} > -3/2$.
\end{itemize}
Since $\mathfrak{t}(e)=\Xi$ for every $e\in E_{\Xi}$ and $\mathfrak{s} (\Xi)=-3/2-\kappa$, it is sufficient to prove:
\begin{itemize}
\item[1')] For any non-empty subset $A\subset E_{\Xi}(\tau)$  such that $\sharp\{ A\}+ \sharp \{N_{\Xi}(\gs)\}$ is even, one has 
\[ |\gs^{0}_{\f{e}}|_{\mathfrak{s}}+\left(\frac{3}{2}-\kappa\right)\sharp\{ A\}    > 0\,.\]
\item[2')] $|\gs^{0}_{\f{e}}|_{\mathfrak{s}} > -\frac{3}{2}$ for any every subtree $\gs^{\f{n}}_{\f{e}}$ included in $\tau^{\f{n}}_{\f{e}} $  satisfying $\sharp\{N(\gs)\}\geq 2$.
\end{itemize}
It follows easily from the structure of $T$  that the trees $\gs^{\f{n}}_{\f{e}}$ satisfying $\sharp\{N(\gs)\}\geq 2$ such that $|\gs^{0}_{\f{e}}|_{\mathfrak{s}}$ is minimal are given by
\[\gs^{0}_{\f{e}}=\tikz[scale=1.2]{\node{} child{node{} edge from parent[noise] child {node{}}} child[noise]{node{} }}\, \;\text{or}\;\,\gs^{0}_{\f{e}}= \tikz[scale=1.2]{\node{}  child[derivative]{node{} edge from parent[noise] child {node{}}}child[derivative]{node{} edge from parent[noise] child {node{}}}} \,,\]
where $|\gs^{0}_{\f{e}}|_{\mathfrak{s}}=-1 -2\kappa$ in both cases. Therefore we can chose the parameter $\kappa>0$ sufficiently small to satisfy 1') and 2') trivially.
\end{proof}
\begin{remark}\label{Ito_model}
The BPHZ model $(\hat{\Pi},\hat{\gG})$ obtained from the Theorem \ref{convergence} is an example of a random model with a.s. values on distributions and it will be the main object to formulate a new type of It\^o formula for $u$. Recalling the inclusion of the space $\cV_{\Xi}\oplus \cU$ into the regularity structure $\cT_{HP}$ defined in \cite{MartinHairer2015} (see the Remark \ref{positive_renom}), we can easily check that the renormalisation map $M_{\gep}$ defined in \eqref{defn_M} and the model $(\hat{\Pi}_{\gep},\hat{\gG}_{\gep})$ restricted to the sector $\cV_{\Xi}\oplus \cU$ coincide exactly with the renormalisation procedure developed in \cite[Thm. 4.5]{MartinHairer2015} to define what in this context is called the \emph{It\^o model}. By uniqueness of the limit on this sector we can apply directly this result and we obtain immediately $\hPi_z\Xi= \widetilde{\xi}$, the periodic extension of $\xi$ and for every $\tau\in U$, $z=(t,x)\in \bR^2$ and every smooth test function $\psi$ such that for any $s< t $ $\psi(s,y)=0$ we have immediately 
\begin{equation}\label{stochastic integral}
\hPi_z  (\tau\Xi)(\psi)= \int_t^{\infty}\int_{\bR}\hPi_{z}\tau(s,y)\psi(s,y)d\widetilde{W}_{s,y}
\end{equation}
We stress that equation \eqref{stochastic integral} holds only when the test function is supported in the future. Otherwise, the right-hand side integrand will not be adapted and we cannot interpret $\hPi_z \Xi \tau(\psi)$ as a Skorohod integral. An explicit formula to describe the law of $\hPi_z\tau$ in its full generality has been developed in \cite[Prop 4.22]{chandra_analytic16}. Moreover we recall that in our case we have $\hgG_{zz'}= \gG_{\hat{f}(z')- \hat{f}(z)}$ where $\hat{f}\colon \bR^2\to \bR^3$ is given by
\[\hat{f}(z)_i= - z_i\,,\; i=1,2 \quad \hat{f}(z)_3= -(K* \widetilde{\xi} )(z)\,.\]
The model  $(\hat{\Pi},\hat{\gG})$ is also adapted to the action of translation on $\bR$, as a consequence of the Proposition \ref{interaction_model} on the converging sequence $(\hPi^{\gep}, \hgG^{\gep})$.
\end{remark}
\section{Calculus on regularity structures}\label{calculus_reg_struct}
In this section, we will show how the models $(\hat{\Pi}^{\gep},\hgG^{\gep})$ and $(\hat{\Pi},\hgG)$ can be used to describe respectively  $u_{\gep}$ and $u$ and, more generally, what kind of analytical operations we can define on a the regularity structure $\cT$.
\subsection{Modelled distributions}
The main function of a regularity structure and a model upon that is to provide a coherent framework to approximate random distributions similar to how polynomials approximate smooth functions via Taylor's formula. Since for any function $f\colon \bR\to \bR$ it is possible to describe the condition $f\in\cC^{\gga}$ in terms of $F\colon \bR^2\to \bR^{\lfloor\gga\rfloor}$, the vector of its derivatives (see \cite{hairer_phi15} for further details), we introduce an equivalent version of this space in our general context.
\begin{definition}\label{defn_dgamma}
For any given parameters $\gga>0 $, $\gh\in (-2,\gga)$ and   $(\Pi,\gG)$ a  model upon $ (\cA, \cT, \cG)$, we define $\cD^{\gamma,\eta}$ as the set of all function $U \colon \bR^2 \to\cQ_{<\gga}\cT$ such that for every compact set $\cK\subset \bR^2$, one has
\begin{equation}\label{norm_dgamma}
\vert U \vert_{\gamma,\eta} := \sup_{z \in \cK}\sup_{\alpha < \gamma} \frac{\abs{ U(z)}_{\alpha}}{|t|^{(\frac{\eta - \alpha}{2})\wedge 0}} + \sup_{(z,z') \in \cK^{(2)}}\sup_{\alpha < \gamma} \frac{\vert  U(z) - \Gamma_{zz'} U(z')\vert_{\alpha}} {\bigl(|t|\wedge |t'|\bigr)^{\frac{\eta-\gamma}{2}} |z-z'|^{\gamma - \alpha}} < +\infty\;,
\end{equation}
where $\cK^{(2)}$ denotes the set of points $(z,z')\in \cK^{2}$ such that $\abs{z-z'}\leq 1/2\sqrt{\abs{t}\wedge \abs{t'}}$. The elements of $\cD^{\gamma,\eta}$ are called \emph{modelled distributions}.
\end{definition}
\begin{remark}
The definition of the set $\cD^{\gga,\gh}$ does depend crucially on the underlying model $(\Pi, \gG)$. To remark this dependency, we will adopt for the same set the alternative notation $\cD^{\gga,\gh}(\gG)\,$. Similarly we recall that the quantities $\Vert U \Vert_{\gamma,\eta}$ depend on the compact set $\cK$ but we avoid to put the symbol $\cK$ in the notation because of our finite time horizon setting, we will henceforth prove the results on a fixed compact set $\cK\subset \bR^2$ containing $[0, T]\times [0,1]$. The presence of an extra parameter $\gh$ allows more freedom than the classical $\cC^{\gga}$ spaces. In this way the coordinates of $U$ are allowed to blow at rate $\gh$ near the set $P=\{(t,x)\in \bR^2\colon \;t=0\}$ and  the condition $\gh>-2$ is put to keep this singularity integrable. By definition of $ \cD^{\gga,\gh} $, for any value $\gga\geq \gga'>0$ and $U\in \cD^{\gga,\gh}$ the projection $\cQ_{<\gga'}U\in \cD^{\gga',\gh}$.
\end{remark}
For any given model $(\Pi, \gG)$ the couple $(\cD^{\gga,\gh}(\gG),\vert \cdot \vert_{\gamma,\eta} )$ is clearly a Banach space. Since we will consider modelled distributions belonging to different models, for any couple of models $(\Pi,\gG)$ and $(\Pi',\gG')$ and modelled distributions $U\in \cD^{\gga,\gh}(\gG)$, $U'\in\cD^{\gga,\gh}(\gG')$ we define the quantity
\[
\norm{U, U'}_{\gga,\gh}:=\sup_{z,w, \ga} \frac{\vert  U(z) - U'(z)- \gG_{zw}U(w)+ \gG'_{zw}U'(w)\vert_{\alpha}} {\bigl(|t|\wedge |t'|\bigr)^{\frac{\eta-\gamma}{2}} |z-z'|^{\gamma - \alpha}} +\sup_{z, \ga} \frac{\abs{ U'(z)- U'(z)}_{\alpha}}{|t|^{(\frac{\eta - \alpha}{2})\wedge 0}}  \;,
\]
where the parameters $z,w,\ga$ belong to the same sets as the quantity \eqref{norm_dgamma}. This function, together with the norm $\norm{\cdot}_{\cM}$ on models endows the fibred space.
\[\cM \ltimes \cD^{\gga,\gh}:=\{((\Pi,\gG),U)\colon (\Pi,\gG)\in \cM,\;\; U\in \cD^{\gga,\gh}(\gG)\}\]
of a complete metric structure using the distance $\norm{\cdot,\cdot}_{\gga,\gh}+ \norm{\cdot,\cdot }_{\cM}$. Combining the knowledge of a model $(\Pi, \gG)\in \cM $ and $U\in \cD^{\gga,\gh}(\gG) $, it is possible to define uniquely a distribution such that the coordinates of $U$ have the same role of the derivatives of a function in the Taylor's formula. This association is called the reconstruction theorem and it is one of the main theorem in the theory of regularity structures (for its proof see \cite[Sec. 3, Sec. 6]{Hairer2014}).
\begin{theorem}[Reconstruction theorem]\label{reconstruction_theorem}
For any $(\Pi,\gG)\in \cM$ there exists a unique map $\cR\colon \cD^{\gga,\gh}(\gG)\to \cS'(\bR^{2})$, called the reconstruction operator, satisfying the following properties:
\begin{itemize}
\item (Generalised Taylor expansion) for any compact set $\cK\subset\bR^2$ there exists a constant $C>0$ such that 
\begin{equation}\label{taylor_decom}
\abs{\left(\cR U- \Pi_z U(z)\right)( \gh_z^{\gl})}\leq C \gl^{\gga}
\end{equation}
uniformly over $\gh\in \cB_2$, $\gl \in (0,1]$ and $z\in \cK$;
\item the distribution $\cR U\in \cC^{\ga_U\wedge \gh}$ where $\ga_U:= \min\{a\in \cA \colon \cQ_a U\neq 0\}$ and in case $\ga\wedge \gh=0 $ we set by convention $\cC^{0}$ the space of locally bounded functions;
\item (local  Lipschitz property) for any fixed $R>0$ and all couples $(\Pi',\gG'),(\Pi,\gG)\in \cM $, $U\in \cD^{\gga,\gh}(\gG)$, $U'\in\cD^{\gga,\gh}(\gG')$ such that $\norm{U; U'}_{\gga,\gh} +  \norm{(\Pi,\gG); (\Pi',\gG') }_{\cM}< R$ and $\ga_U= \ga_{U'}= \ga$, denoting by $\cR$ and $\cR'$ the respective reconstruction operators, there exists a constant $C>0$ depending on $R$ such that
\begin{equation}\label{local_lipschitz}
\norm{\cR' U' -\cR U}_{\cC^{\ga\wedge \gh}}\leq C \left( \norm{U, U'}_{\gga,\gh} +  \norm{(\Pi,\gG), (\Pi',\gG') }_{\cM}\right)\,.
\end{equation}
\end{itemize}
\end{theorem}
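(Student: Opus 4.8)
The plan is to construct $\cR$ by the wavelet-analytic method of \cite[Sec. 3]{Hairer2014}, treating the blow-up near the singular hyperplane $P:=\{(t,x)\in\bR^2\colon t=0\}$ as in \cite[Sec. 6]{Hairer2014}. Uniqueness is the soft part: if $\gz_1,\gz_2$ both satisfy \eqref{taylor_decom} for a fixed $U$, then $\gz:=\gz_1-\gz_2$ obeys $\abs{\gz(\gh^{\gl}_z)}\leq C\gl^{\gga}$ uniformly over $\gh\in\cB_2$, $z\in\cK$ and $\gl\in(0,1]$. Choosing for $\gh$ a smooth mollifier, this says the mollification of $\gz$ at scale $\gl$ is bounded pointwise by $C\gl^{\gga}$; since $\gga>0$ and this mollification converges to $\gz$ in $\cS'(\bR^2)$, letting $\gl\to0$ forces $\gz=0$ on the interior of $\cK$, and exhausting $\bR^2$ by compacts gives $\gz=0$.

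For existence I would fix a compactly supported multiresolution analysis adapted to the parabolic scaling, with scaling function $\gP$ and mother wavelets $\Psi^{(i)}$, writing $\gP^n_x$, $\Psi^{n,i}_x$ for the elements of scale $2^{-n}$ centred at a point $x$ of the associated dyadic grid $\gL_n$, and set
\[
\cR_n U \;:=\; \sum_{x\in\gL_n}\bigl(\Pi_x U(x)\bigr)(\gP^n_x)\,\gP^n_x\,.
\]
The heart of the matter is the estimate of the increment $\cR_{n+1}U-\cR_n U$ paired with a wavelet $\Psi^{n,i}_y$: by the refinement relations this pairing is a finite combination of terms $\bigl(\Pi_x U(x)-\Pi_{x'}U(x')\bigr)(\Psi^{n,i}_y)$ over neighbouring grid points $x\in\gL_n$, $x'\in\gL_{n+1}$, and the model identity $\Pi_x=\Pi_{x'}\gG_{x'x}$ rewrites the bracket as $\Pi_{x'}\bigl(\gG_{x'x}U(x)-U(x')\bigr)$. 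Since $U$ takes values in $\cQ_{<\gga}\cT$ and $\gG_{x'x}$ only lowers homogeneity (see \eqref{reg_tri}), the vector $\gG_{x'x}U(x)-U(x')$ has components only at levels $\ga<\gga$, each of size $\lesssim\norm{x-x'}^{\gga-\ga}$ up to the weight $(\abs{t}\wedge\abs{t'})^{(\gh-\gga)/2}$ furnished by the coherence bound in \eqref{norm_dgamma}; feeding this into the model bound \eqref{model1} for $\Pi_{x'}$ against a test function of scale $2^{-n}\sim\norm{x-x'}$ gives a term $\lesssim 2^{-n\gga}$ times a $P$-dependent weight. Summing the geometric series in $n$ (legitimate because $\gga>0$) shows $\{\cR_nU\}$ is Cauchy; $\cR U$ is its limit, and comparing $\cR U$ with $\cR_nU$ and then $\cR_nU$ with $\Pi_zU(z)$ at scale $2^{-n}\sim\gl$ produces the Taylor bound \eqref{taylor_decom}.

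The membership $\cR U\in\cC^{\ga_U\wedge\gh}$ is read off the wavelet coefficients of $\cR U$ just produced: their decay $\lesssim 2^{-n(\ga_U\wedge\gh)}$ at scale $2^{-n}$, together with the coarse-scale bound, is exactly a weighted Hölder estimate, with $\gh$ governing the blow-up in the parabolic dyadic annuli around $P$ and $\ga_U$ the behaviour away from $P$ (reading $\cC^{0}$ as the locally bounded functions when the exponent vanishes). For the local Lipschitz property \eqref{local_lipschitz} one runs the construction in parallel for $(\Pi,\gG,U)$ and $(\Pi',\gG',U')$ and subtracts; the increment difference splits into a piece bounded by $\norm{U,U'}_{\gga,\gh}$ (the two models agreeing) and a piece bounded by $\norm{(\Pi,\gG),(\Pi',\gG')}_{\cM}$ times a bound on $U'$ alone, the latter staying finite thanks to the constraint $\norm{U;U'}_{\gga,\gh}+\norm{(\Pi,\gG);(\Pi',\gG')}_{\cM}<R$, and the geometric summation goes through verbatim.

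The main obstacle is the bookkeeping near $P$: the weights $\abs{t}^{(\gh-\ga)/2}$ of Definition \ref{defn_dgamma} destroy the uniform summability of the naive increment bound, so one must decompose space-time into parabolic dyadic annuli about $P$, use $\gh>-2$ to guarantee $\sum_n 2^{-n(\gh+2)}<\infty$, and verify that the wavelet pieces whose support meets $P$ still contribute a convergent series — this is the content of the weighted reconstruction theorem in \cite[Sec. 6]{Hairer2014} and is where essentially all the analytic effort concentrates; away from $P$ the proof is the clean scaling computation sketched above.
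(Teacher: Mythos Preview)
Your proposal is correct and follows exactly the approach the paper relies on: the paper does not give an independent proof of this theorem but simply refers to \cite[Sec.~3, Sec.~6]{Hairer2014}, which is precisely the wavelet/multiresolution construction you sketch, together with the weighted treatment near $P$ for the $\cD^{\gga,\gh}$ setting. Your outline of uniqueness, the increment estimate via $\Pi_{x'}(\gG_{x'x}U(x)-U(x'))$, the resulting H\"older regularity, and the parallel construction for the local Lipschitz bound all match the cited argument.
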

\begin{remark}\label{reconstruction_more}
The reconstruction map has in some rare cases an explicit expression. For instance if $\Pi_z \tau $ is a continuous function for every $\tau\in T$ (like the model $ \cL(\hPI^{\gep})$ or $\cL(\PI^{\gep})$ for any $\gep>0$) and $U\in \cD^{\gga,\gh}(\gG)$, then  $\cR U$ is a continuous function given explicitly by
\begin{equation}\label{simpl_reconstruct}
\cR(U)(z)= \Pi_z (U(z))(z)\,.
\end{equation}
Introducing the space $\cD^{\gga,\gh}_{\cU}$ of all modelled distributions taking values in $\cU$, the identity \eqref{simpl_reconstruct} holds also if $(\Pi, \gG)$ is a generic model and $U\in \cD^{\gga,\gh}_{\cU}(\gG)$, because the elements of the canonical basis of $\cU$ have all non negative homogeneity (see for further details in \cite[Sec. 3.4]{Hairer2014}). We finally conclude that for any value $\gga\geq \gga'>0$ and $U\in \cD^{\gga,\gh}$ we have the identity $\cR\cQ_{<\gga'}U= \cR U$, therefore to define correctly the distribution $\cR U$ is sufficient to fix $\gga>0$ such that $ \cQ_{<\gga}\cT$ is generated by  the set $\{\tau \in T\colon \abs{\tau}\leq 0\}$.
\end{remark}
\begin{remark}
Concerning the regularity of $\cR U$, the result stated in the Theorem \ref{reconstruction_theorem} is optimal because of the presence the parameter $\gh$ in the definition and the possible explosion of the components of $U$. However if we forget the behaviour at $0$ it is also possible to prove $\cR U\in \cC^{\gb_{U} }(\bR^2\setminus P)$ where $\gb_U:= \min\{a\in \cA\setminus \bN \colon \cQ_a U\neq 0\}$ (see \cite[Sec. 6]{Hairer2014}) and the local Lipschitz property \eqref{local_lipschitz} holds on the same space $\cC^{\gb_{U} }(\bR^2\setminus P)$. We stress that the local Lipschitz continuity of the reconstruction operator $\cR$ as given in \eqref{local_lipschitz} is only apparent in the proofs contained in \cite[Sec. 3, Sec. 6]{Hairer2014} but not explicitly mentioned in the statements.
\end{remark}
\begin{remark}
If a model $(\Pi, \gG)$ is adapted to the action of the translations (see the equations \eqref{periodic}) we and the function $U$ is periodic in the space variable on $\bR^2$, then  using the general result \cite[Prop. 3.38]{Hairer2014} we obtain also $\cR U=\widetilde{u}$ for some $u \in \cC^{\ga_U\wedge \gh}(\bR\times \bT)$, with an abuse of notation we can identify $\cR U$ with $u$. 
\end{remark}
In what follows, we will denote by $\hat{\cR}_{\gep}$  and $\hat{\cR}$ the reconstruction operator associated to the space $\cD^{\gga,\gh}(\hgG^{\gep})$  and $\cD^{\gga,\gh}(\hgG)$. Using the shorthand notation $\1_{+}= \1_{(0,\infty)\times \bR}$, we introduce the function $\1_{+}\Xi\colon\bR^2\to \cT $, defined for any $z=(t,x)\in \bR^2$
\[(\1_{+}\Xi)(z):=\1_{+}(z)\Xi= \left\{\begin{array}{cl}\Xi & \text{if } t>0, \\
      0 & \text{Otherwise.}
\end{array}
\right. \]
For any fixed realisation of $\xi$ and any choice of the parameters $\gga>0$ and $-2<\gh<\gga$, the definitions of $\hgG$ and $\gG^{\gep}$ implies immediately $\1_{+}\Xi\in \cD^{\gga,\gh}(\hgG^{\gep})$ for all $\gep>0$ and $ \1_{+}\Xi\in \cD^{\gga,\gh}(\hgG)$. The reconstruction of $\1_{+}\Xi$ in both cases can be explicitly calculated.
\begin{proposition}
for any $z\in [0,T]\times \bT$ one has 
\begin{equation}\label{xi_reconstruct}
\hat{\cR}_{\gep}(\1_{+}\Xi)(z)=\1_{+}(z)\xi_{\gep}(z)\,,\quad \hat{\cR}(\1_{+}\Xi)= \1_{[0,\infty)}\xi\,.
\end{equation}
where the second identity holds a.s. as distributions.
\end{proposition}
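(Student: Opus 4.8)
The plan is to prove the two identities separately: the first is immediate from the explicit form of the reconstruction operator against a model with continuous realisations, and the second follows by letting $\gep\to0$, combining the convergence of models with the local Lipschitz continuity of reconstruction.

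For the first identity, recall that $\PI^{\gep}$ is the canonical model built from the smooth periodic function $\widetilde{\xi_{\gep}}$, so every $\Pi^{\gep}_z\tau$ is a continuous function, and by Proposition~\ref{interaction_model} the same holds for $\hPi^{\gep}_z\tau=\Pi^{\gep}_zM_{\gep}\tau$. Hence the explicit reconstruction formula \eqref{simpl_reconstruct} of Remark~\ref{reconstruction_more} applies and, for every $z\in\bR^2$,
\[
\hat{\cR}_{\gep}(\1_{+}\Xi)(z)=\hPi^{\gep}_z\bigl((\1_{+}\Xi)(z)\bigr)(z)=\1_{+}(z)\,\bigl(\hPi^{\gep}_z\Xi\bigr)(z)\,.
\]
Since $M_{\gep}\Xi=\Xi$ by \eqref{defn_M} and $\gG_h\Xi=\Xi$ for all $h$ by \eqref{explicit_gamma}, the identifications \eqref{PI_and_Pi} together with \eqref{canonical2} give $\hPi^{\gep}_z\Xi=\Pi^{\gep}_z\Xi=\PI^{\gep}\gG_{f_{\gep}(z)}\Xi=\PI^{\gep}\Xi=\widetilde{\xi_{\gep}}$. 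Evaluating at $z\in[0,T]\times\bT$ produces $\xi_{\gep}(z)$, which is the first claim; moreover the displayed computation shows $\hat{\cR}_{\gep}(\1_{+}\Xi)=\1_{+}\widetilde{\xi_{\gep}}$ on all of $\bR^2$.

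For the second identity, observe that $\1_{+}\Xi$ is literally the same $\cT$-valued function in $\cD^{\gga,\gh}(\hgG^{\gep})$ and in $\cD^{\gga,\gh}(\hgG)$, and that $\hgG^{\gep}_{zz'}\Xi=\Xi=\hgG_{zz'}\Xi$ for all $z,z'$; consequently $\norm{\1_{+}\Xi,\1_{+}\Xi}_{\gga,\gh}=0$. Applying the local Lipschitz estimate \eqref{local_lipschitz} with $U=U'=\1_{+}\Xi$, $\cR=\hat{\cR}_{\gep}$ and $\cR'=\hat{\cR}$, and using $(\hPi^{\gep},\hgG^{\gep})\overset{\bP}{\to}(\hPi,\hgG)$ from Theorem~\ref{convergence}, we obtain $\hat{\cR}_{\gep}(\1_{+}\Xi)\overset{\bP}{\to}\hat{\cR}(\1_{+}\Xi)$ in $\cC^{\ga_U\wedge\gh}$, hence in $\cS'(\bR^2)$. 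On the other hand, by the first part $\hat{\cR}_{\gep}(\1_{+}\Xi)=\1_{+}\widetilde{\xi_{\gep}}$, which, since $\widetilde{\xi_{\gep}}$ is smooth, coincides with the distribution $\1_{[0,\infty)}\widetilde{\xi_{\gep}}$ (the operator built around \eqref{indicator}) by dominated convergence in the defining limit \eqref{indicator}; since $\widetilde{\xi_{\gep}}\to\widetilde{\xi}$ in $\cC^{-3/2-\kappa}(\bR\times\bT)$ in probability and $\1_{[0,\infty)}$ is a continuous linear map on $\cC^{-3/2-\kappa}$, this gives $\hat{\cR}_{\gep}(\1_{+}\Xi)\overset{\bP}{\to}\1_{[0,\infty)}\widetilde{\xi}$ in $\cS'(\bR^2)$. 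Uniqueness of limits in probability then forces $\hat{\cR}(\1_{+}\Xi)=\1_{[0,\infty)}\xi$ a.s.

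The individual steps are routine; the only points that need care are book-keeping. First, the constant in \eqref{local_lipschitz} depends on an a priori bound $R$ on the distances of the models and of the modelled distributions, so one should first restrict to an a.s.\ convergent subsequence — along which $\norm{(\hPi^{\gep},\hgG^{\gep})}_{\cM}$ stays bounded — run the estimate there, and then upgrade back to convergence in probability. Second, one must check that the pointwise product $\1_{+}\widetilde{\xi_{\gep}}$ really equals the image $\1_{[0,\infty)}\widetilde{\xi_{\gep}}$ of the operator of \eqref{indicator}, which follows from that defining limit and dominated convergence precisely because $\widetilde{\xi_{\gep}}$ is smooth. Neither is a genuine obstacle, which is to be expected since this proposition is essentially a consistency check on the constructions of the previous section.
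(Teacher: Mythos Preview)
Your proof is correct and follows essentially the same approach as the paper: the first identity via the explicit reconstruction formula \eqref{simpl_reconstruct}, and the second by combining the local Lipschitz property \eqref{local_lipschitz} with the convergence of models (Theorem~\ref{convergence}) on one side, and the continuity of the indicator operator $\1_{[0,\infty)}$ on $\cC^{-3/2-\kappa}$ together with $\xi_{\gep}\to\xi$ on the other, concluding by uniqueness of limits. Your version is in fact more explicit than the paper's (spelling out $\hPi^{\gep}_z\Xi=\widetilde{\xi_{\gep}}$ and flagging the subsequence bookkeeping for the constant in \eqref{local_lipschitz}), but there is no genuine methodological difference.
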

\begin{proof}
As we recalled in the Remark \ref{reconstruction_more}, to prove the first part \eqref{xi_reconstruct} we can apply directly the identity \eqref{simpl_reconstruct} obtaining the result trivially. Using the Theorem \ref{convergence}, related to the convergence of models and the local Lipschitz continuity of the reconstruction map, the distribution $\hat{\cR}_{\gep}(\1_{+}\Xi) $ converges in probability to $\hat{\cR}(\1_{[0,+\infty)}\Xi)$ with respect to the topology of $\cC^{-3/2-\kappa}(\bR\times \bT)$. Since $\xi_{\gep}$ converges in probability to $ \xi$ with respect to the topology $\cC^{-3/2-\kappa}(\bR\times \bT)$ (see \cite[Lem 10.2]{Hairer2014}) and the operator $\1_{[0,+\infty)}$ (introduced in the section \ref{elements}) extends continuously the multiplication with the indicator $\1_{+}$, then $\1_{+}(z)\xi_{\gep}$ converges in probability to $\1_{[0,+\infty)}\xi$ with respect to the same topology. We conclude by uniqueness of the limit.
\end{proof}

\begin{remark}\label{xi_canonical}
Denoting by $\cR_{\gep}$ the reconstruction operator with respect to the canonical model $(\Pi^{\gep},\gG^{\gep})$ we have also $\1_{+}\Xi\in \cD^{\gga,\gh}(\gG^{\gep})$ and $\cR_{\gep}\1_{+}\Xi=\hat{\cR}_{\gep}\1_{+}\Xi$, because $\hPi_{z}^{\gep}\Xi= \Pi_{z}^{\gep}\Xi$ for any $\gep>0$. Using the same argument above one has $\cR_{\gep}\1_{+}\Xi$ converges in probability to $\1_{[0,+\infty)}\xi$ as before. Nevertheless the sequence $(\Pi^{\gep},\gG^{\gep})$ does not converge and we cannot interpret  $\1_{[0,+\infty)}\xi$ as the reconstruction of some modelled distribution, unless we study the model $(\hPi^{\gep},\hgG^{\gep})$.
\end{remark}
\subsection{Operations with the stochastic heat equation}
Although modelled distributions look very unusual, the reconstruction theorem associates to them a distribution, which is a classical analytical object. Under this identification, it is possible to lift some operations on the $\cC^{\gga}$ spaces directly at the level of the modelled distributions as it was explained in detail in \cite[Sec. 4, 5, 6]{Hairer2014}. Moreover, this ``lifting" procedure is also continuous with respects to the intrinsic topology of the modelled distributions. In what follows, we will briefly recall them to put them in relation with the stochastic heat equation.
\subsubsection*{Convolution}
The first operation to define is the convolution with $G$, the heat kernel on $\bR$. In other terms, we analyse under which conditions we can associate to any $((\Pi,\gG),V)\in \cM \ltimes \cD^{\gga,\gh}$ one modelled distribution $\cP(V)\in \cD^{\bar{\gga},\bar{\eta}}(\Pi)$ in a continuous way such that
\begin{equation}\label{convolution}
\cR(\cP(V))=G*\cR V\,.
\end{equation}
For our purposes we are not interested to describe this operation in general. Indeed recalling the formulae \eqref{analytic_representation} and \eqref{xi_reconstruct}, it is sufficient to define $\cP$ only in the case of the modelled distribution $V=\1_{+}\Xi$ to have an expression of $u_{\gep}$ and $u$, the solution of \eqref{eq3} and \eqref{eqSHE}, as the reconstruction of some modelled distributions. In this case, we can restate the convolution with $G$ as the convolution with two other kernels thanks to this technical lemma (the proof is a direct consequence of \cite[Lemma 7.7]{Hairer2014}).
\begin{lemma}[Second decomposition]\label{KandR2}
For any fixed $T>0$, there exists a function $\bar{R}\colon\bR^2\to \bR $ such that
\begin{itemize}
\item For every distribution $v\in \cC^{\gb}(\bR\times \bT)$ with $\gb>-2$ non integer and supported on $[0,+\infty)$ one has 
\begin{equation}\label{sum_K+R}
(G*\widetilde{v} )(z) = (K*\widetilde{v})(z) + (\bar{R} * \widetilde{v})(z)\,,
\end{equation}
where $K$ is the function introduced in the Lemma \ref{KandR1}, $z\in  (-\infty, T+1] \times \bR $ and $\widetilde{v}$ is the periodic extension of $v$.
\item $\bar{R}$ is smooth, $\bar{R}(t,x) = 0$ for $t \leq 0$ and it is compactly supported.
\end{itemize}
\end{lemma}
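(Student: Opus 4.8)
The plan is to deduce the statement from the first decomposition $G=K+R$ of Lemma~\ref{KandR1}; the only point is to trade the smooth but non-compactly supported remainder $R$ for a compactly supported modification which, as long as one stays at times $\le T+1$ and tests against distributions supported on $\{t\ge0\}$, acts on $\widetilde v$ in exactly the same way. This is the content of \cite[Lemma 7.7]{Hairer2014}, and I describe the construction below.

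I would first record two elementary facts about $R=G-K$. Since $G$ vanishes on $\{t\le0\}$, $K$ vanishes on $\{t\le0,\ x\neq0\}$ (Lemma~\ref{KandR1}), and $R$ is smooth, $R$ vanishes to infinite order on $\{t\le0\}$. Since moreover $G$ has Gaussian decay in the space variable while $K$ is compactly supported, the spatially periodised kernel
\[
R^{\mathrm{per}}(t,x):=\sum_{m\in\bZ}R(t,x+m)
\]
is a well-defined function, smooth on $\bR^2$ (the series and all its term-by-term derivatives converge locally uniformly), $1$-periodic in $x$, and vanishing for $t\le0$. Now fix once and for all a smooth $\chi\colon\bR\to\bR$ with $\chi\equiv1$ on $(-\infty,T+2]$ and $\chi\equiv0$ on $[T+3,+\infty)$, and a smooth $\psi\colon\bR\to\bR$ with $\operatorname{supp}\psi\subset(-1,1)$ and $\sum_{m\in\bZ}\psi(\,\cdot+m)\equiv1$, and set
\[
\bar R(t,x):=\chi(t)\,\psi(x)\,R^{\mathrm{per}}(t,x)\,.
\]
Then $\bar R$ is smooth, supported in $[0,T+3]\times[-1,1]$ --- hence compactly supported --- and vanishes for $t\le0$; and since $R^{\mathrm{per}}$ is $1$-periodic in $x$, its own spatial periodisation is
$\sum_{m\in\bZ}\bar R(t,x+m)=\chi(t)\,R^{\mathrm{per}}(t,x)\sum_{m\in\bZ}\psi(x+m)=\chi(t)\,R^{\mathrm{per}}(t,x)$.

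To verify \eqref{sum_K+R}, I would start from the identity $(G*\widetilde v)(z)=(K*\widetilde v)(z)+(R*\widetilde v)(z)$ and unfold the definition \eqref{equation_periodic} of the periodic extension, which gives, for any kernel $L$ with sufficient spatial decay, that $L*\widetilde v$ is the pairing on $\bR\times\bT$ of $v$ against the spatially periodised kernel $L^{\mathrm{per}}$. Applied to $L=R$ and $L=\bar R$ and combined with the formula for $\sum_m\bar R(t,x+m)$ above, the difference $(R*\widetilde v)(z)-(\bar R*\widetilde v)(z)$ equals the pairing of $v$ against $[(1-\chi)R^{\mathrm{per}}](z-\,\cdot\,)$, and $(1-\chi)R^{\mathrm{per}}$ is supported in $[T+2,+\infty)\times\bT$ in its time argument. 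Hence for $z=(t,x)$ with $t\le T+1$ the function $s\mapsto[(1-\chi)R^{\mathrm{per}}](t-s,\,\cdot\,)$ is supported in $\{s\le-1\}$, a closed set disjoint from $\operatorname{supp}v\subset[0,+\infty)\times\bT$, so this pairing vanishes. Therefore $(\bar R*\widetilde v)(z)=(R*\widetilde v)(z)$ for all such $z$, which together with $G=K+R$ yields \eqref{sum_K+R} on $(-\infty,T+1]\times\bR$.

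The only genuine obstacle is the Gaussian spatial tail of $R$: a naive sharp cutoff of $R$ in the space variable would change the convolution against the spatially periodic distribution $\widetilde v$, since the periodised kernel would change. Introducing $R^{\mathrm{per}}$ together with the partition of unity $\{\psi(\,\cdot+m)\}_{m\in\bZ}$ is precisely what produces a compactly supported $\bar R$ with the same periodisation (up to the harmless time cutoff $\chi$); the remaining steps --- the time truncation and the disjoint-support argument --- are routine and are exactly what \cite[Lemma 7.7]{Hairer2014} provides.
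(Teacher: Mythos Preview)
Your proof is correct and follows exactly the approach the paper indicates: the paper simply states that the lemma ``is a direct consequence of \cite[Lemma 7.7]{Hairer2014}'', and your construction --- periodising $R$ in space, then localising with a spatial partition of unity $\psi$ and a time cutoff $\chi$ --- is precisely the content of that lemma. The disjoint-support argument at the end is the standard way to conclude, and your identification of the spatial periodisation of $\bar R$ as $\chi(t)R^{\mathrm{per}}(t,x)$ is the key computation.
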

Thanks to this decomposition, it is sufficient to write $\cP=\mathfrak{K}+\mathfrak{R}$ for some operators $\mathfrak{K}$ and $\mathfrak{R}$ satisfying
\begin{equation}\label{reconstruction_KandR}
\cR(\mathfrak{K}(V))=K*\cR V\,,\quad \cR (\mathfrak{R} (V))= \bar{R}*\cR V\,.
\end{equation}
Considering the case of $\mathfrak{R}$, we remark that $\bar{R}*v$ will always be a smooth function for any distribution $v$ supported on positive times. Thus for any fixed couple $((\Pi,\gG),V)\in \cM \ltimes \cD^{\gga,\gh}$ such that  $\cR V$ is supported on $\bR_+\times \bR$, the operator $\mathfrak{R}$ can be easily defined for any $\bar{\gga}>0$ as the lifting of the $\bar{\gga}$-th order Taylor polynomial of $\bar{R}*\cR V$, that is:
\begin{equation}\label{operatorR}
 \mathfrak{R}(V)(z):= \sum_{\abs{k}< \bar{\gga}}(\partial^k \bar{R}*(\cR V))(z)\frac{\X^k}{k!}\,.
\end{equation}
From this definition it is straightforward to check $\mathfrak{R}(V) \in \cD^{\bar{\gga},\bar{\gh}}(\Pi)$ for any $ \bar{\gga}>0$, $ -2<\bar{\gh}< \bar{\gga}$ and $\mathfrak{R}(V)$ satisfies the second identity of \eqref{reconstruction_KandR}. Moreover the application $\mathfrak{R} \colon \cM \ltimes \cD^{\gga,\gh}\to  \cD^{\bar{\gga},\bar{\gh}}$ is also continuous with respect to the topology of $\cM \ltimes \cD^{\gga,\gh}$, as a consequence of \cite[Lem. 7.3]{Hairer2014}. This continuity property is a consequence of the compact support of $\bar{R} $ and it is the main reason to introduce Lemma \ref{KandR1}. On the other hand, the kernel $K$ is not a smooth and the definition of $ \mathfrak{K}$ depends on the model, as a consequence of this general result (for its proof see the ``Extension theorem" \cite[Thm. 5.14]{Hairer2014} and the ``Multi-level Schauder estimates" \cite[Thm. 5.14, Prop 6.16]{Hairer2014}).
\begin{proposition}\label{convolution_K}
For any  couple $((\Pi,\gG),V)\in \cM \ltimes \cD^{\gga,\gh}$ where $(\Pi,\gG)$ is of the form $\cL(\PI)$ for some admissible map $\PI$ and $\gga>0 $, $3/2-\kappa<\gh<\gga$ are not integers, there exists a regularity structure $(\cA_2,\mathcal{T}_2, \cG_2)$ including $(\cA,\mathcal{T}, \cG)$, a linear map $\widetilde{I}\colon \cT\to \cT_2$ satisfying $\widetilde{I}(\Xi)=\cI(\Xi) $ and a model $(\Pi^{2}, \gG^{2})$ extending $(\Pi,\gG) $ on $\cT_2$ such that, imposing $\bar{\gga}= \gga+2$, $\bar{\eta}=\ga_V\wedge \gh+ 2$, where $\ga_V=\min\{a\in \cA \colon \cQ_a V\neq 0\}$ the applications $\cN \colon \cM \ltimes \cD^{\gga,\gh}\to \cD^{\bar{\gga},\bar{\eta}}_{\cU}$,  $J\colon \cT\to \cD^{\bar{\gga},\bar{\eta}}_{\cU}$
\begin{equation}\label{operatorN}
\cN (V)(z):=\sum_{\abs{k}< \gga+2}\bigg((\partial^k K)*(\cR V- \Pi_zV(z))\bigg) (z)\frac{\X^k}{k!} \,,
\end{equation}
\begin{equation}
J(z)\tau:=\sum_{\abs{k}< \abs{\tau}+2}\bigg((\partial^k K*\Pi_z\tau)\bigg) (z)\frac{\X^k}{k!}\,,
\end{equation}
are well defined and the application
\begin{equation}\label{defn_conv_K}
\mathfrak{K}(V)(z):= \cQ_{<\bar{\gga}}(\widetilde{I} (V))(z)+ J(z)V(z)+ \cN (V)(z)\,,
\end{equation}
is a map $\mathfrak{K}\colon \cM \ltimes \cD^{\gga,\gh}\to \cD^{\bar{\gga},\bar{\eta}}(\gG_2)$ satisfying the first identity of \eqref{reconstruction_KandR} without any restriction on the support of $\cR(V)$. Moreover $\mathfrak{K}$ is also continuous with respect to the topology of  $\cM$.
\end{proposition}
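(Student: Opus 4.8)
The statement is the transcription into the weighted spaces $\cD^{\gga,\gh}$ of Hairer's \emph{Extension theorem} \cite[Thm.~5.14]{Hairer2014} together with the \emph{multi-level Schauder estimate} \cite[Thm.~5.12]{Hairer2014} and its weighted version \cite[Prop.~6.16]{Hairer2014}, so the plan is to reduce to those results and to check that the hypotheses hold in the present situation. First I would apply the Extension theorem to $(\cA,\cT,\cG)$ and to the admissible model $(\Pi,\gG)=\cL(\PI)$: this produces a regularity structure $(\cA_2,\cT_2,\cG_2)\supset(\cA,\cT,\cG)$ with an abstract integration map $\widetilde I$ of order $2$ (so $\abs{\widetilde I\tau}=\abs{\tau}+2$ on non-polynomial symbols, $\widetilde I\X^k=0$ in accordance with \eqref{pol_0}, and $\widetilde I(\Xi)=\cI(\Xi)$ since that symbol already lies in $\cT$), together with a model $(\Pi^2,\gG^2)$ extending $(\Pi,\gG)$ and admissible for $K$, i.e.\ $\Pi^2_z\widetilde I\tau$ equals $K*\Pi^2_z\tau$ minus its Taylor jet of parabolic order $\abs{\tau}+2$ at $z$. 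The analytic inputs are already available: the dyadic decomposition $K=\sum_nK_n$ with the scaling and moment bounds of Lemma \ref{KandR1}, and the fact that $\cL(\PI)$ is produced from a character (Remark \ref{operator L}), so the extension is again of the form $\cL(\cdot)$ and hence automatically satisfies the $\gG$-bound \eqref{model2}. For the modelled distribution $V=\1_+\Xi$ relevant in the sequel one may take $\cT_2=\cT$; in general one only adjoins the symbols $\cI_k(\tau)$ missing from $\cT$.

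Next I would show that the three summands of \eqref{defn_conv_K} are modelled distributions of the asserted class. Using the dyadic bounds on $K$: (i) $J(z)\colon\cT\to\cD^{\bar{\gga},\bar{\eta}}_{\cU}$ is well defined and $J(\cdot)\tau\in\cD^{\bar{\gga},\bar{\eta}}(\gG^2)$, the identity \eqref{pol_0} being exactly what forces the polynomial coordinates of $J(z)\tau$ and $\gG^2_{zz'}J(z')\tau$ to match; (ii) $\cN$ as in \eqref{operatorN} sends $\cM\ltimes\cD^{\gga,\gh}$ into $\cD^{\bar{\gga},\bar{\eta}}_{\cU}$, the key point being to feed the reconstruction bound $\abs{(\cR V-\Pi_zV(z))(\gh_z^{\gl})}\lesssim\gl^{\gga}$ of Theorem \ref{reconstruction_theorem} into the sum $\sum_n\partial^kK_n*(\cR V-\Pi_zV(z))$; (iii) $\cQ_{<\bar{\gga}}\widetilde I(V)\in\cD^{\bar{\gga},\bar{\eta}}$ directly from $V\in\cD^{\gga,\gh}$ and $\abs{\widetilde I\tau}=\abs{\tau}+2$. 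Summing the three contributions gives $\mathfrak{K}(V)\in\cD^{\bar{\gga},\bar{\eta}}(\gG_2)$, and since each estimate above is linear (resp.\ bilinear) in the pair $\big((\Pi,\gG),V\big)$, the map $\mathfrak{K}$ is locally Lipschitz for the topology of $\cM\ltimes\cD^{\gga,\gh}$, which is the continuity claimed.

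It then remains to verify $\cR(\mathfrak{K}(V))=K*\cR V$, the first identity of \eqref{reconstruction_KandR}. By the characterisation of $\cR$ through \eqref{taylor_decom} it suffices to bound $\abs{\big(K*\cR V-\Pi^2_z\mathfrak{K}(V)(z)\big)(\gh_z^{\gl})}$ by $\gl^{\bar{\gga}}$ uniformly on compacts. Expanding $\Pi^2_z\mathfrak{K}(V)(z)$ using admissibility of $(\Pi^2,\gG^2)$ and the definitions \eqref{operatorN}, \eqref{defn_conv_K}, the pieces combine: the non-local part of $\Pi^2_z\widetilde I(V(z))$ produces $K*\Pi_zV(z)$, the term $\cN(V)(z)$ supplies $K*(\cR V-\Pi_zV(z))$, and $J(z)V(z)$ together with the polynomial jets removes the remaining low-order Taylor contributions, so that what is left is $O(\gl^{\bar{\gga}})$ again by \eqref{taylor_decom} applied to $V$. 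No restriction on $\mathrm{supp}(\cR V)$ is needed here because $K$ (unlike $G$) is compactly supported, so $K*v$ is a well-defined distribution for any tempered $v$ (Remark \ref{convolution_K_reg}).

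The genuinely delicate step is the weighted bound for $\cN(V)$ near the hyperplane $P=\{t=0\}$ in the second paragraph: one splits the dyadic sum at the scale $2^{-n}\sim\sqrt{\abs{t}}$, bounding the coarse scales via $\abs{V(z)}_{\ga}\lesssim\abs{t}^{(\gh-\ga)/2}$ and the support of $K_n$, and the fine scales via the reconstruction bound — but with the subtlety that near $t=0$ the effective order of $V$ degrades from $\gga$ to $\ga_V$, which is precisely why the output weight is $\bar{\eta}=\ga_V\wedge\gh+2$ rather than $\gh+2$. Making this exponent sharp while simultaneously controlling the two-point increments $\mathfrak{K}(V)(z)-\gG^2_{zz'}\mathfrak{K}(V)(z')$ against the weight $(\abs{t}\wedge\abs{t'})^{(\bar{\eta}-\bar{\gga})/2}$ is the technical heart of the matter; it is exactly the content of \cite[Prop.~6.16]{Hairer2014}, which I would invoke rather than reprove, only indicating where the non-integrality of $\gga,\gh$ and the lower bound $\gh>3/2-\kappa$ enter.
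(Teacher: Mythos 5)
Your proposal is correct and takes exactly the same route as the paper: the paper itself states Proposition \ref{convolution_K} without a written proof, explicitly pointing to Hairer's Extension theorem and multi-level Schauder estimates (including the weighted Proposition 6.16) as the source, which is precisely the reduction you carry out. Your additional explanations — the role of \eqref{pol_0} in matching polynomial coordinates, the use of \eqref{taylor_decom} to control $\cN(V)$, the dyadic split at scale $2^{-n}\sim\sqrt{|t|}$ and the resulting exponent $\bar\eta=\ga_V\wedge\gh+2$, the fact that $\cL(\PI)$ stays of that form under extension, and the observation that compact support of $K$ removes any support restriction on $\cR V$ — are all faithful reconstructions of what the cited references deliver, so there is nothing to flag.
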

Choosing in the definition of $\mathfrak{R}$ the same parameters $\bar{\gga}$ and $\bar{\gh}$ of $\mathfrak{K}$, the application $\cP=\mathfrak{K}+ \mathfrak{R}$ is a well defined map $\cP\colon  \cM \ltimes \cD^{\gga,\gh}\to \cD^{\bar{\gga},\bar{\gh}}$ which depends continuously on the topology of the models. We will denote by $\hat{\mathfrak{K}}_{\gep}$, $\hat{\mathfrak{R}}_{\gep}$, $\hat{\cP}_{\gep}$ (resp. $\hat{\mathfrak{K}}$, $\hat{\mathfrak{R}}$, $\hat{\cP}$) the operators $\mathfrak{K}$, $\mathfrak{R}$ and $\cP$ associated to the model $(\hat{\Pi}^{\gep}, \hgG^{\gep})$ (resp. $ (\hat{\Pi}, \hgG)$). Let us we calculate  $\hat{\cP}_{\gep}(\1_{+}\Xi)$ and $\hat{\cP}(\1_{+}\Xi)$ in this case.
\begin{proposition}\label{big_U}
For any $\gga> 0$  and every $ -3/2+\kappa< \gh< \gga$ non integer, using the shorthand notation  $\bar{\gga}= \gga+2$, the modelled distribution $U_{\gep}:= \hat{\cP}_{\gep}(\1_{+}\Xi)$ and $U:=\hat{\cP}(\1_{+}\Xi)$ belong respectively to $\cD^{\bar{\gga}, 1/2-\kappa}_{\cU}(\hgG^{\gep})$ and $\cD^{\bar{\gga}, 1/2-\kappa}_{\cU}(\hgG)$ and they are both given explicitly for any $z=(t,x)\in [0,T]\times \bR$ by the formulae
\begin{equation}\label{lifting_U1}
U_{\gep}(z)= \widetilde{u}_{\gep}(z)\1 + \1_{+}(z)\,\tikz[scale=1.2]{\node{} child{node{} edge from parent[noise] child {node{}}}}+  \sum_{0<\abs{k}<\bar{\gga}}v^k_{\gep}(z)\frac{\X^k}{k!}\,,
\end{equation}
\begin{equation}\label{lifting_U2}
U(z)= \widetilde{u}(z)\1 + \1_{+}(z)\,\tikz[scale=1.2]{\node{} child{node{} edge from parent[noise] child {node{}}}}+  \sum_{0<\abs{k}<\bar{\gga}}v^k(z)\frac{\X^k}{k!}\,,
\end{equation}
where $v^k_{\gep}(z)=(\partial_k \bar{R}*\widetilde{\1_{+}\xi_{\gep}})(z)$ and $v^k(z)=(\partial_k \bar{R}*\widetilde{\1_{[0,+\infty)}\xi})(z)$. Moreover for any $z\in [0,T]\times \bT$, $\hat{\cR}_{\gep}(U_{\gep})(z)=u_{\gep}(z)$  and $\hat{\cR}( U)(z)=u(z)$.
\end{proposition}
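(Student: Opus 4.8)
The plan is to compute $\hat{\cP}_{\gep}(\1_{+}\Xi)$ directly from $\hat{\cP}_{\gep}=\hat{\mathfrak{K}}_{\gep}+\hat{\mathfrak{R}}_{\gep}$, where $\hat{\mathfrak{R}}_{\gep}$ is given by \eqref{operatorR} and $\hat{\mathfrak{K}}_{\gep}$ by \eqref{defn_conv_K} through $\widetilde{I}$, $J$ and $\cN$, and then read off the three assertions. I would first record the input data for $V=\1_{+}\Xi$: as noted just before the statement, $\1_{+}\Xi\in\cD^{\gga,\gh}(\hgG^{\gep})$; moreover $V(z)=\1_{+}(z)\Xi$, $\widetilde{I}(\Xi)=\cI(\Xi)$, $\hat{\Pi}^{\gep}_{z}V(z)=\1_{+}(z)\widetilde{\xi_{\gep}}$ (since $\hat{\Pi}^{\gep}_{z}\Xi=\widetilde{\xi_{\gep}}$), and $\hat{\cR}_{\gep}V=\widetilde{\1_{+}\xi_{\gep}}$ by \eqref{xi_reconstruct}. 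Because $\ga_{V}=\abs{\Xi}=-3/2-\kappa$ and $-3/2+\kappa<\gh$, we get $\ga_{V}\wedge\gh=\ga_{V}$, so the target exponent of $\hat{\mathfrak{K}}_{\gep}$ in Proposition~\ref{convolution_K} is $\bar{\eta}=\ga_{V}\wedge\gh+2=1/2-\kappa$; choosing the same $\bar{\gga}=\gga+2$, $\bar{\eta}=1/2-\kappa$ in \eqref{operatorR}, Proposition~\ref{convolution_K} gives $U_{\gep}\in\cD^{\bar{\gga},1/2-\kappa}(\hgG^{\gep})$, and since the only symbols occurring in the summands of $\hat{\mathfrak{K}}_{\gep}(V)$ and $\hat{\mathfrak{R}}_{\gep}(V)$ are $\cI(\Xi)\in U$ and the monomials $\X^{k}$, in fact $U_{\gep}\in\cD^{\bar{\gga},1/2-\kappa}_{\cU}(\hgG^{\gep})$.

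Next I would evaluate each summand at $z=(t,x)\in[0,T]\times\bR$. The abstract integration term is $\cQ_{<\bar{\gga}}(\widetilde{I}(\1_{+}\Xi))(z)=\1_{+}(z)\cI(\Xi)$, the tree term in \eqref{lifting_U1}. In $J(z)V(z)$ only the index $k=0$ contributes (as $\abs{\Xi}+2=1/2-\kappa<1$), giving $\1_{+}(z)(K*\widetilde{\xi_{\gep}})(z)\1$; $\cN(\1_{+}\Xi)(z)$ contributes $\sum_{\abs{k}<\bar{\gga}}\frac{\X^{k}}{k!}\bigl(\partial^{k}K*(\widetilde{\1_{+}\xi_{\gep}}-\1_{+}(z)\widetilde{\xi_{\gep}})\bigr)(z)$; and $\hat{\mathfrak{R}}_{\gep}(\1_{+}\Xi)(z)$ contributes $\sum_{\abs{k}<\bar{\gga}}\frac{\X^{k}}{k!}(\partial^{k}\bar{R}*\widetilde{\1_{+}\xi_{\gep}})(z)$. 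Collecting the coefficient of $\1$, the $J$-contribution $\1_{+}(z)(K*\widetilde{\xi_{\gep}})(z)$ cancels the term $-\1_{+}(z)(K*\widetilde{\xi_{\gep}})(z)$ from $\cN$, leaving $(K*\widetilde{\1_{+}\xi_{\gep}})(z)+(\bar{R}*\widetilde{\1_{+}\xi_{\gep}})(z)$; by the decomposition $G=K+\bar{R}$ valid on distributions supported at positive times (Lemma~\ref{KandR2} with $v=\1_{+}\xi_{\gep}$) this equals $(G*\widetilde{\1_{+}\xi_{\gep}})(z)$, and since the heat kernel looks only at the source at times $\leq t$ it coincides at $z$ with $(G*\widetilde{\1_{[0,t]}\xi_{\gep}})(t,x)=\widetilde{u_{\gep}}(z)$ by \eqref{analytic_representation2}. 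Collecting the coefficients of $\X^{k}$ for $0<\abs{k}<\bar{\gga}$ gives the remainder terms $v^{k}_{\gep}(z)$; here one uses that $\partial^{k}K$ is supported at positive times and the explicit form of $\hat{\cR}_{\gep}(\1_{+}\Xi)$ to reduce the net contribution to $(\partial^{k}\bar{R}*\widetilde{\1_{+}\xi_{\gep}})(z)$. This establishes \eqref{lifting_U1}.

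For the limiting model the argument is identical: $(\hat{\Pi},\hgG)$ is of the form $\cL(\hPI)$ with $\hat{\Pi}_{z}\Xi=\widetilde{\xi}$ (Remark~\ref{Ito_model}), $\hat{\cR}(\1_{+}\Xi)=\1_{[0,\infty)}\xi$ by \eqref{xi_reconstruct}, and $K*\widetilde{\xi}\in\cC^{1/2-\kappa}$ by the Schauder estimate of Remark~\ref{convolution_K_reg}, so every convolution entering \eqref{lifting_U2} is meaningful; Lemma~\ref{KandR2} with $v=\1_{[0,\infty)}\xi$ then identifies the coefficient of $\1$ with $\widetilde{u}$ via \eqref{analytic_representation2}. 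Alternatively, \eqref{lifting_U2} follows from \eqref{lifting_U1} by letting $\gep\to0$: the models converge by Theorem~\ref{convergence}, the map $\cP=\mathfrak{K}+\mathfrak{R}$ is continuous on $\cM\ltimes\cD^{\gga,\gh}$ by Proposition~\ref{convolution_K} and the continuity of $\mathfrak{R}$, and $\hat{\cR}_{\gep}(\1_{+}\Xi)\to\1_{[0,\infty)}\xi$.

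Finally, the reconstruction identities come from $\hat{\cR}_{\gep}(U_{\gep})=\hat{\cR}_{\gep}\bigl(\hat{\cP}_{\gep}(\1_{+}\Xi)\bigr)=G*\hat{\cR}_{\gep}(\1_{+}\Xi)=G*\widetilde{\1_{+}\xi_{\gep}}$, which holds because $\cR(\mathfrak{K}(V))=K*\cR V$, $\cR(\mathfrak{R}(V))=\bar{R}*\cR V$ and $K+\bar{R}=G$ on positive-time distributions; restricted to $[0,T]\times\bT$ this agrees with $(G*\widetilde{\1_{[0,t]}\xi_{\gep}})(t,x)=\widetilde{u_{\gep}}(z)$, so $\hat{\cR}_{\gep}(U_{\gep})(z)=u_{\gep}(z)$ for $z\in[0,T]\times\bT$, and the same computation with \eqref{analytic_representation} gives $\hat{\cR}(U)(z)=u(z)$. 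In the regularised case one may instead read this off \eqref{lifting_U1} and the continuous-model formula \eqref{simpl_reconstruct}, using that $\hat{\Pi}^{\gep}_{z}\cI(\Xi)$ and $\hat{\Pi}^{\gep}_{z}\X^{k}$ ($k\neq0$) vanish at $z$ owing to the twist $\gG_{f_{\gep}(z)}$ in \eqref{PI_and_Pi}. I expect the main work to be bookkeeping: tracking the indicator $\1_{+}(z)$ through the three pieces of $\mathfrak{K}$, reconciling the two kernel decompositions $G=K+R$ and $G=K+\bar{R}$, and checking consistency with the variation-of-constants representation \eqref{analytic_representation}; the identification of the polynomial remainders $v^{k}_{\gep}$, $v^{k}$ is the step requiring the most care.
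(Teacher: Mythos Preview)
Your proposal is correct and takes essentially the same approach as the paper: apply Proposition~\ref{convolution_K} to $V=\1_{+}\Xi$ (so that $\bar\eta=\ga_V\wedge\gh+2=1/2-\kappa$ from $\ga_V=-3/2-\kappa$), compute the pieces $\widetilde I$, $J$, $\cN$, $\mathfrak R$ explicitly, identify the $\1$-coefficient via Lemma~\ref{KandR2} and \eqref{analytic_representation2}, and read off the reconstruction from \eqref{convolution} together with the positive-time support of $K$ and $\bar R$. The only notable difference is in the handling of $\cN$: the paper asserts directly that $\cN(\1_{+}\Xi)\equiv 0$, using the pointwise identity $\hat{\cR}_{\gep}(\1_{+}\Xi)(z)=\1_{+}(z)\hat\Pi^{\gep}_z(\Xi)(z)$, whereas you keep the $\cN$-contribution and display its cancellation against the $J$-term on the $\1$-coefficient; both routes land on the same formula and the remaining bookkeeping for the polynomial coefficients $v^k_{\gep}$ is the same.
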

\begin{proof}
The proposition is a direct consequence of the definition of $\mathfrak{K}$ in the Proposition \ref{convolution_K}. In particular we have immediately $ \hat{\mathfrak{K}}_{\gep}(\1_{+}\Xi)\in \cD^{\bar{\gga}, 1/2-\kappa}_{\cU}(\hat{\gG}^{\gep})$ and $\hat{\mathfrak{K}}(\1_{+}\Xi)\in \cD^{\bar{\gga}, 1/2-\kappa}_{\cU}(\hat{\gG}) $ because $\ga_{\1_{+}\Xi}= -3/2 -\kappa$. Considering the explicit formula \eqref{defn_conv_K}, which defines $\mathfrak{K}$, by definition of $\1_{+}\Xi $ we have for any $z\in \bR^2$  
\[\hat{\cR}_{\gep}(\1_{+}\Xi)(z)= \1_{+}(z)\hPi^{\gep}_z(\Xi)(z)\,,\quad  \hat{\cR}(\1_{+}\Xi)= \1_{[0,+\infty)}\hPi_z(\Xi)\,.\]
Hence the function $\cN(\1_{+} \Xi) $ defined in \eqref{operatorN} is constantly equal to zero in case of $ \hat{\mathfrak{K}}_{\gep}$ and $ \hat{\mathfrak{K}}$. Summing up the definition of $\widetilde{I}$, the definition of $J$ and the identity \eqref{xi_reconstruct}, we obtain
\[\begin{split}
\hat{\mathfrak{K}}_{\gep}(\1_{+}\Xi)(z)&=(K* \widetilde{\1_{+}\xi_{\gep}})(z)\1+ \1_{+}(z)\,\tikz[scale=1.3]{\node{} child{node{} edge from parent[noise] child {node{}}}}\;,\\ \hat{\mathfrak{K}}(\1_{+}\Xi)(z)&=(K* \widetilde{\1_{[0,+\infty)}\xi})(z)\1+ \1_{+}(z)\,\tikz[scale=1.3]{\node{} child{node{} edge from parent[noise] child {node{}}}}\,.
\end{split}\]
Applying again the identity \eqref{xi_reconstruct} and the definition of $\mathfrak{R}$ in \eqref{operatorR}, the formulae \eqref{lifting_U1} \eqref{lifting_U2} follws from the distributional identities \eqref{sum_K+R} and \eqref{analytic_representation2}. The last identities on the reconstruction follow straightforwardly from the general identity \eqref{convolution} and the property that the kernels $K$ and $R$ are $0$ for negative times. Thus for any $z=(t,x)\in [0,T]\times \bR$ one has 
\[(K* \widetilde{\1_{[0,+\infty)}\xi})(z)=(K*\widetilde{\1_{[0,t]}\xi})(z)\,,\quad(\bar{R}* \widetilde{\1_{[0,+\infty)}\xi})(z)= (\bar{R}*\widetilde{\1_{[0,t]}\xi})(z)\]
and similarly with $\xi_{\gep}$. Thereby obtaining the thesis.
\end{proof}
\begin{remark}
For any $\gep>0$ it is also possible to consider $\cP_{\gep}$, the convolution operator associated to the canonical model $(\Pi^{\gep}, \gG^{\gep})$. Following the Remark \ref{xi_canonical} related to the modelled distribution $\1_{+}\Xi$  in the case of the canonical model and the proof of the Proposition \ref{big_U}, we obtain also that $\cP_{\gep}(\1_{+}\Xi)\in \cD^{\bar{\gga}, 1/2-\kappa}_{\cU}(\gG^{\gep})$ and the identity $\cP_{\gep}(\1_{+}\Xi)(z)= U_{\gep}(z)$ for any $z\in \bR^2$, implying $\cR_{\gep}U_{\gep}=u_{\gep}$. Following Proposition \ref{big_U}, the hypothesis $\gga>0$ implies $\bar{\gga}>2$. However, to reconstruct $u_{\gep}$ and $u$ from $U_{\gep}$ and $U$, as explained in the Remark \ref{reconstruction_more}, we can relax this condition by writing $U_{\gep}$ and $U$ as elements of $\cD^{\gga', 1/2-\kappa}$ for some $0<\gga'\leq \bar{\gga}$.
\end{remark}
Writing $u_{\gep}$ and $u$ as the reconstruction of some modelled distribution, we obtain immediately the following convergence.
\begin{proposition}\label{convergence_left}
Let $u_{\gep}$ and $u$ be the solutions respectively of the equations \eqref{eq3} and \eqref{eqSHE}. Then as $\gep\to 0^{+}$
\begin{equation}\label{convergence_left_eq}
 \sup_{(t,x)\in [0,T]\times \bT}\vert u_{\gep}(t,x) -u(t,x)\vert \overset{\bP}{\to} 0 \,.
 \end{equation}
Moreover $u_{\gep}\to u$ in probability with respect to the topology of $\cC^{1/2-\kappa}((0,T)\times \bT)$.
\end{proposition}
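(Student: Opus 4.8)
The plan is to transport the convergence of models from Theorem~\ref{convergence} through the convolution operator $\hat{\cP}$ and then through the reconstruction operator, using the representations of $u_{\gep}$ and $u$ from Proposition~\ref{big_U}. Recall that on $[0,T]\times\bT$ we have $u_{\gep}=\hat{\cR}_{\gep}U_{\gep}$ and $u=\hat{\cR}U$, where $U_{\gep}=\hat{\cP}_{\gep}(\1_{+}\Xi)\in\cD^{\bar{\gga},1/2-\kappa}_{\cU}(\hgG^{\gep})$ and $U=\hat{\cP}(\1_{+}\Xi)\in\cD^{\bar{\gga},1/2-\kappa}_{\cU}(\hgG)$, with $\bar{\gga}=\gga+2$. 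The first observation is that the pair $((\hPi^{\gep},\hgG^{\gep}),\1_{+}\Xi)$ converges in probability to $((\hPi,\hgG),\1_{+}\Xi)$ in $\cM\ltimes\cD^{\gga,\gh}$: the function $\1_{+}\Xi$ is literally the same for all models, and since $\Xi$ has the lowest homogeneity it is fixed by the structure group, so $\hgG^{\gep}_{zw}\Xi-\hgG_{zw}\Xi=0$ and the fibred distance $\norm{\1_{+}\Xi;\1_{+}\Xi}_{\gga,\gh}$ vanishes identically; only the model distance $\norm{(\hPi^{\gep},\hgG^{\gep});(\hPi,\hgG)}_{\cM}$ survives, and it tends to $0$ in probability by Theorem~\ref{convergence}.

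Next I would apply the continuity statements already established. By Proposition~\ref{convolution_K} the operator $\hat{\mathfrak{K}}$ is continuous with respect to the topology of $\cM\ltimes\cD^{\gga,\gh}$, and $\hat{\mathfrak{R}}$ is continuous as well, so $\hat{\cP}=\hat{\mathfrak{K}}+\hat{\mathfrak{R}}$ is continuous. Composing with the convergence of the previous paragraph yields $((\hPi^{\gep},\hgG^{\gep}),U_{\gep})\overset{\bP}{\to}((\hPi,\hgG),U)$ in $\cM\ltimes\cD^{\bar{\gga},1/2-\kappa}$; in particular $\norm{U_{\gep};U}_{\bar{\gga},1/2-\kappa}\overset{\bP}{\to}0$, and the norms of $U_{\gep}$ and of the models stay uniformly bounded on events of probability tending to one.

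I would then invoke the local Lipschitz property of reconstruction \eqref{local_lipschitz}. Since $U_{\gep}$ and $U$ take values in the function-like sector $\cU$, one has $\ga_{U_{\gep}}=\ga_{U}=0$, so the relevant exponent is $0\wedge(1/2-\kappa)=0$ and \eqref{local_lipschitz} reads $\norm{u_{\gep}-u}_{\cC^{0}(\cK)}\le C_R\big(\norm{U_{\gep};U}_{\bar{\gga},1/2-\kappa}+\norm{(\hPi^{\gep},\hgG^{\gep});(\hPi,\hgG)}_{\cM}\big)$ on the event where the quantity inside the hypothesis is bounded by $R$, for a fixed compact $\cK\supset[0,T]\times[0,1]$. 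A standard truncation argument then gives \eqref{convergence_left_eq}: for $\delta>0$ choose $R$ large enough that the ``bad event'' has probability $<\delta/2$, and on its complement make the right-hand side as small as desired with probability $>1-\delta/2$ using the convergence of the previous step. Since the models are adapted to the space translations and $U_{\gep}$, $U$ are space-periodic, convergence in $\cC^{0}(\cK)$ is exactly the asserted uniform convergence over $[0,T]\times\bT$.

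For the convergence in $\cC^{1/2-\kappa}((0,T)\times\bT)$ I would use the improved regularity of the reconstruction away from $P=\{t=0\}$ recalled after Theorem~\ref{reconstruction_theorem}: there $\hat{\cR}U\in\cC^{\gb_U}(\bR^2\setminus P)$ with $\gb_U=\min\{a\in\cA\setminus\bN:\cQ_aU\ne0\}$, and \eqref{local_lipschitz} also holds in this space. For $U=\hat{\cP}(\1_{+}\Xi)$ the only non-integer homogeneity in the decomposition \eqref{lifting_U2} is that of $\cI(\Xi)$, so $\gb_U=1/2-\kappa$; repeating verbatim the truncation argument on each compact $\cK'\subset(0,T)\times\bT$ (all of which are compact subsets of $\bR^2\setminus P$) gives convergence in probability in the Fréchet topology of $\cC^{1/2-\kappa}((0,T)\times\bT)$. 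The only genuinely delicate point is the passage from the deterministic Lipschitz bounds, valid only on bounded balls of $\cM\ltimes\cD^{\bar{\gga},\bar{\eta}}$, to convergence in probability of the reconstructions; this is handled by the localisation on the random model norms above, and the rest is bookkeeping in chaining the already-established continuity results.
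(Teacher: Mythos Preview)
Your proof is correct and follows essentially the same approach as the paper: both combine Proposition~\ref{big_U}, the continuity of the convolution operator from Proposition~\ref{convolution_K}, the local Lipschitz property of reconstruction, and the convergence of models from Theorem~\ref{convergence}, with the second part using the improved regularity $\gb_U=1/2-\kappa$ away from $P$. The paper phrases this more compactly by packaging the composite $(\hPi,\hgG)\mapsto \hat{\cR}\hat{\cP}(\1_{+}\Xi)$ as a single continuous map $\Psi\colon\cM\to\cC^{0}([0,T]\times\bT)$ (respectively $\cC^{1/2-\kappa}((0,T)\times\bT)$), whereas you spell out the intermediate step through $\cM\ltimes\cD^{\bar{\gga},1/2-\kappa}$ and the localisation-in-probability argument explicitly.
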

\begin{proof}
Thanks to the Proposition \ref{big_U}, the Proposition \ref{convolution_K} and the local Lipschitz property of the reconstruction map, there exists a continuous map $\Psi\colon \cM\to \cC^{0}([0,T]\times \bT)$ such that $u_{\gep}= \Psi((\hPi^{\gep},\hgG^{\gep}))$ and $u=\Psi((\hPi,\hgG))$. Thus the limit \eqref{convergence_left_eq} is a direct consequence of the Theorem \ref{convergence}. Restricting $u_{\gep}$ and $u$ on $(0,T)\times \bT$ and following the Remark \ref{reconstruction_more} on the regularity of the reconstruction operator outside the origin, we obtain that $\Psi$ is also a continuous map $\Psi\colon \cM\to \cC^{1/2-\kappa}((0,T)\times \bT)$, concluding in the same way.
\end{proof}
\subsubsection*{Composition}
For any  $((\Pi,\gG),V)\in \cM \ltimes \cD^{\gga,\gh}_{\cU}$, the general property of the reconstruction operator ensures us that $\cR V$ is a function (see the Remark \ref{reconstruction_more}). In particular for any function  $h\colon \bR\to \bR$ sufficiently smooth we can find a modelled distribution $H(V)$ such that 
\begin{equation}\label{composition_reconstruct}
\cR (H(V))=h\circ\cR V\,.
\end{equation}
We call this operation the \emph{lifting of $h$} and we write it  as a linear map  $H\colon \cD^{\gga,\gh}_{\cU}\to \cD^{\gga,\gh}_{\cU}$ (the lifting of a function $f$ will always be denoted in capital letters $F$). For any smooth function $h$ the function  $ H(V)\colon \bR^2\to \cU $ is given by
\begin{equation}\label{composition}
H(V)(z):=\cQ_{<\gga}\sum_{ k\geq 0}\frac{h^{(k)}(v(z))}{k!} (V(z)-\cR(V)(z)\1)^{ k}\,,
\end{equation}
where the exponent  $k$ is the product in $\cU$. Denoting by $C^n_b(\bR) $ the space of $C^n$ functions with all bounded derivatives up to the $n$-the order, we apply the general theory to deduce a sufficient condition to define the lifting $H(V)$.
\begin{proposition}\label{prop_composition}
For any $\gga>0 $, $0\leq\gh<\gga$, the lifting of $h$ in \eqref{composition} is well defined and it depends continuously on the topology of $\cM \ltimes \cD^{\gga,\gh}_{\cU}$ if $h\in C^{\gb}_b(\bR)$ where $\gb$ is the smallest integer $\gb\geq  ((\gga/\gb_V)\vee 1)+ 1$, where we recall the notation $\gb_V:= \min\{a\in \cA\setminus \bN \colon \cQ_a V\neq 0\}$.
\end{proposition}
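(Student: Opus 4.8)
The plan is to obtain both assertions as a special case of the general composition theorem of the theory of regularity structures, namely \cite[Thm.~4.16]{Hairer2014} together with its extension to the weighted spaces $\cD^{\gga,\gh}$ developed in \cite[Sec.~6]{Hairer2014}, once we have checked that $\cU$ is a function-like sector to which those statements apply. Recall from Remark \ref{positive_renom} that $\gG(\cU)\subset\cU$ for every $\gG\in\cG$, that $\cU_0=\langle\1\rangle$, and that every element of the canonical basis $V$ of $\cU$ has homogeneity $m(1/2-\kappa)+|l|\geq 0$; hence $\cU$ is a function-like sector of regularity $0$ whose smallest strictly positive homogeneity equals $1/2-\kappa$ (for $\kappa<1/2$). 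In particular, for any $V\in\cD^{\gga,\gh}_{\cU}(\gG)$ and any $z$, the element $V(z)-\cR(V)(z)\1\in\cU$ has all its components of strictly positive homogeneity, so that $\cQ_{<\gga}\big(V(z)-\cR(V)(z)\1\big)^{k}$ vanishes as soon as $k$ is large enough; consequently the series \eqref{composition} is a finite sum and $H(V)$ is a well-defined map $\bR^2\to\cQ_{<\gga}\cU$. The restriction $\gh\geq 0$ guarantees, via Theorem \ref{reconstruction_theorem}, that $\ga_V\wedge\gh\geq 0$ and hence that $\cR V\in\cC^{0}$ is a genuine locally bounded function, so that $h\circ\cR V$ is meaningful; the identity \eqref{composition_reconstruct} then follows by applying $\cR$ and using \eqref{simpl_reconstruct}, which gives $\cR(H(V))(z)=\langle H(V)(z),\1\rangle=h\big(\langle V(z),\1\rangle\big)=h(\cR V(z))$.

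With these preliminaries the argument reduces to verifying the two bounds of \eqref{norm_dgamma} for $H(V)$. The pointwise bound $\sup_{\ga}|H(V)(z)|_{\ga}/|t|^{((\gh-\ga)/2)\wedge 0}<\infty$ follows from the corresponding bound for $V$, from the boundedness of $h$ and of its derivatives on $\bR$, and from the fact that multiplying homogeneous components of $V$ in $\cU$ only improves the rate of blow-up near the hyperplane $P$ (it is here that $\gh\geq 0$ is used, so that no negative weight is produced). For the increment bound one expands $H(V)(z)-\gG_{zz'}H(V)(z')$ using the multiplicativity of $\gG_{zz'}$ on $\cU$ (Remark \ref{multiplicative_gamma}), and writes the result as a combination of two kinds of contributions: the increments $(V(z)-v(z)\1)^{k}-\gG_{zz'}\big(V(z')-v(z')\1\big)^{k}$, where $v(w):=\langle V(w),\1\rangle$, which are controlled by the $\cD^{\gga,\gh}$-increments of $V$ and by the first few derivatives of $h$; and the Taylor remainder of $h$, expanded around $v(z)$ and evaluated at $v(z')$, which is estimated using $|v(z)-v(z')|=|\cR V(z)-\cR V(z')|\lesssim|z-z'|^{\gb_V}$ (the Hölder regularity of $\cR V$ off $P$) together with one further derivative of $h$. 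The bookkeeping of how many derivatives of $h$ are required to control every surviving term of the truncated series \eqref{composition} and the attendant Taylor remainders is exactly the one carried out in the proof of \cite[Thm.~4.16]{Hairer2014}, and it yields the sufficient condition $h\in C^{\gb}_b(\bR)$ with $\gb$ the smallest integer $\geq((\gga/\gb_V)\vee 1)+1$. The joint continuity of $((\Pi,\gG),V)\mapsto H(V)$ on $\cM\ltimes\cD^{\gga,\gh}_{\cU}$ is obtained by running the same estimates on the difference of two inputs, using that $h\in C^{\gb}_b$ makes $h^{(k)}$ globally Lipschitz for every $k\leq\gb-1$.

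The only genuinely technical point is the increment estimate just described, i.e.\ checking that the combinatorial expansion of $(V(z)-v(z)\1)^{k}-\gG_{zz'}\big(V(z')-v(z')\1\big)^{k}$ together with the $h$-Taylor remainder reproduces, homogeneity by homogeneity, the bounds with the weights $(|t|\wedge|t'|)^{(\gh-\gga)/2}$ prescribed by Definition \ref{defn_dgamma}. Since this computation is performed in full generality for an arbitrary function-like sector in \cite[Thm.~4.16]{Hairer2014} (and, for the weighted norms, in \cite[Sec.~6]{Hairer2014}), and since $\cU$ has just been identified as such a sector with minimal strictly positive homogeneity bounded away from $0$, the proposition follows by specialising those results; I would therefore confine the proof to the verification of the hypotheses above and refer to loc.\ cit.\ for the remaining estimates.
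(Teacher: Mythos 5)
Your proposal is correct and takes essentially the same route as the paper: the paper's own proof is a two-line citation of \cite[Thm.~6.13]{Hairer2014} (the weighted version of the \cite[Thm.~4.16]{Hairer2014} you invoke) together with \cite[Prop.~3.11]{MartinHairer2015} for the precise derivative count. You supply more of the verification that $\cU$ is a function-like sector satisfying the hypotheses, but the core step — reduce to the general composition theorem for modelled distributions and translate the H\"older condition on $h$ into the $C^{\gb}_b$ condition — is identical.
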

\begin{proof}
Following the general results \cite[Thm. 6.13]{Hairer2014}, \cite[Prop. 3.11]{MartinHairer2015}, the map $H\to H(V)$ is local Lipschitz with respect to the metric $ \norm{\cdot,\cdot}_{\gga,\gh}+ \norm{\cdot,\cdot }_{\cM}$ as long as $h$ is a $\lambda$-H\"older function where $\gl\geq ((\gga/\gb_V)\vee 1)+ 1$. Thus we obtain the thesis.
\end{proof}

\begin{remark}\label{comp_U}
Applying this proposition in case of $U_{\gep}$ and $U$ we obtain   easily $\gb_{U_{\gep}}= \gb_{U}=\abs{\cI(\Xi)} = 1/2-\kappa$. Thus when we consider for any $\gga'>0$ the projection on $\cD^{\gga', 1/2-\kappa}$ of the modelled distributions $U_{\gep}$, $U$ introduced in \eqref{lifting_U1} and \eqref{lifting_U2}, the theorem applies for any $h\in C^{\gb}_b(\bR)$ where $\gb$ is the smallest integer $\gb\geq  ((2\gga'/1-2\kappa)\vee 1)+ 1$. Since this operation depends only on the algebraic structure, we have also the same result on $U_{\gep}$, interpreted as a modelled distribution with respect to the canonical model $(\Pi^{\gep}, \gG^{\gep})$.
\end{remark}
\subsubsection*{Space derivative}
Thanks to its definition, the regularity structure $\cT$ allows us to define easily a linear map $D_x\colon \cU\to \cT$, which behaves like a space derivative on abstract symbols. Indeed it is sufficient to characterise $D_x$ as the unique linear map satisfying
\begin{equation}
\begin{gathered}
 D_x \1=0\,,\quad D_x X_1=0 \,,\quad D_x X_2= \1\,\quad D_x \cI(\Xi)=\cI_1(\Xi)\,,\\ 
 D_x (\tau\gs) =(D_x  \tau) \gs + (D_x\gs)\tau\,.
\end{gathered}
\end{equation}
for any couple $\tau$, $\gs$ such that $\gs\tau\in U$. Thus by composition we can define for any couple $((\Pi,\gG),V)\in \cM \ltimes \cD^{\gga,\gh}_{\cU}$ the function $D_xV\colon \bR^2\to \cT$. This abstract operation can pass directly at the level of the reconstruction, thanks to the explicit structure of the models we are considering.
\begin{proposition}\label{space_derivative_prop}
For any model $(\Pi,\gG)$ of the form $\cL(\PI)$ for some admissible map $\PI$, the operator $D_x$ is an abstract gradient which is compatible with  $(\Pi,\gG)$, as explained in the definitions \cite[Def. 5.25, Def. 5.26]{Hairer2014}. Moreover for any $V\in \cD^{\gga,\gh}_{\cU}$ such that $ \gga>1$, $0\leq\gh<\gga $ the application $V\to D_x V$ is an application $D_{x}\colon  \cD^{\gga,\gh}_{\cU}(\gG)\to \cD^{\gga-1,\gh-1}(\gG)$ depending continuously on the topology of $\cM \ltimes \cD^{\gga,\gh}_{\cU}$ such that
\begin{equation}\label{space_derivative}
\cR (D_xV)= \partial_x (\cR V)\,,
\end{equation}
where the equality is interpreted in the sense of distributions.
\end{proposition}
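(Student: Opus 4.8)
The plan is to verify by hand the two structural properties that make $D_x$ an abstract gradient compatible with the model, and then to read off the mapping property, the continuity and the reconstruction identity from the general theory of abstract gradients in \cite[Sec.~5.3]{Hairer2014} (in its weighted version, as in \cite{MartinHairer2015}).

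\textbf{Step 1: $D_x$ is an abstract gradient (\cite[Def.~5.25]{Hairer2014}).} First I would check that $D_x$ lowers the homogeneity by the parabolic degree of $X_2$, i.e.\ by one: on the generators $D_x\1 = D_xX_1 = 0$, $\abs{D_xX_2} = \abs{\1} = 0 = \abs{X_2}-1$ and $\abs{D_x\cI(\Xi)} = \abs{\cI_1(\Xi)} = \abs{\cI(\Xi)}-1$, and the Leibniz rule $D_x(\tau\gs) = (D_x\tau)\gs + \tau(D_x\gs)$ together with the additivity of $\abs{\cdot}$ under juxtaposition propagates this to all of $\cU$. Then I would show that $D_x$ commutes with every $\gG\in\cG$: since $\gG_h$ is multiplicative (Remark~\ref{multiplicative_gamma}) and $D_x$ is a derivation, it is enough to verify $D_x\gG_h = \gG_hD_x$ on $X_1,X_2,\cI(\Xi)$, which follows from \eqref{explicit_gamma}, the only non-obvious identity being $D_x\gG_h\cI(\Xi) = D_x(\cI(\Xi)+h_3\1) = \cI_1(\Xi) = \gG_h\cI_1(\Xi)$, where the last step uses that $\gG_h$ fixes $\cI_1(\Xi)$ by \eqref{explicit_gamma}.

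\textbf{Step 2: compatibility with $\cL(\PI)$ (\cite[Def.~5.26]{Hairer2014}).} Here I would establish $\Pi_zD_x\tau = \partial_x\Pi_z\tau$ for $\tau\in\cU$. Writing $\Pi_z = \PI\gG_{f(z)}$ as in \eqref{PI_and_Pi} and using Step 1 together with $\gG_{f(z)}\cU\subset\cU$ (Remark~\ref{positive_renom}), this reduces to $\PI D_x\gs = \partial_x\PI\gs$ for $\gs\in\cU$. On the generators admissibility gives $\PI D_x\1 = 0 = \partial_x 1$, $\PI D_xX_1 = 0 = \partial_x\PI X_1$ (since $\PI X_1(\bar z) = \bar z_1$ depends only on the time variable), $\PI D_xX_2 = \PI\1 = 1 = \partial_x\PI X_2$, and $\PI D_x\cI(\Xi) = \PI\cI_1(\Xi) = \partial_x(K*\PI\Xi) = \partial_x\PI\cI(\Xi)$ by \eqref{canonical2}; for a product $\tau\gs\in\cU$ one expands $D_x$ by the Leibniz rule and invokes that $\PI$ is multiplicative on $\cU\oplus\cV_{\cI_1(\Xi)}$ (which contains $D_x\cU$). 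This multiplicativity holds for the canonical models $(\Pi^{\gep},\gG^{\gep})$ by construction, and for the renormalised models $(\hPi^{\gep},\hgG^{\gep})$ because $M_{\gep}$ equals the identity on $\cU\oplus\cV_{\cI_1(\Xi)}$ by Theorem~\ref{BPHZ_explic}, so there $\hPI^{\gep}$ coincides with the multiplicative $\PI^{\gep}$; for the limiting model $(\hPi,\hgG)$ the distributional identity $\hPi_zD_x\tau = \partial_x\hPi_z\tau$ passes to the limit, both sides depending continuously on the model by Theorem~\ref{convergence}.

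\textbf{Step 3: consequences.} Granting Steps 1--2, the inclusion $D_x\colon\cD^{\gga,\gh}_{\cU}(\gG)\to\cD^{\gga-1,\gh-1}(\gG)$ is immediate from Definition~\ref{defn_dgamma}: applying a fixed linear map lowering homogeneity by one turns the two bounds defining $\abs{\cdot}_{\gga,\gh}$ into the corresponding bounds with $\gga$ and $\gh$ decreased by one, the increment bound being preserved since $D_x\gG_{zz'} = \gG_{zz'}D_x$; the constraints remain admissible because $\gh-1>-2$ (as $\gh\ge0$) and $\gga-1>0$ (as $\gga>1$). Continuity with respect to the topology of $\cM\ltimes\cD^{\gga,\gh}_{\cU}$ is then clear, $D_x$ being a fixed linear map of norm bounded independently of the model. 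Finally $\cR(D_xV) = \partial_x(\cR V)$ follows from Step 2 by the usual argument for abstract gradients: testing $\cR(D_xV)$ and $\partial_x\cR V$ against rescaled bump functions $\gh_z^{\gl}$ and using $\partial_x\Pi_zV(z) = \Pi_zD_xV(z)$ together with the Taylor bound \eqref{taylor_decom}. I expect this last step in the weighted spaces to be the main obstacle: one must check that the singularity at the hyperplane $P=\{t=0\}$ is shifted by exactly one and stays integrable (this is precisely where $\gh-1>-2$ enters), and that the identity, naturally obtained pointwise on $\bR^2\setminus P$, extends to a genuine distributional identity on all of $\bR^2$.
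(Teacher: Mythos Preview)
Your argument follows the same three-part structure as the paper's proof: verify that $D_x$ is an abstract gradient (homogeneity drop by one and $D_x\gG_h=\gG_hD_x$), verify compatibility $\Pi_zD_x=\partial_x\Pi_z$ on $\cU$ via the reduction $\Pi_z=\PI\gG_{f(z)}$, and then invoke the general theory (the paper simply cites \cite[Prop.~6.15]{Hairer2014} for the mapping property, the reconstruction identity \eqref{space_derivative}, and the behaviour near $t=0$ that you flag as the main obstacle at the end of Step~3).

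The only substantive difference is in Step~2. The paper asserts in one line that admissibility (conditions~\eqref{canonical1}--\eqref{canonical2}) already gives $\PI D_xu=\partial_x\PI u$ for every $u\in U$, without discussing how this handles products $\cI(\Xi)^m$ with $m\ge2$; you are more explicit about the role of multiplicativity of $\PI$ on $\cU\oplus\cV_{\cI_1(\Xi)}$ and then check it model-by-model (canonical, BPHZ via $M_\gep=\mathrm{id}$ on that sector, and the limit). This is a more transparent treatment of the same step, and it covers precisely the models the paper actually uses; in that sense you are filling in a detail the paper glosses over rather than taking a different route.
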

\begin{proof}
By construction of the application $D_x$ and using the multiplicative property of $\gG_h$ (see the Remark \ref{multiplicative_gamma}), it is straightforward to prove recursively for any $\gb\in \cA$ and all $h\in \bR^3$ the following identities
\begin{equation}\label{space_derivative1}
D_x(\cQ_{\gb}\cU) \subset \cQ_{\gb-1}\cU\,,\quad  D_x\gG_h=\gG_hD_x\,.
\end{equation}
Hence $D_{x}$ is an abstract gradient operator, as defined in \cite[Def. 5.25]{Hairer2014}. Let us fix a model $(\Pi,\gG)$ of the form $\cL(\PI)$ for some admissible map $\PI$, then the conditions \eqref{canonical1} \eqref{canonical2} imply that $\PI\colon \cU\to \cS'(\bR^2)$ is well defined and for any  $u\in U$
\begin{equation}\label{space_derivative2}
\PI D_x u= \partial_{x}\PI u\,,
\end{equation}
where the derivative $\partial_x$ is interpreted in the sense of distributions. Summing up the properties \eqref{space_derivative1} \eqref{space_derivative2} and recalling the definition of $\Pi_z$ in \eqref{PI_and_Pi}, for any $z\in \bR^2$ and $u\in U$ we obtain
\[\Pi_zD_x u= \PI\gG_{f(z)}D_x u= \PI D_x\gG_{f(z)}u= \partial_x(\PI\gG_{f(z)}u)= \partial_x\Pi_zu\,.\]
Therefore  $D_{x}$ is an abstract gradient operator which is compatible with $(\Pi,\gG)$, as explained in  \cite[Def. 5.26]{Hairer2014}. The remaining part of the statement follows from \cite[Prop 6.15]{Hairer2014}. The continuous dependency on $\cM \ltimes \cD^{\gga,\gh}_{\cU}$ comes immediately from the definition of the metric of $\cM \ltimes \cD^{\gga,\gh}_{\cU}$.
\end{proof}
Applying the Proposition \ref{space_derivative_prop} to $U_{\gep}$ and $U$, we can write $\partial_x u_{\gep} $ and $\partial_x u$ as the reconstruction of some modelled distributions.
\begin{corollary}\label{cor_space}
For any $\gga'>1$ let  $U_{\gep}$, $U$ be the projection on $\cD^{\gga', 1/2-\kappa}$ of the modelled distributions introduced in \eqref{lifting_U1} and \eqref{lifting_U2} for any fixed realisation of $\xi$. Then the modelled distributions $D_{x}U_{\gep}$ and $D_{x}U $ belong respectively to $\cD^{\gga'-1, -1/2-\kappa}$ and for any $\gep>0$ one has
 \begin{equation}\label{space_derivative_U}
\cR_{\gep} (D_xU)= \hat{\cR}_{\gep} (D_xU)=\partial_x u_{\gep}\,,\quad  \hat{\cR} (D_xU)=\partial_x u
\end{equation}
where the second identity holds on $\cC^{-1/2-\kappa}((0,T)\times \bT)$.
\end{corollary}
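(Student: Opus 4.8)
The plan is to deduce the corollary from Proposition \ref{space_derivative_prop}, once the parameter constraints are verified, and then to reconcile the canonical and renormalised reconstructions via Theorem \ref{BPHZ_explic}.

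First I would check the hypotheses of Proposition \ref{space_derivative_prop}. Since $\kappa<1/2$ one has $0<\tfrac12-\kappa<1<\gga'$, so the pair $(\gga',\tfrac12-\kappa)$ satisfies $\gga'>1$ and $0\le\tfrac12-\kappa<\gga'$; moreover the projections of $U_\gep$ and $U$ onto $\cD^{\gga',1/2-\kappa}$ take values in $\cU$ by \eqref{lifting_U1}--\eqref{lifting_U2}, and the models $(\Pi^\gep,\gG^\gep)$, $(\hPi^\gep,\hgG^\gep)$ and $(\hPi,\hgG)$ are all of the form $\cL(\PI)$ for an admissible map $\PI$. Hence $D_x$ maps $\cD^{\gga',1/2-\kappa}_{\cU}$ continuously into $\cD^{\gga'-1,-1/2-\kappa}$ relative to each of these models; note $-1/2-\kappa>-2$, so these are legitimate modelled-distribution spaces. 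In particular $D_xU_\gep\in\cD^{\gga'-1,-1/2-\kappa}(\hgG^\gep)=\cD^{\gga'-1,-1/2-\kappa}(\gG^\gep)$ (the two spaces agree because $\hgG^\gep=\gG^\gep$ by Proposition \ref{interaction_model}) and $D_xU\in\cD^{\gga'-1,-1/2-\kappa}(\hgG)$. Combining the identity \eqref{space_derivative} with $\hat\cR_\gep U_\gep=u_\gep$ and $\hat\cR U=u$ from Proposition \ref{big_U} gives $\hat\cR_\gep(D_xU_\gep)=\partial_x(\hat\cR_\gep U_\gep)=\partial_x u_\gep$ and $\hat\cR(D_xU)=\partial_x(\hat\cR U)=\partial_x u$ in the sense of distributions.

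Next I would establish $\cR_\gep(D_xU_\gep)=\hat\cR_\gep(D_xU_\gep)$. From \eqref{lifting_U1} and the defining relations of $D_x$ (with $D_x\1=D_xX_1=0$, $D_x\cI(\Xi)=\cI_1(\Xi)$, and $D_x\X^k$ again a polynomial) the modelled distribution $D_xU_\gep$ takes values in $\cV_{\cI_1(\Xi)}\oplus\cU$; on this subspace Theorem \ref{BPHZ_explic} gives $M_\gep=\id$, so $\hPi^\gep_z=\Pi^\gep_zM_\gep=\Pi^\gep_z$ there. Since $\cL(\Pi^\gep)$ and $\cL(\hPI^\gep)$ are models all of whose $\Pi_z\tau$ are continuous functions, the explicit reconstruction formula \eqref{simpl_reconstruct} applies and evaluates to $\Pi^\gep_z(D_xU_\gep(z))(z)$ for both, which proves the coincidence and, together with the previous paragraph, the first part of \eqref{space_derivative_U}.

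Finally, for the regularity assertion I would note that the only non-integer homogeneity occurring in $D_xU$ is $\abs{\cI_1(\Xi)}=-\tfrac12-\kappa$, and the $\cI_1(\Xi)$-component of $D_xU(z)$ equals $\1_+(z)$, which is not identically zero; hence $\gb_{D_xU}=-\tfrac12-\kappa$. By the regularity statement of the reconstruction theorem away from $P=\{t=0\}$ (the remark following Theorem \ref{reconstruction_theorem}) we get $\hat\cR(D_xU)=\partial_x u\in\cC^{-1/2-\kappa}(\bR^2\setminus P)$, and since $(\hPi,\hgG)$ is adapted to spatial translations and $D_xU$ is periodic in space, this distribution is the periodic extension of an element of $\cC^{-1/2-\kappa}((0,T)\times\bT)$, which is the last claim. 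I expect no serious obstacle here: the argument is essentially a bookkeeping of the regularity exponents together with the $M_\gep$-invariance of $\cV_{\cI_1(\Xi)}\oplus\cU$, and the only point requiring a little care is confirming that differentiation keeps us inside an admissible modelled-distribution space, which is exactly where the standing assumption $\kappa<1/2$ and the hypothesis $\gga'>1$ are used.
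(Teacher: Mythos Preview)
Your proof is correct and is essentially what the paper intends: the corollary is stated without proof because it follows directly from Proposition~\ref{space_derivative_prop} applied to $U_\gep$ and $U$ (together with Proposition~\ref{big_U} for the identification of their reconstructions), and you have carefully spelled out exactly those steps, including the observation that $D_xU_\gep$ lands in $\cV_{\cI_1(\Xi)}\oplus\cU$ where $M_\gep=\id$, so the canonical and BPHZ reconstructions agree.
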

\subsubsection*{Product}
We conclude the list of operation on modelled distributions with the notion of product between modelled distribution. Even if $\cT$ is not an algebra with respect to the juxtaposition product $m$ introduced in the section \ref{abstract_reg_struct}, we can still consider $m$ as a well defined bilinear map on some subspaces of $\cT$ such as $m\colon \cU\times \cT\to \cT$ or $m\colon (\cV_{\cI_1(\Xi)}\oplus \cU )\times (\cV_{\cI_1(\Xi)}\oplus \cU)\to \cT$. Therefore for any couple of modelled distribution $V_1$, $V_2$ and $\gga>0$ we define the function $V_1 V_2\colon \bR^2 \to \cT$ as
\begin{equation}\label{product_operation}
V_1V_2 (z):= \cQ_{<\gga} (V_{1}(z)V_2(z))\,,
\end{equation}
as long as the point-wise product on the right-hand side of \eqref{product_operation} is well defined. The behaviour of this operation is described in 
\cite[Proposition 6.12]{Hairer2014}, which we recall here.
\begin{proposition}\label{product_prop}
Let $(\Pi, \gG)\in \cM$ and $V_{1}\in \cD^{\gga_1,\gh_1}(\gG)$, $V_2\in  \cD^{\gga_2,\gh_2}(\gG)$ be a couple of modelled distributions such that the point-wise product is well defined. If the parameters
\begin{equation}\label{parameters_product}
\gga=( \gga_1+\ga_{V_2}) \wedge( \gga_2+\ga_{V_1})\,,\quad \gh= (\gh_1+\gh_2)\wedge (\gh_1+\ga_{V_2})\wedge (\gh_2+\ga_{V_1})\,,
\end{equation}
satisfy the conditions $\gga>0$ and $-2<\gh<\gga $, then the function $V_1V_2$ is a well defined element of $\cD^{\gga,\gh}$. This operation is continuous with respect to the topology of $\cM \ltimes \cD^{\gga,\gh}_{\cU}$.
\end{proposition}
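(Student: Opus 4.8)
The plan is to follow the proof of \cite[Prop. 6.12]{Hairer2014}, carrying along the weight $\gh$ near the singular hyperplane $P=\{t=0\}$ exactly as in \cite[Sec. 6]{Hairer2014} and \cite[Sec. 3]{MartinHairer2015}. The crucial algebraic input is that, on the subspaces of $\cT$ on which the juxtaposition product is defined, every $\gG_{zz'}$ is multiplicative (see the Remark \ref{multiplicative_gamma}), so that $\gG_{zz'}\bigl(V_1(z')V_2(z')\bigr)=\bigl(\gG_{zz'}V_1(z')\bigr)\bigl(\gG_{zz'}V_2(z')\bigr)$ whenever the pointwise product is well defined; this identity is what makes the required estimates telescope. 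Note also that $\gG_{zz'}$ never raises homogeneity, so $\gG_{zz'}\cQ_{<\gga}=\cQ_{<\gga}\gG_{zz'}\cQ_{<\gga}$.

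First I would establish the pointwise half of the norm \eqref{norm_dgamma}. Fix the reference compact $\cK$ and $\ga<\gga$. Expanding $V_1(z)V_2(z)=\sum_{\gb_1,\gb_2}\cQ_{\gb_1}V_1(z)\,\cQ_{\gb_2}V_2(z)$ and projecting onto $\cT_{\ga}$ leaves only the finitely many pairs with $\gb_1+\gb_2=\ga$ and $\gb_i\geq\ga_{V_i}$. Inserting the bounds $\abs{\cQ_{\gb_i}V_i(z)}_{\gb_i}\lesssim\abs{t}^{((\gh_i-\gb_i)/2)\wedge 0}$ and using the very definition of $\gga$ and $\gh$ in \eqref{parameters_product}, a short case distinction on the signs of the exponents gives $\abs{(V_1V_2)(z)}_{\ga}\lesssim\abs{t}^{((\gh-\ga)/2)\wedge 0}$; finite dimensionality of each $\cT_{\gb}$ makes the implicit constants uniform over $z\in\cK$.

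The translation bound is the heart of the matter. For $(z,z')\in\cK^{(2)}$ I would write, using multiplicativity of $\gG_{zz'}$ and the identity $\gG_{zz'}\cQ_{<\gga}=\cQ_{<\gga}\gG_{zz'}\cQ_{<\gga}$,
\[(V_1V_2)(z)-\gG_{zz'}(V_1V_2)(z')=\cQ_{<\gga}\Bigl[V_1(z)V_2(z)-\bigl(\gG_{zz'}V_1(z')\bigr)\bigl(\gG_{zz'}V_2(z')\bigr)\Bigr]+\cQ_{<\gga}\gG_{zz'}(\id-\cQ_{<\gga})\bigl[V_1(z')V_2(z')\bigr].\]
For the first term I would use $V_1(z)V_2(z)-(\gG_{zz'}V_1(z'))(\gG_{zz'}V_2(z'))=(V_1(z)-\gG_{zz'}V_1(z'))V_2(z)+(\gG_{zz'}V_1(z'))(V_2(z)-\gG_{zz'}V_2(z'))$, bounding $V_i(z)-\gG_{zz'}V_i(z')$ by the $\cD^{\gga_i,\gh_i}$-norm, $V_i(z)$ by its pointwise bound, and $\gG_{zz'}V_i(z')$ by combining the model bound on $\gG$ with the $\cD$-bound on $V_i(z')$. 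For the second term, the components of $V_1(z')V_2(z')$ of homogeneity $\mu\geq\gga$ come from pairs $\gb_1+\gb_2=\mu$ with $\gb_i<\gga_i$, so applying $\gG_{zz'}$ contributes $\norm{z-z'}^{\mu-\ga}\leq\norm{z-z'}^{\gga-\ga}$ (here $\norm{z-z'}\leq1$) times a suitable power of $\abs{t'}$. In both terms the various powers of $\abs{t}$ and $\abs{t'}$ are converted into the single factor $(\abs{t}\wedge\abs{t'})^{(\gh-\gga)/2}$ by exploiting precisely the constraint $\abs{z-z'}\leq\tfrac12\sqrt{\abs{t}\wedge\abs{t'}}$ built into $\cK^{(2)}$, which forces $\abs{t}$ and $\abs{t'}$ to be comparable and allows trading negative powers of $\abs{t}$ against powers of $\norm{z-z'}$. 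This weight bookkeeping — not any conceptual point — is where I expect the delicate work to lie.

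Finally, since all the estimates above are bilinear in $(V_1,V_2)$ and (through the bounds on $\gG$) continuous in the model, the continuity statement follows from the standard telescoping decomposition of $(V_1V_2)-(V_1'V_2')$ into differences in one factor at a time, each controlled by $\norm{V_i,V_i'}_{\gga_i,\gh_i}$, $\abs{V_i}_{\gga_i,\gh_i}$, $\abs{V_i'}_{\gga_i,\gh_i}$ and $\norm{(\Pi,\gG),(\Pi',\gG')}_{\cM}$; I anticipate no new difficulty there. The one genuine obstacle is the weight bookkeeping near $P$ in the translation bound, i.e. the systematic use of the set $\cK^{(2)}$.
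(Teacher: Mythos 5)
The paper gives no proof of this statement: Proposition \ref{product_prop} is explicitly ``recalled'' from \cite[Prop.~6.12]{Hairer2014}, so there is no in-paper argument to compare against. Your sketch reconstructs the proof of the cited result, and it does so correctly. The decomposition
\[
(V_1V_2)(z)-\gG_{zz'}(V_1V_2)(z')=\cQ_{<\gga}\bigl[V_1(z)V_2(z)-\gG_{zz'}\bigl(V_1(z')V_2(z')\bigr)\bigr]+\cQ_{<\gga}\gG_{zz'}(\id-\cQ_{<\gga})\bigl[V_1(z')V_2(z')\bigr]
\]
is valid: the difference between this and the left-hand side is exactly $\cQ_{<\gga}\gG_{zz'}\cQ_{<\gga}B-\gG_{zz'}\cQ_{<\gga}B$ with $B=V_1(z')V_2(z')$, which vanishes because $\gG$ never raises homogeneity, so $\gG(\cQ_{<\gga}\cT)\subset\cQ_{<\gga}\cT$. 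The subsequent bilinear splitting of the first bracket, the treatment of the truncation-error bracket via the model bound \eqref{model2} combined with the pointwise weight bounds, and the use of $\cK^{(2)}$ to trade negative powers of $\abs{t}\wedge\abs{t'}$ against powers of $\norm{z-z'}$ are precisely the steps in Hairer's argument; the weight bookkeeping you flag as the delicate part is indeed where the exponent arithmetic \eqref{parameters_product} gets used. The continuity statement follows, as you say, from the telescoping decomposition and the bilinearity of the estimates in $(V_1,V_2)$. In short: nothing to compare in the paper itself, but your reconstruction of the underlying proof is sound.
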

\begin{remark}
Differently to the other operations we defined before, where we related the reconstruction operator to some classical operations on distribution, we cannot define directly the reconstruction $\cR(V_1V_2)$   as an analytical operation between $\cR(V_1)$ and $\cR(V_2)$, because there is no classical notion of product between distributions. However in case of the canonical model $(\Pi^{\gep}, \gG^{\gep})$ for any fixed $\gep>0$, we can apply the multiplicative property of $\Pi^{\gep}_z$ on symbols and the explicit form of the reconstruction operator in \eqref{simpl_reconstruct} to obtain for any couple of $V_1,V_2\in \cD^{\gga, \gh}(\gG^{\gep}) $ the general identity 
\begin{equation}\label{product_canonical}
\cR_{\gep}(V_1V_2)= \cR_{\gep}V_1\cR_{\gep}V_2\,.
\end{equation}
But this property does not hold any more with the operators $\hat{\cR}_{\gep}$ and $\hat{\cR}$.
\end{remark}
Summing up all the operations we defined before, we show the existence of two specific modelled distribution, related to $U_{\gep}$ and $U$.
\begin{proposition}\label{product_function}
Let $U_{\gep}$, $U$ be the projection on $\cD^{\gga', 1/2-\kappa}$ of the modelled distributions introduced in \eqref{lifting_U1} and \eqref{lifting_U2} for any fixed realisation of $\xi$ and $\gga'>0$. Choosing $\gga'=3/2 +2\kappa$ for any $\gp\in C^7_b(\bR)$ the modelled distributions $\gP'(U_{\gep})\Xi $, $\gP''(U_{\gep})(D_xU_{\gep})^2$ and $\gP'(U)\Xi $,  $\gP''(U)(D_xU)^2 $ are respectively well defined element of $\cD^{\kappa, -1-2\kappa}(\hgG^{\gep}) $  for any fixed $\gep>0$ and $\cD^{\kappa,  -1-2\kappa}(\hPi) $. Moreover as $\gep\to 0$ we have 
\begin{equation}\label{conv_prod}
\norm{\gP'(U_{\gep})\Xi, \gP'(U)\Xi }_{\kappa,  -1-2\kappa}\overset{\bP}{\to} 0\,,\quad \norm{\gP''(U_{\gep})(D_xU_{\gep})^2, \gP''(U)(D_xU)^2}_{\kappa, -1-2\kappa}\overset{\bP}{\to} 0\,.
\end{equation}
\end{proposition}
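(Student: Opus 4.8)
The plan is to build the four modelled distributions from the elementary operations recalled in this section --- composition with a smooth function, the abstract space derivative $D_x$, and the product of modelled distributions --- and to deduce \eqref{conv_prod} from the joint continuity of these operations together with the Theorem \ref{convergence}.

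First I would fix $\kappa>0$ small enough that $0<\kappa$, $-2<-1-2\kappa$ and $-1-2\kappa<\kappa$, and recall from the Proposition \ref{big_U} and the Remark \ref{comp_U} that, with $\gga'=3/2+2\kappa$, the projections $U_\gep,U\in\cD^{\gga',1/2-\kappa}$ have $\gb_{U_\gep}=\gb_{U}=\abs{\cI(\Xi)}=1/2-\kappa$. Since $(\gga'/\gb_{U_\gep})\vee 1=(3/2+2\kappa)/(1/2-\kappa)>3$, the integer regularity required in the Proposition \ref{prop_composition} is $5$, and $\gp\in C^7_b(\bR)$ gives $\gp',\gp''\in C^5_b(\bR)$; hence the Proposition \ref{prop_composition} produces $\gP'(U_\gep),\gP''(U_\gep)\in\cD^{\gga',1/2-\kappa}_{\cU}(\hgG^{\gep})$ and $\gP'(U),\gP''(U)\in\cD^{\gga',1/2-\kappa}_{\cU}(\hgG)$, all depending locally Lipschitz-continuously on $\cM\ltimes\cD^{\gga',1/2-\kappa}_{\cU}$. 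Applying next the Proposition \ref{space_derivative_prop} (equivalently the Corollary \ref{cor_space}), which is legitimate because $\gga'>1$ and $0\le 1/2-\kappa<\gga'$, gives $D_xU_\gep,D_xU\in\cD^{\gga'-1,-1/2-\kappa}=\cD^{1/2+2\kappa,-1/2-\kappa}$, continuously in the model and valued in $\cV_{\cI_1(\Xi)}\oplus\cU$ with lowest homogeneity $\abs{\cI_1(\Xi)}=-1/2-\kappa$.

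Then I would invoke the Proposition \ref{product_prop} three times, checking each time that the pointwise product lies in $T$ via the maps $m\colon(\cV_{\cI_1(\Xi)}\oplus\cU)\times(\cV_{\cI_1(\Xi)}\oplus\cU)\to\cT$ and $m\colon\cU\times\cT\to\cT$. For $(D_xU_\gep)^2$, the formula \eqref{parameters_product} with $\gga_1=\gga_2=1/2+2\kappa$ and $\gh_1=\gh_2=\ga_{D_xU_\gep}=-1/2-\kappa$ gives $\gga=\kappa$ and $\gh=-1-2\kappa$, the product having lowest homogeneity $\abs{\cI_1(\Xi)^2}=-1-2\kappa$, so $(D_xU_\gep)^2\in\cD^{\kappa,-1-2\kappa}$. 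For $\gP''(U_\gep)(D_xU_\gep)^2$, taking $\gga_1=\gga'$, $\gh_1=1/2-\kappa$, $\ga_{\gP''(U_\gep)}=0$, $\gga_2=\kappa$, $\gh_2=\ga_{(D_xU_\gep)^2}=-1-2\kappa$ in \eqref{parameters_product} yields $\gga=(1/2)\wedge\kappa=\kappa$ and $\gh=-1-2\kappa$. Finally I would interpret the $\Xi$ in $\gP'(U_\gep)\Xi$ as the modelled distribution $\1_+\Xi$, consistently with the construction of $U_\gep$ in the Proposition \ref{big_U}; since $\hgG^{\gep}$ and $\hgG$ fix $\Xi$ and no pair in $\cK^{(2)}$ crosses $\{t=0\}$, one has $\1_+\Xi\in\cD^{\gga'',-1-2\kappa}$ for every $\gga''>0$, and \eqref{parameters_product} with $\ga_{\1_+\Xi}=-3/2-\kappa$ gives $\gga=((3/2+2\kappa)+(-3/2-\kappa))\wedge\gga''=\kappa$ and $\gh=-1-2\kappa$. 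The identical computations with $U,\hgG$ in place of $U_\gep,\hgG^{\gep}$ place $\gP'(U)\Xi$ and $\gP''(U)(D_xU)^2$ in $\cD^{\kappa,-1-2\kappa}(\hgG)$; throughout one uses $-2<-1-2\kappa<\kappa$.

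For the convergence \eqref{conv_prod}, the composition of the operations above defines, for $m$ ranging over $\cM$, the sections $m\mapsto\gP'(\hat{\cP}(\1_+\Xi))\Xi$ and $m\mapsto\gP''(\hat{\cP}(\1_+\Xi))(D_x\hat{\cP}(\1_+\Xi))^2$ of the fibred space $\cM\ltimes\cD^{\kappa,-1-2\kappa}$, and by the local Lipschitz statements in the Propositions \ref{convolution_K}, \ref{prop_composition}, \ref{space_derivative_prop} and \ref{product_prop} these sections are continuous for the semi-distance $\norm{\cdot,\cdot}_{\kappa,-1-2\kappa}+\norm{\cdot,\cdot}_{\cM}$; here $\1_+\Xi$ is the same fixed modelled distribution for every $\gep$ and in the limit. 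Since $(\hPi^{\gep},\hgG^{\gep})\overset{\bP}{\to}(\hPi,\hgG)$ by the Theorem \ref{convergence}, evaluating these sections at the converging models yields \eqref{conv_prod}. The main obstacle is the bookkeeping at the level of the Proposition \ref{product_prop}: the single choice $\gga'=3/2+2\kappa$ must be large enough for $D_x$ and both products to be applicable, small enough for $\gp\in C^7_b(\bR)$ to suffice in the Proposition \ref{prop_composition}, and arranged so that the exponents delivered by \eqref{parameters_product} collapse exactly onto $(\kappa,-1-2\kappa)$, which also pins down how small $\kappa$ has to be; the behaviour of $\1_+\Xi$ near $\{t=0\}$ is the only additional subtlety.
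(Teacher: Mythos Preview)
Your proposal is correct and follows essentially the same route as the paper: lift $\gp',\gp''$ via the Proposition \ref{prop_composition} (with the same arithmetic yielding $\gb=5$), apply $D_x$, then invoke the Proposition \ref{product_prop} with the same exponent bookkeeping, and deduce \eqref{conv_prod} from the Theorem \ref{convergence} and the continuity of each operation. The only cosmetic differences are that you make explicit the interpretation of $\Xi$ as $\1_+\Xi$ and the argument that no pair in $\cK^{(2)}$ crosses $\{t=0\}$, whereas the paper simply posits $\Xi\in\cD^{\delta,\nu}$ for some $\delta>0$, $\nu\in(0,\delta)$; both choices lead to the same $(\kappa,-1-2\kappa)$.
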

\begin{proof}
Using the Proposition \ref{prop_composition} and the Remark  \ref{comp_U} to $\gp'$ and $\gp''$, the modelled distributions $\gP''(U_{\gep}) $ $\gP'(U_{\gep})$ are well defined  if $ \gp',\, \gp''\in C^{\gb}_b(\bR)$ where $\gb$ is the smallest integer such that $\gb\geq ((2\gga'/1-2\kappa)\vee 1)+ 1$. Choosing $ \gga'=3/2 +2\kappa$ we have $((2\gga'/1-2\kappa)\vee 1)+ 1>4$ and $\gb= 5$. Thus by hypothesis on $\gp$ we can lift the functions $ \gp',\, \gp''$ to modelled distributions. By construction of $\cT$, the the functions $\gP''(U_{\gep})(D_xU_{\gep})^2 $, $\gP'(U_{\gep})\Xi$, $\gP''(U)(D_xU)^2 $ and $\gP'(U)\Xi $ are well defined and the result will follow by applying Proposition \ref{product_prop} to these products. In case of $\gP'(U_{\gep})\Xi$ and $ \gP'(U)\Xi$, supposing that $\Xi\in \cD^{\delta, \nu}$ for some $\delta>0$ and $\nu\in (0, \delta)$, we can choose $\kappa$ sufficiently small such that $\gP'(U_{\gep})\Xi$ and $ \gP'(U)\Xi$ belong to $\cD^{\gga, \eta}$ where
\begin{equation}\label{par1}
\gga= (\gga'+ \ga_{\Xi})\wedge \delta= \kappa \,,\quad \gh= (1/2- \kappa+\ga_{\Xi})\wedge \nu= -1-2\kappa\,,
\end{equation}
On the other hand, when we consider $\gP''(U_{\gep})(D_xU_{\gep})^2$ and $\gP''(U)(D_xU)^2$ we are doing two products. Firstly the products $(D_x U_{\gep})^2$, $(D_x U)^2 $ belong to  $\cD^{\gga_1, \eta_1}$  where
\[ \gga_1= \gga'-1 +\ga_{D_xU_{\gep}}=\kappa\,, \quad \eta_1= -1-2\kappa\]
Multiplying it with $ \gP''(U)$ and $\gP''(U_{\gep})$, we obtain that $\gP''(U_{\gep})(D_xU_{\gep})^2$ and $\gP''(U)(D_xU)^2$ belong to $\cD^{\gga, \eta}$ where
\begin{equation}
\gga= \kappa\wedge (\gga'-1-2\kappa)\,, \quad \gh= (-1-2\kappa)\wedge (-1/2- 3\kappa)\,
\end{equation}
which becomes equal to the same result of \eqref{par1}, by fixing $\kappa$ sufficiently small. Thus the modelled distribution are well defined and the convergence property \eqref{conv_prod} is a direct consequence of the Theorem \ref{convergence} and the continuity of the product operation in the topology of $\cM \ltimes \cD^{\gga,\gh}_{\cU}$.
\end{proof}
\begin{remark}
Following the proof of the Proposition \eqref{product_function}, the choice of the parameter $\gga'$ and $\gp$ in the statement could be replaced by a generic value $\gga'> 3/2+ \kappa$ and a function $\gp$ with the right number of bounded derivatives. The value $ 3/2+ 2\kappa $ was simply chosen in order to find the smallest subspace where the modelled distributions  $\gP'(U_{\gep})\Xi $, $\gP''(U_{\gep})(D_xU_{\gep})^2$, $\gP'(U)\Xi $,  $\gP''(U)(D_xU)^2 $ are well defined.
\end{remark}
\section{It\^o formula}\label{ito_form}
We combine the explicit knowledge of the sequence $(\hPi^{\gep},\hgG^{\gep})$ with the operations on the modelled distributions defined in the section \ref{calculus_reg_struct} to describe the random  distribution $(\partial_{t}- \partial_{xx})\gp(u)$ and $\gp(u)$, when $u$ is the solution of \eqref{eqSHE} and $\gp$ is a sufficiently smooth function, as explained in the introduction. The resulting formulae will be called differential and integral It\^o formula, in accordance to the formal definitions given in the equations \eqref{example_differential} and \eqref{example_integral}.
\subsection{Pathwise It\^o formulae}
The first type of identities we show are called pathwise differential  It\^o Formula and pathwise integral It\^o Formula. We choose this adjective in their denomination because these identities involve in their terms the reconstruction of some modelled distribution, an object which is defined only pathwise.
\begin{theorem}[Pathwise differential  It\^o Formula]\label{Differential_Ito}
Let $u$ be the solution of \eqref{eqSHE} and $\gp\in C^7_b(\bR)$. Then we have the identity
\begin{equation}\label{differential}
(\partial_t - \partial_{xx} )(\gp(u))=\hat{\cR}(\gP'(U)\Xi)-\hat{\cR}(\gP''(U)(D_xU)^2)\,,
\end{equation}
where the equality holds a.s. as elements of $\cC^{-3/2-\kappa}((0,T)\times \bT)$.
\end{theorem}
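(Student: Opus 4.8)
The strategy is to obtain \eqref{differential} as the $\gep\to 0^+$ limit of the corresponding identity at the level of the mollified equation, using the convergence results already established. First I would record the classical chain-rule identity for the smooth solution $u_{\gep}$: combining \eqref{parabolic_phi} with Proposition \ref{big_U} and its Remark (so that $\cR_{\gep}U_{\gep}=u_{\gep}$), the multiplicative property of the canonical model \eqref{product_canonical}, the reconstruction of $\1_+\Xi$ in \eqref{xi_reconstruct}, the identity $\cR_{\gep}(D_xU_{\gep})=\partial_x u_{\gep}$ from Corollary \ref{cor_space}, and the composition identity \eqref{composition_reconstruct}, one gets pathwise
\[
(\partial_t-\partial_{xx})(\gp(u_{\gep}))=\cR_{\gep}(\gP'(U_{\gep})\Xi)-\cR_{\gep}(\gP''(U_{\gep})(D_xU_{\gep})^2)
\]
on $(0,T)\times\bT$. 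Here one must be a little careful: the modelled distribution $\gP'(U_{\gep})\Xi$ and the product $\gP''(U_{\gep})(D_xU_{\gep})^2$ are well defined with respect to the canonical model $(\Pi^{\gep},\gG^{\gep})$ exactly as in Proposition \ref{product_function} (these operations depend only on the algebraic structure), and for the canonical model the reconstruction is the pointwise evaluation \eqref{simpl_reconstruct}, so each term literally equals $\gp'(u_{\gep})\xi_{\gep}$ and $\gp''(u_{\gep})(\partial_x u_{\gep})^2$ respectively. The restriction to $(0,T)\times\bT$ is needed to ignore the initial-time singularity encoded by $\gh=1/2-\kappa$.

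The second step is to pass from the canonical model to the renormalised (BPHZ) model. The key observation is that $\hat{\cR}_{\gep}$ is the reconstruction for $(\hPi^{\gep},\hgG^{\gep})$, and one must compare $\cR_{\gep}(\gP'(U_{\gep})\Xi)$ with $\hat{\cR}_{\gep}(\gP'(U_{\gep})\Xi)$, and similarly for the other product. This is where the explicit renormalisation constants $C^1_{\gep}$, $C^2_{\gep}$ of Theorem \ref{BPHZ_explic} enter: the difference is a finite linear combination of reconstructions of lower-homogeneity modelled distributions weighted by these constants, and the point is that the \emph{sum} of the two correction terms cancels — this is the rigorous incarnation of the heuristic cancellation discussed around \eqref{parabolic_phi}. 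Concretely one expands $U_{\gep}$ in the basis of $\cT$, applies $\gP'$, $\gP''$, $D_x$ symbolically, reconstructs with both models, and uses $\hPi^{\gep}_z=\Pi^{\gep}_z M_{\gep}$ (Proposition \ref{interaction_model}) together with the form of $M_{\gep}$ in \eqref{defn_M}. The correction to $\gP'(U_{\gep})\Xi$ comes from $M_{\gep}$ acting on symbols $\Xi\cI(\Xi)^m\X^k$ and produces a term proportional to $C^1_{\gep}\gp''(u_{\gep})$; the correction to $\gP''(U_{\gep})(D_xU_{\gep})^2$ comes from $M_{\gep}$ acting on $\cI_1(\Xi)^2\cI(\Xi)^m\X^k$ and produces a term proportional to $C^2_{\gep}\gp''(u_{\gep})$. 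Because $C^1_{\gep}=C^2_{\gep}$ (both equal $\int \gr_{\gep}(K*\gr_{\gep})$; note $K_x*\gr_{\gep}=K*(\gr_{\gep})_x$ and integration by parts, or more simply one checks they coincide from \eqref{C_1}, \eqref{C_2}), these two corrections enter \eqref{differential} with opposite signs and cancel exactly. Hence
\[
(\partial_t-\partial_{xx})(\gp(u_{\gep}))=\hat{\cR}_{\gep}(\gP'(U_{\gep})\Xi)-\hat{\cR}_{\gep}(\gP''(U_{\gep})(D_xU_{\gep})^2)
\]
on $(0,T)\times\bT$.

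The third step is to take $\gep\to 0$. On the left-hand side, Proposition \ref{convergence_left} gives $u_{\gep}\to u$ in probability in $\cC^{1/2-\kappa}((0,T)\times\bT)$; since $\gp\in C^7_b$ the composition $\gp(u_{\gep})\to\gp(u)$ in $\cC^{1/2-\kappa}$ as well (Lipschitz continuity of composition with a bounded-derivative function on Hölder spaces), and differentiation is continuous from $\cC^{1/2-\kappa}$ to $\cC^{-3/2-\kappa}$, so the left-hand sides converge in probability in $\cC^{-3/2-\kappa}((0,T)\times\bT)$. On the right-hand side, Proposition \ref{product_function} gives $\gP'(U_{\gep})\Xi\to\gP'(U)\Xi$ and $\gP''(U_{\gep})(D_xU_{\gep})^2\to\gP''(U)(D_xU)^2$ in probability in $\cD^{\kappa,-1-2\kappa}$ jointly with $(\hPi^{\gep},\hgG^{\gep})\to(\hPi,\hgG)$ in $\cM$, and the local Lipschitz property of the reconstruction operator \eqref{local_lipschitz} (applied with the target space $\cC^{-1-2\kappa}$, or rather with the improved exponent $-1-2\kappa$ valid away from $P$, which embeds into $\cC^{-3/2-\kappa}$) yields $\hat{\cR}_{\gep}(\cdot)\to\hat{\cR}(\cdot)$ in probability. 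Passing to the limit and identifying the limits yields \eqref{differential} a.s., after extracting an a.s. convergent subsequence if one prefers almost-sure over in-probability statements.

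\textbf{Main obstacle.} The delicate point is Step 2: verifying that the two renormalisation corrections cancel on the nose. This requires (i) carefully tracking which symbols in the expansions of $\gP'(U_{\gep})\Xi$ and $\gP''(U_{\gep})(D_xU_{\gep})^2$ are affected by $M_{\gep}$, (ii) checking that the $\cI(\Xi)^m\X^k$-tail produced by $M_{\gep}$ reconstructs, after composition with $\gp'$ or $\gp''$, to exactly $C_{\gep}\,\gp''(u_{\gep})$ in both cases (the combinatorial factor $m$ in \eqref{defn_M} for $\Xi\cI(\Xi)^m$ must be reconciled with the second derivative appearing from $\gp'$ composed with $U_{\gep}$), and (iii) the identity $C^1_{\gep}=C^2_{\gep}$. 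Points (ii)--(iii) are exactly the content of the "It\^o model" identification of Remark \ref{Ito_model} and the reference \cite[Thm. 4.5]{MartinHairer2015}, so I would lean on that rather than recomputing; everything else is bookkeeping.
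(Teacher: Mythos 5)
Your proposal follows essentially the same route as the paper, but there is a genuine error in Step 2: you claim that $C^1_{\gep}=C^2_{\gep}$ \emph{exactly}, and you use this to assert that the fixed-$\gep$ identity
\[
(\partial_t-\partial_{xx})\gp(u_{\gep})=\hat{\cR}_{\gep}(\gP'(U_{\gep})\Xi)-\hat{\cR}_{\gep}(\gP''(U_{\gep})(D_xU_{\gep})^2)
\]
holds with no correction. Neither of these is true. The paper's own computation arrives at the fixed-$\gep$ identity \eqref{approx_right}, which carries the extra term $\gp''(u_{\gep})\big(C^1_{\gep}-C^2_{\gep}\big)$; this term is nonzero for each $\gep>0$ and only vanishes in the limit, which is exactly the content of Theorem \ref{constants} (part (A.4)). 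The integration-by-parts argument you gesture at — $K_x*\gr_{\gep}=K*(\gr_{\gep})_x$, then integrate by parts — would give $\int(\partial_x K_{\gep})^2 = -\int(\partial_{xx}K_{\gep})K_{\gep}$, and to finish one would want to trade $\partial_{xx}K_{\gep}$ for $\partial_t K_{\gep} - \gr_{\gep}$ and kill the time-derivative by integration in $t$. That works for the genuine heat kernel $G$ (so the leading terms of $C^1_{\gep}$ and $C^2_{\gep}$ agree, as the paper verifies via the Fourier identity \eqref{heat}), but $K$ is the compactly supported truncation from Lemma \ref{KandR1}: it equals $G$ only near the origin, satisfies the moment-vanishing condition \eqref{pol_0}, and does \emph{not} solve $(\partial_t-\partial_{xx})K=\delta_0$. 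The discrepancy $R=G-K$ produces the $o(1)$ correction that the paper controls in the proof of Theorem \ref{constants} using $R(0,0)=0$ and the explicit scaling operators $S^m_{\gep}$. "One checks they coincide from \eqref{C_1}, \eqref{C_2}" is therefore not correct — they manifestly do not coincide for finite $\gep$.

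The fix is straightforward given what you already have: keep the correction term $\gp''(u_{\gep})(C^1_{\gep}-C^2_{\gep})$ in the fixed-$\gep$ identity, note that $\gp''(u_{\gep})$ is bounded uniformly in $\gep$ (as $\gp''$ is bounded), and invoke Theorem \ref{constants} to get $C^1_{\gep}-C^2_{\gep}\to 0$, so the correction vanishes in $\cC^{-3/2-\kappa}$. The rest of your argument (reconstruction with the canonical model gives the classical chain rule; the BPHZ renormalisation produces the two $C_{\gep}$-corrections; convergence of models plus local Lipschitz continuity of $\cR$ handles the two reconstruction terms; continuity of $\partial_t-\partial_{xx}$ handles the left-hand side) matches the paper's proof and is sound.
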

\begin{proof}
The identity \eqref{differential} will be obtained by rearranging the equality \eqref{parabolic_phi} in terms of modelled distributions and sending $\gep\to 0$. Recalling the Proposition \ref{big_U} and \ref{product_function} we write $u_{\gep}=\hat{\cR_{\gep}}U_{\gep}$ where $U_{\gep}$ is the projection on $\cD^{3/2+2\kappa, 1/2-\kappa}(\hgG^{\gep})$ of the modelled distributions introduced in \eqref{lifting_U1}. The hypothesis on $\gp $ and the definition of $U_{\gep}$ allow to lift $\gp'$ and $\gp''$ at the level of the modelled distributions and we can rewrite the identity \eqref{parabolic_phi} as
\begin{equation}\label{almost_differential}
(\partial_t - \partial_{xx})\gp(u_{\gep})=(\hat{\cR_{\gep}}\gP'(U_{\gep}))(\hat{\cR_{\gep}}\Xi)-(\hat{\cR_{\gep}}\gP''(U_{\gep}))(\hat{\cR_{\gep}}D_x U_{\gep})^2\,.
\end{equation}
On the other hand Proposition \ref{product_function} implies that $\gP'(U_{\gep})\Xi$ and $\gP''(U_{\gep})(D_xU_{\gep})^2$ belong to $\cD^{\kappa,-1-2\kappa} (\hgG^{\gep})$. Calculating explicitly $\hat{\cR}_{\gep}(\gP''(U_{\gep})(D_x U_{\gep})^2)$ and $\hat{\cR}_{\gep}(\gP'(U_{\gep})\Xi)$, we obtain
\begin{equation}\label{functions_nices}
\begin{split}
\hat{\cR}_{\gep}(\gP'(U_{\gep})\Xi)(z) &=\Pi_z^{\gep}\big( M_{\gep}\gP'(U_{\gep})\Xi(z)\big) (z)\,,\\ \hat{\cR}_{\gep}(\gP''(U_{\gep})(D_x U_{\gep})^2)(z)&= \Pi_z^{\gep}\big( M_{\gep}\gP''(U_{\gep})(D_x U_{\gep})^2(z)\big) (z)\,,
\end{split}
\end{equation}
for any $z\in (0,T)\times \bT$ as a consequence of the equation \eqref{simpl_reconstruct} and the Proposition \ref{interaction_model}. From these equalities we deduce an explicit relation between the functions on the left-hand side of \eqref{functions_nices} and the right-hand side of \eqref{almost_differential}. To lighten the notation we write down $\gP'(U_{\gep})\Xi$, and $\gP''(U_{\gep})(D_xU_{\gep})^2$ on the canonical basis of $ \cQ_{<\kappa}\cT$ and $ (D_xU)^2$ on the canonical bases $\cQ_{<1/2 +2\kappa}\cT$ without referring explicitly to $z$, the indicator~$\1_{+}$ and the periodic extension, obtaining the identities
\[\begin{split}
\gP'(U_{\gep})\Xi= \gp'(u_{\gep})\tikz[scale=1.5]{\node{} child[noise]{node{} }} + \gp''(u_{\gep})\tikz[scale=1.2]{\node{} child{node{} edge from parent[noise] child {node{}}}child[noise]{node{} }}+  \gp''(u_{\gep})v^{(0,1)}_{\gep}\;\tikz[scale=1.5]{\node{} child[noise]{ node{}} }_{(0,1)}  +\frac{\gp'''(u_{\gep})}{2}\tikz[scale=1.2]{\node{} child{node{} edge from parent[noise] child {node{}}}child{node{} edge from parent[noise] child {node{}}}child[noise]{node{} }} \\+ \gp'''(u_{\gep})v^{(0,1)}_{\gep}\;\tikz[scale=1.2]{\node{} child{node{} edge from parent[noise] child {node{}}} child[noise]{node{} }}_{\!\!\!\scriptstyle (0,1)} + \frac{\gp^{(4)}(u_{\gep})}{6}\tikz[scale=1.2]{\node{} child{node{} edge from parent[noise] child {node{}}} child{node{} edge from parent[noise] child {node{}}} child{node{} edge from parent[noise] child {node{}}}child[noise]{node{} }}\,,
\end{split}\]
\[\begin{split}
(D_xU_{\gep})^2= \tikz[scale=1.2]{\node{} child[derivative]{node{} edge from parent[noise] child {node{}}} child[derivative]{node{} edge from parent[noise] child {node{}}}}+2\,v^{(0,1)}_{\gep}\;\tikz[scale=1.2]{\node{} child[derivative]{node{} edge from parent[noise] child {node{}}}}+(v^{(0,1)}_{\gep})^2\1\,,
\end{split}\]
\[\begin{split}
 \gP''(U_{\gep})(D_xU_{\gep})^2= \gp''(u_{\gep})(D_xU_{\gep})^2+ \gp'''(u_{\gep})\tikz[scale=1.2]{\node{} child[derivative]{node{} edge from parent[noise] child {node{}}} child[derivative]{node{} edge from parent[noise] child {node{}}} child{node{} child[noise]{node{}}}}+ \gp'''(u_{\gep})v^{(0,1)}_{\gep}\;\tikz[scale=1.2]{\node{} child[derivative]{node{} edge from parent[noise] child {node{}}} child[derivative]{node{} edge from parent[noise] child {node{}}}}_{\!\!\!(0,1)}\\ +2\gp'''(u_{\gep})v^{(0,1)}_{\gep}\;\tikz[scale=1.2]{\node{} child{node{} child[noise]{node{}}}child[derivative]{node{} edge from parent[noise] child {node{}}}}+\frac{\gp^{(4)}(u_{\gep})}{2} \tikz[scale=1.2]{\node{} child[derivative]{node{} edge from parent[noise] child {node{}}} child[derivative]{node{} edge from parent[noise] child {node{}}} child{node{} child[noise]{node{}}} child{node{} child[noise]{node{}}}}\;.
\end{split}\]
Then we apply of the renormalisation map $M_{\gep}$
\[\begin{split}
M_{\gep}\big(\gP'(U_{\gep})\Xi\big)&= \gp'(u_{\gep})\tikz[scale=1.2]{\node{} child[noise]{node{} }} + \gp''(u_{\gep})\left(\tikz[scale=1.2]{\node{} child{node{} edge from parent[noise] child {node{}}}child[noise]{node{} }} - C_{\gep}^{1}\1\right)+  \gp''(u_{\gep})v^{(0,1)}_{\gep} \;\tikz[scale=1.5]{\node{} child[noise]{ node{}} }_{(0,1)} \\&+\frac{\gp'''(u_{\gep})}{2}\left(\tikz[scale=1.2]{\node{} child{node{} edge from parent[noise] child {node{}}}child{node{} edge from parent[noise] child {node{}}}child[noise]{node{} }}- 2 C^{1}_{\gep}\;\tikz[scale=1.2]{\node{} child{node{} edge from parent[noise] child {node{}}}}\right) +\gp'''(u_{\gep})v^{(0,1)}_{\gep}\,\left(\tikz[scale=1.2]{\node{} child{node{} edge from parent[noise] child {node{}}} child[noise]{node{} }}_{\!\!\!\scriptstyle (0,1)}- C^{1}_{\gep} \X^{(0,1)}\right)\\&+ \frac{\gp^{(4)}(u_{\gep})}{6}\left(\tikz[scale=1.2]{\node{} child{node{} edge from parent[noise] child {node{}}} child{node{} edge from parent[noise] child {node{}}} child{node{} edge from parent[noise] child {node{}}}child[noise]{node{} }}- 3 C^{1}_{\gep}\;\tikz[scale=1.2]{\node{}  child{node{} edge from parent[noise] child {node{}}}child{node{} edge from parent[noise] child {node{}}}}\right)\;,
\end{split}\]
\[\begin{split}
M_{\gep}((D_xU_{\gep})^2)= \tikz[scale=1.2]{\node{} child[derivative]{node{} edge from parent[noise] child {node{}}} child[derivative]{node{} edge from parent[noise] child {node{}}}}+2\,v^{(0,1)}_{\gep}\;\tikz[scale=1.2]{\node{} child[derivative]{node{} edge from parent[noise] child {node{}}}}+(v^{(0,1)}_{\gep})^2\1 -C^{2}_{\gep}\,\1= (D_xU_{\gep})^2 -C^{2}_{\gep}\,\1\;,
\end{split}\]
\[\begin{split}
M_{\gep}\big(\gP''(U_{\gep})(D_xU_{\gep})^2\big)&= \gp''(u_{\gep})\left((D_xU_{\gep})^2 -C^{2}_{\gep}\,\1\right)+2\gp'''(u_{\gep})v^{(0,1)}_{\gep}\;\tikz[scale=1.2]{\node{} child{node{} child[noise]{node{}}}child[derivative]{node{} edge from parent[noise] child {node{}}}} \\&+ \gp'''(u_{\gep})\left(\tikz[scale=1.2]{\node{} child[derivative]{node{} edge from parent[noise] child {node{}}} child[derivative]{node{} edge from parent[noise] child {node{}}} child{node{} child[noise]{node{}}}} - C^{2}_{\gep}\; \tikz[scale=1.2]{\node{} child{node{} edge from parent[noise] child {node{}}}}\right)+ \gp'''(u_{\gep})v^{(0,1)}_{\gep}\left(\tikz[scale=1.2]{\node{} child[derivative]{node{} edge from parent[noise] child {node{}}} child[derivative]{node{} edge from parent[noise] child {node{}}}}_{\!\!\!(0,1)}- C^2_{\gep}\X^{(0,1)}\right)\\ &+ \frac{\gp^{(4)}(u_{\gep})}{2} \left(\tikz[scale=1.2]{\node{} child[derivative]{node{} edge from parent[noise] child {node{}}} child[derivative]{node{} edge from parent[noise] child {node{}}} child{node{} child[noise]{node{}}} child{node{} child[noise]{node{}}}} - C^2_{\gep}\,\tikz[scale=1.2]{\node{} child{node{} child[noise]{node{}}} child{node{} child[noise]{node{}}}}\right)\,.
\end{split}\]

To conclude the calculation we apply the operator $\Pi^{\gep}_z \cdot (z)$ on both sides of the above equations. As a consequence of the definition of $\Pi_z^{\gep} $, one has $\Pi^{\gep}_z \tau (z)=0$ for every $\tau\in T$ of the form $\gs_1 \gs_2$ with $\abs{\gs_1}>0$. Hence most of the terms in the expansion above are discarded and we last with the identities
\begin{equation}\label{first_rough}
\hat{\cR}_{\gep}(\gP'(U_{\gep})\Xi)= \gp'(u_{\gep})\xi_{\gep}-\gp''(u_{\gep}) C_{\gep}^{1}=	(\hat{\cR_{\gep}}\gP'(U_{\gep}))(\hat{\cR_{\gep}}\Xi)-\gp''(u_{\gep}) C_{\gep}^{1}\,,
\end{equation}
\begin{equation}\label{second_rough}
\begin{split}
\hat{\cR_{\gep}}(\gP''(U_{\gep})(D_xU_{\gep})^2) &=\gp''(u_{\gep})\left(\Pi_z^{\gep}(D_xU_{\gep})^2 (z) - C_{\gep}^{2}\right)\\&=(\hat{\cR_{\gep}}\gP''(U_{\gep}))\left(\Pi_z^{\gep}(D_xU_{\gep})^2 (z) - C_{\gep}^{2}\right).
\end{split}
\end{equation}
Writing $\Pi_z^{\gep}(D_xU_{\gep})^2(z)= \cR_{\gep}((D_xU_{\gep})^2)  $, the multiplicative property of $\cR_{\gep}$ in \eqref{product_canonical} and the identity \eqref{space_derivative_U} imply that the equality \eqref{second_rough} becomes
\begin{equation}\label{third_rough}
\hat{\cR_{\gep}}(\gP''(U_{\gep})(D_xU_{\gep})^2) =  (\hat{\cR_{\gep}}\gP''(U_{\gep}))(\hat{\cR_{\gep}}D_x U_{\gep})^2 -\gp''(u_{\gep}) C_{\gep}^{2}\,.
\end{equation}
Resuming up the equations \eqref{first_rough} and \eqref{third_rough}, we obtain the final rearrangement
\begin{equation}\label{approx_right}
(\partial_t - \partial_{xx})\gp(u_{\gep})=\hat{\cR_{\gep}}(\gP'(U_{\gep})\Xi)-\hat{\cR_{\gep}}(\gP''(U_{\gep})(D_xU_{\gep})^2)+\gp''(u_{\gep})\left(C_{\gep}^{1}-C_{\gep}^{2}\right)\,.
\end{equation}
Let us now send $\gep\to 0^+$. the left-hand side of \eqref{approx_right} converges in probability to $(\partial_t - \partial_{xx} )\gp(u)$ thanks to the Proposition \ref{convergence_left} and the fact that the derivative is a continuous operation between H\"older spaces. On the other hand, the local Lipschitz property of the reconstruction operator $\cR$ in \eqref{local_lipschitz} and the convergence \eqref{conv_prod} imply 
\begin{equation}\label{convergence_recon}
\hat{\cR_{\gep}}(\gP'(U_{\gep})\Xi)\overset{\bP}{\to}\hat{\cR}(\gP'(U)\Xi)\,,\quad\hat{\cR_{\gep}}(\gP''(U_{\gep})(D_xU_{\gep})^2)\overset{\bP}{\to}\hat{\cR}(\gP''(U)(D_xU)^2)\,,
\end{equation}
with respect to the topology of  $\cC^{-3/2- \kappa}((0,T)\times \bT)$. Thus the theorem holds as long as the deterministic sequence $C^1_{\gep}- C^{2}_{\gep}$ converges to $0$, which is the main consequence of Lemma  \ref{constants}.
\end{proof}
\begin{remark}\label{div_proba}
Looking at the identities \eqref{first_rough} and \eqref{third_rough} separately and the convergence result \eqref{convergence_recon}, we obtain the existence of two sequences of random variables $X^{1}_{\gep}\,, \,X^2_{\gep}\in \cC^{-3/2- \kappa}((0,T)\times \bT)$ converging in probability such that 
\[\gp'(u_{\gep})\xi_{\gep}= X^{1}_{\gep} +\gp''(u_{\gep})C_{\gep}^{1}\,,\quad  \gp''(u_{\gep})(\partial_xu_{\gep})^2=X^2_{\gep} + \gp''(u_{\gep})C_{\gep}^{2}\,.\]
Since we know from the Lemma \ref{constants} that the deterministic sequences $C^1_{\gep}$ and $ C^{2}_{\gep}$ are both diverging, we obtain easily
\[\norm{\gp'(u_{\gep})\xi_{\gep}}_{\cC^{-3/2- \kappa}((0,T)\times \bT)}\overset{\bP}{\to}  +\infty\,,\quad\norm{\gp''(u_{\gep})(\partial_x u_{\gep})^2}_{\cC^{-3/2- \kappa}((0,T)\times \bT)}\overset{\bP}{\to}  +\infty\,.\]
Thus we can justify rigorously the calculations done in the introduction. 
\end{remark}
From the formula \eqref{differential} we can identify $\gp(u)$ with the solution of the following equation
\[
\begin{cases}
\partial_t v- \partial_{xx}v = \hat{\cR}(\gP'(U)\Xi)-\hat{\cR}(\gP''(U)(D_xU)^2)\,,\\
v(0, x)= \gp(0)\,\\
v(t,0)=v(t,1)\,\\
\partial_xv(t,0)=\partial_x v(t,1)
\end{cases}
\]
Using the general results contained in section \ref{elements} we obtain immediately.
\begin{corollary}[Pathwise integral It\^o Formula]
For any $\gp\in C^7_b(\bR)$ and $(t,x)\in [0,T]\times \bT$ we have 
\[
\begin{split}
\gp(u(t,x))=& \gp(0)+ (P*\1_{[0,t]}\hat{\cR}(\gP'(U)\Xi)) (t,x)- (P*\1_{[0,t]}\hat{\cR}((\gP''(U)(D_xU)^2)))(t,x)\,.
\end{split}
\]
\end{corollary}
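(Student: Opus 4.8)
The plan is to read the corollary as the statement that $\gp(u)$ solves the deterministic heat equation \eqref{deterministic_equation} with the source supplied by Theorem~\ref{Differential_Ito} and constant initial datum $\gp(0)$, and then to invoke the variation of constants formula \eqref{explicit_heat_equation} together with the uniqueness result of Section~\ref{elements}. Concretely, I would fix $\kappa<1/2$ and set
\[
g:=\hat{\cR}(\gP'(U)\Xi)-\hat{\cR}(\gP''(U)(D_xU)^2)\,.
\]
By Proposition~\ref{product_function} the modelled distributions $\gP'(U)\Xi$ and $\gP''(U)(D_xU)^2$ lie in $\cD^{\kappa,-1-2\kappa}(\hgG)$, and since the minimal homogeneity appearing in them is $\abs{\Xi}=-3/2-\kappa$, the Reconstruction Theorem~\ref{reconstruction_theorem} gives $g\in\cC^{-3/2-\kappa}((0,T)\times\bT)$; Theorem~\ref{Differential_Ito} then asserts exactly that $(\partial_t-\partial_{xx})\gp(u)=g$ a.s. on $(0,T)\times\bT$.

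Since $\gb:=-3/2-\kappa\in(-2,0)$ is non-integer, the deterministic theory of Section~\ref{elements} applies: the operator $\1_{[0,t]}$ acts on $\cC^{\gb}(\bR\times\bT)$, so by linearity $P*\1_{[0,t]}g$ splits into the two convolutions appearing in the statement, each an element of $\cC^{1/2-\kappa}$; and the boundary-value problem \eqref{deterministic_equation} with source $g$ and initial datum $v_0\equiv\gp(0)$ has a \emph{unique} solution $v\in\cC^{\gb+2}([0,T]\times\bT)=\cC^{1/2-\kappa}([0,T]\times\bT)$, which by \eqref{explicit_heat_equation} and $\int_{\bT}P_t(x-y)\,dy=1$ is
\[
v(t,x)=\gp(0)+(P*\1_{[0,t]}\hat{\cR}(\gP'(U)\Xi))(t,x)-(P*\1_{[0,t]}\hat{\cR}(\gP''(U)(D_xU)^2))(t,x)\,,
\]
i.e.\ the right-hand side of the corollary. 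It then remains to identify $v$ with $\gp(u)$. Because $u\in\cC^{1/2-\kappa}([0,T]\times\bT)$ a.s.\ (from \eqref{analytic_representation} and the Schauder estimate) and $\gp\in C^{7}_b(\bR)$ is in particular globally Lipschitz, the composition $\gp(u)$ again lies in $\cC^{1/2-\kappa}([0,T]\times\bT)$, has initial value $\gp(u(0,\cdot))=\gp(0)$, and defines a function on the torus, hence satisfies the periodic boundary conditions. Combined with $(\partial_t-\partial_{xx})\gp(u)=g$ on $(0,T)\times\bT$, this shows $\gp(u)$ solves \eqref{deterministic_equation} with the same data as $v$, so $\gp(u)=v$ by uniqueness, and evaluating at the given $(t,x)$ concludes.

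The one step needing genuine care is the passage from the distributional identity of Theorem~\ref{Differential_Ito}, which tests only against $\psi$ with $\mathrm{supp}(\psi)\subset(0,T)\times\bT$ and carries no information at $t=0$, to the assertion that $\gp(u)$ is a solution of \eqref{deterministic_equation} in the precise sense used for uniqueness, where the initial condition enters through $\1_{[0,t]}$ and its description \eqref{indicator} as $g(\psi)=\lim_N g(\gp_N\psi)$. The bridge is that $\gp(u)$ is continuous up to $\{t=0\}$ with boundary value $\gp(0)$: integrating by parts a test function against $\gp(u)$ then produces only the interior term $g(\psi)$ plus the expected boundary contribution at $t=0$, so $\gp(u)$ and $\gp(0)+P*\1_{[0,t]}g$ act identically on all test functions. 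A minor accompanying point is that $g$ must be viewed as an element of $\cC^{\gb}(\bR\times\bT)$ rather than merely of $\cC^{\gb}((0,T)\times\bT)$ for $\1_{[0,t]}$ to be applied; this is harmless, since $\1_{[0,t]}g$ depends only on the restriction of $g$ to $[0,t]\times\bT\subset[0,T]\times\bT$.
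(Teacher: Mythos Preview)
Your proposal is correct and follows essentially the same approach as the paper: both identify $\gp(u)$ as the solution of the heat equation \eqref{deterministic_equation} with source $g=\hat{\cR}(\gP'(U)\Xi)-\hat{\cR}(\gP''(U)(D_xU)^2)$ and initial datum $\gp(0)$, and then invoke the uniqueness and variation of constants formula from Section~\ref{elements}. The paper states this in a single sentence before the corollary, whereas you spell out the regularity checks and the passage from the interior identity to the full boundary-value problem more carefully.
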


\subsection{Identification of the differential formula}
Thanks to the explicit Gaussian structure involving the definition of $u$ in \eqref{sto_conv}, in order to obtain the Theorem \ref{Integral_Ito}, we can identify the terms $\hat{\cR}(\gP'(U)\Xi))$ and  $\hat{\cR}(\gP''(U)(D_xU)^2)$ appearing in the formula \eqref{differential} with some explicit classical operations of stochastic calculus (the so called identification theorems of the introduction). In case of $\hat{\cR}(\gP'(U)\Xi))$, this identification is done by means of a general result contained in \cite{MartinHairer2015}. In what follows, we will denote by $(\cF_{t})_{t\in \bR}$ the natural filtration of $\xi$, that is $\cF_t:=\gs(\{\xi(\psi)\colon \psi\vert_{(t,+\infty)\times\bT}=0\,; \psi\in L^{2}(\bR\times \bT) \})$.
\begin{proposition}\label{ident1}
Let $(\hat{\Pi},\hat{\gG})$ be the BPHZ model and $\gp\in C^7_b(\bR)$. Then for any smooth function $\psi\colon \bR\times \bT \to \bR$ with supp $(\psi) \subset (0,+\infty) \times \bT$, one has for any $t\in (0,T]$
\begin{equation}\label{ident1_equation}
\bigl(\1_{[0,t]}\hat{\cR}(\gP'(U)\Xi)\bigr)(\psi) = \int_0^t  \int_{\bT} \gp'(u(s,y))\psi(s,y) dW_{s,y}\;.
\end{equation}
\end{proposition}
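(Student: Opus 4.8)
The plan is to reduce the identity to the pointwise representation \eqref{stochastic integral} of Remark \ref{Ito_model}, which, for the BPHZ model, identifies $\hPi_z(\sigma\Xi)$ with a Wiener integral whenever $\sigma\in U$ and the test function is supported in the strict future of $z$. First I record the structural facts. By Propositions \ref{big_U}, \ref{prop_composition} and \ref{product_function} (which all use $\gp\in C^7_b(\bR)$), the modelled distribution $\gP'(U)$ belongs to $\cD^{3/2+2\kappa,1/2-\kappa}_{\cU}(\hgG)$, its $\1$-coefficient is $\gp'(u)$ and $\hat{\cR}(\gP'(U))=\gp'(u)$, while $\gP'(U)\Xi\in\cD^{\kappa,-1-2\kappa}(\hgG)$; hence $\hat{\cR}(\gP'(U)\Xi)\in\cC^{-3/2-\kappa}$ is a well-defined distribution by Theorem \ref{reconstruction_theorem}. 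Secondly, $\gP'(U)$ is \emph{adapted}: for $z=(t,x)$ the coefficients of $U(z)$ — namely $u(t,x)$ and the $v^k(t,x)=(\partial_k\bar R*\widetilde{\1_{[0,\infty)}\xi})(t,x)$ — are $\cF_t$-measurable, because $G$ and $\bar R$ vanish on negative times, and so are the coefficients of $\gP'(U)(z)$; since $K$ is also supported on positive times, $\hPi_z\sigma$ for $\sigma\in U$ is a continuous function whose value at a point $(s,y)$ is $\cF_s$-measurable as soon as $s>t$.

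The core step is to prove that for every smooth $\psi$ with $\mathrm{supp}(\psi)\subset(0,+\infty)\times\bT$ one has
\[\hat{\cR}(\gP'(U)\Xi)(\psi)=\int_{0}^{+\infty}\int_{\bT}\gp'(u(s,y))\,\psi(s,y)\,dW_{s,y}\,,\]
the right-hand side being a genuine Walsh integral since $\gp'(u)$ is bounded and adapted. Fix $n$ large, a dyadic grid of parabolic mesh $2^{-n}$, a subordinate smooth partition of unity $\{\chi_i\}$, and for the $i$-th cell an anchor point $z_i$ obtained by shifting two time-steps into the past, so that $\psi_i:=\psi\chi_i$ is supported in the strict future of $z_i$ and at parabolic distance $O(2^{-n})$ from it (for $n$ large all relevant cells lie in $\{t>0\}$, as $\psi$ is supported away from $\{t=0\}$). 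The generalised Taylor bound \eqref{taylor_decom} with $\gga=\kappa>0$, summed over the $O(2^{3n})$ cells, gives $\hat{\cR}(\gP'(U)\Xi)(\psi)=\lim_{n\to\infty}\sum_i\hPi_{z_i}\!\big((\gP'(U)\Xi)(z_i)\big)(\psi_i)$. Expanding $\gP'(U)(z_i)\in\cU$ on the canonical basis and applying \eqref{stochastic integral} term by term (the components of $\gP'(U)(z_i)\Xi$ of non-negative homogeneity contributing negligibly after summation), $\hPi_{z_i}((\gP'(U)\Xi)(z_i))(\psi_i)=\int\!\int\hPi_{z_i}(\gP'(U)(z_i))(s,y)\,\psi_i(s,y)\,d\widetilde{W}_{s,y}$, so that $\sum_i\hPi_{z_i}((\gP'(U)\Xi)(z_i))(\psi_i)=\int\!\int g^{(n)}(s,y)\psi(s,y)\,d\widetilde{W}_{s,y}$, where $g^{(n)}:=\sum_i\hPi_{z_i}(\gP'(U)(z_i))\,\chi_i$.

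It remains to pass to the limit in $\int\!\int g^{(n)}\psi\,d\widetilde{W}$. Since $\hPi_w\sigma$ evaluated at $w$ equals the $\1$-coefficient of $\sigma$ for every $\sigma\in\cU$, the model identity $\hPi_{z_i}=\hPi_{(s,y)}\hgG_{(s,y)z_i}$ and the $\cD^{3/2+2\kappa,1/2-\kappa}$ bound for $\gP'(U)$ yield, uniformly on the (compact, away from $\{t=0\}$) support of $\psi$, the estimate $\big|\hPi_{z_i}(\gP'(U)(z_i))(s,y)-\gp'(u(s,y))\big|\lesssim\norm{(s,y)-z_i}^{3/2+2\kappa}\lesssim 2^{-n(3/2+2\kappa)}$, hence $\norm{g^{(n)}-\gp'(u)}_{L^\infty}\to0$. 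Both $g^{(n)}$ (the cutoffs $\chi_i$ being deterministic and the measurability observation applying) and $\gp'(u)$ are adapted, so the It\^o isometry gives $\big\|\int\!\int(g^{(n)}-\gp'(u))\psi\,d\widetilde{W}\big\|_{L^2}\to0$; together with the a.s.\ convergence of the left-hand side this establishes the core identity, a.s.\ for each such $\psi$ and hence simultaneously on a countable dense set. Finally I pass to $\1_{[0,t]}$: by \eqref{indicator}, $(\1_{[0,t]}\hat{\cR}(\gP'(U)\Xi))(\psi)=\lim_N\hat{\cR}(\gP'(U)\Xi)(\psi\theta_N)$ for smooth cutoffs $\theta_N\to\1_{(0,t)\times\bT}$ supported in $(0,+\infty)\times\bT$; applying the core identity to each $\psi\theta_N$ and invoking dominated convergence for Walsh integrals produces $\int_0^t\int_\bT\gp'(u(s,y))\psi(s,y)\,dW_{s,y}$, as claimed. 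One could also skip the mesh-refinement step by citing directly the identification theorem for the It\^o model of \cite{MartinHairer2015}, to which the sector $\cV_\Xi\oplus\cU$ is canonically isomorphic by Remark \ref{Ito_model}. In either route, the main obstacle is the passage from the local, future-supported representation \eqref{stochastic integral} to the global statement, which forces the careful placement of the anchor points and uses the adaptedness of $\gP'(U)$ in an essential way.
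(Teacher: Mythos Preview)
Your proof is correct. The paper's own argument is much shorter: it simply invokes \cite[Theorem~6.2]{MartinHairer2015} after checking its hypotheses, namely that the coefficients of $\gP'(U)(t,x)$ are $\cF_t$-adapted (which you also verify) and that $\bE(|\gP'(U)|_{\gga',\gh'})^p<\infty$ for $p>2$, the latter following from Borell--TIS applied to $u$ and $v^{(0,1)}$. The identification of the BPHZ model with the It\^o model of \cite{MartinHairer2015} on the sector $\cV_\Xi\oplus\cU$ (Remark~\ref{Ito_model}) is what makes that citation legitimate.

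Your route instead reconstructs the content of that theorem in this specific setting: the mesh approximation with anchor points shifted into the strict past, the cell-by-cell use of \eqref{stochastic integral}, pulling the $\cF_{t_i}$-measurable coefficients inside the Walsh integral, and the It\^o-isometry limit. This is precisely the mechanism behind \cite[Theorem~6.2]{MartinHairer2015}, so the two approaches are the same in substance; you have traded brevity for self-containment. You acknowledge this yourself in the final sentence. One small point worth making explicit in your write-up: the implicit constants in your pathwise bound $|g^{(n)}-\gp'(u)|\lesssim 2^{-n(3/2+2\kappa)}$ are random (they involve $\|\hPi\|$, $\|\hgG\|$ and $|\gP'(U)|_{\gga',\gh'}$), so passing to the $L^2(\Omega)$ limit via It\^o isometry still requires the moment bound that the paper checks via Borell--TIS.
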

\begin{proof}
Thanks to the inclusion of $\cV_{\Xi}$ and $\cU$ into the regularity structure $\cT_{HP}$ and the identification of the BPHZ model $(\hat{\Pi},\hat{\gG})$ with the \emph{It\^o model}, both defined in \cite{MartinHairer2015} (see the Remark \ref{positive_renom} and \ref{Ito_model}), the identity \eqref{ident1_equation} is a consequence of \cite[Theorem 6.2]{MartinHairer2015} applied to the modelled distribution $\gP'(U)\in \cD^{\gga',\gh'}_{\cU}$, with $\gga'= 3/2+ 2\kappa$, $\gh'= 1/2- \kappa$ given in Proposition \ref{product_function}. Let us check that $\gP'(U)$ satisfy the hypotheses of this theorem. For any $z\in [0,T]\times \bR$, $z= (t,x)$ we apply the definition \eqref{composition} to the explicit form of $U$ in \eqref{lifting_U2} to obtain
\[\begin{split}
\gP'(U)(z)&= \gp'(\widetilde{u}(z))\1 + \gp''(\widetilde{u}(z))\1_{+}(z)\,\tikz[scale=1.2]{\node{} child{node{} edge from parent[noise] child {node{}}}}+  \gp''(\widetilde{u}(z))v^{(0,1)}(z)\X^{(0,1)}  \\&+ \frac{\gp'''(\widetilde{u}(z))}{2}v^{(0,1)}(z)\1_{+}(z)\,\tikz[scale=1.2]{\node{} child{node{} edge from parent[noise] child{node{}}}}_{\,(0,1)}+\frac{\gp'''(\widetilde{u}(z))}{2}\1_{+}(z)\,\tikz[scale=1.2]{\node{} child{node{} edge from parent[noise] child {node{}}}child{node{} edge from parent[noise] child {node{}}}} \\&+ \frac{\gp^{(4)}(\widetilde{u}(z))}{6}\1_{+}(z)\,\tikz[scale=1.1]{\node{} child{node{} edge from parent[noise] child {node{}}} child{node{} edge from parent[noise] child {node{}}} child{node{} edge from parent[noise] child {node{}}}}\,.
\end{split}\]
Since $\{u(t,x)\}_{(t,x)}$ and $\{v^{(0,1)}(t,x)\}_{(t,x)}$ are adapted to the filtration $\cF_t$, so is the random field $\{\gP'(U)(t,x)\}_{(t,x)}$. Moreover $ \bE (\vert \gP'(U)\vert_{\gga',\gh'})^p<+\infty$  for any $p>2$ because $\gP'$ is a local Lipschitz map on the Banach space $(\cD^{\gga,\gh}(\hgG),\vert \cdot \vert_{\gamma,\eta} )$ and one has the bounds
\begin{equation}\label{integrability_u}
\bE\left(\sup_{z\in [0,T]\times \bT} \abs{u(z)}\right)^p<+ \infty\,,\quad \bE\left(\sup_{z\in [0,T]\times \bT} \abs{v^{(0,1)}(z)}\right)^p<+  \infty\,.
\end{equation}
These estimates can be easily obtained by applying the general Borell-TIS inequality to the random fields $u$ and $v^{(0,1)}$, which have bounded variance on $[0,T]\times \bT$.
\end{proof}
We pass to the identification of $\hat{\cR}(\gP''(U)(D_xU)^2)$. In this case no general result can be applied and following the same procedure of \cite{Zambotti2006}, we can identify this random distribution through a different approximation of the process $u$ using the heat semigroup on $u$. For any $\gep>0$ we define the process
\begin{equation}\label{approx_martingale}
u^{\gep}_t(x):=\int_{\bT}P_{\gep}( x-y)u(t,y) dy\,.
\end{equation}
\begin{lemma}\label{properties_martingale}
For any $\gep>0$ the process $u^{\gep}$ satisfies the following properties:
\begin{itemize}
\item for any $t>0$ the process $\{u^{\gep}_t(x)\}_{x\in\bT}$ has a.s. smooth trajectories, satisfying for any integer $m\geq 0$ the a.s. identity
 \begin{equation}\label{derivation_under}
\partial_x^m u^{\gep}_t(x)=\int_{0}^t\int_{\bT}\partial_x^mP_{\gep+t-s}( x-y)  dW_{s,y} \,.
\end{equation}
\item for any $x\in \bT$ the process $\{u^{\gep}_t(x)\}_{t\in[0, T]}$ is the solution of the stochastic differential equation
\begin{equation}\label{evolution_time}
 \begin{cases}
d u^{\gep}_t(x)= \partial_{xx}( u^{\gep}_t)(x) dt + dW^{\gep}_t(x)\\ u_0^{\gep}(x)=0\,,
 \end{cases}
\end{equation}
where $W^{\gep}_t(x)$ is the $(\cF_t)$-martingale
\[W^{\gep}_t(x)= \int_0^t\int_{\bT}P_{\gep}( x-y) dW_{s,y}\,,\quad  \langle W^{\gep}_{\cdot}(x)\rangle_t=t \int_{\bT}P(\gep, x-y)^2 dy=t \norm{ P_{\gep}(\cdot)}^2_{L^2(\bT)}\,. \]
\item By sending $\gep\to 0$ one has
\begin{equation}\label{convergence_u^eps}
\sup_{(t,x)\in [0,T]\times \bT}\abs{u^{\gep}(t,x)- u(t,x)}\to 0 \quad \text{a.s.}
\end{equation}
\end{itemize}
\end{lemma}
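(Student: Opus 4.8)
\textit{Proof plan.} The plan is to base everything on the explicit stochastic convolution \eqref{sto_conv} for $u$ together with a stochastic Fubini theorem for Walsh integrals. Writing
\[
u^{\gep}_t(x)=\int_{\bT}P_{\gep}(x-y)\left(\int_0^t\int_{\bT}P_{t-s}(y-z)\,dW_{s,z}\right)dy\,,
\]
the kernel obtained after integrating $P_{\gep}(x-y)P_{t-s}(y-z)$ in $y$ is square integrable in $(s,z)$ over $[0,t]\times\bT$, uniformly in $x\in\bT$, precisely because $\gep>0$ keeps $\gep+t-s$ away from the singularity of the heat kernel. Hence Fubini applies and, using the semigroup identity $P_{\gep}*P_{t-s}=P_{\gep+t-s}$ on $\bT$, one gets the a.s.\ identity
\[
u^{\gep}_t(x)=\int_0^t\int_{\bT}P_{\gep+t-s}(x-z)\,dW_{s,z}\,,
\]
which will be the starting point for all three claims.

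For the first claim, fix $t>0$ and define, for each $m\ge 0$, the Gaussian field $X^m(x):=\int_0^t\int_{\bT}\partial_x^m P_{\gep+t-s}(x-z)\,dW_{s,z}$. Since $\gep+t-s\ge\gep>0$ on the domain of integration, the maps $x\mapsto\partial_x^m P_{\gep+t-s}(x-z)$ and all their spatial increments are bounded in $L^2([0,t]\times\bT)$ with constants that are locally H\"older in $x$; a Kolmogorov continuity argument applied to $X^m$ and $X^{m+1}$ then yields an a.s.\ continuous modification of each $X^m$, and one further stochastic Fubini step gives $X^m(x)-X^m(x_0)=\int_{x_0}^x X^{m+1}(y)\,dy$, i.e.\ $\partial_x X^m=X^{m+1}$ a.s. Thus $x\mapsto u^{\gep}_t(x)=X^0(x)$ is a.s.\ smooth and \eqref{derivation_under} holds.

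For the second claim, fix $x\in\bT$ and expand $P_{\gep+t-s}(x-z)=P_{\gep}(x-z)+\int_0^{t-s}\partial_{xx}P_{\gep+r}(x-z)\,dr$ using the heat equation $\partial_r P_{\gep+r}=\partial_{xx}P_{\gep+r}$. Substituting this into the representation of $u^{\gep}_t(x)$ splits it as $W^{\gep}_t(x)$ plus the term $\int_0^t\int_{\bT}\big(\int_0^{t-s}\partial_{xx}P_{\gep+r}(x-z)\,dr\big)dW_{s,z}$; after the change of variable $r\mapsto s+r$ and a stochastic Fubini interchange moving the outer $dr$ integral through the stochastic integral, this last term becomes $\int_0^t\partial_{xx}u^{\gep}_r(x)\,dr$ by \eqref{derivation_under} with $m=2$. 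This is exactly the integral form of \eqref{evolution_time}. The process $t\mapsto W^{\gep}_t(x)=\int_0^t\int_{\bT}P_{\gep}(x-z)\,dW_{s,z}$ is a continuous $(\cF_t)$-martingale, its integrand being adapted and deterministic in $z$, and by the Walsh isometry $\langle W^{\gep}_{\cdot}(x)\rangle_t=\int_0^t\norm{P_{\gep}(x-\cdot)}^2_{L^2(\bT)}ds=t\,\norm{P_{\gep}(\cdot)}^2_{L^2(\bT)}$ by translation invariance on $\bT$. The third claim is deterministic once a realisation is frozen: $u$ is a.s.\ continuous, hence uniformly continuous, on the compact set $[0,T]\times\bT$, and $\{P_{\gep}\}_{\gep>0}$ is an approximate identity on $\bT$ (non-negative, total mass one, concentrating at the origin), so $u^{\gep}_t(x)=(P_{\gep}*u(t,\cdot))(x)\to u(t,x)$ uniformly in $(t,x)$.

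The main technical point is the justification of the two stochastic Fubini interchanges and of the differentiation under the stochastic integral sign; each reduces to an $L^2$ bound on the relevant kernel, and all these bounds are finite and uniform precisely because the mollification keeps $\gep+t-s\ge\gep>0$. No genuine obstacle arises beyond carefully bookkeeping these estimates.
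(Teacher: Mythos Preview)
Your argument is correct, but it takes a somewhat more laborious route than the paper on the first two items.

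For the smoothness claim, the paper does not argue via Kolmogorov continuity of the stochastic fields $X^m$; instead it observes that $u^{\gep}_t(\cdot)=P_{\gep}*u(t,\cdot)$ is the convolution of an a.s.\ continuous function with a smooth kernel, hence smooth, and then obtains \eqref{derivation_under} by passing the derivative under the Lebesgue integral first and only afterwards invoking a single stochastic Fubini step on the kernel $\partial_x^m P_{\gep}(x-y)P_{t-s}(y-v)$. This avoids the continuity and differentiability bookkeeping on the stochastic side entirely. For the second claim, the paper bypasses your heat-equation expansion and the associated Fubini interchange altogether: since $u$ satisfies the weak formulation \eqref{weak_form} for every smooth test function $l$, one simply takes $l(y)=P_{\gep}(x-y)$ and reads off \eqref{evolution_time} directly. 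Your derivation of the same identity is valid but is a genuine computation where the paper gets it for free from the mild solution theory already set up in Section~\ref{elements}. On the third claim the two arguments coincide.
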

\begin{proof}
We start by considering the trajectories of $ x\to u_t^{\gep}(x)$ for any fixed $t>0$. Since $u$ is a.s. a continuous function, the regularisation property of the heat kernel $P$ implies the desired property on its trajectories. Moreover, for any integer $m\geq 0$ we can pass the derivative under the Lebesgue integral to obtain
\[
\partial_x^m u^{\gep}_t(x)= \int_{\bT}\partial_x^mP_{\gep}( x-y)u(t,y) dy \quad \text{a.s.}
\]
Using the straightforward bound
\[ \int_{\bT}\int_{0}^t\int_{\bT}\left(\partial_x^mP_{\gep}(x-y)P_{t-s}( y-v)\right)^2 ds\, dv\,dy< +\infty\,,\]
we can obtain the formula \eqref{derivation_under} by writing the stochastic integral in \eqref{derivation_under} as a Wiener integral and applying the stochastic Fubini theorem for Wiener integral, as explained in \cite[Thm. 5.13.1]{peccati2011}. For any fixed $x\in\bT$ we study the process $ t\to u_t^{\gep}(x)$. By definition of mild solution for the equation \eqref{eqSHE}, $u$ satisfies the equality  \eqref{weak_form} for any smooth function $l\colon \bT\to \bR$, thus the identity \eqref{evolution_time} follows by simply setting $l(y)=P_{\gep}(x-y)$ in \eqref{weak_form}. Finally for any $(t,x)\in[0,T]\times \bT$ the a.s. H\"older continuity of  $u$ in the space and time implies the convergence \eqref{convergence_u^eps}, using the classical property of the heat semigroup on continuous functions.
\end{proof}
\begin{theorem}\label{ident2}
Let $(\hat{\Pi},\hat{\gG})$ be the BPHZ model and $\gp\in C^7_b(\bR)$. Then for any smooth function $\psi\colon \bR\times \bT \to \bR$ with supp $(\psi) \subset (0,+\infty) \times \bT$, one has for any $t\in (0,T]$ the a.s. identity
\begin{equation}\label{formula_ident}
\1_{[0,t]}\hat{\cR} \bigl(\gP''(U)(D_xU)^2\bigr)(\psi) = - \frac{1}{2}\int_{0}^{t}\int_{\bT}\psi(s,y)\gp''(u(s,y)) C(s)dy\, ds \end{equation}
\[+\int_{[0,t]^2\times \bT^2}\left[\int_{\mathbf{s}_2\vee \mathbf{s}_1}^t \int_{\bT}\psi(s,y)\gp''(u(s,y))\partial_xP_{s-\mathbf{s}_1}( y- \mathbf{y}_1)\partial_xP_{s-\mathbf{s}_2}( y- \mathbf{y}_2) dyds \right]dW^{2}_{\mathbf{s},\mathbf{y}}\,,\]
where $C\colon(0,T)\to \bR$  is the deterministic integrable function $C(s):=\norm{ P_s(\cdot)}^2_{L^2(\bT)}$.
\end{theorem}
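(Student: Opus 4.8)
The plan is to adapt the argument of \cite{Zambotti2006} to the present setting, working with the spatial regularisations $u^{\gep}$ of \eqref{approx_martingale} rather than with the Wong--Zakai approximations $u_{\gep}$ of \eqref{eq3}, since for $u^{\gep}$ the Gaussian structure is explicit and classical stochastic and Malliavin calculus apply by Lemma \ref{properties_martingale}. The first step is a reduction. By the proof of Theorem \ref{Differential_Ito} (see \eqref{third_rough}--\eqref{convergence_recon}) the distribution $\hat{\cR}(\gP''(U)(D_xU)^2)$ is the limit in probability, in $\cC^{-3/2-\kappa}((0,T)\times\bT)$, of the renormalised squares $\gp''(u_{\gep})[(\partial_x u_{\gep})^2-C^2_{\gep}]$; matching the divergent constant $C^2_{\gep}$ of the BPHZ renormalisation with the spatial counterterm $\tfrac12 C(\gep)$ up to a vanishing error (a variant of Lemma \ref{constants}, where $\tfrac12 C(\gep)$ is the divergent part of the variance of $\partial_x u^{\gep}$ that has to be subtracted), this reduces the claim to showing, in probability,
\[
\1_{[0,t]}\hat{\cR}\bigl(\gP''(U)(D_xU)^2\bigr)(\psi)=\lim_{\gep\to0^{+}}\int_{0}^{t}\int_{\bT}\psi(s,y)\,\gp''(u^{\gep}(s,y))\Bigl[\bigl(\partial_x u^{\gep}(s,y)\bigr)^{2}-\tfrac12 C(\gep)\Bigr]\,dy\,ds .
\]

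The second step is a Wiener chaos computation of the right-hand side. By \eqref{derivation_under} one has $\partial_x u^{\gep}(s,y)=I_1(h^{\gep}_{s,y})$ with $h^{\gep}_{s,y}(r,v)=\1_{[0,s]}(r)\,\partial_x P_{\gep+s-r}(y-v)$, so the product formula for Wiener integrals gives $(\partial_x u^{\gep}(s,y))^{2}=\gd^{2}(h^{\gep}_{s,y}\otimes h^{\gep}_{s,y})+\norm{h^{\gep}_{s,y}}_H^{2}$. From $\partial_\tau P_\tau=\partial_{xx}P_\tau$ one obtains $\norm{\partial_x P_\tau}^{2}_{L^2(\bT)}=-\tfrac12 C'(\tau)$, hence $\norm{h^{\gep}_{s,y}}_H^{2}=\int_{\gep}^{\gep+s}\norm{\partial_x P_\tau}^{2}_{L^2(\bT)}\,d\tau=\tfrac12\bigl(C(\gep)-C(\gep+s)\bigr)$. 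Since $u^{\gep}(s,y)$ lies in the first Wiener chaos (so $\nabla^2 u^{\gep}(s,y)=0$) and $\int_{\bT}P_\tau(w)\partial_x P_\tau(w)\,dw=0$ by periodicity, all the correction terms in the Skorohod product formula \eqref{product_formula} applied to $\gp''(u^{\gep}(s,y))\,\gd^{2}(h^{\gep}_{s,y}\otimes h^{\gep}_{s,y})$ vanish, leaving
\[
\gp''(u^{\gep}(s,y))\Bigl[\bigl(\partial_x u^{\gep}(s,y)\bigr)^{2}-\tfrac12 C(\gep)\Bigr]=\gd^{2}\bigl(\gp''(u^{\gep}(s,y))\,h^{\gep}_{s,y}\otimes h^{\gep}_{s,y}\bigr)-\tfrac12\,\gp''(u^{\gep}(s,y))\,C(\gep+s).
\]
Integrating against $\psi$ over $(0,t)\times\bT$ and applying a stochastic Fubini theorem for $\gd^{2}$, the first term becomes $\gd^{2}(\Theta^{\gep})$ with $\Theta^{\gep}(\mathbf{s}_1,\mathbf{y}_1,\mathbf{s}_2,\mathbf{y}_2)=\int_{\mathbf{s}_1\vee\mathbf{s}_2}^{t}\int_{\bT}\psi(s,y)\gp''(u^{\gep}(s,y))\partial_x P_{\gep+s-\mathbf{s}_1}(y-\mathbf{y}_1)\partial_x P_{\gep+s-\mathbf{s}_2}(y-\mathbf{y}_2)\,dy\,ds$, supported in $\mathbf{s}_1,\mathbf{s}_2\in[0,t]$. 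As $\gep\to0^{+}$ the second term tends to $-\tfrac12\int_0^t\int_{\bT}\psi(s,y)\gp''(u(s,y))C(s)\,dy\,ds$ by dominated convergence, since $C$ is non-increasing and integrable on $(0,T)$ and $u^{\gep}\to u$ uniformly a.s. by \eqref{convergence_u^eps}; this gives the first term of the asserted formula, while $\gd^{2}(\Theta^{\gep})$ should converge to $\gd^{2}(\Theta)$, $\Theta$ the $\gep=0$ kernel, producing the multiple Skorohod integral, the truncation $\1_{[0,t]}$ corresponding to the restriction $s\le t$ of the $s$-integration in the spirit of \eqref{indicator}.

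The main obstacle is the convergence $\Theta^{\gep}\to\Theta$ in $\mathbb{D}^{2,2}(H^{\otimes 2})$, which is what \eqref{continuity_property} requires in order to pass the Skorohod integral to the limit: a naive estimate fails because $\norm{\partial_x P_\tau}^{2}_{L^2(\bT)}\sim\tau^{-3/2}$ is not integrable near $\tau=0$, so the kernels $h^{\gep}_{s,y}$ do not themselves converge in $H$, and the cancellation must come from the outer time integration. Performing the integrals over $\mathbf{s}_1,\mathbf{s}_2$ first and using once more $\partial_\tau P_\tau=\partial_{xx}P_\tau$ yields the telescoping identity $\int_{0}^{s\wedge s'}\partial_{xx}P_{\,s+s'-2\mathbf{r}}(w)\,d\mathbf{r}=\tfrac12\bigl(P_{s+s'}(w)-P_{|s-s'|}(w)\bigr)$, which reduces $\bE\norm{\Theta^{\gep}}_{H^{\otimes 2}}^{2}$ — and, using that $u^{\gep}(s,y)\in\mathbb{D}^{k,2}$ with Malliavin derivatives given by translates of the heat kernel $P$, also $\bE\norm{\nabla\Theta^{\gep}}^{2}$ and $\bE\norm{\nabla^2\Theta^{\gep}}^{2}$ — to a space-time integral, over the support of $\psi$, of quantities of the form $\norm{P_{s+s'}-P_{|s-s'|}}_{L^2(\bT)}^{2}=C(s+s')+C(|s-s'|)-2C(s\vee s')$, whose only singularity is the integrable $|s-s'|^{-1/2}$ along the diagonal $s=s'$, which is summable precisely because $\mathrm{supp}(\psi)$ is a compact subset of $(0,T)\times\bT$ bounded away from $\{s=0\}$; the bounds obtained this way are uniform in $\gep$, so dominated convergence gives $\Theta^{\gep}\to\Theta$ in $\mathbb{D}^{2,2}(H^{\otimes 2})$. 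Combining the three limits then identifies $\1_{[0,t]}\hat{\cR}(\gP''(U)(D_xU)^2)(\psi)$ with $-\tfrac12\int_0^t\int_{\bT}\psi\,\gp''(u)\,C+\gd^{2}(\Theta)$, which is the claimed expression once one recalls $P'_{s}=\partial_x P_{s}$.
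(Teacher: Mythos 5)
Your technical computations — the Wiener--chaos decomposition $(\partial_x u^{\gep})^2=\gd^2(h^{\gep}\otimes h^{\gep})+\|h^{\gep}\|_H^2$, the identity $\|h^{\gep}_{s,y}\|^2_H=\tfrac12(C(\gep)-C(\gep+s))$ via $\|\partial_x P_\tau\|^2_{L^2(\bT)}=-\tfrac12 C'(\tau)$, the vanishing of the odd corrections in the Skorohod product formula by $\int_{\bT}P_\tau\,\partial_x P_\tau=0$, the stochastic Fubini step, and the telescoping identity reducing the $L^2$ bounds to $\|P_{s+s'}-P_{|s-s'|}\|^2_{L^2(\bT)}$ — all correctly reproduce what the paper does for the Skorohod term (its $A_2^{\gep}$). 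The problem is the first ``reduction'' step, which is a genuine gap.

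You claim that since $\gp''(u_{\gep})[(\partial_x u_{\gep})^2-C^2_{\gep}]\to\hat\cR(\gP''(U)(D_xU)^2)$ and since, allegedly, $C^2_{\gep}$ agrees with $\tfrac12 C(\gep)$ up to a vanishing error, one may replace the Wong--Zakai approximation $u_{\gep}$ and its BPHZ constant by the heat-semigroup regularisation $u^{\gep}$ and the spatial counterterm. This does not hold. First, the constants do not match: by Theorem \ref{constants}, $C^2_{\gep}=\gep^{-1}\int(G_x*\gr)^2+o(1)$ diverges like $\gep^{-1}$, whereas $\tfrac12 C(\gep)=\tfrac12\|P_{\gep}\|_{L^2(\bT)}^2$ diverges like $\gep^{-1/2}$, so $C^2_{\gep}-\tfrac12 C(\gep)\sim\gep^{-1}$, not $o(1)$; the two regularisations live on different scales (the mollification $\gr_{\gep}$ is parabolic, the heat-kernel smoothing is spatial), so there is no reason for the counterterms to agree under the same parameter. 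Second, and more fundamentally, even a perfect matching of the divergent constants would not let you conclude that the two sequences converge to the same limit: you are comparing two \emph{different} approximations $(u_{\gep})$ and $(u^{\gep})$ of the same singular object, and the renormalised products could a priori converge to limits that differ by a finite correction. That such a finite discrepancy does \emph{not} occur is precisely what needs to be proved.

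The paper closes this gap by never comparing the two approximations directly. It rewrites the pathwise differential It\^o formula of Theorem \ref{Differential_Ito} as $\hat\cR(\gP''(U)(D_xU)^2)(\psi)=(-\partial_t+\partial_{xx})\gp(u)(\psi)+\hat\cR(\gP'(U)\Xi)(\psi)$, identifies the stochastic term via Proposition \ref{ident1}, and so expresses the left-hand side of \eqref{formula_ident} as the explicit classical quantity \eqref{reverse_equation} involving only $\gp(u)$, $\gp'(u)$ and a Walsh integral. It then applies the classical It\^o formula to the semimartingale $s\mapsto h(s)\gp(u^{\gep}_s(y))$ of \eqref{evolution_time}, integrates against $l$ and uses stochastic Fubini, and shows that the resulting expression converges in $L^2(\gO)$ to the \emph{same} classical quantity \eqref{reverse_equation}. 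Equality of the two expressions for every $\gep$ forces the renormalised square on the $u^{\gep}$ side to converge, and the Skorohod computation then \emph{identifies} that limit. In other words, it is the weak form of the heat equation — enforced both by the regularity-structure It\^o formula and by the classical one — that pins down the common limit; your reduction asserts this agreement where the paper derives it. If you insert the passage through \eqref{reverse_equation}, the remainder of your argument does carry through.
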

\begin{proof}
We prove firstly the result when $\psi= h\otimes l$ where $h\colon [0,t]\to \bR$ is a compactly supported smooth function and $l\colon \bT\to \bR $. $ \psi$ is compactly supported up to time $t$. Therefore we we can forget the operator $\1[0,t]$ on the left
hand side of \eqref{formula_ident} and we can apply the Theorem \ref{Differential_Ito} and the Proposition \ref{ident1} obtaining
\[\hat{\cR} \bigl(\gP''(U)(D_xU)^2\bigr)(\psi)= \left(-\partial_t (\gp(u))+ \partial^2_x (\gp(u))+\hat{\cR}(\gP'(U)\Xi)\right)(\psi)=\]
\begin{equation}\label{reverse_equation}
\begin{split}
&\int_{0}^t\int_{\bT}\bigg( \gp(u(s,y)) h'(s)l(y) + \gp(u(s,y)) h(s)l''(y) \bigg)ds \,dy\\&+ \int_0^t\int_{\bT}  \gp'(u(s,y))h(s)l(y) dW_{s,y}\,.
\end{split}
\end{equation}
Let us recover the right-hand side of \eqref{reverse_equation} via a different approximation. Using the process $u^{\gep}$ defined in \eqref{approx_martingale}, we can apply the It\^o formula to the semimartingale $h(s)\gp(u^{\gep}_s(y))$ for some fixed $y$ and we obtain
\begin{equation}\label{classical_ito}
\begin{split}
h(t)\gp(u^{\gep}_t(y))- &h(0)\gp(u^{\gep}_0(y))= \int_{0}^t h'(s)\gp(u^{\gep}_s(y))\,ds+ \int_0^t h(s)\partial_{xx}( u^{\gep}_s)(y)\gp'(u^{\gep}_s(y)) ds \\&+\int_0^t h(s) \gp'(u^{\gep}_s(y))dW^{\gep}_s(y)+ \frac{1}{2}\norm{ P_{\gep}(\cdot)}^2_{L^2(\bT)}\int_0^t h(s) \gp''(u^{\gep}_s(y))ds\,.
\end{split}
\end{equation}
The left-hand side of \eqref{classical_ito} is a.s. equal to zero by hypothesis on $h$ and we can still apply the formula \eqref{chain2} with $u^{\gep}$ instead of $u_{\gep}$. Hence we can rewrite the equation \eqref{classical_ito} as
\begin{equation}\label{classical_ito2}
\begin{split}
\int_{0}^t\bigg( \gp(u^{\gep}_s(y))h'(s)+  \partial_{xx}( \gp(u^{\gep}_s(y)))h(s)\bigg) ds +\int_0^t h(s) \gp'(u^{\gep}_s(y))dW^{\gep}_s(y)&\\ = \int_0^t\left[( \partial_xu^{\gep}_s)^2(y)-\frac{C_{\gep}(y)}{2}\right] h(s) \gp''(u^{\gep}_s(y))ds \,.
\end{split}
\end{equation}
By multiplying both sides of \eqref{classical_ito2} with $l$ and integrating by part over $\bT$ to transfer the second derivative on $l$, the equation \eqref{classical_ito2} becomes
\begin{equation}\label{classical_ito3}
\begin{split}
&\int_{\bT} \bigg(\int_{0}^t\bigg( \gp(u^{\gep}_s(y))h'(s)l(y)+  \gp(u^{\gep}_s(y)) h(s)l''(y)\bigg) ds  \\&+\int_0^t  \gp'(u^{\gep}_s(y))h(s)l(y)dW^{\gep}_s(y)\bigg)dy  \\&= \int_0^t \int_{\bT} l(y) h(s) \gp''(u^{\gep}_s(y)) \left(( \partial_xu^{\gep}_s)^2(y)-\frac{\norm{ P_{\gep}(\cdot)}^2_{L^2(\bT)}}{2}\right)ds\,dy\,.
\end{split}
\end{equation}
Writing the integral with respect to $dW^{\gep}_s(x)$ as a Walsh integral, we can apply the boundedness of $\gp'$ and $\gp$ to apply a stochastic Fubini's theorem on $dW_{s,y}$ (see \cite[Thm. 65]{Cairoli1975})
\begin{equation}\label{stoch_int}
\begin{split}
\int_{\bT}\int_0^t  &\gp'(u^{\gep}_s(y))h(s)l(y)dW^{\gep}_s(y)dy=\int_{\bT}\left(\int_0^t \int_{\bT} P_{\gep}( y-z)\gp'(u^{\gep}_s(z))h(s)l(y)dW_{s,z}\right)\!dy \\&=\int_0^t\int_{\bT} \left(\int_{\bT}P_{\gep}( z-y)\gp'(u^{\gep}_s(y))h(s)l(y)dy\right) dW_{s,z}\,.
\end{split}
\end{equation}
Let us prove that the left-hand side of \eqref{classical_ito3} converges in $L^2(\gO)$ to the right-hand side of \eqref{reverse_equation}. From the uniform convergence \eqref{convergence_u^eps} of $u^{\gep}$, it is straightforward to show as $\gep\to 0$
\[\int_{0}^t\int_{\bT}\gp(u^{\gep}_s(y))h'(s)l(y)dy ds\to \int_{0}^t\int_{\bT}\gp(u_s(y))h'(s)l(y)dy ds\quad \text{a.s.}\]
\[\int_{0}^t\int_{\bT} \gp(u^{\gep}_s(y)) h(s)l''(y) dy ds\to \int_{0}^t\int_{\bT} \gp(u_s(y)) h(s)l''(y) dy ds \quad \text{a.s.}\]
and the convergence holds also in $L^2(\gO)$ because these random variables are also uniformly bounded. In  case of the stochastic integral in \eqref{stoch_int}, the same uniform convergence of $u^{\gep}$ in \eqref{stoch_int} implies that
\[\sup_{(s,z)\in [0,T]\times \bT}\left|\int_{\bT}P_{\gep}( z-y)\gp'(u^{\gep}_s(y))h(s)l(y)dy- \gp'(u_s(z))h(s)l(z)\right|\to 0\quad \text{a.s.}\]
and bounding these quantity by some constant we obtain by dominated convergence 
\[\bE\left[\int_{0}^t\int_{\bT}\left(\int_{\bT}P_{\gep}( z-y)\gp'(u^{\gep}_s(y))h(s)l(y)dy -\gp'(u_s(z))h(s)l(z)\right)^2 ds dz\right]\to 0\,.\]
Hence the proof is complete as long as the right-hand side of \eqref{classical_ito3} converges in $L^2(\gO)$ to the right-hand side of \eqref{formula_ident}. Using the shorthand notations $p^{\gep}_s(y)=\partial_xP_{\gep+s}( y)$, $P^{\gep}_{s}(y):= P_{\gep+s}(y)$, the formula \eqref{derivation_under} on $u^{\gep}$ when $m=1$ becomes
\[\partial_x u^{\gep}_s(y)=\int_{0}^s\int_{\bT} p^{\gep}_{s-r}(y-z)  dW_{r,z\,.}\]
Thus writing it as a Wiener integral, we can express $(\partial_x u^{\gep}_s(y))^2$ using the Wiener chaos decomposition of a product (see \cite[Prop. 1.1.2]{nualart1995malliavin}) obtaining
\[ \begin{split}
(\partial_x u^{\gep}_s(y))^2&= I_2\bigg(\1_{[0,s]^2\times \bT^2}p^{\gep}_{s-\cdot}(y-\cdot)p^{\gep}_{s-\cdot}(y-\cdot)  \bigg)+ \int_0^s\int_{\bT}[p^{\gep}_{s-r}(y-z) ]^2dz\, dr\,.
\end{split}\]
Hence, recalling the invariance by  translations of  $\zeta\to \int_{\bT}P(s, \zeta-y)^2 dy $ we write the right-hand side of $(\ref{classical_ito3})$ as  $A_1^{\gep}+ A_{2}^{\gep}$ where 
\[\begin{split}
A_1^{\gep}&=\int_0^t \int_{\bT} l(y) h(s) \gp''(u^{\gep}_s(y))\bigg(\int_0^s\int_{\bT} [p^{\gep}_{s-r}(y-z) ]^2dr dz-\frac{1}{2}\int_{\bT}P^{\gep}_{s}(y-x)^2 dy  \bigg)dyds \\A_2^{\gep}&=\int_0^t \int_{\bT} l(y) h(s) \gp''(u^{\gep}_s(y))I_2\big(\1_{[0,s]^2\times \bT^2}p^{\gep}_{s-\cdot}(y-\cdot)p^{\gep}_{s-\cdot}(y-\cdot) \big)dy\,ds\,.
\end{split}\]
We treat both terms separately. In case of  $A_1^{\gep}$ we apply the integration by parts on the $z$ variable and the smoothness of $P$ outside the origin imply for any $y\in \bT$
\[\int_0^s\int_{\bT} [p^{\gep}_{s-r}(y-z) ]^2 dr dz=-\int_{\gep}^{s+\gep}\int_{\bT} P_{s-r}(y-z) \partial_{t}P_{s-r}(y-z) dr dz=\]
\[=-\int_{\gep}^{s+\gep}\partial_t\left( \int_{\bT}\frac{P_{r}(y-z)^2}{2} dz \right)dr=  \int_{\bT}\frac{P_{\gep}(y-z)^2}{2} dz-\int_{\bT}\frac{P_{s+\gep}(y-z)^2}{2} dz\,.\]
Using again the invariance by  translations of  $\zeta\to \int_{\bT}P(s, \zeta-y)^2 dy $, we can rewrite
\[\begin{split}
A^{\gep}_1&=- \frac{1}{2}\int_0^t \int_{\bT} l(y) h(s) \gp''(u^{\gep}_s(y)) \left(\int_{\bT}P_{s+\gep}(y-z)^2 dz \right)ds\,dx\\&= - \frac{1}{2}\int_0^t \int_{\bT} l(y) h(s) \gp''(u^{\gep}_s(y)) \norm{P_{s+\gep}(\cdot) }^2_{L^2(\bT)}\,ds\,dx\,,
\end{split}\]
where the function $C$ is given is defined in the statement. Therefore from the convergence \eqref{convergence_u^eps} we obtain
\[A^{\gep}_1\to - \frac{1}{2}\int_{0}^{t}\int_{\bT}\psi(s,y)\gp''(u(s,y)) C(s)dy\, ds\quad \text{a.s.}\]
and the convergence holds also in $L^2(\gO)$ because the sequence $A^{\gep}_1$ is uniformly bounded. We pass to the treatment of $A_2^{\gep}$. In order to identify its limit, we interpret the double Wiener integral $I_2$ as a multiple Skorohod integral of order $2$. Then we want to rewrite the quantity
\[ l(y) h(s) \gp''(u^{\gep}_s(y))I_2\bigg(\1_{[0,s]^2\times \bT^2}p^{\gep}_{s-\cdot}(y-\cdot)p^{\gep}_{s-\cdot}(y-\cdot) \bigg)\]
using the product formula \eqref{product_formula} for $\gd^2 $ to commute the deterministic integral in $ds\, dy$ with the stochastic integration. Defining $ U^{\gep}_s(y):=l(y) h(s) \gp''(u^{\gep}_s(y))$, one has $ U^{\gep}_s(y)\in \mathbb{D}^{2,2} $ because $u^{\gep}\in \mathbb{D}^{2,2}$  belongs to some fixed Wiener chaos and $\gp''$ has both two derivatives bounded. Applying  the chain rule formula for the Malliavin derivative (see \cite[Proposition 1.2.3]{nualart1995malliavin}) we have
\begin{equation}\label{malliavin_derivatives}
\begin{split}
\nabla_{\mathbf{s}_1,\mathbf{y}_1} U^{\gep}_{s}(y)&= l(y) h(s) \gp'''(u^{\gep}_s(y)) P^{\gep}_{s-\mathbf{s}_1} (y-\mathbf{y}_1)\,,\\\nabla_{\mathbf{s}_1,\mathbf{y}_1,\mathbf{s}_2 ,\mathbf{y}_2 }^2 U^{\gep}_{s}&=l(y) h(s) \gp^{(4)}(u^{\gep}_s(y)) P^{\gep}_{s-\mathbf{s}_1} (y-\mathbf{y}_1)P^{\gep}_{s-\mathbf{s}_2} (y-\mathbf{y}_2)\,.
\end{split}
\end{equation}
For any $\gep>0 $ it is straightforward to check that the hypothesis of the product formula \eqref{product_formula} are satisfied, therefore we can write
\begin{equation}\label{product_applied}
\begin{split}
&U^{\gep}_{s}(y)I_2\big(\1_{[0,s]^2\times \bT^2}p^{\gep}_{s-\cdot}(y-\cdot)p^{\gep}_{s-\cdot}(y-\cdot) \big)=\\&= \int_{[0,s]^2}\int_{\bT^2}U^{\gep}_{s}(y)p^{\gep}_{s-\mathbf{s}_1}(y-\mathbf{y}_1)p^{\gep}_{s-\mathbf{s}_2}(y-\mathbf{y}_2)dW^2_{\mathbf{s}_1\mathbf{y}_1\mathbf{s}_2\mathbf{y}_2}\\&+ 2\int\limits_{[0,s]\times \bT} \bigg(\int_{[0,s]\times\bT}\nabla_{\mathbf{s}_1,\mathbf{y}_1}U^{\gep}_{s}(y)p^{\gep}_{s-\mathbf{s}_1}(y-\mathbf{y}_1)p^{\gep}_{s-\mathbf{s}_2}(y-\mathbf{y}_2)d\mathbf{s}_1d\mathbf{y}_1\bigg)dW_{\mathbf{s}_2\mathbf{y}_2}\\& +\int\limits_{[0,s]^2\times \bT^2} \nabla_{\mathbf{s}_1,\mathbf{y}_1,\mathbf{s}_2 ,\mathbf{y}_2 }^2 U^{\gep}_{s}(y)p^{\gep}_{s-\mathbf{s}_1}(y-\mathbf{y}_1)p^{\gep}_{s-\mathbf{s}_2}(y-\mathbf{y}_2)d\mathbf{s}_1d\mathbf{y}_1 d\mathbf{s}_2d\mathbf{y}_2\,.
\end{split}
\end{equation}
Looking at the deterministic deterministic integrals in the right-hand side of \eqref{product_applied}, they are both zero as a consequence of the trivial identity
\begin{equation}\label{integral_zero}
\int_{\bT}P^{\gep}_{s-r}(y-z)p^{\gep}_{s-r}(y-z)dz= \int_{\bT}\partial_x\frac{(P^{\gep}_{s-r}(y-z))^2}{2} dz=0\,.
\end{equation}
Thus we can interchange the product of $U^{\gep}_s(y)$ with the multiple Shorokod integral of order $2$. For any $\gep>0$ the stochastic integrand inside $dW^2_{\mathbf{s}_1\mathbf{y}_1\mathbf{s}_2\mathbf{y}_2} $ is a smooth fnction in all its variables $\mathbf{s}_1\,,\mathbf{y}_1\,,\mathbf{s}_2\,,\mathbf{y}_2\,, s\,,y$, then it is square integrable when we integrate it on its referring domain. Therefore we can apply a Fubini type theorem for Skorohod integrals (see e.g. \cite{Nualart1998}) to finally obtain
\[A^{\gep}_{2}= \int_{[0,t]^2\times \bT^2}\bigg(\int_{\mathbf{s}_1\vee \mathbf{s}_2}^t \int_{\bT}h(s)l(y)\gp''(u^{\gep}_s(y))p^{\gep}_{s-\mathbf{s}_1}(y-\mathbf{y}_1)p^{\gep}_{s-\mathbf{s}_2}(y-\mathbf{y}_2) dy\, ds\bigg) dW^2_{\mathbf{s},\mathbf{y}}\,.\]
Let us explain the convergence of $A^{\gep}_{2}$ to the multiple Skorohod integral of order two in the final formula \eqref{formula_ident}. On one hand we proved that all the previous terms in the formula converge in $L^2(\gO)$. Then if the sequence of functions
\[
F^{\gep}(\mathbf{s},\mathbf{y}):=\int_{\mathbf{s}_1\vee \mathbf{s}_2+\gep}^{t+\gep}\int_{\bT} h(s-\gep)l(y)\gp''(u^{\gep}_{s-\gep}(y))p^{0}_{s-\mathbf{s}_1}(y-\mathbf{y}_1)p^{0}_{s-\mathbf{s}_2}(y-\mathbf{y}_2) dy\, ds\,,
\]
where $\mathbf{s}=(\mathbf{s}_1,\mathbf{s}_2)$ and $\mathbf{y}=(\mathbf{y}_1,\mathbf{y}_2)$, converges in $L^2(\Omega\times[0,t]^2\times \bT^2)$ to the function
\[
F(\mathbf{s},\mathbf{y}):=\int_{\bT}\int_{\mathbf{s}_1\vee \mathbf{s}_2}^{t} h(s)l(y)\gp''(u_{s}(x))p^{0}_{s-\mathbf{s}_1}(y-\mathbf{y}_1)p^{0}_{s-\mathbf{s}_2}(y-\mathbf{y}_2) dy\, ds\,,
\]
the theorem will follow because the multiple Skorohod integral is a closed operator. From the a.s. convergence of $u^{\gep}$ in \eqref{convergence_u^eps} it is straightforward to prove that $F^{\gep}$ converges to $F$ a.s. and a.e. Then we conclude by dominated convergence by proving that $\norm{F^{\gep}}^{2}_{L^2}$, the square norm of $F^{\gep}$ in $L^{2}([0,t]^2\times \bT^2)$ is uniformly bounded in $\gep$. Using the symmetry of  $F^{\gep}$ in the variables $\mathbf{s}_1$ and $\mathbf{s}_2$ we introduce the set $\gD_{2,t}= \{0<\mathbf{s}_1<\mathbf{s}_2<t\}$ and writing the square of an integral as a double integral one has
\begin{equation}\label{simplification}
\begin{split}
&\norm{F^{\gep}}_{L^2}^{2}=2\int_{\gD_{2,t}}\int_{ \bT^2}F^{\gep}(\mathbf{s},\mathbf{y})^2d\mathbf{s}d\mathbf{y}=\\&=2 \int_{\gep}^{t+\gep}ds\,\int_{\gep}^{t+\gep}dr\,\int_{\bT}\, dy\int_{\bT} \,dx\; h(s-\gep)h(r-\gep)l(y) l(x)\gp''(u^{\gep}_{s-\gep}(x))\gp''(u^{\gep}_{r-\gep}(y))\\&\times \int_{0}^{s\wedge r}\int_{\bT} \int_{0}^{\mathbf{s}_2}\int_{\bT} p^{0}_{s-\mathbf{s}_1}(x-\mathbf{y}_1) p^{0}_{s-\mathbf{s}_2}(x-\mathbf{y}_2)p^{0}_{r-\mathbf{s}_1}(y-\mathbf{y}_1)p^{0}_{r-\mathbf{s}_2}(y-\mathbf{y}_2)  d\mathbf{s} \,d\mathbf{y}\,,
\end{split}
\end{equation}
where we adopted the shorthand notation $ d\mathbf{s}=  d\mathbf{s}_1 \,d\mathbf{s}_2 $, $d\mathbf{y}= d\mathbf{y}_1 \,d\mathbf{y}_2$ and we applied the Fubini theorem. Integrating by parts with respect to $\mathbf{y}_1$ and $\mathbf{y}_2$ and applying the semigroup property of $P$, we obtain
\[\begin{split}
&\int_{0}^{s\wedge r}\int_{\bT} \int_{0}^{\mathbf{s}_2}\int_{\bT} p^{0}_{s-\mathbf{s}_1}(x-\mathbf{y}_1) p^{0}_{s-\mathbf{s}_2}(x-\mathbf{y}_2)p^{0}_{r-\mathbf{s}_1}(y-\mathbf{y}_1)p^{0}_{r-\mathbf{s}_2}(y-\mathbf{y}_2) d\mathbf{s}\,d\mathbf{y}=\\&=\int_{0}^{s\wedge r}\int_{\bT} \int_{0}^{\mathbf{s}_2}\int_{\bT} [\partial_tP^{0}_{s-\mathbf{s}_1}](x-\mathbf{y}_1)P^{0}_{r-\mathbf{s}_1}(y-\mathbf{y}_1)P^{0}_{s-\mathbf{s}_2}(x-\mathbf{y}_2) [\partial_tP^{0}_{r-\mathbf{s}_2}](y-\mathbf{y}_2) d\mathbf{s}\,d\mathbf{y}\\&=\int_{0}^{s\wedge r} \int_{0}^{\mathbf{s}_2} \partial_tP^{0}_{s+r-2\mathbf{s}_1}(y-x)\partial_t P^{0}_{s+r-2\mathbf{s}_2}(y-x)d\mathbf{s}_1\,d\mathbf{s}_2\\&= \frac{1}{2} (P^{0}_{\abs{r-s}}(y-x))^2 - (P^{0}_{s+r}(y-x))^2
\end{split}\]
Bounding the terms involving the functions $\gp$, $l$ and $h$ with a deterministic constant and applying the rough estimate
\[(P^{0}_{\vert s- r\vert}(y-x)- P^{0}_{s+ r}(y-x))^2\leq (P^{0}_{\vert s- r\vert}(y-x))^2 +  (P^{0}_{s+ r}(y-x))^2,\]
there exists a constant $M>0$ such that for any $\gep>0$ one has
\[\norm{F^{\gep}}^{2}_{L^2}\leq  M\int_0^{T}\int_{\bT^2}(P^{0}_{s}(y-x))^2 \,ds \, dy\, dx = M\int_0^{T}C (s) \,ds <+ \infty\,.\]
Thereby obtaining the thesis. To conclude the result when $\psi $ is a generic smooth function supported on $(0,t)\times \bT$, we apply the formula \eqref{formula_ident} with a sequence of test functions $h_N\otimes l_N\colon (0,t)\times \bT\to \bR$ converging to $\psi$ as rapidly decreasing functions. This convergence is very strong and writing $\hat{\cR} \bigl(\gP''(U)(D_xU)^2\bigr)(h_N\otimes l_N)$ as the right-hand side of \eqref{reverse_equation} we can use the same argument  as before to prove
\[\hat{\cR} \bigl(\gP''(U)(D_xU)^2\bigr)(h_N\otimes l_N)\overset{L^2(\gO)}{\longrightarrow} \hat{\cR} \bigl(\gP''(U)(D_xU)^2\bigr)(\psi)\,.\]
\[- \frac{1}{2}\int_{0}^{t}\int_{\bT}(h_N(s) l_N(y))\gp''(u(s,y)) C(s)dy\overset{L^2(\gO)}{\longrightarrow}- \frac{1}{2}\int_{0}^{t}\int_{\bT}\psi(s,y)\gp''(u(s,y)) C(s)dy\,.\]
Then we can repeat the same argument above to prove that the double Skorohod integral converges to the respective quantity. When $\psi $ is a generic test function defined on $(0,+\infty)\times \bT$ we repeat the same calculations with the sequence of tests function $\gp_N \psi$ where $\gp_N $ is introduced in \eqref{indicator} and it converges a.e. to the indicator function $\1_{(0,t)\times \bT}$.
\end{proof}
\begin{remark}\label{indicator_rk}
The Proposition \ref{ident1} and the Theorem \ref{ident2} are formulated when the test function $ \psi\colon \bR\times \bT\to \bR$ is supported on positive times in order to be coherent with the statement of the Theorem \ref{Integral_Ito} and \cite[Theorem 6.2]{MartinHairer2015}. However for any generic test function $ \psi$, we can apply the identification theorems to the sequence  $ \gp_N\psi$ given in \ref{indicator} and the explicit definition of the indicator operator $ \1_{[0,t]}$ to obtain that the same result holds without any restriction on the support of the test function.
\end{remark}
\begin{remark}
The approximating procedure we used to prove this result is very different compared to the proof of \cite[Theorem 6.2]{MartinHairer2015}. In that case, the result is obtained by studying an approximating sequence inspired from the proof of Theorem \ref{reconstruction_theorem}. That is $\1_{[0,t]}\hat{\cR}(\gP'(U)\Xi)$ is  the a.s. limit in the $\cC^{-3/2-\kappa}$ topology of the smooth random fields
\begin{equation}\label{hope}
\hat{\cR}^n(\gP'(U)\Xi)(z):= \1_{[0,t]}(t)\sum_{\bar{z} \in \gL ^n([0,t])}\hPi_{\bar{z}}(\gP'(U)\Xi(\bar{z}))(\gp^n_{\bar{z}})\gp^n_{\bar{z}}(z)\,,
\end{equation}
where $\gL^n([0,T])$ denotes the dyadic grid on $[0,T]\times \bT$ of order $n$ and the functions $\gp^n_{\bar{z}}(z)$ are obtained by rescaling of a specific compactly supported function $\gp\colon \bR\times \bT\to \bR$. When we study the sequence \eqref{hope} in the $L^2(\gO)$ topology, the behaviour of this sequence is completely determined by knowing only the terms $\hPi_z  (\tau\Xi))(\gp^n_{\bar{z}})$ for $\tau \in U$. Thus we can apply the identity \eqref{stochastic integral} and conclude. However, considering the same approximations for $\1_{[0,t]}\hat{\cR}(\gP''(U)(D_xU)^2)$, we do not have the same simplification. In particular, the splitting of the heat kernel $G$ as a sum $K+R$ as explained in the Lemma \ref{KandR1} make all the calculations very indirect and it does not allow to use directly the explicit structure of $P$. A general methodology to describe the stochastic properties of the reconstruction operator for the BPHZ model is still missing.
\end{remark}
\begin{remark}\label{periodic_writing}
From the formulae \eqref{ident1_equation} and \eqref{formula_ident}, we can easily write the periodic extension of the reconstruction defined above. Indeed for any smooth function $\psi\colon \bR^2\to \bR $ with supp$(\psi) \subset (0,+\infty) \times \bR$ we have the identities
\begin{equation}\label{periodic_ext1}
\widetilde{\1_{[0,t]}\widehat{\cR}(\gP'(U)\Xi))}(\psi)= \int_0^t  \int_{\bR} \psi (s,y)\gp'(\widetilde{u}_s(y)) d\widetilde{W}_{s,y}\,,
\end{equation}
\begin{equation}\label{periodic_ext2}
\begin{gathered}
\widetilde{\1_{[0,t]}\widehat{\cR}(\gP''(U)(D_xU)^2}(\psi)=-\frac{1}{2}\int_0^t\int_{\bR} \psi(s,y)\gp''(\widetilde{u}_s(y))C(s) dy\, ds\\
+\int_{[0,t]^2\times \bR^2}\left[\int_{\mathbf{s}_2\vee \mathbf{s}_1}^t \int_{\bR} \psi(s,y)\gp''(\widetilde{u}_s(y))\partial_xG_{s-\mathbf{s}_1}( y- \mathbf{y}_1)\partial_xG_{s-\mathbf{s}_2}( y- \mathbf{y}_2) dyds \right]d\widetilde{W^2}_{\mathbf{s},\mathbf{y}}\,.
\end{gathered}
\end{equation}
And the indicator operator on the right-hand side tell us that that these identities hold also for any smooth function $\psi$ (see Remark \ref{indicator_rk}). 
\end{remark}

\subsection{Identification of the integral formula}
We pass to the identification of the terms involving the convolution with $P$. In principle this operation is deterministic and it should be obtained by applying the previous results to the deterministic test function $\psi\colon \bR\times \bT\to \bR$ given by $\psi(s,y)= P_{t-s}( x-y)$ for some $(t,x)\in [0,T]\times \bT$. However the function $\psi$ is not smooth because $\psi$ has a singularity at $(t,x)$. In order to skip this obstacle we  recall an additional property of the function  $K\colon \bR^2\setminus \{0\}\to \bR$, introduced in the Lemma \ref{KandR1} and the Lemma \ref{KandR2}.
\begin{lemma}\label{KandR3}
There exists a sequence of smooth positive function $K_{n}\colon \bR^2\to \bR $, $n\geq 0$ satisfying $\text{\emph{supp}}(K_n)=\{z=(t,x)\in \bR^2\colon  \norm{z}\leq 2^{-n},\; t>0\}$ such that for any $z\in \mathbb{R}^2\setminus\{0\}$
\begin{equation}\label{smooth_decomposition}
K(z)= \sum_{n \geq 0}K_n(z)\,.
\end{equation}
Moreover for every distribution $u\in \cC^{\ga}$ with $-2<\ga<0$ non integer one has for any $z\in \bR^2$ 
\begin{equation}\label{smooth_decomposition2}
(K*u)(z)= \sum_{n\geq 0}(K_n*u)(z)\,.
\end{equation}
\end{lemma}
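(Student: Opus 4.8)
This lemma is the parabolic version of the standard dyadic decomposition of a singular kernel used throughout the theory of regularity structures (cf.\ \cite[Lem.~5.5]{Hairer2014} and the surrounding discussion), so the plan is to exhibit the decomposition explicitly and then read off the two assertions. Recall from Lemma \ref{KandR1} that $K$ is smooth on $\bR^{2}\setminus\{0\}$, compactly supported, vanishes for $t\le 0$, equals $G$ near the origin, and hence satisfies the scaling bounds $\abs{\partial^{k}K(z)}\le C_{k}\norm{z}^{-1-\abs{k}}$ on its support, the exponent $-1$ being the parabolic order of the heat kernel $G$. First I would fix a smooth $\psi\colon\bR^{2}\to[0,1]$ with $\psi\equiv 0$ on $\{\norm{z}\le 1/2\}$ and $\psi\equiv 1$ on $\{\norm{z}\ge 1\}$, let $\psi_{n}$ be the parabolic rescaling of $\psi$ by a factor $2^{n}$ (so $\psi_{n}\ge\psi_{n-1}$ pointwise, with $\psi_{-1}:=0$, and the dilation constants chosen so that $\psi_{n}-\psi_{n-1}$ is supported in $\{\norm{z}\le 2^{-n}\}$), and set $K_{n}:=(\psi_{n}-\psi_{n-1})\,K$. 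Then each $K_{n}$ is smooth (it is supported away from the origin, where $K$ is smooth, and it vanishes for $t\le 0$), supported in $\{\norm{z}\le 2^{-n},\ t>0\}$, and — for every $n$ large enough that this region lies inside the set where $K=G>0$ — nonnegative. The telescoping identity $\sum_{n=0}^{N}K_{n}=\psi_{N}K$ together with $\psi_{N}(z)\to 1$ for each $z\ne 0$ gives $\sum_{n\ge 0}K_{n}(z)=K(z)$ on $\bR^{2}\setminus\{0\}$ with the sum locally finite there; this is \eqref{smooth_decomposition}.

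For \eqref{smooth_decomposition2} the decisive input is a scaling estimate for $K_{n}$. Writing $\lambda_{n}:=2^{-n}$, the bounds $\abs{\partial^{k}K(z)}\le C_{k}\norm{z}^{-1-\abs{k}}$ together with $\norm{z}\asymp\lambda_{n}$ on $\mathrm{supp}\,K_{n}$ let one write $K_{n}(z)=\lambda_{n}^{2}\,(\theta_{n})^{\lambda_{n}}_{0}(z)$, where $(\theta_{n})^{\lambda_{n}}_{0}$ is $\theta_{n}$ parabolically rescaled to scale $\lambda_{n}$ at the origin and the functions $\theta_{n}$ are supported in $\{1/2\le\norm{z}\le 1\}$ with all derivatives up to order $r$ (as in the definition of $\cC^{\ga}$) bounded uniformly in $n$; up to a fixed normalising constant, $\theta_{n}\in\cB_{r}$. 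Hence, for $u\in\cC^{\ga}$ with $-2<\ga<0$ non‑integer and $z$ in a fixed compact $\cK$, one has $(K_{n}*u)(z)=\langle u,K_{n}(z-\cdot)\rangle=\lambda_{n}^{2}\langle u,(\tilde\theta_{n})^{\lambda_{n}}_{z}\rangle$ with $\tilde\theta_{n}(w):=\theta_{n}(-w)$, and therefore
\[ \abs{(K_{n}*u)(z)}\le C\,\norm{u}_{\cC^{\ga}(\cK)}\,\lambda_{n}^{\,2+\ga}. \]
Since $2+\ga>0$, the series $\sum_{n}(K_{n}*u)(z)$ converges absolutely and uniformly for $z\in\cK$.

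It remains to identify this sum with $(K*u)(z)$, the latter being the element of $\cC^{\ga+2}$ furnished by Remark \ref{convolution_K_reg}. Because $K-\sum_{n=0}^{N}K_{n}=(1-\psi_{N})K=\sum_{n>N}K_{n}$ is supported in $\{\norm{z}\le 2^{-N}\}$, continuity of convolution against this compactly supported, sufficiently regular kernel yields
\[ (K*u)(z)-\sum_{n=0}^{N}(K_{n}*u)(z)=\bigl((1-\psi_{N})K*u\bigr)(z)=\sum_{n>N}(K_{n}*u)(z), \]
whose modulus is at most $C\norm{u}_{\cC^{\ga}(\cK)}\sum_{n>N}\lambda_{n}^{2+\ga}\to 0$ as $N\to\infty$. (Alternatively, one may take the right-hand side of \eqref{smooth_decomposition2} as the definition of $K*u$ and check its consistency with the mapping property $\cC^{\ga}\to\cC^{\ga+2}$ by mollifying $u$: for smooth $u$ the identity follows from \eqref{smooth_decomposition} and Fubini, and one passes to the limit using the uniform bound above.) This proves \eqref{smooth_decomposition2}.

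The only step requiring real care is the bookkeeping of two exponents: the parabolic order of the singularity of $K$ is $-1$ (equivalently, $K$ gains $\beta=2$ derivatives, by the Schauder estimate), and $\ga>-2$; their combination is exactly what makes $2+\ga$ strictly positive and the series geometrically convergent. One should also bear in mind that $K(z-\cdot)$ cannot legitimately be paired with $u$ directly, since $K$ is too irregular at the origin to lie in $\cC^{r}_{0}$, so the splitting into the $K_{n}$ is precisely the device that makes $K*u$ meaningful; the remaining verifications — the pointwise bounds on the $K_{n}$ and the routine matching of the exact support and sign statements — are standard.
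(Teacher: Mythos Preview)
Your proof is correct and follows essentially the same approach as the paper: the dyadic decomposition of $K$ into smooth pieces on parabolic annuli, the scaling bound $|(K_n*u)(z)|\lesssim 2^{-n(2+\alpha)}$, and geometric summability from $\alpha+2>0$. The paper's own proof is terser --- it simply notes that \eqref{smooth_decomposition} holds by construction of $K$ (via \cite[Ass.~5.1]{Hairer2014}) and cites \cite[Lem.~5.19]{Hairer2014} for the Cauchy property of the partial sums in $\cC^{\alpha+2}$ --- whereas you have spelled out both the construction and the key estimate explicitly.
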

\begin{proof}
The Kernel $K$ satisfies automatically the property \eqref{smooth_decomposition} by construction, as expressed in \cite[Ass. 5.1]{Hairer2014}.  Moreover for all test functions $\psi $  and $N\geq 0$ we have the identity
\begin{equation}\label{really_final_hard}
((\sum_{n= 0}^N K_n)*u)(\psi)= \sum_{n= 0}^N (K_n*u)(\psi)\,.
\end{equation}
Following \cite[Lem. 5.19]{Hairer2014}, the right-hand side sequence of \eqref{really_final_hard} is a Cauchy sequence with respect to the topology of $\cC^{\ga+2}$. Thus by uniqueness of the limit we obtain the equality 
\begin{equation}\label{final_hard_equality}
(K*u)= \sum_{n\geq  0} (K_n*u)\,,
\end{equation}
as elements of $\cC^{\ga+2}$. Since $\ga+2>0$ \eqref{final_hard_equality} becomes an equality between functions, thereby obtaining the thesis.
\end{proof}

\begin{theorem}\label{last_theorem}
Let $\gp\in C^7_b(\bR)$. Then for any $(t,x)\in [0,T]\times \bT$ one has the a.s. identities
\begin{equation}\label{almost_last_integral}
P*(\1_{[0,t]}\hat{\cR}(\gP'(U)\Xi))(t,x)= \int_0^t  \int_{\bT} P_{t-s}(x-y)\gp'(u_s(y)) dW_{s,y}\,,
\end{equation}
\begin{equation}\label{last_integral}
\begin{gathered}
P*(\1_{[0,t]}\hat{\cR}(\gP''(U)(D_xU)^2)(t,x)=-\frac{1}{2}\int_0^t\int_{\bR} \psi(s,y)\gp''(\widetilde{u}_s(y))C(s) dy\, ds\\
+\int_{[0,t]^2\times \bR^2}\left[\int_{\mathbf{s}_2\vee \mathbf{s}_1}^t \int_{\bR} \psi(s,y)\gp''(\widetilde{u}_s(y))\partial_xP_{s-\mathbf{s}_1}( y- \mathbf{y}_1)\partial_xP_{s-\mathbf{s}_2}( y- \mathbf{y}_2) dyds \right]dW^2_{\mathbf{s},\mathbf{y}}\,.
\end{gathered}
\end{equation}
\end{theorem}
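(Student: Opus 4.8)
The plan is to deduce the statement from the identification results already established, namely the pathwise formula of the Theorem~\ref{Differential_Ito} together with the Proposition~\ref{ident1} and the Theorem~\ref{ident2} (equivalently, the periodic--extension identities \eqref{periodic_ext1}, \eqref{periodic_ext2} of the Remark~\ref{periodic_writing}), by approximating the singular kernel $P_{t-s}(x-y)$ with smooth kernels. Fix $(t,x)\in[0,T]\times\bT$ and write $g$ for whichever of the two reconstructed distributions $\hat{\cR}(\gP'(U)\Xi)$ or $\hat{\cR}(\gP''(U)(D_xU)^2)$ is being considered; by the Proposition~\ref{product_function} and the Theorem~\ref{reconstruction_theorem} one has $g\in\cC^{\gb}$ for some non-integer $\gb\in(-2,0)$ (for $\kappa$ small), so that $\1_{[0,t]}g\in\cC^{\gb}(\bR\times\bT)$ is supported on positive times. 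Since $t\leq T$, the identity $(P*\1_{[0,t]}g)(t,x)=(G*\widetilde{\1_{[0,t]}g})(t,x)$ combined with the Lemma~\ref{KandR2} gives $(P*\1_{[0,t]}g)(t,x)=(K*\widetilde{\1_{[0,t]}g})(t,x)+(\bar{R}*\widetilde{\1_{[0,t]}g})(t,x)$, and the Lemma~\ref{KandR3} further yields $(K*\widetilde{\1_{[0,t]}g})(t,x)=\sum_{n\geq 0}(K_n*\widetilde{\1_{[0,t]}g})(t,x)$ because $\gb>-2$.

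The point of this decomposition is that each summand is a pairing of $\widetilde{\1_{[0,t]}g}$ against a \emph{smooth, compactly supported} function: writing $\psi_n(s,y):=K_n(t-s,x-y)$ and $\psi_{\bar R}(s,y):=\bar{R}(t-s,x-y)$, their spatial periodisations are smooth functions on $\bR\times\bT$, so $(K_n*\widetilde{\1_{[0,t]}g})(t,x)=\bigl(\1_{[0,t]}g\bigr)(\psi_n)$ and likewise for $\bar R$. Hence, by the Remark~\ref{periodic_writing} --- whose identities extend to an arbitrary smooth test function via the Remark~\ref{indicator_rk} --- each quantity $(K_n*\widetilde{\1_{[0,t]}g})(t,x)$ and $(\bar{R}*\widetilde{\1_{[0,t]}g})(t,x)$ equals the explicit right-hand side of \eqref{periodic_ext1} (when $g=\hat{\cR}(\gP'(U)\Xi)$) or of \eqref{periodic_ext2} (when $g=\hat{\cR}(\gP''(U)(D_xU)^2)$), with the test function $\psi$ replaced by $\psi_n$, respectively $\psi_{\bar R}$.

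It then remains to sum over $n$, add the $\bar R$-contribution, and recognise that $K+\bar{R}$ plays the role of $G$ once convolved against a distribution supported on positive times, so that after periodising in the space variable one recovers the torus kernel $P_{t-s}(x-y)$. For \eqref{almost_last_integral} this last step is routine: the partial sums $\sum_{n\leq N}\psi_n+\psi_{\bar R}$ converge to the corresponding kernel in $L^2([0,t]\times\bR)$ (the parabolic singularity of $G$ at the origin is square integrable), and since $\gp'$ is bounded the products with $\gp'(\widetilde{u}_s(y))$ converge in $L^2(\gO\times\bR\times\bT)$; the It\^o--Walsh isometry then passes the limit through the stochastic integral, and the periodisation \eqref{stochastic_periodic} produces $\int_0^t\int_{\bT}P_{t-s}(x-y)\gp'(u_s(y))\,dW_{s,y}$. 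For \eqref{last_integral} the same scheme is applied termwise: the deterministic $C(s)$-term converges by dominated convergence, while the double Skorohod integral requires the closedness of $\gd^2$ (recalled in the proof of the Theorem~\ref{ident2}), so that it suffices to show that the inner integrands --- obtained by substituting $\sum_{n\leq N}K_n(t-s,x-y)+\bar{R}(t-s,x-y)$ for $P_{t-s}(x-y)$ in the bracket of \eqref{periodic_ext2} --- converge in $L^2\bigl(\gO\times[0,t]^2\times\bT^2\bigr)$ to the one built from $G$. This last $L^2$-estimate is the main technical obstacle: it is carried out by re-running the computation \eqref{simplification} of the proof of the Theorem~\ref{ident2} with the tail $\sum_{n>N}K_n$ in place of one of the kernel factors, bounding $\gp''$, and invoking $\sum_{n>N}K_n\to 0$ together with $\int_0^T C(s)\,ds<\infty$ to conclude by dominated convergence. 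Granting this, the limit of the double Skorohod integrals is exactly the term appearing in \eqref{last_integral}, which completes the proof; feeding the result into the pathwise integral formula then yields the integral It\^o formula of the Theorem~\ref{Integral_Ito}.
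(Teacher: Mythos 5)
Your overall strategy is the same as the paper's: decompose the space--time heat kernel as $G = K + \bar R$ via Lemma~\ref{KandR2}, split $K=\sum_{n}K_n$ by Lemma~\ref{KandR3}, evaluate the identifications of Proposition~\ref{ident1} and Theorem~\ref{ident2} against the resulting smooth, compactly supported test functions, and then take $N\to\infty$. For the Wiener-integral term the passage to the limit via It\^o isometry and $L^2$--domination by $G_{t-s}(x-y)$ is correct, as is the dominated-convergence argument for the $C(s)$-term.

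The gap is in the limit argument for the double Skorohod integral. You assert that, by the closedness of $\delta^2$, it ``suffices to show that the inner integrands converge in $L^2\bigl(\gO\times[0,t]^2\times\bT^2\bigr)$.'' Closedness is the statement: if $F^N\to F_K$ in $L^2(\gO;H^{\otimes 2})$ \emph{and} $\delta^2(F^N)$ converges in $L^2(\gO)$, then $F_K\in\mathrm{Dom}(\delta^2)$ and $\delta^2(F_K)=\lim\delta^2(F^N)$. That worked in Theorem~\ref{ident2} because the convergence of $\delta^2(F^\gep)$ in $L^2(\gO)$ was available \emph{for free} from the a.s.\ and $L^2$ convergences of all the other terms in identity \eqref{classical_ito3}. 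Here there is no such independent control: you only know $\widetilde{\1_{[0,t]}v}(\gh_N)$ converges a.s., not in $L^2(\gO)$, so you cannot deduce $L^2(\gO)$-convergence of $A^2_N=\widetilde{\1_{[0,t]}v}(\gh_N)-A^1_N$ by subtraction. This is precisely why the paper switches from the closedness argument of Theorem~\ref{ident2} to the \emph{continuity} of $\delta^2$ on $H=\mathbb{D}^{2,2}(L^2(O_t\times O_t))$, inequality \eqref{continuity_property}, which requires proving $F^N\to F_K$ in the stronger norm $\norm{\cdot}_{H}$; i.e.\ convergence not only of $F^N$ in $L^2$ but also of the first and second Malliavin derivatives $\nabla F^N$, $\nabla^2 F^N$. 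Re-running the computation \eqref{simplification} only controls $\norm{F^N}_{L^2(O_t\times O_t)}$; the Malliavin-derivative bounds (the formulae \eqref{mall_deriv1}--\eqref{mall_deriv2} and the estimates for $\ga_2$, $\ga_3$ involving $\gG^3_{s,r}$ and $\gG^4_{s,r}$) are the main technical content of the paper's proof, and your sketch does not supply them. Without either establishing the $\mathbb{D}^{2,2}$ convergence or an independent $L^2(\gO)$ convergence of $\delta^2(F^N)$, the conclusion does not follow.
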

\begin{proof}
We will prove equivalently the identities \eqref{almost_last_integral} and \eqref{last_integral} on the periodic extension. Using Lemma \ref{KandR1},  for any periodic random distribution $v$ and $(t,x)\in [0,T]\times \bR$ we have the general identity  
\begin{equation}\label{last_lifting}
\widetilde{(P*\1_{[0,t]}v)}(t,x)=(G*\widetilde{\1_{[0,t]}v})(t,x)= (K*\widetilde{\1_{[0,t]}v})(t,x)+ (R*\widetilde{\1_{[0,t]}v})(t,x)\,.
\end{equation}
By choosing $v=\hat{\cR}(\gP''(U)(D_xU)^2) $, $\hat{\cR}(\gP'(U)\Xi)$, we can apply directly Theorem \ref{ident2} and the previous formulae \eqref{periodic_ext1}, \eqref{periodic_ext2} to the term involving the kernel $R$. Thus the theorem is proved as long as these formulae are also true for the kernel $K$. To calculate it, we apply Lemma \ref{KandR3} obtaining  for any periodic random distribution $v$ the identity
\[K*(\widetilde{\1_{[0,t]}v})(t,x)= \sum_{n\geq 0}K_n*(\widetilde{\1_{[0,t]}v})(t,x)= \lim_{N\to +\infty}\widetilde{\1_{[0,t]}v}(\gh_{N})\,,\]
where $\gh_{N}\colon \bR^2\to \bR$ is the sequence of compactly smooth functions 
\[\gh_{N}(s,y):= \sum_{n= 0}^N  K_n(t-s,x-y)\,,\]
and the convergence is a.s. By applying the Theorem \ref{ident2} to the sequence of function $\gh_{N}$, we study the convergence of the sequence $\widetilde{\1_{[0,t]}v}(\gh_{N})$ in the  $L^2(\gO)$ topology when $v$ is equal to $\hat{\cR}(\gP''(U)(D_xU)^2) $ or $\hat{\cR}(\gP'(U)\Xi)$. In case $v=\hat{\cR}(\gP'(U)\Xi)$ one has trivially
\[\widetilde{\1_{[0,t]}v}(\gh_{N})=\int_0^t  \int_{\bR}\gh_{N}(s,y)\gp'(\widetilde{u}(s,y))d\widetilde{W}_{s,y}\,.\]
Since $\gp' $ is bounded, there exists a constant $M>0$ such that for any $(s,y)\in [0,t]\times \bR$ and $N\geq 0$ one has
\[\vert \gh_{N}(s,y)\gp'(\widetilde{u}_s(y))\vert\leq M G_{t-s}(x-y)\,.\]
The function $(s,y)\to G_{t-s}(x-y) $ is $L^2$ integrable on $[0,t]\times \bR$  and $\gh_{N}(s,y)$ converges a.e. to $K$. By using the It\^o isometry and the dominated convergence theorem, we can straightforwardly prove
\[ \widetilde{\1_{[0,t]}v}(\gh_{N})\overset{L^2(\gO)}{\longrightarrow}  \int_0^t  \int_{\bR}K(t-s,x-y)\gp'(\widetilde{u}_{s}(y))d\widetilde{W}_{s,y}\,,\]
Thereby obtaining the identity \eqref{periodic_ext1} with the kernel $K$. Let us consider the case $v=\hat{\cR}(\gP''(U)(D_xU)^2)$. To shorten the notation we adopt the convention $g_{s-r}(x-y):=\partial_x G_{s-r}( x-y)$ and $O_t=[0,t]\times \bR$. Looking again at the equation \eqref{periodic_ext2} we have $\widetilde{\1_{[0,t]}v}(\gh_{N})=A_N^1+A^2_N$ where
\[
A^1_N=- \frac{1}{2}\int_{O_t} \gh_{N}(s,y)\gp''(\widetilde{u}_s(y))C(s) dy\, ds\,,
\]
\[\begin{split}
A^2_N&=\int_{O_t\times O_t}\left[\int_{\mathbf{s}_2\vee \mathbf{s}_1}^t \int_{\bR}\gh_{N}(s,y)\gp''(\widetilde{u}_s(y))g_{s-\mathbf{s}_1}( y- \mathbf{y}_1)g_{s-\mathbf{s}_2}( y- \mathbf{y}_2) dyds \right]d\widetilde{W}^{2}_{\mathbf{s},\mathbf{y}}\,.
\end{split}\]
From the definition of $\gh_N$, one has a.e. and a.s.
 \[\gh_{N}(s,y)\gp''(\widetilde{u}(s,y))C(s)\to K(t-s,x-y) \gp''(\widetilde{u}(s,y))C(s)\,.\]
Moreover there exists a constant $M>0$ such that for every $N\geq 0$
\[\abs{ A_{N}^1}\leq M\int_{O_t}G_{t-s}(x-y) C(s)dy\, ds= M\int_0^t C(s)\, ds<\infty\,\]
(the last equality comes by integrating on $\bR$ the density function of a Gaussian random variable). Therefore we obtain by dominated convergence
\[A^1_N\overset{L^2(\gO)}{\longrightarrow}-\frac{1}{2}\int_0^t\int_{\bR}K(t-s,x-y)\gp''(\widetilde{u}_s(y))C(s) dyds \,.\]
Let us pass to the convergence of the sequence $A^2_N$. Introducing the functions $\{\gP^N\}_{N\geq 0}$, $\gP_K$, $\{F^N\}_{N\geq 0}$ and $F_K$ defined by
\[\gP^{N}(s,y,\mathbf{s},\mathbf{y}):=\gh_{N}(s,y) \gp''(\widetilde{u}_{s}(y))g_{s-\mathbf{s}_1}( y- \mathbf{y}_1)g_{s-\mathbf{s}_2}(y- \mathbf{y}_2)\,,\]
\[\gP_K(s,y,\mathbf{s},\mathbf{y}):=K(t-s,x-y)\gp''(\widetilde{u}_{s}(y))g_{s-\mathbf{s}_1}( y- \mathbf{y}_1)g_{s-\mathbf{s}_2}(y- \mathbf{y}_2)\,,\]
\[F^{N}(\mathbf{s},\mathbf{y}):= \int_{\mathbf{s}_2\vee\mathbf{s}_1}^{t}\int_{\bR}\gP^{N}(s,y,\mathbf{s},\mathbf{y})ds dy\,, \quad F_K(\mathbf{s},\mathbf{y}):= \int_{\mathbf{s}_2\vee\mathbf{s}_1}^{t}\int_{\bR}\gP_K(s,y,\mathbf{s},\mathbf{y})ds dy,\]
(as usual $\mathbf{s}=(\mathbf{s}_1,\mathbf{s}_2)$ and $\mathbf{y}=(\mathbf{y}_1,\mathbf{y}_2)$ and $\mathbf{s}_1\vee \mathbf{s}_2\leq t$), we will prove the last convergence
\begin{equation}\label{last_convergence}
A^2_N\overset{L^2(\gO)}{\longrightarrow}\int_{O_t\times O_t} F_K(\mathbf{s},\mathbf{y})d\widetilde{W}^{2}_{\mathbf{s},\mathbf{y}}\,.
\end{equation}
The multiple Skorohod integral is a continuous map from $H= \mathbb{D}^{2,2}(L^2(O_t\times O_t))$ to $L^2(\gO)$ (see the definition of $\mathbb{D}^{2,2}(V)$ and the inequality \eqref{continuity_property} in the section \ref{elements}). Then
the convergence  \eqref{last_convergence} will follow by proving that $F^N$ and $F_K$ belong to $H$ and $F^N\to F_K$ in $H$. To prove these results, we calculate the first and second Malliavin derivative of $F^N$ and $F_K$ thanks to chain rule formula of the Malliavin derivative (see \cite[Proposition 1.2.3]{nualart1995malliavin}). In particular, we have 
\begin{equation}\label{mall_deriv1}
\begin{split}
\nabla_{\mathbf{t}_1\mathbf{z}_1} F^N(\mathbf{s},\mathbf{y})= \int_{\mathbf{s}_2\vee\mathbf{s}_1\vee \mathbf{t}_1}^{t}\int_{\bR}&\gh_{N}(s,y) \gp^{(3)}(\widetilde{u}_{s}(y))G_{s-\mathbf{t}_1}(y-\mathbf{z}_1)\\&\times g_{s-\mathbf{s}_1}( y- \mathbf{y}_1)g_{s-\mathbf{s}_2}(y- \mathbf{y}_2)dy ds\,,
\end{split}
\end{equation}
\begin{equation}\label{mall_deriv2}
\begin{split}
\nabla^2_{\mathbf{t}_1\mathbf{z}_1\mathbf{t}_2\mathbf{z}_2} F^N(\mathbf{s},\mathbf{y})=\int_{\mathbf{s}_2\vee\mathbf{s}_1\vee\mathbf{t}_1\vee\mathbf{t}_2}^{t}\int_{\bR}\gh_{N}(s,y) &\gp^{(4)}(\widetilde{u}_{s}(y))G_{s-\mathbf{t}_2}(y-\mathbf{z}_2)G_{s-\mathbf{t}_1}(y-\mathbf{z}_1)\\&\times g_{s-\mathbf{s}_1}( y- \mathbf{y}_1)g_{s-\mathbf{s}_2} (y-\mathbf{y}_2)dy ds\,.
\end{split}
\end{equation}
and similarly for $F_K$ by replacing $\gh_N$ with $K(t-s,x-y)$. Bounding uniformly $\gh_N$ and the kernel $K$ by $G_{t-s}(x-y)$, we have trivially $\norm{F^N}^2\leq \norm{F_G}^2$ for every $N\geq 0$ and $ \norm{F_K}^2\leq \norm{F_G}^2$ where 
\[F_G(\mathbf{s},\mathbf{y}):= \int_{\mathbf{s}_2\vee\mathbf{s}_1}^{t}\int_{\bR}\gP_G(s,y,\mathbf{s},\mathbf{y})ds dy\,,\]
\[\gP_G(s,y,\mathbf{s},\mathbf{y}):=G_{t-s}(x-y)\gp''(\widetilde{u}_{s}(y))g_{s-\mathbf{s}_1}( y- \mathbf{y}_1)g_{s-\mathbf{s}_2}(y- \mathbf{y}_2).\]
Since the Malliavin derivatives of $F_G$ are given by \eqref{mall_deriv1} and \eqref{mall_deriv2} where $\gh_N$ is replaced by $G(t-\cdot, x-\cdot)$, it is sufficient to prove the convergence that the random variables
\[\ga_1:=\int_{(O_t)^2}\left(F_G(\mathbf{s},\mathbf{y})\right)^2d\mathbf{s}\,d\mathbf{y}\,,\quad\ga_2:=\int_{(O_t)^3}(\nabla_{\mathbf{t}_1\mathbf{z}_1} F_G(\mathbf{s},\mathbf{y}))^2d\mathbf{s}\,d\mathbf{y} d\mathbf{t}_1d\mathbf{z}_1\,,\]
\[ \ga_3:=\int_{(O_t)^4}(\nabla^2_{\mathbf{t}_1\mathbf{z}_1\mathbf{t}_2\mathbf{z}_2} F_G(\mathbf{s},\mathbf{y}))^2d\mathbf{s}\,d\mathbf{y} d\mathbf{t}\,d\mathbf{z}\,,\qquad \mathbf{t}=(\mathbf{t}_1,\mathbf{t}_2)\,,\quad\mathbf{z}=(\mathbf{z}_1,\mathbf{z}_2)\,,\]
are uniformly bounded. Let us analyse them separately. Looking at the expression of $\ga_1$, it is possible to express the term $g_{s-\mathbf{s}_1}( y- \mathbf{y}_1)g_{s-\mathbf{s}_2} (y-\mathbf{y}_2)$ appearing in $F^2_G$ in the same way as in the equation \eqref{simplification}, by replathe kernels $p$ and $P$  by $g$ and $G$, thereby obtaining
\[\begin{split}
\ga_1=\int_{(O_t)^2}G_{t-r}&(x-z) G_{t-s}(x-y) \gp''(\widetilde{u}_{s}(y))\gp''(\widetilde{u}_{r}(z))\\&\times(G_{\vert s- r\vert}(y-z)- G_{s+ r}(y-z))^2 drds dydz\,.\end{split}\]
By hypothesis on $\gp$ and bounding roughly the difference of a square there exists a constant $M>0$ such that for every $N\geq 0$
\begin{equation}\label{det_integral}
\ga_1\leq M \int_{O_t\times O_t} G_{t-s}( x-y)G_{t-r}(x-z)(G_{\vert s- r\vert}(z-y))^2+ (G_{s+ r}(z-y))^2 dr ds dy dz\,.
\end{equation}
Let us show that the deterministic integral in the right-hand side of \eqref{det_integral} is finite. By definition of $G$ one has
\begin{equation}\label{square_heat}
G_{\vert s- r\vert}(z-y)^2 + G_{s+ r}(z-y)^2=  \frac{G_{\vert s- r\vert/2}(z-y)}{\sqrt{8\pi\vert s- r\vert}}+ \frac{G_{(s+ r)/2}(z-y)}{\sqrt{8\pi (s+ r)}}\,.
\end{equation}
Plugging this identity in the deterministic integral in the right-hand side of \eqref{det_integral}, we can apply the semigroup property of $G$ and  a rough estimate on the Heat kernel to show that there exist some constants  $C,\, C'>0$ such that
\[\begin{split}
&\int_{O_t\times O_t}  G_{t-s}( x-y)G_{t-r}(x-z)\left(\frac{G_{\vert s- r\vert/2}(z-y)}{\sqrt{8\pi \vert s- r\vert}}+ \frac{G_{(s+ r)/2}(z-y)}{\sqrt{8\pi (s+ r)}}\right)dr \,ds \,dy \,dz\\&=\int_{0}^t\int_{O_t}  G_{t-s}( x-y)\left(\frac{G_{t-r+\vert s- r\vert/2}(x-y)}{\sqrt{8\pi \vert s- r\vert}} +  \frac{G_{t+(s-r)/2}(x-y)}{\sqrt{8\pi (s+ r)}}\right)ds\,dr\,dy\\&\leq C \int_{0}^t\int_{O_t}  \frac{1}{\sqrt{t-s}}\left(\frac{G_{t-r+\vert s- r\vert/2}(x-y)}{\sqrt{8\pi \vert s- r\vert}} +  \frac{G_{t+(s-r)/2}(x-y)}{\sqrt{8\pi (s+ r)}}\right)ds\,dr\,dy\\&\leq C'\int_0^t \int_0^t \left( \frac{1}{\sqrt{t-s}}\frac{1}{\sqrt{ \vert s- r\vert}}  +  \frac{1}{\sqrt{t-s}}\frac{1}{\sqrt{ s+ r}}\right) ds\,dr<+\infty\,.\end{split}\]
(we are again integrating on $\bR$  the density function of a Gaussian random variable). Passing to $\ga_2$, we rewrite $(\nabla_{\mathbf{t}_1\mathbf{z}_1} F(\mathbf{s},\mathbf{y}))^2$ as a double integral and applying again the semigroup property of $G$ we have
\[\begin{split}
&\ga_2= 2\int_{(\gD_{2,t}\times \bR^2)\times O_t}(\nabla_{\mathbf{t}_1\mathbf{z}_1} F_G(\mathbf{s},\mathbf{y}))^2d\mathbf{s}\,d\mathbf{y} d\mathbf{t}_1d\mathbf{z}_1\\&=2\int_{(O_t)^2}\,G_{t-s}( x-y)G_{t-r}(x-z)\gp^{(3)}(\widetilde{u}_{s}(y))\gp^{(3)}(\widetilde{u}_{r}(z))\gG_{s,r}^3(z,y) drds dydz\,,\end{split}\]
where the function $\gG_{s,r}^3(z,y)$ is defined through the identities
\[\begin{split}
\gG_{s,r}^3(z,y)&:= \int_0^{s\wedge r}\int_0^{\mathbf{t}_1}\int_{0}^{\mathbf{s}_2}\gG_{s,r,\mathbf{s},\mathbf{t}_1}^3(z,y) d\mathbf{s}_1d\mathbf{s}_2d\mathbf{t}_1\\&+\int_0^{s\wedge r}\int_0^{\mathbf{s}_2}\int_{0}^{\mathbf{s}_1}\gG_{s,r,\mathbf{s},\mathbf{t}_1}^3(z,y) d\mathbf{t}_1d\mathbf{s}_1d\mathbf{s}_2\\& + \int_0^{s\wedge r}\int_0^{\mathbf{s}_2}\int_{0}^{\mathbf{t}_1}\gG_{s,r,\mathbf{s},\mathbf{t}_1}^3(z,y) d\mathbf{s}_1d\mathbf{t}_1d\mathbf{s}_2\,,\end{split}\]
\[ \gG_{s,r,\mathbf{s},\mathbf{t}_1}^3(z,y):=G_{s+r-2\mathbf{t}_1}(y-z) g_{s+r-2\mathbf{s}_1}( y- z)g_{s+r-2\mathbf{s}_2}(y- z)\,.\]
Let us consider the term $\gG_{s,r}^3(z,y)$. Using the elementary estimates
\begin{equation}\label{elem_estimates}
\abs{ g_{t}(y)}\leq \sup_{u\in\bR}(\frac{u}{\sqrt{4\pi}} \exp^{-u^2})\frac{1}{t}\,,\qquad \abs{ G_{t}(y)}\leq \frac{1}{\sqrt{t}}\ \,,
\end{equation} 
we can bound each term in the sum defining $\gG_{s,r}^3(z,y)$ by some integrable functions depending only on $r$ and $s$. For example, in case of the first term in the sum defining $\gG_{s,r}^3(z,y)$ there exists a constant $C>0$ such that
\[\begin{split}
&\bigg\vert\int_0^{s\wedge r}\int_0^{\mathbf{t}_1}\int_{0}^{\mathbf{s}_2}\gG_{s,r,\mathbf{s},\mathbf{t}_1}^3(z,y) d\mathbf{s}_1d\mathbf{s}_2d\mathbf{t}_1\bigg\vert \leq \\&\leq  C \int_0^{s\wedge r}\bigg[\int_0^{\mathbf{t}_1}\bigg[\int_{0}^{\mathbf{s}_2} \frac{1}{ s+r- 2\mathbf{s}_1}d\mathbf{s}_1\bigg]\frac{1}{ s+r- 2\mathbf{s}_2}d\mathbf{s}_2 \bigg]\frac{1}{\sqrt{s+r- 2\mathbf{t}_1}}\,d\mathbf{t}_1\,.\end{split}\]
Writing explicitly the integral on the right-hand side, there exists a constant $C'>0$ such that this integral is bounded by
\[\begin{split}
&C'\left[\ln(s+r)^2(\sqrt{s+r}- \sqrt{\abs{s-r}})+\ln(s+r)\int^{s+r}_{\abs{s-r}} \frac{\abs{\ln(y) }}{\sqrt{y}} dy+\int^{s+r}_{\abs{s-r}} \frac{\ln(y)^2}{\sqrt{y}} dy\right]\\&\leq C'((\ln(2T)^2\sqrt{2T})\vee 1)\left[1+\int^{s+r}_{\abs{s-r}} \frac{\abs{\ln(y) }}{\sqrt{y}} dy+\int^{s+r}_{\abs{s-r}} \frac{\ln(y)^2}{\sqrt{y}} dy\right].
\end{split}\]
Working exactly in the same way on the other integrals, it is possible to show  that there exists a constant $D_T>0$ depending on $T$ such that
\begin{equation}\label{integrable_function}
\begin{split}
&\abs{\gG_{s,r}^3(z,y)} \leq D_T \bigg(1 +\int^{s+r}_{\abs{s-r}} \frac{\ln(y)^2}{\sqrt{y}\vee 1} dy+\int^{s+r}_{\abs{s-r}} \frac{\abs{\ln(y)}}{\sqrt{y}\vee 1} dy\bigg)\,.\end{split}
\end{equation}
Let us denote the right-hand side of \eqref{integrable_function} by $ C_T(s,r)$. This function is integrable on $[0,t]^2$. By integrating on the remaining components and bounding the derivatives with some uniform constant, there exists a constant $M>0$ such that 
\[\begin{split}\ga_2&\leq M\int_{(O_t)^2} G_{t-s}( x-y)G_{t-r}(x-z)C_T(s,r)drds dydz\\&= M\int_{[0,t]^2}C_T(s,r)drds <+\infty\,.\end{split}\] 
We consider the last term $\ga_3 $. By writing $(\nabla^2_{\mathbf{t},\mathbf{z}} F(\mathbf{s},\mathbf{y}))^2$ with the same technique to express $\ga_2$, we obtain
\[\begin{split}
&\ga_3= 8\int_{(\gD_{2,t}\times \bR^2)^2}(\nabla^2_{\mathbf{t},\mathbf{z}} F_G(\mathbf{s},\mathbf{y}))^2d\mathbf{s}\,d\mathbf{y} d\mathbf{t}d\mathbf{z}\\&=8\int_{(O_t)^2}\,G_{t-s}( x-y)G_{t-r}(x-z)\gp^{(4)}(\widetilde{u}_{s}(y))\gp^{(4)}(\widetilde{u}_{r}(z))\gG_{s,r}^4(z,y)drds dydz\,.\end{split}\]
(the factor $8$ comes out because the function $(\nabla^2_{\mathbf{t},\mathbf{z}} F^N(\mathbf{s},\mathbf{y}))^2$ is symmetric under the change of coordinates $\mathbf{s}_1\to \mathbf{s}_2 $, $\mathbf{t}_1\to \mathbf{t}_2 $ and $\mathbf{s}\to \mathbf{t}$). The function $\gG_{s,r}^4(z,y)$ is defined through the new  identities
\[\begin{split}
\gG_{s,r}^4(z,y)&:= \int_0^{s\wedge r}\int_0^{\mathbf{t}_2}\int_{0}^{\mathbf{t}_1}\int_{0}^{\mathbf{s}_2}\gG_{s,r,\mathbf{s},\mathbf{t}}^4(z,y) d\mathbf{s}d\mathbf{t}\\&+ \int_0^{s\wedge r}\int_{0}^{\mathbf{t}_2}\int_0^{\mathbf{s}_2}\int_{0}^{\mathbf{s}_1}\gG_{s,r,\mathbf{s},\mathbf{t}}^4(z,y) d\mathbf{t}_1d\mathbf{s}_1d\mathbf{s}_2d\mathbf{t}_2\\&+\int_0^{s\wedge r}\int_{0}^{\mathbf{t}_2}\int_0^{\mathbf{s}_2}\int_{0}^{\mathbf{t}_1}\gG_{s,r,\mathbf{s},\mathbf{t}}^4(z,y) d\mathbf{s}_1d\mathbf{t}_1d\mathbf{s}_2d\mathbf{t}_2 \;,\end{split}\]
\[\gG_{s,r,\mathbf{s},\mathbf{t}}^4(z,y):=G_{s+r-2\mathbf{t}_1}(y-z) G_{s+r-2\mathbf{t}_2}(y-z)g_{s+r-2\mathbf{s}_1}( y- z)g_{s+r-2\mathbf{s}_2}(y- z)\,.\]
Recalling the elementary estimates in \eqref{elem_estimates}, we can similarly bound every single integral appearing in $\gG_{s,r}^4(z,y)$ in the same way implying there exists an integrable function $B_T(r,s)$ such that $\abs{\gG_{s,r}^4(z,y)} \leq B_T(s,t) $. Bounding $\gp^{(4)}$ we conclude there exists a constant $M>0$ such that
\[\begin{split}\ga_3&\leq M\int_{(O_t)^2} G_{t-s}( x-y)G_{t-r}(x-z)B_T(s,r)drds dydz\\&= M\int_{[0,t]^2}B_T(s,r)drds <+\infty\,.\end{split}\]
Thus we conclude that the random variables $\ga_1$, $\ga_2$ and $\ga_3$ are uniformly bounded and $F^N,F_K\in H$. As a matter of fact, the previous estimates have a stronger consequence because they imply that the functions $\gP^{N}(s,y,\mathbf{s},\mathbf{y}) $ and $\gP_K(s,y,\mathbf{s},\mathbf{y}) $  defined above are a.e. on $s$, $y$, $\mathbf{s}$, $\mathbf{y}$ and a.s. dominated by some integrable functions. Rewriting the norm on $H$ as follows
\begin{equation}\label{first_convergence}
\begin{split}
\norm{F^N - F_K}_H^2 &=\bE \int_{(O_t)^2}\left(F^N(\mathbf{s},\mathbf{y})-F_K(\mathbf{s},\mathbf{y})\right)^2d\mathbf{s}d\mathbf{y}\\&+\bE\int_{(O_t)^3}(\nabla_{\mathbf{t}_1\mathbf{z}_1} F^N(\mathbf{s},\mathbf{y})-\nabla_{\mathbf{t}_1\mathbf{z}_1} F_K(\mathbf{s},\mathbf{y}))^2d\mathbf{s}\,d\mathbf{y} d\mathbf{t}_1d\mathbf{z}_1\\&+\bE\int_{(O_t)^4}(\nabla^2_{\mathbf{t},\mathbf{z}} F^N(\mathbf{s},\mathbf{y})-\nabla^2_{\mathbf{t},\mathbf{z}} F_K(\mathbf{s},\mathbf{y}))^2d\mathbf{s}\,d\mathbf{y} d\mathbf{t}d\mathbf{z} \,,
 \end{split}
 \end{equation}
we obtain $\norm{F^N - F_K}_H^2\to 0 $  by dominated convergence because we have trivially the a.e. a.s. the convergence of the functions
\[\gP^{N}(s,y,\mathbf{s},\mathbf{y}) \to \gP_K(s,y,\mathbf{s},\mathbf{y})\,, \quad \nabla_{\mathbf{t}_1\mathbf{z}_1} F^N(\mathbf{s},\mathbf{y})\to \nabla_{\mathbf{t}_1\mathbf{z}_1} F_K(\mathbf{s},\mathbf{y})\,,\]
\[\nabla^2_{\mathbf{t},\mathbf{z}} F^N(\mathbf{s},\mathbf{y})\to\nabla^2_{\mathbf{t},\mathbf{z}} F_K(\mathbf{s},\mathbf{y})\,,\]
Thereby proving the theorem.
\end{proof}
\begin{proof}[Proof of the Theorem \ref{Integral_Ito}]
For any $\gp\in C^7_b(\bR)$ the differential and the integral formula are obtained applying straightforwardly the previous results. Looking at their proofs, we realise that the Skorohod and the Wiener integrals and their convolution with $P$, differently from the reconstructions, are well defined if the derivatives of $\gp$ are bounded up to the order $4$. Thus for any fixed $ \gp\in C^{4}_b(\bR)$  we can write the  differential and the integral It\^o formula on  $\gp_{\delta}$, a  sequence $\{\gp_{\delta}\}_{\delta>0}$ of smooth functions with all bounded derivatives converging to $\gp$.  Using the same calculations of Theorem \ref{last_theorem}, we can prove that the terms involving $\gp_{\delta}$ converges in $L^2(\gO)$ to the same terms involving $\gp$. Thereby obtaining the proof.
\end{proof}
\begin{remark}
Using the integrability of the random field $u$ in \eqref{integrability_u} and looking carefully at the proof of the identity \eqref{last_integral}, we could lower down slightly the hypothesis on $\gp$ in the Theorem \ref{Integral_Ito}, supposing that $\gp$ has only the second, the third and the fourth derivative bounded. Indeed the function $\gp'(u)$ will have linear growth and the right-hand side of \eqref{almost_last_integral} will be always well defined. In this way, the same argument given in the proof above should provide the Theorem \ref{Integral_Ito} even in this case. These slight modifications should allow us to obtain a differential and an integral formula even for the random field $u^2$, giving an interesting decomposition of this random field. 
\end{remark}
\section{Renormalisation constants}
We calculate the asymptotic behaviour of the renormalisations constants defined in \eqref{C_1}, \eqref{C_2}. A preliminary result to analyse them lies on a remarkable identity on $G$, the heat kernel on $\bR$, interpreted as a function $G\colon \bR^2\setminus \{0\}\to \bR$.
\begin{lemma}
For any $z\in \bR^2\setminus \{0\}$ one has
\begin{equation}\label{heat}
2\int_{\bR^2} G_x(z-\z)G_x(-\z)d\z=G(z)+ G(-z)
\end{equation}
\end{lemma}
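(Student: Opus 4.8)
\textit{Proof plan.}
The plan is to evaluate the left-hand side as an honest, absolutely convergent Lebesgue integral and collapse it to a one-dimensional integral in time which telescopes via the heat equation. After the harmless substitution $\z\mapsto-\z$ the quantity becomes $2\int_{\bR^2}G_x(\z)\,G_x(\z+z)\,d\z$, a cross-correlation of $G_x:=\partial_x G$ with itself. Writing $z=(t,x)$ and $\z=(s,y)$ and recalling that $G(s,\cdot)$ is supported on $s>0$, is even in its space variable, and that $\partial_x G(s,\cdot)$ is a Gaussian times a polynomial, the integrand is supported on $\{s>\max(0,-t)\}$ and, for $z\neq 0$, is absolutely integrable by elementary Gaussian bounds, so Fubini applies.

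First I would carry out the spatial ($y$-)integration. Since $G_x(s+t,y+x)=\partial_y\bigl[G_{s+t}(y+x)\bigr]$, an integration by parts (the Gaussian tails annihilate the boundary terms) followed by the semigroup property $G_s*G_{s'}=G_{s+s'}$ (spatial convolution) and evenness of $G_s$ gives
\[
\int_{\bR}G_x(s,y)\,G_x(s+t,y+x)\,dy=-\,\partial_{xx}G_{2s+t}(x)\,.
\]
Substituting $u=2s+t$ in the remaining $s$-integral, and noting that the support constraint $s>\max(0,-t)$ translates into $u>|t|$ in both cases $t\geq 0$ and $t<0$, the left-hand side becomes $-\tfrac12\int_{|t|}^{\infty}\partial_{xx}G_u(x)\,du$. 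Now the heat equation $\partial_{xx}G_u(x)=\partial_u G_u(x)$ turns the integrand into a total $u$-derivative, and since $G_u(x)\to 0$ as $u\to\infty$ one gets $-\tfrac12\bigl(0-G_{|t|}(x)\bigr)=\tfrac12 G_{|t|}(x)$. Finally, for $z=(t,x)\neq 0$ one reads off $G(z)+G(-z)=G_t(x)\1_{\{t>0\}}+G_{-t}(-x)\1_{\{t<0\}}=G_{|t|}(x)$, using evenness in space for the $t<0$ piece; this is exactly twice what we computed, which closes the identity.

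I expect the only points requiring care to be the bookkeeping of the support constraints under the change of variables $u=2s+t$ (checking that the lower endpoint is $|t|$ whether $t$ is positive or negative) and the verification of absolute convergence so that Fubini and the integration by parts are legitimate for $z\neq 0$; in particular in the degenerate case $t=0$, $x\neq 0$ the crude Cauchy--Schwarz bound $\norm{G_x(s,\cdot)}_{L^2}^2\lesssim s^{-3/2}$ fails to be integrable at $s=0$, and one instead uses that the two factors have disjoint Gaussian concentration, so the product decays exponentially as $s\to 0^+$. As an independent check one may note that on the Fourier side $\widehat{G}(\tau,\xi)=(i\tau+\xi^2)^{-1}$, so the left-hand side has Fourier transform $\abs{\widehat{G_x}}^2=\xi^2/(\tau^2+\xi^4)=\operatorname{Re}\widehat{G}$, which is precisely the Fourier transform of $\tfrac12\bigl(G(z)+G(-z)\bigr)$.
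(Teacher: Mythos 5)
Your proof is correct, but it proceeds by a genuinely different route than the paper's. The paper verifies the identity purely on the Fourier side: writing the left-hand side as $2\,G_x*\overline{G}_x$, it computes $\widehat{G}(\xi)=(2\pi i\xi_1+4\pi^2\xi_2^2)^{-1}$, checks that both sides have Fourier transform $8\pi^2\xi_2^2/(4\pi^2\xi_1^2+(4\pi^2\xi_2^2)^2)$, and invokes uniqueness of the Fourier transform; the whole argument is three lines and sidesteps all questions of where the integrand is supported. Your proof works in physical space: integrate by parts to move one spatial derivative across, use the semigroup convolution law $G_s*G_{s'}=G_{s+s'}$ and evenness to reduce the spatial integral to $-\partial_{xx}G_{2s+t}(x)$, change variables to $u=2s+t$ so the support condition becomes $u>|t|$, and let the heat equation $\partial_{xx}G_u=\partial_u G_u$ telescope the remaining time integral to a boundary term $G_{|t|}(x)=G(z)+G(-z)$. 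What your route buys is transparency: it shows the identity is really a consequence of the semigroup property together with the fact that the heat kernel solves the heat equation, rather than an algebraic coincidence of rational functions of $(\xi_1,\xi_2)$. What it costs is care with absolute convergence and with the endpoint of the $u$-integral, which you handle; the paper's Fourier argument avoids both issues at the price of being less explanatory. One small presentational wobble: when you write ``the left-hand side becomes $-\tfrac12\int_{|t|}^{\infty}\partial_{xx}G_u(x)\,du$'' you have dropped the prefactor $2$ from the statement, so what you compute at that stage is $\int_{\bR^2}G_x(\z)G_x(\z+z)\,d\z=\tfrac12 G_{|t|}(x)$ rather than the left-hand side of the lemma; the phrase ``exactly twice what we computed'' then silently reinstates the missing $2$. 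This is internally consistent once decoded, but it would be cleaner to carry the factor $2$ through and land directly on $G_{|t|}(x)=G(z)+G(-z)$. Your closing Fourier remark (with the convention $\widehat{G}(\tau,\xi)=(i\tau+\xi^2)^{-1}$, so that $|\widehat{G_x}|^2=\operatorname{Re}\widehat{G}$) is essentially the paper's proof and is a good sanity check.
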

\begin{proof}
We verify this identity by calculating the space-time Fourier transform
\[ f\to \widehat{f}(\xi)=  \int_{\bR^2}e^{-2\pi i (\xi\cdot z)}f(z)dz\]
of both sides. In order to do that, we recall the elementary identity
\[\widehat{G}(\xi)= \frac{1}{2\pi i\xi_1+ 4\pi^2 \xi_2^2}\,.\]
Using the notation $\overline{u}(z)= u(-z)$, for any function $u\colon \bR^2\to \bR$, we rewrite the left-hand side of (\ref{heat}) as $2G_x*\overline{G}_x(z)$. Applying the Fourier transform, we then obtain
\[\begin{split}2\widehat{G_x*\overline{G}_x}(\xi)&= 2\widehat{G_x}(\xi)\overline{\widehat{G_x}}(\xi)=2(2\pi i\xi_2\widehat{G}(\xi))(-2\pi i \xi_2\widehat{G}(-\xi))= \frac{8\pi^2\xi_2^2}{4\pi^2 \xi_1^2+ (4\pi^2 \xi_2^2)^2}\,. \end{split}\]
On the other hand, the same operation on the right-hand side of (\ref{heat}) implies
\[\widehat{G}(\xi)+ \widehat{G}(-\xi)= \frac{8\pi^2 \xi_2^2}{(2\pi i\xi_1+ 4\pi^2 \xi_2^2)(-2\pi i\xi_1+ 4\pi^2 \xi_2^2)}=\frac{8\pi^2\xi_2^2}{4\pi^2 \xi_1^2+ (4\pi^2 \xi_2^2)^2}\,.\]
By uniqueness of the Fourier transform, we conclude.
\end{proof}
\begin{theorem}\label{constants}
Let $C_{\gep}^{1} $, $C^{2}_{\gep}$ be the constants introduced in (\ref{C_1}), (\ref{C_2}). Using the convention $\gr^{*2}= \gr*\gr$, one has following estimates as $\gep \to 0^+$
\begin{align}
C_{\gep}^{1}&= \frac{1}{\gep} \int_{\bR^2}G(s,y) \gr^{*2}(s,y)ds dy+ o(1)\,,\\ C^{2}_{\gep}&= \frac{1}{\gep} \int_{\bR^2} (G_x*\gr)^2(s,y)ds dy+ o(1)\,,\\ C_{\gep}^{1}&= C^{2}_{\gep}+ o(1)\,.
\end{align}
\end{theorem}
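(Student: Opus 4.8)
The plan is to reduce everything to parabolic--scaling computations, exploiting that $\gr_{\gep}$ concentrates at the origin at parabolic scale $\gep$ while the truncated kernel $K$ coincides with $G$ on a fixed neighbourhood of the origin, so that for $\gep$ small a convolution against $\gr_{\gep}$ cannot tell $K$ from $G$. I would record at the start the homogeneities $G(\gl^{2}t,\gl x)=\gl^{-1}G(t,x)$, $(\partial_x G)(\gl^{2}t,\gl x)=\gl^{-2}(\partial_x G)(t,x)$ and the identity $\gr_{\gep}*\gr_{\gep}=(\gr^{*2})_{\gep}$, together with the elementary consequences $(G*\gr_{\gep})(\gep^{2}s,\gep y)=\gep^{-1}(G*\gr)(s,y)$ and $(\partial_xG*\gr_{\gep})(\gep^{2}s,\gep y)=\gep^{-2}(\partial_xG*\gr)(s,y)$. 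Since $\gr$ is even I would also rewrite, by Fubini and a shift in the defining integrals, $C^{1}_{\gep}=\int_{\bR^2}K(u)\,(\gr^{*2})_{\gep}(u)\,du$ and $C^{2}_{\gep}=\int_{\bR^2}\Phi(u)\,(\gr^{*2})_{\gep}(u)\,du$, where $\Phi(u):=\int_{\bR^2}\partial_xK(v)\,\partial_xK(v+u)\,dv$ (finite for $u\neq 0$).

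For $C^{1}_{\gep}$ this is almost immediate: for $\gep$ small $K=G$ on the support of $(\gr^{*2})_{\gep}$, so $C^{1}_{\gep}=\int_{\bR^2}G(u)(\gr^{*2})_{\gep}(u)\,du$, and the change of variables $u=(\gep^{2}s,\gep y)$ with the homogeneity of $G$ gives $C^{1}_{\gep}=\gep^{-1}\int_{\bR^2}G\,\gr^{*2}\,ds\,dy$, the $o(1)$ only absorbing finitely many large $\gep$; the integral converges because $G$ has an integrable $\|z\|^{-1}$--type singularity in the parabolic dimension $3$ and $\gr^{*2}$ is smooth and compactly supported. For $C^{2}_{\gep}$ I would first observe that near the origin $K=G$, so $\Phi$ coincides, up to a function that is \emph{smooth} near $0$, with $\Phi_G(u):=\int_{\bR^2} \partial_xG(v)\partial_xG(v+u)\,dv$, and that the preceding lemma (identity \eqref{heat}), after the substitution $v=w-\zeta$, says exactly $\Phi_G(u)=\tfrac12\bigl(G(u)+G(-u)\bigr)$. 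Since $(\gr^{*2})_{\gep}$ is even this yields $\int\Phi_G(u)(\gr^{*2})_{\gep}(u)\,du=\int G(u)(\gr^{*2})_{\gep}(u)\,du=\gep^{-1}\int_{\bR^2} G\,\gr^{*2}$. The same computation, integrating $\int_{\bR^2}\partial_xG(w-\zeta_1)\partial_xG(w-\zeta_2)\,dw=\tfrac12(G(\zeta_1-\zeta_2)+G(\zeta_2-\zeta_1))$ against $\gr(\zeta_1)\gr(\zeta_2)$ and using that $\gr$ and $\gr^{*2}$ are even, gives the purely deterministic identity $\int_{\bR^2}(\partial_xG*\gr)^{2}=\int_{\bR^2}G\,\gr^{*2}$, so $\gep^{-1}\int G\,\gr^{*2}$ is indeed the announced coefficient $\gep^{-1}\int(\partial_xG*\gr)^{2}$.

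It then remains to control the contribution of the smooth correction $\Phi-\Phi_G$ (equivalently, of the pieces of $C^{2}_{\gep}$ built from $R=G-K$, which vanishes on a neighbourhood of the origin): one must show $\int_{\bR^2}(\Phi-\Phi_G)(u)(\gr^{*2})_{\gep}(u)\,du=o(1)$, and this is where pure scaling no longer suffices — here I would exploit the compact support and the cancellation property \eqref{pol_0} of $K$ together with the vanishing odd moments of $\gr$. I expect this remainder estimate to be the main obstacle; everything else is bookkeeping with the scaling identities. Granting it, the first two asymptotics follow, and the third, $C^{1}_{\gep}=C^{2}_{\gep}+o(1)$, is then a corollary: the leading coefficients of $C^{1}_{\gep}$ and $C^{2}_{\gep}$ agree by the identity $\int(\partial_xG*\gr)^{2}=\int G\,\gr^{*2}$ just proved, so subtracting cancels the divergent $\gep^{-1}$ terms and leaves only the $o(1)$ remainders; equivalently, $C^{1}_{\gep}-C^{2}_{\gep}=\int_{\bR^2}(K-\Phi)(u)(\gr^{*2})_{\gep}(u)\,du$, whose odd--in--$u$ part integrates to zero against the even mollifier while the even part is $o(1)$ by the same remainder analysis.
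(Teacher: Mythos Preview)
Your treatment of $C^{1}_{\gep}$ and of the identity $\int G\,\gr^{*2}=\int(G_x*\gr)^{2}$ is essentially the paper's argument, only phrased slightly more sharply: since $R=G-K$ vanishes identically on a neighbourhood of the origin, for $\gep$ small the support of $(\gr^{*2})_{\gep}$ lies where $K=G$, and the parabolic scaling of $G$ gives the $\gep^{-1}$ coefficient exactly. The paper arrives at the same formula $C^{1}_{\gep}=\gep^{-1}\int S^{1}_{\gep}(K)\,\gr^{*2}$ and then passes to the limit by dominated convergence; the two are equivalent.

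For $C^{2}_{\gep}$ your route diverges from the paper's. You package the computation into the autocorrelation $\Phi(u)=\int K_x(v)K_x(v+u)\,dv$ and compare it to $\Phi_G(u)=\tfrac12(G(u)+G(-u))$. The paper never introduces $\Phi$: it keeps everything in scaled variables, writing $C^{2}_{\gep}=\gep^{-1}\int(S^{2}_{\gep}(K_x)*\gr)^{2}$, and handles the remainder by expanding $(S^{2}_{\gep}(G_x)*\gr)^{2}-(S^{2}_{\gep}(K_x)*\gr)^{2}$ as a sum of a cross term and a square term in $S^{2}_{\gep}(R_x)=\gep\,S^{1}_{\gep}(R_x)$, then bounding the cross term by Cauchy--Schwarz and arguing that $(S^{1}_{\gep}(R_x)*\gr)(z)=(S^{0}_{\gep}(R)*\gr_x)(z)\to 0$ pointwise (because $R$ vanishes near the origin) together with a domination to get $\int(S^{1}_{\gep}(R_x)*\gr)^{2}\to 0$.

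Here is the genuine gap in your proposal. Once you observe that $\Phi-\Phi_G$ is continuous at the origin, you have
\[
\int(\Phi-\Phi_G)(u)\,(\gr^{*2})_{\gep}(u)\,du\;\longrightarrow\;(\Phi-\Phi_G)(0)\,,
\]
so you need $(\Phi-\Phi_G)(0)=0$. The tools you propose---the polynomial annihilation property \eqref{pol_0} of $K$ and the vanishing odd moments of $\gr$---cannot deliver this: moment conditions on $\gr^{*2}$ only kill \emph{higher}-order Taylor coefficients of $\Phi-\Phi_G$, never the constant term, since $\int(\gr^{*2})_{\gep}=1$. And the polynomial cancellations of $K$ concern $\int K\cdot Q$, which is a completely different object from $(\Phi-\Phi_G)(0)=-2\int G_xR_x+\int R_x^{2}$. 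The latter is an explicit number built from the decomposition $G=K+R$ and a priori depends on how the cut-off is chosen; nothing in your toolkit forces it to vanish. The paper sidesteps this by never collapsing to a value at $u=0$: it stays at the level of the $\gep$-dependent integrals and exploits the fact that after rescaling the $K$-contribution lives near the origin while the $R$-contribution is pushed to infinity, so the interaction is controlled by the pointwise vanishing $S^{0}_{\gep}(R)\to 0$ rather than by moment cancellations. If you want to rescue your autocorrelation approach you would have to prove $(\Phi-\Phi_G)(0)=0$ directly, and that is a separate (and delicate) computation, not a consequence of the cancellation properties you cite.
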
 
\begin{proof}
All the integrals we consider in the proof will be taken on the whole space $\bR^2$. We will not write it explicitly to simplify the notation. For any function $F\colon \bR^2\setminus \{0\} \to \bR$, any integer $m$ and $\gep>0$, we introduce the function $S^m_{\gep}(F)\colon \bR^2\setminus \{0\} \to \bR$ given by
\[S^m_{\gep}(F)(t,x):= \gep^m F(\gep^2 t,\gep x)\,.\]
Using the definition of $C_{\gep}^{1}$, together with the hypothesis $\gr(-z)=\gr(z)$ one has
\[C_{\gep}^{1}=\int\int K(w)\gr_{\gep}(z) \gr_{\gep}(z-w)dw dz \]
\[=\int K(w)\int\gr_{\gep}(z) \gr_{\gep}(w-z)dz dw =\int K(w)(\gr_{\gep})^{*2}(w)dw. \]
A simple change of variable tells us that $(\gr_{\gep})^{*2}(w)= (\gr^{*2})_{\gep}(w)$. Therefore we deduce that
\[C_{\gep}^{1}=\int K(t,x) \gep^{-3}\gr^{*2}\left(\frac{t}{\gep^2},\frac{x}{\gep}\right)dt dx = \int K(\gep^2 t,\gep x) \gr^{*2}(t,x)dt dx\]
\[= \frac{1}{\gep} \int  S^1_{\gep}(K)(t,x) \gr^{*2}(t,x)dt dx\,.\]
Since $S^1_{\gep}(K) $ is equal to $S^1_{\gep}(G)$ as $\gep \to 0^+$ and $G$ satisfies $S^1_{\gep}(G)=G $, one has 
\[S^1_{\gep}(K)(t,x) \gr^{*2}(t,x)\rightarrow G(t,x) \gr^{*2}(t,x) \quad \text{a.e.}\]
Moreover, it is straightforward to show that the function $G\gr^{*2}$ is integrable and it dominates $S^1_{\gep}(K)\gr^{*2}$, therefore we obtain 
\[\int S^1_{\gep}(K)(t,x)  \gr^{*2}(t,x)dt dx\rightarrow \int G(t,x) \gr^{*2}(t,x)dt dx\,,\]
by dominated convergence. We recover the identity (A.2), by using the decomposition $G=K+R$, as explained in the Lemma \ref{KandR1}. Writing again $S^1_{\gep}(G)=G $, we obtain
\[\frac{1}{\gep}\int \left[G(t,x)- S^1_{\gep}(K)(t,x) \right]\gr^{*2}(t,x)dt dx=\int S^0_{\gep}(R)(t,x)\gr^{*2}(t,x)dtdx\,. \]
The function $\gr^{*2}$ is compactly supported and $R$ is bounded. Thus  we can apply the dominated convergence theorem to obtain
\[\int S^0_{\gep}(R)(t,x)\gr^{*2}(t,x)dtdx\rightarrow R(0,0) \int \gr^{*2}(t,x)dtdx =0\,.\]
The last equality holds because the function $R$ satisfies trivially $R(0,0)=0$. Passing to the identity (A.3), we rewrite $C^2_{\gep}$ as
\[C^2_{\gep}=\int_{\bR^2} (K_x*\gr_{\gep})^2(z)dz = \frac{1}{\gep} \int \gep(K_x*\gr_{\gep})^2(z)dz\,. \]
Let express $\int\gep(K_x*\gr_{\gep})^2(z)dz $ in terms of  $S^2_{\gep}(K_x)$ and of $\gr$. By applying a standard change of variable in the integrals we get
\[(K_x*\gr_{\gep})(\gep^2 t,\gep x)= \int K_x( \gep^2 t-\gep^2s,\gep x-\gep y)\gr(s,y)ds dy = (S_{\gep}^0 (K_x)*\gr)( t, x)\,. \]
Therefore for any $z=(t,x)$ we write
\[(K_x*\gr_{\gep})(t,x)= (S^0_{\gep}( K_x)*\gr)\left(\frac{t}{\gep^2}, \frac{x}{\gep}\right)\,.\]
Integrating this identity on both sides and multiplying it with $\gep$, we obtain
\[\int \gep(K_x*\gr_{\gep})^2(z)dz= \gep^4\int (S_{\gep}^0 (K_x)*\gr)^2(t, x)dt dx= \int (S^2_{\gep} (K_x)*\gr)^2(t,x)dt dx \,.\]
By sending $\gep\to 0^+$ the function $S^2_{\gep} (K_x)$ becomes equal to $S^2_{\gep}(G_x)$ and, using the scaling relation $S^2_{\gep}(G_x)= G_x$, for a.e. couple of points $(t,x)$, $(s,y)$ one has 
\[S^2_{\gep} (K_x)(t-s,x-y)\gr(s,y)\rightarrow G_x(t-s,x-y)\gr(s,y)\,.\]
The function $G_x(t-s,x-y)\gr(s,y)$ is clearly integrable in both variables $(t,x)$ $(s,y)$. Therefore as a consequence of Fubini's theorem we get the convergence
\[(S^2_{\gep} (K_x)*\gr)(t,x)\rightarrow (G_x*\gr)(t,x)\quad \text{a.e.}\]
which implies trivially
\[(S^2_{\gep} (K_x)*\gr)^2(t,x)\rightarrow (G_x*\gr)^2(t,x)\quad \text{a.e.}\]
Since the function  $(G_x*\gr)^2$ is also integrable and it dominates $(S^2_{\gep} (K_x)*\gr)^2 $, we have by dominated convergence
\[ \int (S^2_{\gep} (K_x)*\gr)^2(t,x)dt dx \rightarrow \int (G_x*\gr)^2(z)dz\, .\]
Let us prove the infinitesimal behaviour of the remainder. Writing again the decomposition $G= K+R$ as explained in the Lemma \ref{KandR1}, we deduce from the Cauchy-Schwarz inequality the estimate
\[
\begin{split}
&\frac{1}{\gep}\int (G_x*\gr)^2(z)dz- \frac{1}{\gep}\int \gep(K_x*\gr_{\gep})^2(z)dz=\\&=\frac{1}{\gep}\int (S^2_{\gep}(G_x)*\gr)^2(z)-(S^2_{\gep} (K_x)*\gr)^2(z)dz\\&= \frac{1}{\gep}\int 2(S^2_{\gep}(K_x)*\gr)(z)(S^2_{\gep}(R_x)*\gr)(z)dz + \frac{1}{\gep}\int (S^2_{\gep}(R_x)*\gr)^2(z)dz \\&=  \int 2(S^2_{\gep}(K_x)*\gr)(z)(S^1_{\gep}(R_x)*\gr)(z)dz+ \gep\int (S^1_{\gep}(R_x)*\gr)^2(z)dz\\& \leq2\left(\int (S^2_{\gep}(K_x)*\gr)^2(z)dz \right)^{1/2}\left(\int (S^1_{\gep}(R_x)*\gr)^2(z)dz\right)^{1/2}+ \gep\int (S^1_{\gep}(R_x)*\gr)^2(z)dz\,.
\end{split}
\]
Using the identity $S^1_{\gep}(R_x)*\gr= S^0_{\gep}(R)*\gr_x$ and the properties of the function $R$ stated above, it is straightforward to show that
\[(S^1_{\gep}(R_x)*\gr)(z)\to 0 \quad \text{a.e.}\]
Moreover we can bound $(S^0_{\gep}(R)*\gr_x)^2$ by an integrable function. Thus we obtain
\[\int (S^1_{\gep}(R_x)*\gr)^2(z)dz\to 0\,,\]
and the identity (A.3) follows. To finally prove the identity (A.4), it is sufficient to show 
\[\int G(s,y) \gr^{*2}(s,y)ds dy=\int (G_x*\gr)^2(s,y)ds dy.\]
Starting from the identity (\ref{heat}), we convolve both sides of the equation with the function $\gr^{*2}= \gr*\gr$. Therefore for any $u\in \bR^2$ the left-hand side of (\ref{heat}) becomes
\[2(G_x*\overline{G_x})*\gr^{*2}(u)= \int \int \int 2G_x(u-v-w)G_x(-w)\gr(v-x)\gr(x) dx dvdw\,.\]
Choosing the following change of variable
\[\begin{cases}x'= x\\v'= v-x \\ w'= w+x\end{cases}\quad \begin{cases} x= x'\\v=v' +x'\\ w= w'-x'\end{cases}\quad  dx dv dw= dx' dv'dw' \,,\]
the integral becomes
\[\int \int \int 2G_x(u-v'-w')G_x(-w'+x')\gr(v')\gr(x') dx' dv'dw'\,.\]
Using the identity $\gr(x')= \gr(-x')$, the above integral equals
\[2\int \int \int G_x(u-v'-w')G_x(-w'-x')\gr(v')\gr(x') dx' dv'dw'=\]
\[=2\int(G_x*\gr)(u-w')(G_x*\gr)(-w')dw'.\]
On the other hand, the right-hand side of (\ref{heat}) convolved with $\gr^{*2} $ gives
\[\int G(u-w) \gr^{*2}(w)dw+ \int G(w-u) \gr^{*2}(w)dw= \]
\[\int G(u-w) \gr^{*2}(w)dw+ \int G(-u-w) \gr^{*2}(w)dw=(G*\gr^{*2})(u)+ (G*\gr^{*2})(-u)\,.\]
Evaluating both sides in $u=0$, we finally conclude.
\end{proof}

\bibliographystyle{abbrv}
\bibliography{bibliography}

\begin{thebibliography}{10}

\bibitem{fritz_bayer17}
C.~Bayer, P.~K. Friz, P.~Gassiat, J.~Martin, and B.~Stemper.
\newblock A regularity structure for rough volatility.
\newblock {\em Mathematical Finance}, pages 1--51, 2019.

\bibitem{Bruned2015}
Y.~Bruned.
\newblock {\em Singular {K}{P}{Z} Type Equations}.
\newblock Theses, Universit{\'e} Pierre et Marie Curie - Paris VI, 2015.

\bibitem{Bruned2018}
Y.~Bruned.
\newblock Recursive formulae in regularity structures.
\newblock {\em Stochastic Partial Differential Equations. Analysis and
  Computations}, 6(4):525--564, 2018.

\bibitem{Bruned_final17}
Y.~{Bruned}, A.~{Chandra}, I.~{Chevyrev}, and M.~{Hairer}.
\newblock Renormalising {S}{P}{D}{E}s in regularity structures.
\newblock {\em arXiv e-prints}, pages 1--85, Nov 2017.

\bibitem{brunedSHE2019}
Y.~{Bruned}, F.~{Gabriel}, M.~{Hairer}, and L.~{Zambotti}.
\newblock Geometric stochastic heat equations.
\newblock {\em arXiv e-prints}, pages 1--83, Feb 2019.

\bibitem{Bruned2019}
Y.~{Bruned}, M.~{Hairer}, and L.~{Zambotti}.
\newblock Algebraic renormalisation of regularity structures.
\newblock {\em Inventiones mathematicae}, 215(3):1039--1156, 2019.

\bibitem{Cairoli1975}
R.~Cairoli and J.~B. Walsh.
\newblock Stochastic integrals in the plane.
\newblock {\em Acta Mathematica}, 134:111--183, 1975.

\bibitem{chandra_analytic16}
A.~{Chandra} and M.~{Hairer}.
\newblock An analytic {B}{P}{H}{Z} theorem for regularity structures.
\newblock {\em arXiv e-prints}, pages 1--129, Dec 2016.

\bibitem{daprato2014}
G.~Da~Prato and J.~Zabczyk.
\newblock {\em Stochastic {E}quations in {I}nfinite {D}imensions}.
\newblock Encyclopedia of Mathematics and its Applications. Cambridge
  University Press, 2 edition, 2014.

\bibitem{flandoli2002}
F.~Flandoli and F.~Russo.
\newblock Generalized {I}ntegration and {S}tochastic {O}{D}{E}s.
\newblock {\em Annals of Probability}, 30(1):270--292, 2002.

\bibitem{funaki1983}
T.~Funaki.
\newblock Random motion of strings and related stochastic evolution equations.
\newblock {\em Nagoya Mathematical Journal}, 89:129--193, 1983.

\bibitem{geman1980}
D.~Geman and J.~Horowitz.
\newblock Occupation {D}ensities.
\newblock {\em Annals of Probability}, 8(1):1--67, 02 1980.

\bibitem{Gerencser2019}
M.~Gerencs{\'e}r and M.~Hairer.
\newblock Singular {S}{P}{D}{E}s in domains with boundaries.
\newblock {\em Probability Theory and Related Fields}, 173(3):697--758, Apr
  2019.

\bibitem{gradinaru05}
M.~Gradinaru, I.~Nourdin, and S.~Tindel.
\newblock {Ito's- and Tanaka's-type formulae for the stochastic heat equation:
  {T}he linear case}.
\newblock {\em {Journal of Functional Analysis}}, 228:114--143, 2005.

\bibitem{Hairer2014}
M.~Hairer.
\newblock A theory of regularity structures.
\newblock {\em Inventiones mathematicae}, 198(2):269--504, 2014.

\bibitem{Hairer2015}
M.~Hairer.
\newblock Introduction to regularity structures.
\newblock {\em Brazilian Journal of Probability and Statistics},
  29(2):175--210, 2015.

\bibitem{hairer_phi15}
M.~{Hairer}.
\newblock Regularity structures and the dynamical $\phi^4_3$ model.
\newblock {\em arXiv e-prints}, pages 1--46, Aug 2015.

\bibitem{hairer_string16}
M.~{Hairer}.
\newblock The motion of a random string.
\newblock {\em arXiv e-prints}, pages 1--20, May 2016.

\bibitem{MartinHairer2015}
M.~Hairer and {\'E}.~Pardoux.
\newblock A {W}ong-{Z}akai theorem for stochastic {P}{D}{E}s.
\newblock {\em Journal of the Mathematical Society of Japan}, 67(4):1551--1604,
  2015.

\bibitem{krylov1996lectures}
N.~Krylov.
\newblock {\em Lectures on {E}lliptic and {P}arabolic {E}quations in
  {H}{\"o}lder Spaces}.
\newblock Graduate studies in mathematics. American Mathematical Society, 1996.

\bibitem{Lanconelli2007}
A.~Lanconelli.
\newblock On a new version of the {I}t{\^{o}}'s formula for the stochastic heat
  equation.
\newblock {\em Communications on Stochastic Analysis}, 1(2):311--320, 2007.

\bibitem{Mueller2002}
C.~Mueller and R.~Tribe.
\newblock Hitting properties of a random string.
\newblock {\em Electronic Journal of Probability}, 7:no. 10, 29, 2002.

\bibitem{Nourdin2010}
I.~Nourdin and D.~Nualart.
\newblock Central {L}imit {T}heorems for {M}ultiple {S}korokhod integrals.
\newblock {\em Journal of Theoretical Probability}, 23(1):39--64, Mar 2010.

\bibitem{nualart1995malliavin}
D.~Nualart.
\newblock {\em The {M}alliavin {C}alculus and {R}elated {T}opics}.
\newblock Probability and its applications. Springer-Verlag, 1995.

\bibitem{Nualart1998}
D.~Nualart and M.~Zakai.
\newblock Generalized multiple stochastic integrals and the representation of
  {W}iener functionals.
\newblock {\em Stochastics}, 23(3):311--330, 1998.

\bibitem{peccati2011}
G.~Peccati and M.~Taqqu.
\newblock {\em Wiener {C}haos: {M}oments, {C}umulants and {D}iagrams}.
\newblock Springer, 2011.

\bibitem{revuz2004continuous}
D.~Revuz and M.~Yor.
\newblock {\em Continuous {M}artingales and {B}rownian {M}otion}.
\newblock Grundlehren der mathematischen Wissenschaften. Springer Berlin
  Heidelberg, 2004.

\bibitem{Russo1993}
F.~Russo and P.~Vallois.
\newblock Forward, {B}ackward and {S}ymmetric {S}tochastic integration.
\newblock {\em Probability Theory and Related Fields}, 97(3):403--421, 1993.

\bibitem{Simon1997}
L.~Simon.
\newblock Schauder estimates by scaling.
\newblock {\em Calculus of Variations and Partial Differential Equations},
  5(5):391--407, Jul 1997.

\bibitem{swanson2007}
J.~Swanson.
\newblock Variations of the solution to a stochastic heat equation.
\newblock {\em Annals of Probability}, 35(6):2122--2159, 2007.

\bibitem{walsh1984}
J.~B. Walsh.
\newblock {\em An introduction to stochastic partial differential equations}.
\newblock Lecture Notes in Mathematics. Springer Berlin Heidelberg, 1984.

\bibitem{Zambotti2006}
L.~Zambotti.
\newblock It{\^o}-tanaka’s formula for stochastic partial differential
  equations driven by additive space-time white noise.
\newblock {\em Stochastic partial differential equations and applications -
  VII}, 245:337--347, 2006.

\end{thebibliography}

\end{document}